\theoremstyle{plain}
\newtheorem{theorem}{Theorem}
\newtheorem{lemma}{Lemma}[section]
\newtheorem{proposition}[lemma]{Proposition}
\newtheorem{corollary}[lemma]{Corollary}
\theoremstyle{definition}
\newtheorem{definition}[lemma]{Definition}
\theoremstyle{remark}
\newtheorem{remark}[lemma]{Remark}
\definecolor{bg}{RGB}{220,220,220}
\def\XXint#1#2#3{{\setbox0=\hbox{$#1{#2#3}{\int}$ }
\vcenter{\hbox{$#2#3$ }}\kern-.6\wd0}}
\title[Boundary-driven exclusion process on the Sierpinski gasket]{Asymptotic behavior of density in the boundary-driven exclusion process on the Sierpinski gasket}
\author{Joe P.\@ Chen}
\address[Joe P.\@ Chen]{Department of Mathematics, Colgate University, Hamilton, NY 13346, USA}
\email{jpchen@colgate.edu}
\urladdr{\url{http://math.colgate.edu/~jpchen}}
\author{Patr\'{i}cia Gon\c{c}alves}
\address[Patr\'{i}cia Gon\c{c}alves]{Center for Mathematical Analysis, Geometry and Dynamical Systems, Instituto Superior T\'{e}cnico, Universidade de Lisboa, 1049-001 Lisboa, Portugal}
\email{pgoncalves@tecnico.ulisboa.pt}
\urladdr{\url{https://patriciamath.wixsite.com/patricia}}
\thanks{JPC thanks the US National Science Foundation (DMS-1855604), the Simons Foundation (Collaboration Grant for Mathematicians \#523544), and the Research Council of Colgate University for support.
PG thanks  FCT/Portugal for support through the project 
UID/MAT/04459/2013.  This project has received funding from the European Research Council (ERC) under  the European Union's Horizon 2020 research and innovative programme (grant agreement   No 715734).}
\date{\today}
\keywords{Exclusion process, hydrodynamic limit, equilibrium fluctuations, heat equation, Ornstein-Uhlenbeck equation, analysis on fractals, Dirichlet forms, moving particle lemma}
\subjclass[2010]{
(Primary)
35K05; %Heat equation
35R60; %PDE with randomness
60H15; %SPDE
60K35; %IPS; StatMech
82C22; %IPS
(Secondary)
28A80; %Fractals
60J27. %CTMC on discrete state spaces
}
\begin{document}
\renewcommand{\theequation}{\thesection.\arabic{equation}}
\numberwithin{equation}{section}

\begin{abstract}
We derive  the macroscopic laws that govern the evolution of the density of particles in the exclusion process on the Sierpinski gasket in the presence of a variable speed boundary.  
We obtain, at the hydrodynamics level, the heat equation evolving on the Sierpinski gasket with either Dirichlet or Neumann boundary conditions, depending on whether the reservoirs are fast or slow. 
For a particular strength of the boundary dynamics  we obtain linear Robin boundary conditions. 
As for the fluctuations, we prove that, when starting from the stationary measure, namely the product Bernoulli measure in the equilibrium setting, they are governed by Ornstein-Uhlenbeck processes with the respective boundary conditions. 
\end{abstract}

\maketitle

\setcounter{tocdepth}{1}
\tableofcontents

\section{Introduction} \label{sec:intro}

The purpose of this article is to derive the macroscopic laws that govern the space-time evolution of the thermodynamic quantities of a classical interacting particle system (IPS)---namely, the exclusion process---evolving on a non-lattice, non-translationally-invariant state space. 
The IPS were introduced in the mathematics community by Spitzer in \cite{Spi} (but were already known to physicists) as microscopic stochastic systems, whose dynamics conserves a certain number of thermodynamic quantities of interest.
See the monographs \cites{LiggettBook, Spohn, KipnisLandim} for detailed accounts.
Depending on whether one is looking at the Law of Large Numbers (LLN) or the Central Limit Theorem (CLT), the macroscopic laws are governed by either partial differential equations (PDEs) or stochastic PDEs. 
Over the past decades, there have been many studies around microscopic models whose dynamics conserves one or more quantities of interest, and the goal in the so-called \emph{hydrodynamic limit} is to make rigorous the derivation of these PDEs by means of a scaling argument procedure. 

One of the intriguing questions  in the field of IPS is to understand how  a local microscopic perturbation of the  system has an impact at the level of its macroscopic behavior. 
In recent years, many articles have been devoted to the study of 1D microscopic symmetric systems in presence of a ``slow/fast boundary,'' see for example \cites{BMNS17,BGJ17,BGJ18,BGS20} and references therein. 
The strength of the boundary Glauber dynamics does not change the bulk properties of the PDE, as long as its impact is over a negligible set of points in the discrete space.
Nevertheless it imposes additional boundary conditions which depend on the strength of the boundary Glauber rates.

In this article, we analyze the same type of problem when the microscopic system has symmetric rates, and evolves on a fractal which has spatial dimension $>1$. 
Our chosen fractal is the Sierpinski gasket, and the microscopic stochastic dynamics is the classical exclusion process that we describe as follows. 
Consider the exclusion process evolving on a discretization of the gasket, that is, on a level-$N$ approximating graph denoted by $\mathcal{G}_N=(V_N, E_N)$, where $V_N $ is the set of vertices and $E_N $ denotes the set of edges; see Figure \ref{fig:SG}. 
The exclusion process on $\mathcal{G}_N$ is a continuous-time Markov process denoted by $\{\eta^N_t :t\geq 0\}$ with state space $\Omega_N=\{0,1\}^{V_N}$. 
Its dynamics is defined as follows: On every pair of vertices $x,y\in V_N$ which are connected by an edge, we place an independent rate-1 exponential clock. 
If the clock on the edge $xy$ rings, we exchange occupation variables  at  vertices $x$ and $y$. 
Observe that the exchange between $x$ and $y$ is meaningful only when one of the vertices is empty and the other one  is occupied; otherwise nothing happens.

\begin{figure}
\centering
\includegraphics[width=0.85\textwidth]{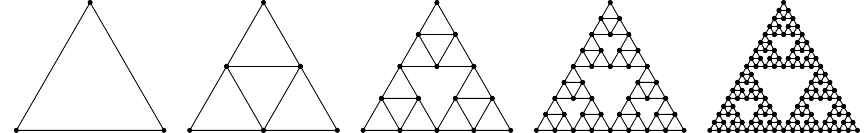}
\caption{The level-$N$ approximating graph $\mathcal{G}_N$ of the Sierpinski gasket, for $N=0,1,2,3,4$ (from left to right). The boundary set $V_0$ is the set of three vertices of the outer triangle.}
\label{fig:SG}
\end{figure}

On the vertices of $V_0=\{a_0, a_1, a_2\}$, we attach three extra vertices $\{A_i\}_{i=0}^2$ whose role is to mimic the action of particle reservoirs. 
This means that each one of these extra vertices $A_i$ can inject (resp.\@ remove) particles into (resp.\@ from) the corresponding vertex $a_i$ at a rate $\lambda_+(a_i)$ (resp.\@ $\lambda_-(a_i)$).
In other words, we add Glauber dynamics to the vertices of $V_0$; see Figure \ref{fig:SGReservoir}.
In order to have a nontrivial limit, we speed up the process in the time scale $5^N$, the diffusive scale to obtain a Brownian motion on the gasket \cite{BarlowPerkins}.
Furthermore, to analyze the impact of changing the reservoirs' dynamics, we scale it by a factor 1/$b^N$ for some $b>0$.
The precise definition of the infinitesimal generator of this Markov process is given in \eqref{eq:gen}. 

\begin{remark}
\label{ft1}
Since these scalings are not fully explained until \S\ref{sec:model}, we should mention here that the correponding model on the $d$-dimensional grid $\{0,\frac{1}{N}, \dotsc, \frac{N-1}{N}, 1\}^d$ has the exclusion process sped up by $N^2$, which is consistent with the diffusive scaling of symmetric random walks on the grid; and in the 1D case, the boundary Glauber dynamics at the two endpoints $\{0,1\}$ is further scaled by $N^{-\theta}$ for some $\theta>0$.\end{remark}

\begin{figure}
\centering
\begin{minipage}{0.4\textwidth}
\includegraphics[width=\textwidth]{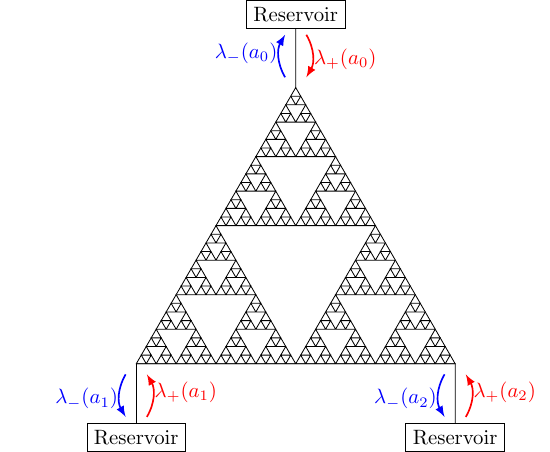}
\end{minipage}
\begin{minipage}{0.4\textwidth}
\[
5^N \mathcal{L}_N =5^N \left(\mathcal{L}^{\text{bulk}}_N + \frac{1}{b^N}\mathcal{L}^{\text{boundary}}_N\right)
\]

\vspace{5pt}
\begin{center}
\begin{tikzpicture}
%draw horizontal line
\draw [-,thick,purple] (3,0) -- (0,0);
\draw [->,thick,teal] (3,0) -- (6,0);
\draw (6,0) node[right=3pt] {$b$};

% draw vertical line
\draw [thick, brown, brown!50!black] (3, 3pt) -- (3, -3pt);

% draw nodes
\draw (1, 0) node[below=3pt,purple] {Dirichlet} node[above=2pt, purple] {$b<\frac{5}{3}$};
\draw (3,0) node[above=3pt, brown!50!black] {Robin} node[below=2pt, brown!50!black] {$b=\frac{5}{3}$};
\draw (5,0) node[below=3pt,teal] {Neumann} node[above=2pt, teal] {$b>\frac{5}{3}$};
\end{tikzpicture}
\end{center}
\end{minipage}
\caption{A schematic of the boundary-driven exclusion process on the Sierpinski gasket (left); and the scaling regimes determined by the inverse strength $b$ of the reservoirs' dynamics (right).}
\label{fig:SGReservoir}
\end{figure}

\subsection{Results at a glance}

Our aim is to analyze the hydrodynamic limit, the fluctuations  of this process, and their dependence on the parameter $b$ which governs the  strength of the reservoirs. As we are working with an exclusion process whose jump rates are equal to $1$ between connected vertices, we expect to obtain the heat equation on the Sierpinski gasket, but with certain types of boundary conditions. 

\subsubsection{Hydrodynamic limit (LLN, Theorem \ref{thm:Hydro})}

In general terms, the goal in the \emph{hydrodynamic limit} is to show that starting the process from a collection of measures $\{\mu_N\}_N$ for which the Law of Large Numbers holds---that is, the random measure $\pi^N_0(\eta^N)=\frac{1}{|V_N|}\sum_{x\in V_N}\eta^N_0(x)\delta_{\{x\}}$ converges, in probability with respect to $\mu_N$ and as $N\to\infty$, to the deterministic measure $\varrho(x)\,dm(x)$, where $\varrho(\cdot)$ is a function defined on the Sierpinski gasket, and $m$ is the standard self-similar measure on the gasket---then, the same holds at later times $t>0$---that is, the random measure $\pi^N_t(\eta^N)=\frac{1}{|V_N|}\sum_{x\in V_N}\eta^N_{t5^N}(x)\delta_{\{x\}}$ converges, in probability with respect to $\mu_N(t)$, the distribution of $\eta_{t5^N}$, and as $N\to\infty$, to $\rho_t(x)\,dm(x)$, where $\rho_t(\cdot)$ is the solution (in the weak sense) of the corresponding hydrodynamic equation of the system. 

For the model that we consider here, we obtain as hydrodynamic equations the heat equation with Dirichlet, Robin, or Neumann boundary conditions, depending on whether $b<5/3$, $b=5/3$, and $b>5/3$, respectively; see again Figure \ref{fig:SGReservoir}.

\begin{remark}
In the notation of Remark \ref{ft1}, the analogous Dirichlet, Robin, or Neumann regimes in the 1D case are $\theta<1$, $\theta=1$, and $\theta>1$, respectively.
\end{remark}

The emergence of the different scaling regimes comes from the competition between the bulk exclusion dynamics (at rate $5^N$) and the boundary Glauber dynamics (at rate $(5/(3b))^N$, where $(5/3)^N$ is the scaling factor needed to see a nontrivial normal derivative at the boundary).
When $b=5/3$, the exclusion dynamics at the boundary is in tune with the Glauber dynamics, and the Robin boundary condition emerges. 
When $b>5/3$, the Glauber dynamics become negligible in the limit, so we obtain isolated boundaries corresponding to Neumann boundary condition. 
In contrast, when $b< 5/3$ we get fixed boundary densities, since in this case the Glauber dynamics is faster than the exclusion dynamics.

Our method of proof (see \S\ref{sec:Hydro}) is the classical entropy method of \cite{GPV88}, which relies on showing tightness of the sequence $\{\pi^N_\cdot\}_N$ and to characterize uniquely the limit point $\pi_\cdot.$ Once uniqueness is proved, the convergence follows. 
To prove uniqueness of the limit point $\pi_\cdot$, we deduce from the particle system that at each time $t$, the measure $\pi_t$ is absolutely continuous with respect to the standard self-similar measure $m$ on the gasket---a consequence of the dynamics of an exclusion process. 
Then we characterize the density and show that it is the unique weak solution of the hydrodynamic equation. Here we associate to the random measure $\pi^N_t$ a collection of martingales $M_t^N$ which correspond to random discretizations of the weak solution of the PDE, and then we prove that in the limit it solves the integral formulation of the corresponding weak solution. 
A key part of the argument involves the density \emph{replacement lemmas} (\S\ref{sec:denRL}) which replace occupation variables at the boundary by suitable local averages.
To conclude the proof of the LLN for the random measure $\pi^N_t$, we prove uniqueness of the weak solution to the heat equation with the respective boundary condition.

\subsubsection{Equilibrium fluctuations (CLT, Theorem \ref{thm:OU})}

Another question we address in this article is related to the CLT. To wit, consider the system starting from the stationary measure. We observe that when the reservoirs' rates are all identical, \emph{i.e.,} $\lambda_+(a_i)=\lambda_+$ and $\lambda_-(a_i)=\lambda_-$ for all $i=0,1,2$, then the product Bernoulli measures $\nu_\rho^N$ with $\rho:= \lambda_+/(\lambda_++\lambda_-)$ are reversible for $\{\eta^N_t: t\geq 0\}$.
Without the identical rates condition $\nu_\rho^N$ are no longer invariant. Nevertheless, since we work with an irreducible Markov process on a finite state space, we know that the invariant measure is unique. The characterization of this measure is so far out of reach, and we leave this issue for a future work.  That being said, we observe that, from  our hydrodynamic limit result, we cannot say anything when the system starts  from the stationary measure (the so called \emph{hydrostatic limit}) in the case $\lambda_-(a)\neq\lambda_+(a)$, $a\in V_0$.  Nevertheless, in a forthcoming article \cite{SGNonEq}, we will show, in the case $b=1$, that the stationary density correlations vanish as  $N\to\infty$, from where we conclude that the empirical measure converges to $\bar\rho(x)\,dm(x)$, where $\bar\rho(\cdot) $ is the stationary solution of the hydrodynamic equation.

Given the outstanding technical obstacles, we decide for the moment to analyze the CLT for $\pi^N_t$ only in the case when $\lambda_+(a_i)=\lambda_+$ and $\lambda_-(a_i)=\lambda_-$ for all $i=0,1,2$. Then we start from the product Bernoulli measure $\nu^N_\rho$ where $\rho=\lambda_+/(\lambda_+ + \lambda_-)$. We define the \emph{density fluctuation field} $\mathcal Y^N_t$ which acts on test functions $F$ as $\mathcal Y_t^N(F)=|V_N|^{\frac 12}(\pi^N_t(F)-\mathbb{E}_{\nu_\rho^N}[\pi^N_t(F)])$, where $\pi^N_t(F)$ denotes the integral of $F$ with respect to the random measure $\pi^N_t(\eta)$. We prove that for a suitable space of test functions, the density fluctuation fields converge to the unique solution of a generalized (distribution-valued) Ornstein-Uhlenbeck equation on the gasket.  

The method of proof goes, as in the hydrodynamic limit, by showing tightness of the sequence $\{\mathcal Y^N_\cdot\}_N$ and to characterize uniquely the limit point $\mathcal Y_\cdot$ as the solution of an Ornstein-Uhlenbeck equation with the respective boundary conditions (\S\ref{sec:OU}). 
Part of the proof also calls for several replacement lemmas at the boundary (\S\ref{sec:DFRL}).

\subsubsection{Generalizations and open problems}

Now we comment on our chosen fractal, the Sierpinski gasket. 
In \S\ref{sec:generalizations} we describe possible generalizations of our work to other fractals. More precisely, the results that we have obtained here can be adapted to  other post-critically finite self-similar fractals as defined in \cites{BarlowStFlour, KigamiBook}, and more generally, to \emph{resistance spaces} introduced by Kigami \cite{Kigamiresistance}. What is most important  for our proof to work is to have discrete analogues of the Laplacians and of energy forms on the underlying graph, and good rates of convergence of discrete operators to their continuous versions. 
Meanwhile, we also need a method to perform local averaging of the particle density on a graph which lacks translational invariance. This is made possible through a functional inequality called the \emph{moving particle lemma}, which holds on any graph approximation of a resistance space \cite{ChenMPL}.

Regarding our choice of the interacting particle system, the exclusion process, we believe  that our proof can be carried out to more general dynamics with symmetric rates or long-range interactions. 
Due to the length of the present paper, we leave the details of these generalizations to a future work.
On a historical note, Jara \cite{Jara} had studied the boundary-driven zero-range process on the Sierpinski gasket, and obtained the density hydrodynamic limit using the $H_{-1}$-norm method \cites{CY92, GQ00}.

We also point out that a natural extension of the fluctuations result to the non-equilibrium setting is due to appear soon \cite{SGNonEq}, and the main issue is to have quantitative decay of the two-point space-time correlations in the exclusion process.
As a result we will prove the aforementioned hydrostatic limit.

\subsection*{Organization of the paper}

In \S\ref{sec:model} we formally define the boundary-driven exclusion process on the Sierpinski gasket.
In \S\ref{sec:Density} we state the hydrodynamic limit theorem for the empirical density (Theorem \ref{thm:Hydro}), exhibiting the three limit regimes: Neumann, Robin, and Dirichlet.
In \S\ref{sec:EF} we state the convergence of the equilibrium density fluctuation field to the Ornstein-Uhlenbeck equation with appropriate boundary condition (Theorem \ref{thm:OU}).
In \S\ref{sec:rl} we establish several replacement lemmas on the Sierpinski gasket, which form the technical core of the paper.
We then prove Theorem \ref{thm:Hydro} in \S\ref{sec:Hydro} and \S\ref{sec:uniqueness}, and Theorem \ref{thm:OU} in \S\ref{sec:OU}.
Generalizations to mixed boundary conditions on $SG$, as well as to other state spaces, are described in \S\ref{sec:generalizations}.
Appendices \ref{sec:HS} and \ref{app:DNmap} summarize several key results from analysis on fractals which are needed for this article.

\section{Model} \label{sec:model}

\subsection{Sierpinski gasket} \label{sec:SG}

Consider the iterated function system (IFS) consisting of three contractive similitudes $\mathfrak{F}_i: \mathbb{R}^2 \to\mathbb{R}^2$ given by $\mathfrak{F}_i(x)= \frac{1}{2}(x-a_i)+a_i$, $i\in \{0,1,2\}$, where $\{a_i\}_{i=0}^2$ are the three vertices of an equilateral triangle of side $1$.
The Sierpinski gasket $K$ is the unique fixed point under this IFS: $K= \bigcup_{i=0}^2 \mathfrak{F}_i(K)$.
Set $V_0=\{a_0, a_1, a_2\}$. 
Given a word $w=w_1 w_2 \ldots w_j$ of length $|w|=j$ drawn from the alphabet $\{0,1,2\}$, we define $\mathfrak{F}_w := \mathfrak{F}_{w_1} \circ \mathfrak{F}_{w_2} \circ \cdots \circ \mathfrak{F}_{w_j}$.
Set $K_w := \mathfrak{F}_w(K)$, which we call a $j$-cell if $|w|=j$.
Also set $V_N:= \bigcup_{|w|=N} \mathfrak{F}_w(V_0)$, and $V_* := \bigcup_{N\geq 0} V_N$.
We then introduce the approximating Sierpinski gasket graph of level $N$, $\mathcal{G}_N=(V_N, E_N)$, where two vertices $x$ and $y$ are connected by an edge (denoted $xy\in E_N$ or $x\sim y$) iff there exists a word $w$ of length $N$ such that $x,y \in \mathfrak{F}_w(V_0)$.

Let $m_N$ be the uniform measure on $V_N$, charging each vertex $x\in V_N$ a mass $|V_N|^{-1} = (\frac{3}{2}(3^N+1))^{-1}$: this explains the appearance of the prefactor $\frac{2}{3}$ in the results to follow. 
It is a standard argument that $m_N$ converges weakly to $m$, the self-similar (finite) probability measure on $K$, which is a constant multiple of the $d_H$-dimensional Hausdorff measure with $d_H=\log_2 3$ in the Euclidean metric.
From now on we fix our measure space $(K,m)$.

To obtain a diffusion process on $(K,m)$, we take the scaling limit of random walks on $\mathcal{G}_N$ accelerated by 
$5^N$.
To be precise, the expected time for a random walk $\{X_t^N: t\geq 0\}$ started from $a_0$ to hit $\{a_1, a_2\}$ equals $5^N$ on $\mathcal{G}_N$: this is a simple one-step Markov chain calculation which can be found in \emph{e.g.} \cite{BarlowStFlour}*{Lemma 2.16}.
It is by now a well-known result \cite{BarlowPerkins} that the sequence of rescaled random walks $\{X_{t 5^N}^N: t\geq 0\}_N$ is tight in law and in resolvent, and converges to a unique (up to deterministic time change) Markov process $\{X_t: t\geq 0\}$ on $K$.

\begin{remark}
As a reminder and a comparison, if $\{X^N_t: t\geq 0\}$ denotes the symmetric simple random walk on the $d$-dimensional grid $\{0,\frac{1}{N}, \dotsc, \frac{N-1}{N}, 1\}^d$, then the family of Markov processes $\{X^N_{N^2 t}: t\geq 0\}_{N\in \mathbb{N}}$ converges to the Brownian motion on the unit cube $[0,1]^d$. Here the time scale is $N^2$ and the mass scale is $N^d$.
\end{remark}

In \S\ref{sec:Laplacian} below we will describe the corresponding analytic theory on the Sierpinski gasket. The main reference is the monograph of Kigami \cite{KigamiBook}, though we should mention some of his earlier works \cites{Kigami89, Kigami93} which built up the analytic framework.

\subsection{Exclusion process on the Sierpinski gasket}
\label{sec:excSG}

The \emph{boundary-driven symmetric simple exclusion process} (SSEP) on $\mathcal{G}_N$ is a continuous-time Markov process on $\Omega_N:=\{0,1\}^{V_N}$ with generator
\begin{align}
\label{eq:gen}
5^N \mathcal{L}_N =5^N \left(\mathcal{L}^{\text{bulk}}_N + \frac{1}{b^N}\mathcal{L}^{\text{boundary}}_N\right),
\end{align}
where for all functions $f:\Omega_N\to\mathbb{R}$,
\begin{align}
\label{eq:Lbulk}
\left(\mathcal{L}^{\text{bulk}}_N f\right)(\eta) &= \sum_{x\in V_N} \sum_{\substack{y\in V_N\\ y\sim x}} \eta(x)[1-\eta(y)] \left[f(\eta^{xy})-f(\eta)\right], \\
\label{eq:Lboundary}
\left(\mathcal{L}^{\text{boundary}}_ N f\right)(\eta) &= \sum_{a\in V_0} [\lambda_-(a) \eta(a) + \lambda_+(a) (1-\eta(a))]\left[f(\eta^a)-f(\eta)\right].
\end{align}

Here $5^N$ is the aforementioned diffusive time scaling on $SG$;
$b>0$ is a scaling parameter which indicates the inverse strength of the reservoirs' dynamics relative to the bulk dynamics;
$\lambda_+(a)>0$ (resp.\@ $\lambda_-(a)>0$) is the birth (resp.\@ death) rate of particles at the boundary vertex $a\in V_0$, and is fixed for all $N$;
\begin{align}
\eta^{xy}(z) =\left\{\begin{array}{ll} \eta(y),& \text{if } z=x,\\ \eta(x),& \text{if } z=y, \\ \eta(z),& \text{otherwise}.\end{array}\right.
\quad
\text{and}
\quad
\eta^a(z) = \left\{\begin{array}{ll} 1-\eta(a), &\text{if } z=a, \\ \eta(z),&\text{otherwise}.\end{array}\right.
\end{align}
See Figure \ref{fig:SGReservoir} for a schematic.
For convenience, we denote the sum of the boundary birth and death rates at $a\in V_0$ by $\lambda_\Sigma(a) := \lambda_+(a) + \lambda_-(a)$.

For the rest of the paper, we fix a time horizon $T>0$, and denote by $\mathbb{P}_{\mu_N}$ the probability measure on the Skorokhod space $D([0,T], \Omega_N)$ induced by the Markov process $\{\eta^N_t: t\geq 0\}$ with infinitesimal generator $5^N \mathcal{L}_N$ and initial distribution $\mu_N$.
Expectation with respect to $\mathbb{P}_{\mu_N}$ is written $\mathbb{E}_{\mu_N}$.

\section{Hydrodynamic limits: {Statement of results}}
\label{sec:Density}

In this section we first summarize the analytic theory on the Sierpinski gasket (\S\ref{sec:Laplacian}), then introduce the partial differential equations which will be derived (\S\ref{sec:weakf}), and finally state and explain the hydrodynamic limits for our exclusion processes (\S\ref{sec:hydrolimit}$\sim$\S\ref{sec:hydrolimitheuristics}).

\subsection{Laplacian, Dirichlet forms, and integration by parts}
\label{sec:Laplacian}

We introduce the following operators on functions $f: K\to\mathbb{R}$,
\begin{align}
(\text{discrete Laplacian}) & \qquad (\Delta_N f)(x) = 5^N \sum_{\substack{y\in V_N\\ y\sim x}} (f(y)-f(x)) \quad \text{for } x\in V_N\setminus V_0,\\
\label{eq:normalderiv}
(\text{outward normal derivative})  & \qquad  (\partial_N^\perp f)(a) = \frac{5^N}{3^N} \sum_{\substack{y\in V_N\setminus V_0 \\ y\sim a}} (f(a)-f(y)) \quad \text{for } a\in V_0.\end{align}
Note the appearance of the diffusive time scale factor $5^N$ in both expressions.
We also introduce the Dirichlet energy defined on $f:K\to\mathbb{R}$ by
\begin{align}
\label{eq:graphenergy}
\mathcal{E}_N(f) = \frac{5^N}{3^N}\frac{1}{2}\sum_{x\in V_N}\sum_{\substack{y\in V_N\\y\sim x}} (f(x)-f(y))^2,
\end{align}
and the symmetric quadratic form defined on $f,g:K\to\mathbb{R}$ by $\mathcal{E}_N(f,g) := \frac{1}{4} [\mathcal{E}_N(f+g) - \mathcal{E}_N(f-g)]$.
It is direct to verify the following summation by parts formula:
\begin{align}
\label{eq:IBPdiscrete}
\mathcal{E}_N(f,g) = \frac{1}{|V_N|}\sum_{x\in V_N\setminus V_0} \left(-\frac{3}{2}\Delta_N f\right)(x) g(x) + \sum_{a\in V_0} (\partial^\perp_N f)(a)g(a).
\end{align}

From Kigami's theory of analysis on fractals \cite{KigamiBook}, it is known that the aforementioned identities have ``continuum'' analogs in the limit $N\to\infty$. 
This relies upon the fact that, for each fixed $f:K\to\mathbb{R}$, the sequence $\{\mathcal{E}_N(f)\}_N$ is monotone increasing, so it either converges to a finite limit or diverges to $+\infty$. 
We thus define
\begin{align}
\label{def:E}
\mathcal{E}(f) = \lim_{N\to\infty} \mathcal{E}_N(f)
\end{align}
with natural domain
\begin{align}
\mathcal{F} := \{f: K\to\mathbb{R} : \mathcal{E}(f)<+\infty\}.
\end{align}
As before we use the polarization formular to define $\mathcal{E}(f,g) = \frac{1}{4}\left(\mathcal{E}(f+g) - \mathcal{E}(f-g)\right)$.
Based on the energy, we can give a weak formulation of the Laplacian.

\begin{remark}
As a reminder, the domain of the Laplacian on a bounded Euclidean domain $\Omega \subset \mathbb{R}^d$ with smooth boundary $\partial\Omega$ is defined in the same way as in Definition \ref{def:Lap}:
We say that $u\in {\rm dom}\Delta$ if there exists $f\in C(\overline{\Omega})$ such that
$\displaystyle
\int_\Omega\, \nabla u\cdot \nabla \varphi\,dx = \int_\Omega\, f\varphi\,dx
$
for all $\varphi \in H^1_0(\Omega)$.
\end{remark}

\begin{definition}[Laplacian]
\label{def:Lap}
Let ${\rm dom}\Delta$ denote the space of functions $u \in \mathcal{F}$ for which there exists $f\in C(K)$ such that
\begin{align}
\label{eq:weakformLap}
\mathcal{E}(u,\varphi) = \int_K\, f\varphi\, dm \quad \text{ for all } \varphi \in \mathcal{F}_0 :=\{ g\in \mathcal{F} : g|_{V_0}=0\},
\end{align}
We then write $-\Delta u =f$, and call ${\rm dom}\Delta$ the domain of the Laplacian.
\end{definition}

Note that ${\rm dom}\Delta \subsetneq \mathcal{F}$.
We also set ${\rm dom}\Delta_0 := \{ u\in {\rm dom}\Delta : u|_{V_0}=0\}$.

A pointwise formulation of the Laplacian is also available, and agrees with Definition \ref{def:Lap}.
The following Lemma will be used repeatedly in this paper.
For more details the reader is referred to \cite{KigamiBook}*{\S3.7} or \cite{StrichartzBook}*{\S2.2}.
\begin{lemma}[\cites{KigamiBook, StrichartzBook}]
\label{lem:domLap}
If $u\in {\rm dom}\Delta$, then:
\begin{enumerate}
\item \label{Lapunif} $\frac{3}{2}\Delta_N u \to \Delta u$ uniformly on $K\setminus V_0$.
\item \label{NormalDerivative} For every $a\in V_0$, $(\partial^\perp u)(a) := \lim_{N\to\infty} (\partial_N^\perp u)(a)$ exists.
\item \label{IBP} (Integration by parts formula)
\begin{align}
\label{eq:IBP}
\mathcal{E}(u,\varphi) = \int_K\, (-\Delta u) \varphi\,dm + \sum_{a\in V_0} (\partial^\perp u)(a) \varphi(a) \quad \text{for all } \varphi \in \mathcal{F}.
\end{align}
\end{enumerate}
\end{lemma}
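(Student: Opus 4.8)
The plan is to prove Lemma~\ref{lem:domLap} by leveraging the weak (variational) characterization in Definition~\ref{def:Lap} together with the discrete integration-by-parts identity \eqref{eq:IBPdiscrete}, obtaining the pointwise statements \eqref{Lapunif}--\eqref{NormalDerivative} as consequences of uniform convergence on cells and then recovering \eqref{IBP} by passing to the limit in \eqref{eq:IBPdiscrete}. The key structural input is that $SG$ is post-critically finite with a self-similar energy: the harmonic extension operator is explicit (the ``$\frac{1}{5}$--$\frac{2}{5}$ rule''), so any $u\in{\rm dom}\Delta$ can be compared, cell by cell, with the piecewise harmonic function $h_N$ that agrees with $u$ on $V_N$.

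\textbf{Step 1: Reduction to piecewise harmonic functions.} Given $u\in{\rm dom}\Delta$ with $-\Delta u = f\in C(K)$, let $h_N$ denote the continuous function that is harmonic on each $N$-cell $K_w$ and equals $u$ on $V_N$. Using the self-similar scaling of the energy and the fact that $-\Delta u=f$ is controlled, one shows $\|u-h_N\|_\infty = O(5^{-N})$ and, more importantly, that on each $N$-cell the Laplacian of $u$ is, up to a controlled error, the constant obtained by averaging $f$; this is exactly the content of \cite{KigamiBook}*{\S3.7}. For harmonic functions the discrete Laplacian $\frac{3}{2}\Delta_N h$ vanishes identically on $V_N\setminus V_0$, and $\partial_N^\perp h$ is constant in $N$. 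Thus $\frac{3}{2}\Delta_N u(x)$ differs from the cell-average of $f$ near $x$ by an error that is uniform in $x\in V_N\setminus V_0$ and tends to $0$; combined with the uniform continuity of $f$ on the compact set $K$, this yields \eqref{Lapunif}. Similarly, at each $a\in V_0$ the quantity $\partial_N^\perp u(a)$ is a Cauchy sequence, since successive differences are governed by the (summable) cell contributions of $f$, giving \eqref{NormalDerivative}.

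\textbf{Step 2: Passage to the limit in the discrete Gauss--Green formula.} Fix $\varphi\in\mathcal{F}$. Start from \eqref{eq:IBPdiscrete} with $g$ replaced by $\varphi$:
\begin{align*}
\mathcal{E}_N(u,\varphi) = \frac{1}{|V_N|}\sum_{x\in V_N\setminus V_0}\left(-\tfrac{3}{2}\Delta_N u\right)(x)\,\varphi(x) + \sum_{a\in V_0}(\partial^\perp_N u)(a)\,\varphi(a).
\end{align*}
The left side converges to $\mathcal{E}(u,\varphi)$ by \eqref{def:E} and polarization. On the right side, the boundary sum converges to $\sum_{a\in V_0}(\partial^\perp u)(a)\varphi(a)$ by \eqref{NormalDerivative}. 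For the bulk sum, the point is that $\frac{1}{|V_N|}\sum_{x\in V_N\setminus V_0}(\cdot)(x)\varphi(x)$ is a Riemann-type sum against $m_N$, and since $m_N\to m$ weakly while $-\frac{3}{2}\Delta_N u\to -\Delta u=f$ uniformly (Step 1) and $\varphi$ is continuous (as $\mathcal{F}\hookrightarrow C(K)$ by the resistance estimate), the product converges uniformly and the integrals converge to $\int_K f\varphi\,dm = \int_K(-\Delta u)\varphi\,dm$. Collecting the three limits gives \eqref{eq:IBP}.

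\textbf{Main obstacle.} The genuinely delicate point is Step~1: establishing that $\frac{3}{2}\Delta_N u$ converges to $\Delta u$ \emph{uniformly}, not merely pointwise, which requires the quantitative comparison of $u$ with its piecewise harmonic interpolant on every cell simultaneously, together with equicontinuity coming from $f\in C(K)$. This is where post-critical finiteness and the explicit harmonic extension matrices of $SG$ are essential; on a general resistance space one needs the analogous ``good rate of convergence of discrete operators'' alluded to in \S\ref{sec:generalizations}. Since this is a standard result in Kigami's theory, I would not reprove it in detail but cite \cite{KigamiBook}*{\S3.7} and \cite{StrichartzBook}*{\S2.2}, and devote the written proof mainly to the clean limiting argument of Step~2, which is the part actually used in the sequel (in particular the appearance of the factor $\frac{3}{2}$ matching \eqref{eq:IBPdiscrete}).
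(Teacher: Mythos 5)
Your proposal is correct, and it matches the paper's treatment: the paper offers no proof of Lemma \ref{lem:domLap} at all, importing it from \cite{KigamiBook}*{\S3.7} and \cite{StrichartzBook}*{\S2.2} — precisely the sources you defer to for items \eqref{Lapunif}–\eqref{NormalDerivative}. Your Step 2 (passing to the limit in the discrete Gauss–Green identity \eqref{eq:IBPdiscrete}, using the monotone convergence of $\mathcal{E}_N$, the uniform convergence of $\frac{3}{2}\Delta_N u$, the convergence of the discrete normal derivatives, and $\mathcal{F}\subset C(K)$) is exactly how the integration-by-parts formula is established in those references, so there is no substantive divergence to report.
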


Compare \eqref{eq:IBPdiscrete} with $f=u \in {\rm dom}\Delta$ and $g=\varphi\in \mathcal{F}$ and \eqref{eq:IBP}: Since the self-similar measure $m$ charges zero mass to points, when taking the limit of \eqref{eq:IBPdiscrete} as $N\to\infty$, the first term on the right-hand side converges to $\int_{K\setminus V_0}\, (-\Delta u)\varphi \,dm = \int_K\, (-\Delta u)\varphi\,dm$.

The domain on which the Laplacian is self-adjoint will vary with the boundary conditions imposed on $V_0$; see \eqref{eq:domLapbc} below.

Finally, for $f,g: K\to\mathbb{R}$ set
\begin{align}
\mathcal{E}_1(f,g) := \mathcal{E}(f,g) + \langle f,g\rangle_{L^2(K,m)}.
\end{align}
Then $\mathcal{F}$ endowed with the inner product $\mathcal{E}_1(\cdot, \cdot)$ is a Hilbert space.
This allows us to further define the space $L^2(0,T,\mathcal{F})$, which is the space where our solutions will live. For $F,G: [0,T]\times K\to\mathbb{R}$ define
\begin{equation}
\begin{aligned}
&\langle F,G\rangle_{L^2(0,T,\mathcal{F})} :=\int_0^T\, \mathcal{E}_1(F_s,G_s)\,ds, \quad
\|F\|_{L^2(0,T, \mathcal{F})}:= \left(\int_0^T \, \mathcal{E}_1(F_s)\,ds\right)^{1/2}.
\end{aligned}
\end{equation}
Then $L^2(0,T,\mathcal{F})$ with the inner product $\langle \cdot ,\cdot\rangle_{L^2(0,T,\mathcal{F})}$ is a Hilbert space.

\subsection{Weak formulation of the heat equations}
\label{sec:weakf}

In this section we state the relevant heat equations.
Let us preface with two remarks:
First, the appearance of $\frac{2}{3}$ in the Laplacian is due to the convergence  $\Delta_N u \to \frac{2}{3}\Delta u$, see Lemma \ref{lem:domLap}-\eqref{Lapunif};
and second, in the boundary conditions to follow, ${\sf g}$ and ${\sf r}$ are given $\mathbb{R}$-valued functions with domain $V_0$.

\begin{definition}[Heat equation with Dirichlet boundary condition]
\label{def:HeatD}
We say that $\rho$ is a weak solution to the heat equation with \emph{Dirichlet} boundary conditions started from a measurable function $\varrho: K\to [0,1]$,
\begin{align}
\label{eq:HeatD}
\left\{
\begin{array}{ll} 
\partial_t \rho(t,x) = \frac{2}{3}\Delta \rho(t,x), &  t\in [0,T],~ x\in K\setminus V_0, \\
\rho(t,a) =  {\sf g}(a), & t\in (0,T], ~a\in V_0,\\
\rho(0,x) = \varrho(x), & x\in K,
\end{array}
\right.
\end{align}
if the following conditions are satisfied:
\begin{enumerate}
\item $\rho \in L^2(0,T, \mathcal{F})$.
\item $\rho$ satisfies the weak formulation of \eqref{eq:HeatD}: for any $t\in [0,T]$ and $F\in C([0,T], {\rm dom}\Delta_0) \cap C^1((0,T), {\rm dom}\Delta_0)$,
\begin{equation}
\begin{aligned}
\label{eq:ThetaD}
\Theta_{\text{Dir}}(t):= &\int_K\, \rho_t(x) F_t(x)\,dm(x) - \int_K \,\varrho(x) F_0(x) \,dm(x) \\
&- \int_0^t\, \int_K\, \rho_s(x)\left(\frac{2}{3}\Delta+\partial_s\right) F_s(x)\,dm(x)\,ds + \frac{2}{3} \int_0^t\, \sum_{a\in V_0}{\sf g}(a)(\partial^\perp F_s)(a)\,ds =0.
\end{aligned}
\end{equation}
\item  $ \rho(t,a) = {\sf g}(a)$ for a.e.\@ $t\in(0,T]$  and for all $a\in V_0$.
\end{enumerate}
\end{definition}

\begin{definition}[Heat equation with Robin boundary condition]
\label{def:HeatR}
We say that $\rho$ is a weak solution to the heat equation with \emph{Robin} boundary condition started from a measurable function $\varrho: K\to [0,1]$,
\begin{align}
\label{eq:HeatR}
\left\{
\begin{array}{ll} 
\partial_t \rho(t,x) = \frac{2}{3}\Delta \rho(t,x), &  t\in [0,T],~ x\in K\setminus V_0, \\
\partial^\perp \rho(t,a) = -{\sf r}(a) (\rho(t,a)- {\sf g}(a)), &  t\in (0,T], ~a\in V_0,\\
\rho(0,x) = \varrho(x), & x\in K,
\end{array}
\right.
\end{align}
if the following conditions are satisfied:
\begin{enumerate}
\item $\rho \in L^2(0,T, \mathcal{F})$.
\item $\rho$ satisfies the weak formulation of \eqref{eq:HeatR}: for any $t\in [0,T]$ and $F\in C([0,T], {\rm dom}\Delta) \cap C^1((0,T), {\rm dom}\Delta)$,
\begin{equation}
\label{eq:ThetaR}
\begin{aligned}
\Theta_{\text{Rob}}(t):= &\int_K\, \rho_t(x) F_t(x)\,dm(x) - \int_K \,\varrho(x) F_0(x) \,dm(x) - \int_0^t\, \int_K\, \rho_s(x)\left(\frac{2}{3}\Delta+\partial_s\right) F_s(x)\,dm(x)\,ds\\
& + \frac{2}{3} \int_0^t\, \sum_{a\in V_0} \left[\rho_s(a) (\partial^\perp F_s)(a) + {\sf r}(a) (\rho_s(a)-{\sf g}(a))F_s(a)\right]\,ds =0.
\end{aligned}
\end{equation}
\end{enumerate}
\end{definition}

\begin{definition}[Heat equation with Neumann boundary condition]
\label{def:HeatN}
We say that $\rho$ is a weak solution to the heat equation with \emph{Neumann} boundary condition started from a measurable function $\varrho: K\to [0,1]$ if $\rho$ satisfies Definition \ref{def:HeatR} with $ {\sf r}(a)=0$ for all $a\in V_0$.
\end{definition}

\begin{lemma}\label{lem:uniqueness}
 There exists a unique weak solution of \eqref{eq:HeatD} (resp.\@ \eqref{eq:HeatR}) in the sense of Definition \ref{def:HeatD} (resp.\@ Definition \ref{def:HeatR}).
 \end{lemma}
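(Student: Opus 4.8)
The plan is to prove existence and uniqueness separately, using energy estimates for uniqueness and a Galerkin-type (or semigroup) construction for existence. I will focus first on uniqueness, since it is the part most directly needed for the hydrodynamic limit and is where the boundary terms must be handled carefully.

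\textbf{Uniqueness.} Suppose $\rho^{(1)}$ and $\rho^{(2)}$ are two weak solutions in the sense of Definition \ref{def:HeatR} (the Dirichlet case being analogous, and Neumann being the special case ${\sf r}\equiv 0$), and set $w := \rho^{(1)} - \rho^{(2)}$. Then $w \in L^2(0,T,\mathcal{F})$, $w$ starts from $0$, and $w$ satisfies the weak formulation with ${\sf g}$ replaced by $0$ in the Robin term, for all admissible test functions $F$. The standard strategy is to insert $F_s = w_s$ as a test function; this is not immediately legitimate because $w_s$ need not lie in ${\rm dom}\Delta$, so I would first justify it by a density/regularization argument — either approximating $w$ by its projections onto a finite-dimensional subspace spanned by eigenfunctions of the appropriate self-adjoint realization of $\Delta$ (see \eqref{eq:domLapbc}), or using a Steklov-type time-averaging to get $\partial_s w$ under control. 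Once $F_s = w_s$ is admissible, the weak formulation yields, after using $\int_0^t \int_K w_s \,\partial_s w_s\,dm\,ds = \tfrac12\|w_t\|_{L^2(K,m)}^2$ and the integration-by-parts formula \eqref{eq:IBP} to rewrite $\int_K w_s\,\tfrac23\Delta w_s\,dm$ in terms of $-\tfrac23\mathcal{E}(w_s)$ plus a boundary term, the identity
\begin{equation}
\tfrac12\|w_t\|_{L^2(K,m)}^2 + \tfrac23\int_0^t \mathcal{E}(w_s)\,ds + \tfrac23\int_0^t\sum_{a\in V_0}{\sf r}(a)\,w_s(a)^2\,ds = 0 .
\end{equation}
Since ${\sf r}(a)\geq 0$ and $\mathcal{E}(w_s)\geq 0$, every term on the left is nonnegative, forcing $\|w_t\|_{L^2(K,m)}=0$ for all $t$, hence $\rho^{(1)}=\rho^{(2)}$ in $L^2$. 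In the Dirichlet case the boundary term disappears because the difference of two solutions vanishes on $V_0$ (condition (3) of Definition \ref{def:HeatD}), and $w_s\in\mathcal{F}_0$ is then an admissible test direction.

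\textbf{Existence.} For existence I would use the self-adjoint realization of $\Delta$ on $(K,m)$ with the relevant boundary condition — the Dirichlet Laplacian $\Delta^D$ (domain ${\rm dom}\Delta_0$) or the Robin Laplacian $\Delta^{R}$ defined via the closed form $\mathcal{E}(u,v) + \tfrac23\sum_{a\in V_0}{\sf r}(a)u(a)v(a)$ on $\mathcal{F}$ — each of which has compact resolvent (the embedding $\mathcal{F}\hookrightarrow L^2(K,m)$ is compact on the gasket), hence a discrete spectrum and an orthonormal eigenbasis. For the Dirichlet problem with nonzero boundary data ${\sf g}$, I would first subtract off a harmonic extension $h$ of ${\sf g}$ (the unique $\mathcal{E}$-harmonic function with $h|_{V_0}={\sf g}$, which lies in $\mathcal{F}$), reducing to a homogeneous-Dirichlet problem with a modified initial datum and source; then solve it by expanding in the $\Delta^D$-eigenbasis and writing $\rho_t = h + \sum_k e^{-\frac23\mu_k t} c_k\,\phi_k + (\text{Duhamel term})$. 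One checks $\rho\in L^2(0,T,\mathcal{F})$ from summability of $\sum_k \mu_k c_k^2 e^{-\frac43\mu_k t}$ integrated in $t$, and verifies the weak formulation \eqref{eq:ThetaD} by testing against $F\in C([0,T],{\rm dom}\Delta_0)\cap C^1((0,T),{\rm dom}\Delta_0)$ and integrating by parts term by term, the boundary integral $\tfrac23\int_0^t\sum_a {\sf g}(a)(\partial^\perp F_s)(a)\,ds$ coming precisely from moving $\Delta$ off $h$ via \eqref{eq:IBP}. The Robin case is handled identically with the Robin eigenbasis and no harmonic lift; the boundary term in \eqref{eq:ThetaR} is generated by \eqref{eq:IBP} applied to $F_s$.

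\textbf{Main obstacle.} The delicate point is the rigorous justification that $w_s$ (or a suitable regularization of it) is an admissible test function in the weak formulation, since the test-function class in Definitions \ref{def:HeatD}--\ref{def:HeatR} requires $C^1$-in-time, ${\rm dom}\Delta$-in-space regularity that an arbitrary $L^2(0,T,\mathcal{F})$ solution lacks. I expect to resolve this exactly as in the classical parabolic setting: mollify in time (Steklov averages) to gain time-differentiability, project onto $\mathrm{span}\{\phi_1,\dots,\phi_n\}$ to land in ${\rm dom}\Delta$, run the energy identity for the projected/mollified object, and pass to the limit using the lower semicontinuity of $\mathcal{E}$ and the continuity of the trace map $\mathcal{F}\to\mathbb{R}^{V_0}$, $u\mapsto u|_{V_0}$ (which is bounded since point evaluations are continuous on $\mathcal{F}$ for resistance forms). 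A secondary technical point worth stating as a lemma is that any weak solution has a representative in $C([0,T],L^2(K,m))$, so that $\|w_t\|_{L^2}$ is well-defined pointwise in $t$; this follows from $\rho\in L^2(0,T,\mathcal{F})$ together with $\partial_t\rho\in L^2(0,T,\mathcal{F}^*)$, which the weak formulation encodes.
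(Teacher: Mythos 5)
Your uniqueness argument is correct in outline but takes a genuinely different route from the paper's (\S\ref{sub:uniqueness}). You test the weak formulation with $F_s=w_s$ itself, which forces you to manufacture time-regularity for the difference $w$ before the energy identity $\tfrac12\|w_t\|_{L^2(K,m)}^2+\tfrac23\int_0^t\mathcal{E}(w_s)\,ds+\tfrac23\int_0^t\sum_{a\in V_0}{\sf r}(a)w_s(a)^2\,ds=0$ is legitimate: you need $\partial_t w\in L^2(0,T,\mathcal{F}^*)$, a $C([0,T],L^2(K,m))$ representative, and the Steklov/spectral-projection machinery you describe. This can be carried out (point evaluations are bounded on $\mathcal{F}$ for a resistance form, so the Robin form is closed and the Lions--Magenes argument applies), but it is precisely the technical work you defer, and note that deducing $\partial_t\rho\in L^2(0,T,\mathcal{F}^*)$ from Definitions \ref{def:HeatD}--\ref{def:HeatR} requires a short argument of its own since the weak formulation is stated only for $C^1$-in-time, ${\rm dom}\Delta$-valued test functions with boundary terms included. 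The paper sidesteps time-regularity entirely: it approximates $u=\rho^1-\rho^2$ by $u_j$ in $L^2(0,T,\mathcal{F})$ only, tests against the time-antiderivatives $v_j(t,\cdot)=\int_t^T u_j(s,\cdot)\,ds$ of \eqref{eq:vj}, and passes to the limit with the elementary Cauchy--Schwarz estimates of Lemma \ref{lem:weakreplace}, arriving at $\int_0^T\|u_s\|_{L^2(K,m)}^2\,ds+\tfrac13\mathscr{E}_b\bigl(\int_0^T u_s\,ds\bigr)=0$. Your route buys the familiar pointwise-in-$t$ energy estimate; the paper's route buys that no time derivative of the solution is ever needed, so the admissibility obstacle you flag never arises.

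On existence, your Dirichlet construction is essentially the paper's (harmonic lift of ${\sf g}$ plus the semigroup applied to $\varrho-h$; the Duhamel term you mention is vacuous, since the lift is harmonic and there is no source). But the claim that the Robin case is ``handled identically with the Robin eigenbasis and no harmonic lift'' has a genuine gap when ${\sf g}\not\equiv 0$: the Robin eigenfunctions satisfy the homogeneous condition $\partial^\perp\varphi=-{\sf r}\varphi$ on $V_0$, so any expansion $\sum_k c_k e^{-\frac23\mu_k t}\varphi_k$ satisfies $\partial^\perp\rho=-{\sf r}\rho$ rather than the inhomogeneous condition $\partial^\perp\rho=-{\sf r}(\rho-{\sf g})$ of \eqref{eq:HeatR}, and the boundary inhomogeneity cannot be absorbed into a Duhamel term because it is not an $L^2(K,m)$ source. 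You must first subtract a stationary solution of the inhomogeneous Robin problem; the paper builds it as the harmonic function whose boundary values are computed from ${\sf r}$ and ${\sf g}$ via the Dirichlet-to-Neumann map (Proposition \ref{prop:uniquesoln} and Appendix \ref{app:DNmap}), and only then expands $\varrho-\rho^{\rm Rob}_{\rm ss}$ in the Robin eigenbasis.
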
 
 \begin{proof} See \S\ref{sec:uniqueness}.\end{proof}

\subsection{Hydrodynamic limits}
\label{sec:hydrolimit}

\begin{definition}
\label{def:associated}
We say that a sequence of probability measures $\{\mu_N\}_{N\geq 1}$ on $\Omega_N$ is associated with a measurable density profile $\varrho: K\to [0,1]$ if for any continuous function $F: K\to\mathbb{R}$ and any $\delta>0$,
\begin{align}
\lim_{N\to\infty} \mu_N\left( \eta\in \Omega_N ~:~ \left|\frac{1}{|V_N|}\sum_{x\in V_N} F(x)\eta(x) - \int_K\, F(x)\varrho(x)\,dm(x) \right|>\delta \right)=0.
\end{align}
\end{definition}

%%%
%Main hydro theorem
%%%

We now state our first main theorem, the law of large numbers for the particle density. 
Given the process $\{\eta^N_t: t\geq 0\}$ generated by $5^N \mathcal{L}_N$, we define the \emph{empirical density measure} $\pi^N_t$ by
\begin{align}
\pi^N_t = \frac{1}{|V_N|}\sum_{x\in V_N} \eta^N_t(x) \delta_{\{x\}}.
\end{align}
We then denote the pairing of $\pi^N_t$ with a continuous function $F:K\to\mathbb{R}$ by
\begin{align}
\pi^N_t(F) = \frac{1}{|V_N|}\sum_{x\in V_N} \eta^N_t(x) F(x).
\end{align}
Recall the notation introduced in the final paragraph of \S\ref{sec:excSG}.
Let $\mathcal{M}_+$ be the space of nonnegative measures on $K$ with total mass bounded by $1$.
Then we denote by $\mathbb{Q}_N$ the probability measure on the Skorokhod space $D([0,T], \mathcal{M}_+)$ induced by $\{\pi^N_t : t\geq 0\}$ and by $\mathbb{P}_{\mu_N}$.
Lastly, the stationary density on the boundary is defined as
\begin{align}
\bar\rho(a) & := \frac{\lambda_+(a)}{\lambda_\Sigma(a)} \quad \text{for } a\in V_0.
\end{align}

\begin{theorem}[Hydrodynamic limits]
\label{thm:Hydro}
Let $\varrho: K\to [0,1]$ be measurable, and $\{\mu_N\}_N$ be a sequence of probability measures on $\Omega_N$ which is associated with $\varrho$.
Then for any $t\in [0,T]$, any continuous function $F: K\to\mathbb{R}$, and any $\delta>0$, we have
\begin{align}
\lim_{N\to\infty} \mathbb{Q}_N\left( \pi^N_\cdot \in D([0,T], \mathcal{M}_+) ~:~ \left|\pi^N_t(F) - \int_K\, F(x)\rho(t,x)\,dm(x) \right|>\delta \right)=0,
\end{align}
where $\rho$ is the unique weak solution of:
\begin{itemize}
\item the heat equation with Dirichlet boundary condition with $ {\sf g}(a)=\bar\rho(a)$ (Definition \ref{def:HeatD}), if $b<5/3$;
\item the heat equation with Robin boundary condition (Definition \ref{def:HeatR}) with ${\sf g}(a)=\bar\rho(a)$ and ${\sf r}(a)=\lambda_{\Sigma}(a)$, if $b=5/3$;
\item the heat equation with Neumann boundary condition (Definition \ref{def:HeatN}), if $b>5/3$.
\end{itemize}
\end{theorem}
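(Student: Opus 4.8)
The plan is to prove Theorem \ref{thm:Hydro} via the classical entropy method of \cite{GPV88}, which breaks into three parts: (i) tightness of $\{\mathbb{Q}_N\}_N$ in $D([0,T],\mathcal{M}_+)$, (ii) identification of any limit point as a weak solution of the appropriate heat equation, and (iii) uniqueness of that weak solution (Lemma \ref{lem:uniqueness}, proved in \S\ref{sec:uniqueness}), which upgrades convergence in distribution of the measures to convergence in probability of $\pi^N_t(F)$ to the deterministic value $\int_K F\,\rho_t\,dm$. Tightness is the softer part: since $\mathcal{M}_+$ is compact in the weak topology, it suffices to control the time-oscillations of $\pi^N_\cdot(F)$ for $F$ in a dense class (say $F\in {\rm dom}\Delta$, or ${\rm dom}\Delta_0$ in the Dirichlet case), and this follows from Dynkin's formula together with uniform bounds on the generator applied to the linear functional $\eta\mapsto\pi^N(F)$ — the bulk piece contributes $5^N\mathcal{L}_N^{\rm bulk}$ acting on the sum, which after summation by parts is $\frac{1}{|V_N|}\sum_x\eta(x)(\Delta_N F)(x)$, bounded by Lemma \ref{lem:domLap}-\eqref{Lapunif}, and the boundary piece contributes a term of order $5^N b^{-N}$ times boundary occupation variables, which is $O((5/(3b))^N \cdot 3^{-N} \cdot$ normalization$)$ and is handled uniformly.

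The heart of the argument is step (ii). Starting from Dynkin's formula, $M^N_t(F) := \pi^N_t(F) - \pi^N_0(F) - \int_0^t 5^N\mathcal{L}_N\big(\pi^N_s(F)\big)\,ds$ is a martingale whose quadratic variation is $O(|V_N|^{-1})\to 0$, so $M^N_t(F)\to 0$ in $L^2$. Applying the discrete summation-by-parts identity \eqref{eq:IBPdiscrete} to the bulk term, $5^N\mathcal{L}_N^{\rm bulk}$ acting on $\pi^N_s(F)$ becomes $\frac{1}{|V_N|}\sum_{x\in V_N\setminus V_0}\eta^N_s(x)\big(\tfrac32\Delta_N F\big)(x) - \sum_{a\in V_0}(\partial_N^\perp F)(a)\,\eta^N_s(a)$, and by Lemma \ref{lem:domLap} the first sum converges (in the hydrodynamic sense, after passing to the limit measure) to $\frac23\int_K\Delta F\,\rho_s\,dm$. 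The boundary term $5^N b^{-N}\mathcal{L}_N^{\rm boundary}$ acting on $\pi^N_s(F)$ produces $\big(\tfrac{5}{3b}\big)^N\cdot\tfrac{3^N}{|V_N|}\sum_{a\in V_0}[\lambda_+(a)-\lambda_\Sigma(a)\eta^N_s(a)]F(a)$, where the normalization is chosen (this is exactly why $5/3$ is the critical value) so that $\tfrac{3^N}{|V_N|}\to\tfrac23$. Thus the boundary contribution scales like $\big(\tfrac{5}{3b}\big)^N$: when $b>5/3$ it vanishes, giving Neumann (no boundary term survives, and $\partial^\perp\rho=0$ emerges from the $(\partial_N^\perp F)(a)$ term matched against $(\partial^\perp u)$ in \eqref{eq:IBP}); when $b=5/3$ it survives with coefficient $1$, giving $\frac23\sum_a[\lambda_+(a)-\lambda_\Sigma(a)\rho_s(a)]F(a)$, which combined with the $(\partial^\perp F)(a)$ term is precisely the weak Robin formulation \eqref{eq:ThetaR} with ${\sf r}=\lambda_\Sigma$, ${\sf g}=\bar\rho$; when $b<5/3$ the boundary term blows up, which (by the usual argument that $M^N$ stays bounded) forces $\lambda_+(a)-\lambda_\Sigma(a)\eta^N_s(a)\to 0$ on average, i.e.\ $\rho_s(a)=\bar\rho(a)$, the Dirichlet condition. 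To make these limits rigorous one replaces the occupation variable $\eta^N_s(a)$ at boundary vertices by a local spatial average $\eta^N_s$ over a small cell around $a$, which is the role of the \emph{density replacement lemmas} of \S\ref{sec:rl}; these in turn rest on the moving particle lemma of \cite{ChenMPL}, which provides the Dirichlet-form estimate allowing one to bound the cost of the replacement by the entropy production. One also needs the standard a priori energy estimate showing the limiting density lies in $L^2(0,T,\mathcal{F})$.

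The main obstacle I expect is precisely the boundary replacement lemma in the critical and subcritical regimes. In the bulk, local averaging on a non-translation-invariant graph is already delicate and is exactly what the moving particle lemma is designed for; but near $V_0$ one must average a single boundary occupation variable against a shrinking neighborhood while simultaneously tracking the boundary Glauber dynamics that is being sped up by $(5/(3b))^N$. When $b\le 5/3$ this Glauber rate does not vanish, so the entropy/Dirichlet-form bookkeeping must carefully balance the bulk exchange cost (scaling $5^N/|V_N|$) against the boundary birth–death cost, and in the Dirichlet case ($b<5/3$) one additionally needs the diverging prefactor to pin the boundary value — a limit that is only meaningful after the replacement. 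Controlling these error terms uniformly in $N$, and verifying that the resulting limiting object genuinely satisfies clause (3) of Definition \ref{def:HeatD} (boundary value attained a.e.\ in time) rather than merely the integrated identity, is where the real work lies. Steps (i) and (iii) — tightness and PDE uniqueness — are comparatively routine: uniqueness follows by the standard energy method applied to the difference of two solutions, using the integration-by-parts formula \eqref{eq:IBP} and a Gronwall argument, carried out in \S\ref{sec:uniqueness}.
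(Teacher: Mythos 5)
Your plan follows essentially the same route as the paper: the entropy method with tightness via Aldous' criterion, the Dynkin martingale decomposition identifying the three regimes through the scaling $(5/(3b))^N$ with $3^N/|V_N|\to 2/3$, boundary replacement lemmas built on the moving particle lemma and entropy/Feynman--Kac estimates, an energy estimate placing the limit density in $L^2(0,T,\mathcal{F})$, and uniqueness of the weak solution to conclude. The only places where the paper's execution differs in detail from your sketch are that for $b<5/3$ the divergent boundary term is handled by restricting to test functions in ${\rm dom}\Delta_0$ (with an $L^1$-approximation argument to compensate for the lack of uniform density in $C(K)$) while the boundary value $\bar\rho(a)$ is pinned by the replacement lemma run under a non-constant reference profile $\nu^N_{\rho(\cdot)}$ with $\rho(a)=\bar\rho(a)$ (Corollary \ref{cor:blessthan53} and Lemmas \ref{lem:replace2}, \ref{lem:fixing_profile}) rather than by boundedness of the martingale, and that in the Robin/Neumann case the boundary density must first be replaced by cell averages approximated by continuous bump functions before Portmanteau applies; these fill in, and are consistent with, what you left implicit.
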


In a nutshell, Theorem \ref{thm:Hydro} states that the empirical measures $\pi^N_t$ concentrate on trajectories which are absolutely continuous with respect to the self-similar probability measure $m$, and whose density $\rho_t$ follows the unique weak solution of the heat equation with appropriate boundary conditions.  

\subsection{Heuristics for hydrodynamic equations}
\label{sec:hydrolimitheuristics}

In the introduction \S\ref{sec:intro}, we explained the rationale behind the emergence of the three scaling regimes, and briefly mentioned the stationary measure of the process.
To emphasize: If $\bar\rho(a)$ is not identical for all $a\in V_0$, then the stationary measure $\mu^N_{\rm ss}$ is not product Bernoulli.
At best we can characterize the $k$-point correlations of $\mu^N_{\rm ss}$: for $k=1$, we have the one-site martingals $\rho^N_{\rm ss}(x):=\mathbb{E}_{\mu^N_{\rm ss}}[\eta^N_t(x)]$, $x\in V_N$; and for $k=2$, we have the two-point correlations $\varphi^N_{\rm ss}(x,y):= \mathbb{E}_{\mu^N_{\rm ss}}[(\eta^N_t(x)-\rho^N_{\rm ss}(x))(\eta^N_t(y) - \rho^N_{\rm ss}(y))]$, $x,y\in V_N$.
These are addressed in the upcoming work \cite{SGNonEq}.

Now we would like to explain heuristically why the values of the boundary densities are ${\sf g}(a)=\bar\rho(a)$ in the Dirichlet case, per Theorem \ref{thm:Hydro}.
Consider the particle current of the system.
In the bulk, the measure on the gasket is uniform, and the exclusion process has symmetric rates, so these translate into a current of zero intensity.
At each boundary vertex $a\in V_0$, due to the difference between the injection rate $\lambda_+(a) (1-\eta(a))$ and the ejection rate $\lambda_-(a) \eta(a)$, a nonzero current $j_a(\eta) = \lambda_-(a)\eta(a)-\lambda_+(a)(1-\eta(a))$ emerges.
Nevertheless, we expect that at stationarity this current should be $0$.
If we denote the average with respect to the stationary measure by $\langle \cdot \rangle$, then in the $N\to\infty$ limit we expect
\[
0=\langle j_a(\eta)\rangle =\langle \lambda_-(a)\eta(a)-\lambda_+(1-\eta(a)\rangle \approx \lambda_-(a)\rho(a)-\lambda_+(a)(1-\rho(a)),\]
and this gives $\rho(a)=\bar\rho(a)$.

With the above results in mind, we turn to the proof method for deriving the aforementioned weak solutions to the heat equations.
This is based on the analysis of martingales associated with the empirical density measure.
In order to simplify the exposition, let us fix a time-independent function $F:K\to\mathbb R$. 
By Dynkin's formula, \emph{cf.\@} \cite{KipnisLandim}*{Appendix A, Lemma 1.5.1}, the process
\begin{align}
\label{Dynkin}
M_t^N(F)= \pi^N_t(F) - \pi^N_0(F) - \int_0^t\, 5^N\mathcal{L}_N \pi^N_s(F)\,ds
\end{align}
is a martingale with respect to the filtration generated by $\{\pi^N_s(F): s\leq t\}$, and has quadratic variation
\begin{align}
\langle M^N(F)\rangle_t= \int_0^t\,5^N\left[\mathcal{L}_N\left(\pi^N_s(F)\right)^2 - 2\pi^N_s(F) \mathcal{L}_N \pi^N_s(F)\right]\,ds.
\end{align}
An elementary calculation shows that
\begin{equation}
\begin{aligned}
\label{eq:densityDynkin}
5^N \mathcal{L}_N \pi^N_t(F) &=
\frac{1}{|V_N|}\sum_{x\in V_N\setminus V_0} \eta_t^N(x) (\Delta_N F)(x)\\
& - \frac{3^N}{|V_N|} \sum_{a\in V_0} \left[\eta^N_t(a) (\partial_N^\perp F)(a) + \frac{5^N}{3^N b^N } \lambda_\Sigma(a) (\eta^N_t(a)-\bar\rho(a))F(a) \right].
\end{aligned}
\end{equation}
Another simple computation shows that
the quadratic variation writes as 
\begin{equation}\label{eq:MG1}\begin{split}
\langle M^N(F)\rangle_t&= \int_0^t\,\frac{5^N}{|V_N|^2} \sum_{x\in V_N}\sum_{\substack{y\in V_N\\y\sim x}}(\eta^N_s(x)-\eta_s^N(y))^2(F(x)-F(y))^2 ds\\&+\int_0^t \sum_{a\in V_0}\frac{5^N}{b^N|V_N|^2}\{\lambda_-(a)\eta_s^N(a)+\lambda_+(a)(1-\eta_s^N(a))\}F^2(a)ds.
\end{split}
\end{equation}

Let us take a moment to discuss the boundary term in \eqref{eq:densityDynkin}.
If $b\geq 5/3$, the second term in the square bracket is at most of order unity, regardless of the value of $F(a)$.
On the other hand, if $b<5/3$, the scaling parameter $5^N/(3^N b^N)$ diverges as $N\to\infty$. The only way to get around this is to impose $F(a)=0$ for all $a\in V_0$.
This analysis will inform us of the function space from which $F$ is drawn.

%%%
%Stating the domain of Laplacian
%%%

With Lemma \ref{lem:domLap} in mind, we will now insist that the test function $F$ belong to ${\rm dom}\Delta$. 
By Part \eqref{Lapunif} of the lemma, $\Delta F := \lim_{N\to\infty} \frac{3}{2}\Delta_N F$ is uniformly continuous on $K\setminus V_0$, a precompact set. 
Therefore we can extend $\Delta F$ continuously from $K\setminus V_0$ to $K$, and we denote the continuous extension by $\Delta F$ still. 
As a consequence, we can rewrite the first term on the right-hand side of \eqref{eq:densityDynkin} as
\begin{equation}
\label{eq:throwaway}
\begin{aligned}
\frac{1}{|V_N|}&\sum_{x\in V_N\setminus V_0} \eta_t^N(x) (\Delta_N F)(x)
= \frac{1}{|V_N|}\sum_{x\in V_N\setminus V_0} \eta_t^N(x) \left(\frac{2}{3}\Delta F\right)(x) + \frac{1}{|V_N|} \sum_{x\in V_N \setminus V_0} \eta_t^N(x) \left(\Delta_N F-\frac{2}{3}\Delta F\right)(x)
\\
&=\left(\frac{1}{|V_N|}\sum_{x\in V_N} \eta_t^N(x) \left(\frac{2}{3}\Delta F\right)(x) - \frac{1}{|V_N|}\sum_{a\in V_0} \eta_t^N(a) \left(\frac{2}{3}\Delta F\right)(a)\right) + o_N(1)
= \pi^N_t\left(\frac{2}{3}\Delta F\right) + o_N(1)
\end{aligned}
\end{equation}
as $N\to\infty$. 
Above and in what follows, we use the notation $o_N(1)$ to represent a function which vanishes in $L^1(\mathbb{P}_{\mu_N})$ as $N\to\infty$.

Now suppose the test function is time-dependent: take $F\in C([0,T],{\rm dom}\Delta) \cap C^1((0,T), {\rm dom}\Delta)$,
and denote $F_t= F(t,\cdot)$. 
Then by Dynkin's formula and the aforementioned arguments, we obtain that
\begin{equation}
\label{eq:Dynkin1}
\begin{aligned}
M^N_t(F) &:=\pi^N_t(F_t)- \pi^N_0(F_0) - \int_0^t\, \pi^N_s\left(\left(\frac{2}{3}\Delta +\partial_s\right)F_s\right)\,ds\\
& + \int_0^t\, \frac{3^N}{|V_N|}\sum_{a\in V_0}  \left[\eta^N_s(a) (\partial^\perp F_s)(a) + \frac{5^N}{3^N b^N} \lambda_\Sigma(a) (\eta^N_s(a)-\bar\rho(a))F_s(a) \right]\,ds + o_N(1)
\end{aligned}
\end{equation}
is a martingale with quadratic variation
\begin{equation}
\label{eq:QVDensity}
\begin{split}
\langle M^N(F)\rangle_t&= \int_0^t\,\frac{5^N}{|V_N|^2} \sum_{x\in V_N}\sum_{\substack{y\in V_N\\y\sim x}}(\eta^N_s(x)-\eta_s^N(y))^2(F_s(x)-F_s(y))^2 \,ds\\&+\int_0^t \sum_{a\in V_0}\frac{5^N}{b^N|V_N|^2}\left(\lambda_-(a)\eta_s^N(a)+\lambda_+(a)(1-\eta_s^N(a))\right)F_s^2(a)\,ds.
\end{split}
\end{equation}

To deduce heuristically from the previous decompositions the notion of weak solutions that appear  in \eqref{eq:ThetaD} and \eqref{eq:ThetaR} for the corresponding regime of $b$, we argue as follows. From the computations of \S\ref{sec:dentight}, we will  see that the martingale that appears in \eqref{eq:Dynkin1} vanishes in $L^2(\mathbb{P}_{\mu_N})$ as $N\to\infty$.  The third term on the right-hand side of \eqref{eq:Dynkin1} will correspond to the third  term on the right-hand side of both $\Theta_{\rm Dir}(t)$ and $\Theta_{\rm Rob}(t)$. 
Now we argue for boundary terms for each regime of $b$.  
In the case $b<5/3$, $F(a)=0$ for all $a\in V_0$, so from Lemma \ref{lem:replace2} we easily obtain the  remaining term in the definition of $\Theta_{\rm Dir}(t)$. 
In the case $b>5/3$, we easily see that the term on the right-hand side inside the square brackets vanishes as $N\to\infty$. To treat the remaining term it is enough to recall the replacement Lemma \ref{lem:replace1}. 
Finally, in the Robin case $b=5/3$, one repeats exactly the same procedure as in the two previous cases. 
All the details can be found in \S\ref{sec:Hydro}.

\section{Equilibrium density fluctuations: Statement of results}
\label{sec:EF}

\subsection{Equilibrium density fluctuations and heuristics}
\label{sec:Fluct}

To study the exclusion process at equilibrium, we set $\lambda_+(a) =\lambda_+>0 $ and $\lambda_-(a)=\lambda_->0$   for all $a\in V_0$, and $\lambda_\Sigma=\lambda_++\lambda_-$.
Then it is easy to check that the product Bernoulli measure $\nu^N_\rho$ with constant density $\rho = \lambda_+/\lambda_\Sigma$, \emph{i.e.,} $\nu^N_\rho\{\eta\in \Omega_N:\eta(x)=1\} = \rho$ for every $x\in V_N$, is reversible for the process $\{\eta_t^N: t\geq 0\}$.
In particular, $\mathbb{E}_{\nu^N_\rho}[\eta^N_t(x)] =\rho$ for all $x\in V_N$ and all $t\geq 0$.
Therefore the interesting problem is to study fluctuations about this equilibrium density profile $\rho$.

We define the equilibrium \emph{density fluctuation field {(DFF)}} $\mathcal{Y}^N_\cdot$ given by
\begin{align}
\label{eq:DFF}
\mathcal{Y}^N_t(F) = \frac{1}{\sqrt{|V_N|}}\sum_{x\in V_N} \bar\eta^N_t(x)F(x), \qquad \bar\eta^N_t(x):= \eta^N_t(x)-\rho,
\end{align}
where the space of test functions $F$ will be specified shortly.
Note that the prefactor $1/\sqrt{|V_N|}$ is consistent with the Central Limit Theorem scaling. Our goal now is to show that the DFF converges, in a proper topology to be defined later on, to an Ornstein-Uhlenbeck process $\mathcal Y_t$ on $K$, with suitable boundary conditions which depend on the regime of $b$.  

Before formally stating our results, we give a heuristic explanation for the choice of the space of test functions. 
To do that, we fix a time-independent function $F$, and apply Dynkin's formula to find that
\begin{align}
\mathcal{M}^N_t(F):=\mathcal{Y}^N_t(F) - \mathcal{Y}^N_0(F) - \int_0^t\, 5^N \mathcal{L}_N \mathcal{Y}^N_s(F)\,ds
\end{align}
is a martingale with respect to the filtration generated by $\{\mathcal{Y}^N_s(F): s\leq t\}$, and has quadratic variation
\begin{align}
\langle\mathcal{M}^N(F)\rangle_t = \int_0^t\, 5^N \left(\mathcal{L}_N [\mathcal{Y}_s^N(F)]^2 - 2\mathcal{Y}_s^N(F) \mathcal{L}_N \mathcal{Y}_s^N(F)\right)\,ds.
\end{align}
We directly compute the generator term which gives
\begin{equation}
\begin{aligned}
&5^N \mathcal{L}_N \mathcal{Y}^N_t(F) = \frac{5^N}{\sqrt{|V_N|}}\sum_{x\in V_N} \sum_{\substack{y\in V_N\\y\sim x}} (\eta^N_t(y)-\eta^N_t(x))F(x) + \frac{5^N}{b^N \sqrt{|V_N|}} \sum_{a\in V_0} \left(-\lambda_+ \eta^N_t(a) + \lambda_- (1-\eta^N_t(a)) F(a)\right).
\end{aligned}
\end{equation}
By making a change of variables and centering with respect to $\nu^N_\rho$, we obtain
\begin{equation}
\begin{aligned}
5^N \mathcal{L}_N \mathcal{Y}^N_t(F) &= \frac{5^N}{\sqrt{|V_N|}} \sum_{x\in V_N\setminus V_0} \sum_{\substack{y\in V_N\\ y\sim x}} (F(y)-F(x)) \bar\eta^N_t(x) \\&+ \frac{5^N}{\sqrt{|V_N|}} \sum_{a\in V_0} \sum_{\substack{y\in V_N\\ y\sim a}}(F(y)-F(a))\bar\eta^N_t(a) - \frac{5^N}{b^N\sqrt{|V_N|}} \lambda_\Sigma\sum_{a\in V_0} \bar\eta^N_t(a)F(a)\\
&= \mathcal{Y}_t^N(\Delta_N F) + o_N(1) -\frac{3^N}{\sqrt{|V_N|}} \sum_{a\in V_0} \bar\eta^N_t(a) \left[ (\partial^\perp_N F)(a) + \frac{5^N}{b^N 3^N} \lambda_\Sigma F(a) \right].
\end{aligned}
\end{equation}
This gives
\begin{equation}
\label{eq:MY}
\begin{aligned}
\mathcal{M}^N_t(F) &= \mathcal{Y}^N_t(F) - \mathcal{Y}^N_0(F) - \int_0^t\, \mathcal{Y}^N_s(\Delta_N F) \,ds + o_N(1)\\
&+ \frac{3^N}{\sqrt{|V_N|}}\int_0^t\, \sum_{a\in V_0} \bar\eta^N_s(a) \left[(\partial_N^\perp F)(a) + \frac{5^N}{b^N 3^N} \lambda_\Sigma F(a)\right]\,ds.
\end{aligned}
\end{equation}

Looking back at the  previous display, we need to show that the last integral vanishes in some topology. Observe that, as in the hydrodynamics setting,  for $b<5/3$, the test functions  satisfy $F(a)=0$ for all $a\in V_0$.  With this condition we still need to control the term with the normal derivative {in the last integral}. At this point we use the replacement Lemma \ref{lem:RLDFFDir}, thereby closing the equation for the DFF as  
\begin{equation}
\label{eq:MY_f}
\begin{aligned}
\mathcal{M}^N_t(F) =& \mathcal{Y}^N_t(F) - \mathcal{Y}^N_0(F) - \int_0^t\, \mathcal{Y}^N_s(\Delta_N F) \,ds + o_N(1).
\end{aligned}
\end{equation}
In all regimes of $b$, our goal is to choose suitable boundary conditions for the test functions so that the previous equality holds.
In the case $b>5/3$, the test functions satisfy $(\partial^\perp F)(a)=0$ for all $a\in V_0$. Therefore, by Lemma \ref{lem:domLap} and by controlling the rate of convergence of the discrete normal derivative to the continous normal derivative, we can just bound the variables $\eta(x)$ by $1$, and to achieve  our goal we just need to control the term with $F(a)$. This last term  can be estimated from the replacement Lemma \ref{lem:RLDFFNeu}. 
Finally, in the Robin case $b=5/3$, the test function must satisfy  $(\partial ^\perp F)(a)=-\lambda_\Sigma F(a)$ for all $a\in V_0$. In this case the term inside the time integral in Dynkin's  martingale vanishes as a consequence of Lemma \ref{lem:domLap}, the convergence of the discrete normal derivative to the continuous normal derivative, and the replacement Lemma \ref{lem:RLDFFRob}.

\begin{remark}
\label{rem:choicetestfunctions}
 In order to prove tightness and uniqueness of the sequence $\{\mathcal  Y^N_\cdot\}_N$, we have to impose extra boundary conditions on the test functions; see \eqref{eq:Sb} below. 
But for the purpose of closing the equation for Dynkin's martingale, the boundary conditions mentioned in the last paragraph are sufficient. 
\end{remark}

Next we analyze the quadratic variation of Dynkin's martingale. 
Another straightforward calculation yields that the martingale's quadratic variation is given by
\begin{equation}
\label{eq:QVY}
\begin{aligned}
\langle \mathcal{M}^N(F)\rangle_t =& \frac{5^N}{|V_N|}\int_0^t\, \sum_{x\in V_N} \sum_{\substack{y\in V_N\\ y\sim x}} \eta^N_s(x)(1-\eta^N_s(y)) (F(x)-F(y))^2\,ds \\
&+ \frac{5^N}{b^N|V_N|} \int_0^t\, \sum_{a\in V_0} \left(\lambda_-\eta^N_s(a) + \lambda_+(1-\eta^N_s(a))\right)F^2(a)\,ds.
\end{aligned}
\end{equation}
We note in passing that \eqref{eq:QVY} is $|V_N|^{-1}$ times \eqref{eq:QVDensity}.

As in the last section, let $\mathbb{P}_{\mu_N}$ be the probability measure on $D([0,T],\Omega_N)$ induced by the process $\{\eta^N_t: t\in [0,T]\}$ geneated by $5^N \mathcal{L}_N$ and started from the initial measure $\mu_N$.
In the current setting, we take $\mu_N= \nu^N_\rho$ and write $\mathbb{P}^N_\rho :=\mathbb{P}_{\nu^N_\rho}$, and denote the corresponding expectation by $\mathbb{E}^N_\rho$.
It follows from a direct computation of \eqref{eq:QVY} that
\begin{equation}
\label{eq:EQVY}
\begin{aligned}
\mathbb{E}^N_\rho \left[|\mathcal{M}^N_t(F)|^2\right] = \frac{3^N}{|V_N|} 2 \chi(\rho) t \left[\mathcal{E}_N(F) + \frac{5^N}{3^N b^N} \lambda_\Sigma \sum_{a\in V_0} F^2(a)\right],
\end{aligned}
\end{equation}
where $\chi(\rho)=\rho(1-\rho)$ is the conductivity in the exclusion process.
The martingale equation \eqref{eq:MY_f} together with \eqref{eq:EQVY} suggests that the density fluctuation field $\mathcal{Y}^N_\cdot$ satisfies a discrete Ornstein-Uhlenbeck equation.
Indeed, as mentioned previously, the  second goal of our work is to show that $\{\mathcal{Y}^N_\cdot\}_N$ converges to an Ornstein-Uhlenbeck process on $K$ with suitable boundary condition.

\subsection{Laplacian, Dirichlet forms, and heat semigroups}\label{subsec:fs}

Having provided the heuristics, we now set up the definitions and the analytic background.
Recall the definition and the properties of the Laplacian $\Delta$, Definition \ref{def:Lap} and Lemma \ref{lem:domLap}.
According to our classification of the scaling regimes, we set, for each $b>0$,
\begin{align}
\Delta_b=
\left\{
\begin{array}{ll}
\Delta_{\rm Dir},& \text{if } b<5/3,\\
\Delta_{\rm Rob}, & \text{if } b=5/3,\\
\Delta_{\rm Neu},& \text{if } b>5/3.
\end{array}
\right.
\end{align}
These are the Laplacians with Dirichlet, Robin, and Neumann conditions on $V_0$, with respective domains
\begin{equation}
\label{eq:domLapbc}
\begin{aligned}
{\rm dom}\Delta_{\rm Dir} &:=\{F \in {\rm dom}\Delta: F|_{V_0}=0\} ~(={\rm dom}\Delta_0),\\
{\rm dom}\Delta_{\rm Rob} &:= \{F \in {\rm dom}\Delta:  (\partial^\perp F)|_{V_0} = -\lambda_\Sigma F|_{V_0}\},\\
{\rm dom}\Delta_{\rm Neu} &:= \{F \in {\rm dom}\Delta: (\partial^\perp F)|_{V_0}=0\}.
\end{aligned}
\end{equation}
%$\Delta_b$ is self-adjoint on ${\rm dom}\Delta_b$.

Define the quadratic form
\begin{align}
\label{eq:Eb}
\mathscr{E}_b(F,G) = \mathcal{E}(F,G) + {\bf 1}_{\{b=5/3\}}\sum_{a\in V_0}\lambda_\Sigma F(a)G(a),\quad\forall F,G\in \mathcal{F}_b
\end{align}
where
\begin{align}
\mathcal{F}_b = \left\{
\begin{array}{ll}
\mathcal{F}, & \text{if } b\geq 5/3,\\
\mathcal{F}_0 ~(:=\{F\in \mathcal{F}: F|_{V_0}=0\}), & \text{if } b<5/3.
\end{array}
\right.
\end{align}

In Lemmas \ref{lem:DFR} and \ref{lem:semigroup} below, the results come directly from \cite{KigamiBook} in the cases $b<5/3$ and $b>5/3$. 
The corresponding results for $b=5/3$ can be obtained readily by modifying the proofs.

\begin{restatable}[\cite{KigamiBook}*{Theorems 3.4.6 \& 3.7.9}]{lemma}{Laplacian}
\label{lem:DFR}
\quad	

\begin{enumerate}[wide]
\item \label{cptres} $(\mathscr{E}_b, \mathcal{F}_b)$ is a local regular Dirichlet form on $L^2(K,m)$, and the corresponding non-negative self-adjoint operator $H_b$ on $L^2(K,m)$ has compact resolvent.
\item \label{agree} The operator $H_b$ and the Laplacian $-\Delta$ agree on ${\rm dom}\Delta_b$: $H_b|_{{\rm dom}\Delta_b}=-\Delta|_{{\rm dom}\Delta_b} =: -\Delta_b$. In fact, $H_b$ is the Friedrichs extension of $-\Delta$ on ${\rm dom}\Delta_b$.
\end{enumerate}
\end{restatable}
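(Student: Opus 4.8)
The plan is to treat the three cases $b<5/3$, $b=5/3$, $b>5/3$ uniformly by exhibiting $(\mathscr{E}_b,\mathcal{F}_b)$ as a closed symmetric form bounded below and identifying it with a boundary-value problem for $-\Delta$. For part \eqref{cptres}, I would first observe that $\mathcal{E}$ with domain $\mathcal{F}$ (resp.\@ $\mathcal{F}_0$) is a local regular Dirichlet form on $L^2(K,m)$: this is Kigami's foundational result (see \S\ref{sec:HS} / \cite{KigamiBook}), since $K$ is compact, $\mathcal{F}\cap C(K)$ is dense both in $C(K)$ uniformly and in $\mathcal{F}$ in the $\mathcal{E}_1$-norm, and the Markov (unit-contraction) property of $\mathcal{E}_N$ passes to the limit. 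For $b=5/3$, the extra term $\sum_{a\in V_0}\lambda_\Sigma F(a)G(a)$ is a rank-$3$ perturbation supported on the finite set $V_0$; since point evaluation at $a\in V_0$ is $\mathcal{E}_1$-continuous (finitely ramified fractals have continuous, indeed H\"older, embeddings $\mathcal{F}\hookrightarrow C(K)$), this perturbation is $\mathcal{E}_1$-bounded, symmetric, non-negative, hence $(\mathscr{E}_{5/3},\mathcal{F})$ is again a closed form with the same domain, and it inherits locality and regularity (adding a non-negative potential-type term preserves the Markov property — it is a killing term concentrated on $V_0$; alternatively one checks the unit contraction property directly since $0\le F(a)^2$ is not increased under $F\mapsto 0\vee F\wedge 1$). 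Compactness of the resolvent follows because $\mathcal{F}_b\hookrightarrow L^2(K,m)$ is a compact embedding: this is standard on $SG$ (e.g.\@ from the sub-Gaussian heat kernel bounds, or from the Poincar\'e inequality plus compactness of $K$, cf.\@ \cite{KigamiBook}), and a bounded perturbation of the form does not affect it; equivalently, $H_b$ has discrete spectrum by the min-max principle since the embedding $(\mathcal{F}_b,\mathscr{E}_{b,1})\hookrightarrow L^2$ is compact.

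For part \eqref{agree}, I would argue that $-\Delta$ restricted to ${\rm dom}\Delta_b$ is a symmetric operator whose associated form is exactly $(\mathscr{E}_b,\mathcal{F}_b)$, so that $H_b$ is its Friedrichs extension and the two agree on ${\rm dom}\Delta_b$. Concretely: take $u\in{\rm dom}\Delta_b$ and $\varphi\in\mathcal{F}_b$. Apply the integration-by-parts formula \eqref{eq:IBP} from Lemma \ref{lem:domLap}\eqref{IBP}:
\begin{align}
\mathcal{E}(u,\varphi) = \int_K (-\Delta u)\varphi\,dm + \sum_{a\in V_0}(\partial^\perp u)(a)\varphi(a).
\end{align}
In the Dirichlet case $\varphi|_{V_0}=0$, so the boundary sum drops and $\mathcal{E}(u,\varphi)=\langle -\Delta u,\varphi\rangle_{L^2}=\mathscr{E}_{\rm Dir}(u,\varphi)$. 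In the Neumann case $(\partial^\perp u)|_{V_0}=0$, so again the boundary sum vanishes and $\mathcal{E}(u,\varphi)=\langle-\Delta u,\varphi\rangle_{L^2}=\mathscr{E}_{\rm Neu}(u,\varphi)$. In the Robin case $(\partial^\perp u)(a)=-\lambda_\Sigma u(a)$, so the boundary sum equals $-\sum_{a\in V_0}\lambda_\Sigma u(a)\varphi(a)$, giving $\mathcal{E}(u,\varphi)+\sum_{a\in V_0}\lambda_\Sigma u(a)\varphi(a)=\langle -\Delta u,\varphi\rangle_{L^2}$, i.e.\@ $\mathscr{E}_{5/3}(u,\varphi)=\langle -\Delta u,\varphi\rangle_{L^2}$. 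Thus in all three cases ${\rm dom}\Delta_b\subseteq{\rm dom}(H_b)$ with $H_b u=-\Delta u$; since $H_b$ is self-adjoint and $-\Delta|_{{\rm dom}\Delta_b}$ is a symmetric operator generating the form $\mathscr{E}_b$, the Friedrichs construction identifies $H_b$ as its Friedrichs extension, and in particular $H_b|_{{\rm dom}\Delta_b}=-\Delta_b$.

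The main obstacle, I expect, is not any of these form-theoretic manipulations — they are essentially bookkeeping once Lemma \ref{lem:domLap} and Kigami's theory are in hand — but rather verifying cleanly that ${\rm dom}\Delta_b$ is a form core for $H_b$ (so that "agree on ${\rm dom}\Delta_b$" is the genuinely sharp statement and $-\Delta_b$ deserves to be called "the" self-adjoint Laplacian with that boundary condition), and making sure the Robin perturbation genuinely preserves the Dirichlet-form (Markovian) property rather than just closedness. For the core property I would use that ${\rm dom}\Delta$ is dense in $\mathcal{F}$ in the $\mathcal{E}_1$-norm (a standard consequence of the spectral decomposition of the Neumann Laplacian, whose eigenfunctions lie in ${\rm dom}\Delta$ and span $\mathcal{F}$), intersect with the appropriate boundary constraint, and check the constraint is $\mathcal{E}_1$-closed — again using $\mathcal{E}_1$-continuity of point evaluation and of the normal derivative $\partial^\perp$ on ${\rm dom}\Delta$. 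For the Markov property in the Robin case, the cleanest route is to note that $\mathscr{E}_{5/3}(F) = \mathcal{E}(F)+\sum_{a}\lambda_\Sigma F(a)^2$ is a sum of a Dirichlet form and a non-negative quadratic form that does not increase under the unit contraction $F\mapsto (0\vee F)\wedge 1$ (since $|(0\vee t)\wedge 1|\le|t|$ pointwise and in particular at each $a\in V_0$), so the Markov property is inherited termwise; locality is immediate because the added term is supported on the finite boundary set and both summands are strongly local. Finally, compactness of the resolvent can be stated as a black box citing \S\ref{sec:HS}, so it need not be reproved here.
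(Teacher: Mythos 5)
Your part (1) and the agreement statement in part (2) are correct, but you reach them by a genuinely different route than the paper. For closedness of $(\mathscr{E}_{\rm Rob},\mathcal{F})$ the paper argues directly: it splits an $\mathscr{E}_{\rm Rob,1}$-Cauchy sequence $u_n$ as $u_n=\tilde u_n+h_{u_n|_{V_0}}$ (harmonic extension of the boundary values), controls the harmonic parts by the maximum principle, and reassembles the limit; the remaining Dirichlet-form properties are then taken over ``nearly word for word'' from Kigami. You instead note that the boundary term is a non-negative form dominated by a constant times $\mathcal{E}_1$ (continuity of point evaluations on $\mathcal{F}\subset C(K)$), so $\mathscr{E}_{\rm Rob,1}$ and $\mathcal{E}_1$ are equivalent norms and closedness, compactness of the embedding, and hence compact resolvent are inherited; your termwise check of the Markov property and locality is also fine. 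This is cleaner than the paper's Cauchy-sequence argument and equally valid. For the identity $H_b u=-\Delta u$ on ${\rm dom}\Delta_b$ you compute directly with the Gauss--Green formula of Lemma \ref{lem:domLap}-\eqref{IBP}, whereas the paper simply follows Kigami's Theorem 3.7.9 with $\mathcal{E}$ replaced by $\mathscr{E}_{\rm Rob}$; your computation is exactly the content of that step and is correct, since $-\Delta u\in C(K)\subset L^2(K,m)$ and the characterization of the operator associated with a closed form applies.

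The weak point is the Friedrichs-extension claim, i.e.\@ that ${\rm dom}\Delta_b$ is a form core for $(\mathscr{E}_b,\mathcal{F}_b)$. Your sketch---take ${\rm dom}\Delta$ dense in $\mathcal{F}$, ``intersect with the boundary constraint, and check the constraint is $\mathcal{E}_1$-closed''---does not work as stated: intersecting an $\mathcal{E}_1$-dense subspace with a closed constraint set need not be dense in that set (approximants of $F\in\mathcal{F}_0$ taken from ${\rm dom}\Delta$ will in general not vanish on $V_0$), and $\partial^\perp$ is not $\mathcal{E}_1$-continuous on ${\rm dom}\Delta$; by Gauss--Green it is controlled by $\mathcal{E}_1(u)$ together with $\|\Delta u\|_{L^2}$, i.e.\@ by the graph norm, not by the form norm alone. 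The natural repair, consistent with the rest of your argument, is to use the eigenfunctions of $H_b$ itself: each $\varphi^b_n$ lies in $\mathcal{F}_b\subset C(K)$, testing the eigenvalue relation against $\psi\in\mathcal{F}_0$ shows $\varphi^b_n\in{\rm dom}\Delta$ with $-\Delta\varphi^b_n=\lambda^b_n\varphi^b_n$ (Definition \ref{def:Lap}), and then your integration-by-parts computation run backwards against general $\psi\in\mathcal{F}_b$ forces the correct Dirichlet/Neumann/Robin condition at $V_0$, so $\varphi^b_n\in{\rm dom}\Delta_b$; since finite linear combinations of the $\varphi^b_n$ are $\mathscr{E}_{b,1}$-dense in $\mathcal{F}_b$ by the spectral theorem, ${\rm dom}\Delta_b$ is a form core and the Friedrichs identification follows. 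Alternatively one can, as the paper does, invoke Kigami's Theorem 3.7.9 and Lemma 3.7.12 with $\mathcal{E}$ replaced by $\mathscr{E}_{\rm Rob}$.
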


See \cite{KigamiBook}*{Appendix B} for a quick set of definitions on Dirichlet forms, and \cite{FOT} for more information about Dirichlet forms.
The distinction between $H_b$ and $-\Delta_b$ lies in their respective domains: the former has a larger domain than the latter.
That $H_b$ has compact resolvent implies that $H_b$ has pure point spectrum: let us denote the eigenvalues of $H_b$ in increasing order
\[
0\leq \lambda_1^b < \lambda_2^b \leq \lambda_3^b \leq \cdots \leq \lambda^b_n \leq \cdots \uparrow +\infty
\]
and the corresponding eigenfunctions by $\{\varphi_n^b\}_n$, with $H_b \varphi_n^b = \lambda^b_n \varphi^b_n$ and $\|\varphi_n^b\|_{L^2(m)}=1$. 
Note that $\lambda_1^b=0$ iff $b>5/3$, in which case $\varphi_1^b=1$.

We will invoke the eigenfunctions in \S\ref{sec:uniqueness} only.
The eigenvalues do not play an active role in this paper, though we briefly mention the Weyl asymptotics \cite{FukushimaShima,KigamiLapidus}: if $
\#_b(s) := \#\{j: \lambda^b_j\leq s\}$ denotes the eigenvalue counting function,
and $d=\frac{\log 3}{\log(5/3)}$, then there exists a nonconstant periodic function $G$, bounded away from $0$ and $\infty$ and independent of the boundary parameter $b$, such that 
\begin{align}
\label{eq:ECF}
\#_b(s)= s^{\frac{d}{d+1}}(G(\log s)+o(1)) \quad\text{as } s\to\infty.
\end{align}
This result along with Nash's inequality underlies Lemma \ref{lem:semigroup} below, which is needed in \S\ref{sec:uniqueness} and \S\ref{sec:!limitOU}.

From standard arguments in functional analysis, $H_b$ is associated with a unique strongly continuous \emph{heat semigroup} $\{{\sf T}^b_t: t> 0\}$ on $L^2(K,m)$, satisfying ${\sf T}^b_t {\sf T}^b_s = {\sf T}^b_{t+s}$ for any $t,s>0$, which is given by
\[
H_b F = \lim_{t\downarrow 0} \frac{{\sf T}^b_t F- F}{t}, \quad \forall F\in {\rm dom}(H_b).
\]
In this sense $H_b$ is the infinitesimal generator of the semigroup $\{{\sf T}^b_t: t> 0\}$.
In particular, 
\begin{align}
\label{eq:commute}
{\sf T}^b_t H_b F= H_b {\sf T}^b_t F = \lim_{h\downarrow 0}\frac{{\sf T}^b_{t+h} F- {\sf T}^b_t F}{h},\quad \forall F\in {\rm dom}(H_b),
\end{align}
where the limit is the strong limit in the Hilbert space $L^2(K,m)$.
To summarize, we have the following 1-to-1 correspondence:
\begin{align}
(\mathscr{E}_b, \mathcal{F}_b) \longleftrightarrow H_b \longleftrightarrow \{{\sf T}^b_t: t> 0\}.
\end{align}

We now introduce the space of test functions $\mathcal{S}_b$ that is needed to prove Theorem \ref{thm:OU}.
In what follows $\mathbb{N}_0:= \mathbb{N}\cup \{0\}$.
Recall from Definition \ref{def:Lap} that $F\in {\rm dom}\Delta$ if and only if $\Delta F\in C(K)$.
For $k\geq 2$, define inductively $F\in {\rm dom}(\Delta^k)$ if and only if $\Delta^k F\in C(K)$.
We then set ${\rm dom}(\Delta^\infty) = \bigcap_k {\rm dom}(\Delta^k)$.
(This notion of smoothness has been introduced in \emph{e.g.\@} \cite{bumps}*{p1767}, and can be regarded as the fractal analog of $C^\infty([0,1])$.)
Now set
\begin{align}
\label{eq:Sb}
\mathcal{S}_b := \left\{ F\in {\rm dom}(\Delta^\infty) : \forall k\in \mathbb{N}_0,~
\left\{
\begin{array}{ll}
(\Delta^k F)|_{V_0}=0, & b<5/3\\
(\partial^\perp \Delta^k F)|_{V_0}= -\lambda_\Sigma (\Delta^k F)|_{V_0},& b=5/3\\
(\partial^\perp \Delta^k F)|_{V_0}=0,& b>5/3
\end{array}
\right\}
 \right\}.
\end{align}
(Above the normal derivative $(\partial^\perp \Delta^k F)|_{V_0}$ is well-defined by Lemma \ref{lem:domLap}-\eqref{NormalDerivative}.)
Endow $\mathcal{S}_b$ with the family of seminorms
\begin{align}
\label{eq:seminorms}
\|F\|_j &:= \sup\left\{\left|(\Delta^k F)(x)\right|: x\in K, ~0\leq k\leq j\right\}, \quad j\in \mathbb{N}_0.
\end{align}

\begin{lemma}
\label{lem:closed}
Under the topology generated by $\{\|\cdot\|_j: j\in \mathbb{N}_0\}$, $\mathcal{S}_b$ is a closed subspace of ${\rm dom}(\Delta^\infty)$.
\end{lemma}
\begin{proof}
Let $\{F_n\}_{n\in \mathbb{N}}$ be a sequence in $\mathcal{S}_b$ converging to $F$ with respect to $\{\|\cdot\|_j: j\in \mathbb{N}_0\}$.  
Set $u_n= F_n-F$. 
Then for every $k\in\mathbb{N}_0$, $\Delta^k u_n \to 0$ uniformly on $K$.
Moreover we claim that the normal derivatives $(\partial^\perp \Delta^k u_n)|_{V_0}\to 0$ pointwise.
This is because by \eqref{eq:weakformLap}, $\mathcal{E}(\Delta^k u_n) = \int_K\, (-\Delta^{k+1} u_n) (\Delta^k u_n)\,dm \to 0$ as $N\to\infty$;
and by \eqref{eq:IBP},
\[
\mathcal{E}(\Delta^k u_n, \varphi) = \int_K\, (-\Delta^{k+1} u_n) \varphi\, dm + \sum_{a\in V_0} (\partial^\perp \Delta^k u_n)(a) \varphi(a)
\]
for every $\varphi\in \mathcal{F}$. 
By Cauchy-Schwarz and the preceding argument, $|\mathcal{E}(\Delta^k u_n,\varphi)| \leq [\mathcal{E}(\Delta^k u_n)]^{1/2} [\mathcal{E}(\varphi)]^{1/2} \to 0$; and we also have $\int_K\, (-\Delta^{k+1} u_n)\varphi\,dm \to 0$.
Thus $\sum_{a\in V_0} (\partial^\perp \Delta^k u_n)(a) \varphi(a) \to 0$ for every $\varphi\in \mathcal{F}$.
Taking $\varphi$ which assumes boundary values $\varphi(a_0)=1, \varphi(a_1)=\varphi(a_2)=0$ yields $(\partial^\perp \Delta^k u_n)(a_0)\to 0$, and likewise we have $(\partial^\perp \Delta^k u_n)(a_i)\to 0$ for $i\in\{1,2\}$.
This shows that $F\in \mathcal{S}_b$.
\end{proof}

\begin{proposition}
$\mathcal{S}_b$ endowed with the topology generated by $\{\|\cdot\|_j : j\in \mathbb{N}_0\}$ is a nuclear Fr\'echet space.
\end{proposition}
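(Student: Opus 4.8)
The plan is to verify the three defining properties of a Fréchet space: that $\mathcal{S}_b$ with the topology induced by the countable family $\{\|\cdot\|_j\}_{j\in\mathbb{N}_0}$ is a locally convex topological vector space, that this topology is metrizable (automatic from countability of the seminorm family), that the seminorms separate points (so the topology is Hausdorff), and that $\mathcal{S}_b$ is complete with respect to the associated metric $d(F,G) = \sum_{j\geq 0} 2^{-j}\frac{\|F-G\|_j}{1+\|F-G\|_j}$. The first two points are formal and require no work beyond noting that each $\|\cdot\|_j$ is a genuine seminorm on $\mathcal{S}_b$ (finiteness follows because $(-\Delta)^j F = (-\Delta_b)^j F \in {\rm dom}\Delta_b \subset C(K)$ for $F\in\mathcal{S}_b$, and $C(K)$-functions on the compact set $K$ are bounded). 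Separation of points is immediate from $\|F\|_0 = \sup_{x\in K}|F(x)|$: if $\|F\|_0 = 0$ then $F\equiv 0$.

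The substantive step is completeness. I would take a Cauchy sequence $\{F^{(n)}\}$ in $\mathcal{S}_b$, which by definition of the metric means $\{(-\Delta)^j F^{(n)}\}_n$ is Cauchy in the uniform norm on $C(K)$ for every fixed $j\in\mathbb{N}_0$. Since $(C(K),\|\cdot\|_\infty)$ is complete, there exist $G_j \in C(K)$ with $(-\Delta)^j F^{(n)} \to G_j$ uniformly; write $F := G_0$. It remains to show $F\in\mathcal{S}_b$ and $(-\Delta_b)^j F = G_j$ for all $j$, which then gives $\|F^{(n)} - F\|_j \to 0$ for every $j$, i.e. convergence in $\mathcal{S}_b$. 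The crux is a closedness argument for the operator $-\Delta_b$: one shows inductively that $G_j \in {\rm dom}\Delta_b$ and $-\Delta_b G_j = G_{j+1}$. For the base case, using the weak formulation in Definition \ref{def:Lap} together with the integration-by-parts formula \eqref{eq:IBP} and the boundary-condition characterization of ${\rm dom}\Delta_b$ in \eqref{eq:domLapbc}: for every test function $\varphi$ in the relevant form domain $\mathcal{F}_b$, the identity $\mathscr{E}_b(F^{(n)},\varphi) = \langle -\Delta_b F^{(n)}, \varphi\rangle_{L^2(K,m)} = \langle (-\Delta)^1 F^{(n)}, \varphi\rangle$ passes to the limit — the right side converges to $\langle G_1, \varphi\rangle$ by uniform (hence $L^2$) convergence, and the left side converges to $\mathscr{E}_b(F,\varphi)$ provided one also knows $F^{(n)}\to F$ in the form norm $\mathscr{E}_{b,1}^{1/2}$. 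This last point is where I expect the only real friction.

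To get convergence in the form norm, I would invoke the self-adjointness and compact-resolvent structure from Lemma \ref{lem:DFR}: $\mathcal{E}(F^{(n)} - F^{(m)}) \leq \mathscr{E}_b(F^{(n)}-F^{(m)}, F^{(n)}-F^{(m)}) = \langle -\Delta_b(F^{(n)}-F^{(m)}), F^{(n)}-F^{(m)}\rangle = \langle G_1^{(n,m)}, F^{(n)}-F^{(m)}\rangle$ where $G_1^{(n,m)} = (-\Delta)(F^{(n)}-F^{(m)}) \to 0$ uniformly and $F^{(n)}-F^{(m)}\to 0$ uniformly; by Cauchy–Schwarz in $L^2(K,m)$ (a finite measure) this product tends to $0$, so $\{F^{(n)}\}$ is Cauchy in $\mathscr{E}_{b,1}^{1/2}$ and hence converges there to some element of $\mathcal{F}_b$, which must coincide with $F$ (both are the uniform, hence $L^2$, limit). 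With form-norm convergence in hand, the passage to the limit in $\mathscr{E}_b(F^{(n)},\varphi) = \langle (-\Delta)F^{(n)}, \varphi\rangle$ is legitimate, giving $F\in{\rm dom}\Delta_b$ with $-\Delta_b F = G_1$. The inductive step is identical with $F^{(n)}$ replaced by $(-\Delta_b)^{j}F^{(n)}$ and uses that $(-\Delta_b)^{j}F^{(n)} \to G_j$ in $C(K)$, which is exactly the Cauchy hypothesis. This establishes $F\in\bigcap_j {\rm dom}((-\Delta_b)^j) = \mathcal{S}_b$ and $(-\Delta_b)^j F = G_j$, completing the proof.

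One small caveat I would flag: the seminorms in Definition \ref{def:testfunctionspace} are written with $(-\Delta)^j$ rather than $(-\Delta_b)^j$, but on $\mathcal{S}_b \subset {\rm dom}\Delta_b$ these agree by Lemma \ref{lem:DFR}\eqref{agree}, so no ambiguity arises; I would note this explicitly at the start so that all the $C(K)$-valued limits above are unambiguous.
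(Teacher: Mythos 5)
Your proof is correct and follows the route the paper intends: the paper's own "proof" is a one-line citation of Definition \ref{def:Lap} and the Fr\'echet-space arguments of \cite{ReedSimonI}*{\S V.2}, and your argument is exactly that adaptation --- metrizability and separation are formal, and completeness comes from closedness of $-\Delta_b$ under simultaneous uniform convergence, established via the identity $\mathscr{E}_b(u,\varphi)=\langle -\Delta_b u,\varphi\rangle_{L^2(K,m)}$ on ${\rm dom}\Delta_b$, closedness of the form $(\mathscr{E}_b,\mathcal{F}_b)$, and recovery of the boundary conditions from integration by parts. The only difference is that you spell out the details the paper leaves to the citation, including the useful observation that $(-\Delta)^j$ and $(-\Delta_b)^j$ agree on $\mathcal{S}_b$.
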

\begin{proof}
By adapting the arguments in Example 3 of III.8 and IV.9.7 in  \cite{SchaeferWolff}, we obtain that ${\rm dom}(\Delta^\infty)$ endowed with the topology generated by $\{\|\cdot\|_j: j\in \mathbb{N}_0\}$ is a nuclear Fr\'echet space.
Since $\mathcal{S}_b$ is a closed subspace of ${\rm dom}(\Delta^\infty)$ by Lemma \ref{lem:closed}, it is a nuclear Fr\'echet space by III.7.4 in \cite{SchaeferWolff}.
\end{proof}

%%%
%Heat semigroup
%%%

The following properties of the heat semigroup will be useful.

\begin{restatable}[\cite{KigamiBook}*{Theorem 5.1.7}]{lemma}{HS}
\label{lem:semigroup}
The following hold for $\{{\sf T}^b_t: t>0\}$:
\begin{enumerate}[wide]
%\item \label{bounded} ${\sf T}_t^b: L^p(K,m) \to C(K)$ is a bounded operator for any $t>0$ and any $p\in [1,\infty]$.
\item \label{domain} ${\sf T}_t^b(L^1(K,m)) \subset {\rm dom}\Delta_b$ for any $t>0$.
\item \label{smooth} Let $u\in L^1(K,m)$, and set $u(t,x)=({\sf T}^b_t u)(x)$. Then $u(\cdot, x)\in C^\infty((0,\infty))$ for any $x\in K$.
Moreover, $\partial_t u(t,x) = \Delta_b u(t,x)$ for any $(t,x)\in (0,\infty)\times K$.
\end{enumerate}
\end{restatable}

The next result says that $\mathcal{S}_b$ is left invariant by the action of ${\sf T}^b_t$ for any $t>0$. This will be invoked in the proof of Lemma \ref{lem:MGZ} in \S\ref{sec:!limitOU}.
\begin{corollary}
\label{cor:invarianceSb}
If $F\in \mathcal{S}_b$, then for any $t>0$, ${\sf T}^b_t F\in \mathcal{S}_b$ and $\Delta {\sf T}^b_t F\in \mathcal{S}_b$.
\end{corollary}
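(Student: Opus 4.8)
The plan is to verify the two claimed membership statements directly from the definition of $\mathcal{S}_b$ in \eqref{eq:Sb}, namely $\mathcal{S}_b = \bigcap_{j\geq 1}{\rm dom}((-\Delta_b)^j)$, using the fact established in Lemma \ref{lem:semigroup} that the heat semigroup $\{{\sf T}^b_t\}$ maps $L^1(K,m)$ into ${\rm dom}\Delta_b$ and that it commutes with $H_b=-\Delta_b$ on ${\rm dom}(H_b)$, \emph{cf.\@} \eqref{eq:commute}. First I would record the elementary observation that $\Delta_b {\sf T}^b_t f \in \mathcal{S}_b$ follows immediately from ${\sf T}^b_t f\in \mathcal{S}_b$: indeed, if $g\in \mathcal{S}_b$ then $g\in {\rm dom}\Delta_b$ and $\Delta_b g\in {\rm dom}\Delta_b$ with $\Delta_b g\in \mathcal{S}_b$ as well, since $(-\Delta_b)^j(\Delta_b g) = (-\Delta_b)^{j+1} g$ lies in the intersection whenever $g$ does (up to the sign, which is harmless since $\mathcal{S}_b$ is a linear space). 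So the whole content is the first assertion: ${\sf T}^b_t f\in \mathcal{S}_b$ for $f\in \mathcal{S}_b$ and $t>0$.

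For that, I would argue by induction on $j$ that ${\sf T}^b_t f \in {\rm dom}((-\Delta_b)^j)$ for all $j\geq 1$. The base case $j=1$ is Lemma \ref{lem:semigroup}\eqref{domain}, since $f\in \mathcal{S}_b \subset L^2(K,m)\subset L^1(K,m)$ (recall $m$ is a finite measure). For the inductive step, suppose ${\sf T}^b_t f\in {\rm dom}((-\Delta_b)^j)$. Using the semigroup property ${\sf T}^b_t = {\sf T}^b_{t/2}{\sf T}^b_{t/2}$ together with the commutation relation \eqref{eq:commute}, one has
\begin{align*}
(-\Delta_b)^j {\sf T}^b_t f = (-\Delta_b)^j {\sf T}^b_{t/2}\big({\sf T}^b_{t/2} f\big) = {\sf T}^b_{t/2}\big((-\Delta_b)^j {\sf T}^b_{t/2} f\big),
\end{align*}
where the last equality uses that $H_b$ commutes with ${\sf T}^b_{t/2}$ on its domain, iterated $j$ times, and that ${\sf T}^b_{t/2}f$ is already known to be in ${\rm dom}((-\Delta_b)^j)$ by the inductive hypothesis. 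Now $(-\Delta_b)^j {\sf T}^b_{t/2} f \in L^2(K,m) \subset L^1(K,m)$, so by Lemma \ref{lem:semigroup}\eqref{domain} applied again, ${\sf T}^b_{t/2}$ sends it into ${\rm dom}\Delta_b$; hence $(-\Delta_b)^j {\sf T}^b_t f \in {\rm dom}\Delta_b$, i.e.\@ ${\sf T}^b_t f \in {\rm dom}((-\Delta_b)^{j+1})$. This completes the induction, giving ${\sf T}^b_t f\in \bigcap_{j\geq 1}{\rm dom}((-\Delta_b)^j) = \mathcal{S}_b$, and then the second claim follows from the opening remark.

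The step I expect to be the main (if minor) obstacle is making the commutation/regularity bookkeeping fully rigorous: one must be a little careful that $H_b$ and $-\Delta_b$ genuinely agree on the relevant iterated domains — this is exactly the content of Lemma \ref{lem:DFR}\eqref{agree}, which identifies $H_b$ with $-\Delta_b$ on ${\rm dom}\Delta_b$ and $H_b$ as the Friedrichs extension — and that the pointwise/continuous identification of $(-\Delta_b)^j {\sf T}^b_{t/2} f$ as an $L^1$ (indeed continuous, by Lemma \ref{lem:semigroup}\eqref{bounded}) function is legitimate before re-applying the semigroup. An alternative, perhaps cleaner, route is purely spectral: expand $f=\sum_k c_k \phi_k$ in the orthonormal basis of eigenfunctions of $H_b$ (which exists since $H_b$ has compact resolvent by Lemma \ref{lem:DFR}\eqref{cptres}), with eigenvalues $0\leq \mu_1\leq \mu_2\leq\cdots\to\infty$, so that ${\sf T}^b_t f = \sum_k e^{-\mu_k t} c_k\phi_k$, and then $(-\Delta_b)^j {\sf T}^b_t f = \sum_k \mu_k^j e^{-\mu_k t} c_k \phi_k$ converges in $L^2$ for every $j$ because $\mu_k^j e^{-\mu_k t}$ is bounded in $k$; the Weyl-type spectral asymptotics (Lemmas \ref{lem:Weyl}–\ref{lem:eigf} in Appendix \ref{sec:HS}) then upgrade this to the uniform (sup-norm) convergence needed for the seminorms $\|\cdot\|_j$. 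I would present the semigroup argument as the main proof and remark that the spectral picture gives an equivalent justification.
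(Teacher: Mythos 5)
Your proof is correct and uses exactly the same ingredients as the paper's one-paragraph argument, namely Lemma \ref{lem:semigroup}-\eqref{domain}, the commutation relation \eqref{eq:commute}, and the identification in Lemma \ref{lem:DFR}-\eqref{agree}: the paper simply commutes $(-\Delta_b)^j$ past ${\sf T}^b_t$ directly, using that $(-\Delta_b)^j f\in{\rm dom}\Delta_b\subset L^1(K,m)$ for $f\in\mathcal{S}_b$ so that $(-\Delta_b)^j{\sf T}^b_t f={\sf T}^b_t(-\Delta_b)^j f\in{\rm dom}\Delta_b$, while you reach the same conclusion by induction on $j$ with the splitting ${\sf T}^b_t={\sf T}^b_{t/2}{\sf T}^b_{t/2}$. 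This is a harmless repackaging (it in fact only needs $f\in L^1(K,m)$, so it proves slightly more), and your opening remark that $\Delta_b{\sf T}^b_t f\in\mathcal{S}_b$ then comes for free matches the paper's conclusion.
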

\begin{proof}
Suppose $F\in \mathcal{S}_b$.
By \eqref{eq:Sb}, $\Delta^k F\in \mathcal{S}_b$ for all $k\in \mathbb{N}$.
By Lemma \ref{lem:DFR}-\eqref{agree} and \eqref{eq:commute}, $\Delta^k ({\sf T}^b_t F) = {\sf T}^b_t (\Delta^k F)$ for all $k\in \mathbb{N}$.
Since $\mathcal{S}_b \subset L^1(K,m)$, we deduce from Lemma \ref{lem:semigroup}-\eqref{domain} that ${\sf T}^b_t F\in {\rm dom}\Delta_b$.
Combining these three observations, we conclude that
$
\Delta^k ({\sf T}^b_t F) = {\sf T}^b_t (\Delta^k F) \in {\rm dom}(\Delta_b)
$
for all $k\in \mathbb{N}$, \emph{i.e.,} ${\sf T}^b_t F\in \mathcal{S}_b$ and $\Delta {\sf T}^b_t F\in \mathcal{S}_b$.
\end{proof}

Lastly, we should mention that due to our scaling convention, we will use $\frac{2}{3}H_b$ to generate the heat semigroup, which we denote as $\tilde{\sf T}^b_t$.
All the above results still hold modulo the substitution of $H_b$ (resp.\@ $\Delta$) by $\frac{2}{3}H_b$ (resp.\@ $\frac{2}{3}\Delta$).

\subsection{Ornstein-Uhlenbeck equations}

Let $\mathcal{S}_b'$ be the topological dual of $\mathcal{S}_b$ with respect to the topology generated by the seminorms $\{\|\cdot\|_j : j\in \mathbb{N}_0\}$.

\begin{definition}[Ornstein-Uhlenbeck equation]
\label{def:OUMG}
We say that a random element $\mathcal{Y}$ taking values in $C([0,T], \mathcal{S}_b')$ is a solution to the Ornstein-Uhlenbeck equation on $K$ with parameter $b$ if:
\begin{enumerate}[wide,label=(\textbf{OU\arabic*})]
\item \label{OU1} For every $F\in \mathcal{S}_b$,
\begin{align}
\label{eq:OU1MtF}
\mathcal{M}_t(F) &= \mathcal{Y}_t(F) - \mathcal{Y}_0(F) - \int_0^t\, \mathcal{Y}_s\Big(\frac{2}{3}\Delta F\Big) \,ds \\
\label{eq:OU1NtF} \text{and } \quad \mathcal{N}_t(F) &= (\mathcal{M}_t(F))^2 - \frac{2}{3} 2\chi(\rho) t\mathscr{E}_b(F)
\end{align}
are $\mathscr{F}_t$-martingales, where $\mathscr{F}_t := \sigma\{\mathcal{Y}_s(F): s\leq t\}$ for each $t\in [0,T]$, and $\mathscr{E}_b$ was defined in \eqref{eq:Eb}.
\item \label{OU2} $\mathcal{Y}_0$ is a centered Gaussian $\mathcal{S}_b'$-valued random variable with covariance
\begin{align}
\label{eq:cov}
\mathbb{E}^b_\rho\left[\mathcal{Y}_0(F) \mathcal{Y}_0(G)\right] = \chi(\rho) \int_K\, F(x)G(x)\,dm(x), \quad \forall F,G \in \mathcal{S}_b.
\end{align}
Moreover, for every $F\in \mathcal{S}_b$, the process $\{\mathcal{Y}_t(F): t\geq 0\}$ is Gaussian: the distribution of $\mathcal{Y}_t(F)$ conditional upon $\mathscr{F}_s$, $s<t$, is Gaussian with mean $\mathcal{Y}_s(\tilde{\sf T}^b_{t-s} F)$ and variance $\int_0^{t-s}\,  \frac{2}{3}2\chi(\rho)\mathscr{E}_b(\tilde{\sf T}_r^b F)\,dr$, where $\{\tilde{\sf T}^b_t: t> 0\}$ is the heat semigroup generated by $\frac{2}{3}H_b$.
\end{enumerate}
\end{definition}

For notational simplicity, we have suppressed the dependence of $\mathcal{Y}$ on $b$.

\subsection{Convergence of density fluctuations to the Ornstein-Uhlenbeck equations}

For a fixed value of $b$, let $\mathbb{Q}_\rho^{N,{b}}$ be the probability measure on $D([0,T], \mathcal{S}_b')$ induced by the density fluctuation field $\mathcal{Y}^N_\cdot$ and by $\mathbb{P}^{N}_\rho$ (see bottom of p11).
We are ready to state the second main theorem of this paper.

\begin{theorem}[Ornstein-Uhlenbeck limit of density fluctuations]
\label{thm:OU}
The sequence $\{\mathbb{Q}_\rho^{N,b}\}_N$ converges in distribution, as $N\to\infty$, to a unique solution of the Ornstein-Uhlenbeck equation with boundary parameter $b$, in the sense of Definition \ref{def:OUMG}.
\end{theorem}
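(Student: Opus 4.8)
The plan is to follow the classical martingale approach to convergence of fluctuation fields: establish tightness of $\{\mathbb{Q}^{N,b}_\rho\}_N$ in $D([0,T],\mathcal{S}_b')$, show that any limit point is a solution of the Ornstein-Uhlenbeck equation in the sense of Definition \ref{def:OUMG}, and then prove uniqueness of such a solution, from which convergence of the full sequence follows.

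\textbf{Step 1: Tightness.} Since $\mathcal{S}_b'$ is a nuclear-type dual, by Mitoma's criterion it suffices to prove tightness of the real-valued processes $\{\mathcal{Y}^N_\cdot(F)\}_N$ in $D([0,T],\mathbb{R})$ for each fixed $F\in \mathcal{S}_b$. Using the Dynkin decomposition \eqref{eq:MY} (more precisely its closed form \eqref{eq:MY_f}, after invoking the appropriate boundary replacement lemma --- Lemma \ref{lem:RLDFFDir}, \ref{lem:RLDFFNeu}, or \ref{lem:RLDFFRob} depending on the regime of $b$), I would control the three pieces: the martingale $\mathcal{M}^N_t(F)$ via its explicit quadratic variation \eqref{eq:EQVY}, which is bounded by $C\,3^N/|V_N|\,t\,[\mathcal{E}_N(F)+O(1)]=O(3^N/|V_N|)\to 0$; the additive-functional term $\int_0^t \mathcal{Y}^N_s(\Delta_N F)\,ds$ using $\Delta_N F\to \frac{2}{3}\Delta_b F$ uniformly (Lemma \ref{lem:domLap}) and a uniform second-moment bound on $\mathcal{Y}^N_s(G)$ under the stationary measure $\nu^N_\rho$ (which is just $\chi(\rho)|V_N|^{-1}\sum_x G(x)^2$, bounded by $\chi(\rho)\|G\|_\infty^2$); and the initial term $\mathcal{Y}^N_0(F)$, whose tightness follows from the same second-moment bound plus a CLT. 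The Aldous--Rebolledo criterion then yields tightness. Here the extra boundary conditions built into $\mathcal{S}_b$ via Definition \ref{def:testfunctionspace} are what make the replacement-lemma errors and the discrete-normal-derivative corrections vanish.

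\textbf{Step 2: Characterization of limit points.} Let $\mathcal{Y}$ be a limit point. For \ref{OU2}, the covariance identity at time $0$ follows from passing to the limit in $\mathbb{E}^N_\rho[\mathcal{Y}^N_0(F)\mathcal{Y}^N_0(G)]=\chi(\rho)|V_N|^{-1}\sum_{x\in V_N}F(x)G(x)\to \chi(\rho)\int_K FG\,dm$ (weak convergence of $m_N$ to $m$), together with a standard CLT to get Gaussianity; the Gaussian transition law is then a consequence of \ref{OU1} by the usual argument (the OU martingale problem has Gaussian solutions). For \ref{OU1}, I would show that $\mathcal{M}_t(F)=\mathcal{Y}_t(F)-\mathcal{Y}_0(F)-\int_0^t \mathcal{Y}_s(\frac23\Delta_b F)\,ds$ is a martingale by passing to the limit in \eqref{eq:MY_f}: the discrete Laplacian converges, $\mathcal{M}^N_\cdot(F)\to 0$, and the $o_N(1)$ terms (including the boundary replacement errors) vanish; a uniform-integrability bound (e.g. an $L^2$ bound on $\sup_t|\mathcal{Y}^N_t(F)|$ from the decomposition) promotes convergence of finite-dimensional distributions to convergence of conditional expectations, giving the martingale property. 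For the second martingale $\mathcal{N}_t(F)$, one passes to the limit in $(\mathcal{M}^N_t(F))^2-\langle \mathcal{M}^N(F)\rangle_t$ using \eqref{eq:EQVY}, noting $\frac{3^N}{|V_N|}\to \frac23$ and $\mathcal{E}_N(F)+\frac{5^N}{3^N b^N}\lambda_\Sigma\sum_{a\in V_0}F^2(a)\to \mathscr{E}_b(F)$ (the boundary term survives only in the Robin regime $b=5/3$, matching the definition \eqref{eq:Eb}).

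\textbf{Step 3: Uniqueness.} I would invoke the standard uniqueness theory for generalized OU equations (Holley--Stroock): given \ref{OU1}--\ref{OU2}, for each $F$ the process $t\mapsto \mathcal{Y}_t(\tilde{\sf T}^b_{T-t}F)$ is a martingale, which pins down all finite-dimensional characteristic functionals $\mathbb{E}[\exp(i\mathcal{Y}_t(F))]$ by solving the associated ODE in the transformed test function, and hence the law on $C([0,T],\mathcal{S}_b')$ is unique. This uses Corollary \ref{cor:invarianceSb} (invariance of $\mathcal{S}_b$ under $\tilde{\sf T}^b_t$ and $\Delta_b$) so that the time-reversed test function stays in the test-function space, and Lemma \ref{lem:semigroup} for the smoothness and the identity $\partial_t \tilde{\sf T}^b_t F=\frac23\Delta_b \tilde{\sf T}^b_t F$. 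Combining tightness, characterization, and uniqueness gives convergence of $\{\mathbb{Q}^{N,b}_\rho\}_N$ to the unique OU law.

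\textbf{Main obstacle.} The genuinely delicate part is Step 1/Step 2 at the boundary: showing that the boundary contribution $\frac{3^N}{\sqrt{|V_N|}}\int_0^t\sum_{a\in V_0}\bar\eta^N_s(a)[(\partial^\perp_N F)(a)+\frac{5^N}{b^N3^N}\lambda_\Sigma F(a)]\,ds$ vanishes. This requires both a sufficiently fast rate of convergence of the discrete normal derivative $\partial^\perp_N F\to \partial^\perp F$ (so that the $3^N/\sqrt{|V_N|}\sim 3^{N/2}$ prefactor is beaten) and the boundary replacement lemmas of \S\ref{sec:DFRL}, which trade the single-site occupation $\bar\eta^N_s(a)$ for a local spatial average that can then be bounded using the moving-particle lemma / the energy estimate $\mathcal{E}_N$. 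Getting the correct $b$-dependent boundary condition on the test functions to exactly cancel the surviving term in each of the three regimes is where the argument is most model-specific.
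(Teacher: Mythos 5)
Your overall architecture is the same as the paper's: Mitoma's criterion to reduce tightness in $C([0,T],\mathcal{S}_b')$ to the real-valued processes $\mathcal{Y}^N_\cdot(F)$, the Dynkin decomposition \eqref{eq:MY} with the boundary term killed by the $b$-dependent boundary conditions on $\mathcal{S}_b$, the rate of convergence of $\partial^\perp_N F$ and the replacement Lemmas \ref{lem:RLDFFDir}, \ref{lem:RLDFFNeu}, \ref{lem:RLDFFRob}, and uniqueness via semigroup-composed test functions $t\mapsto \tilde{\sf T}^b_{S-t}F$ (the Holley--Stroock/Kipnis--Landim \S 11.4 scheme, which is exactly what the paper implements through the exponential martingales $\mathcal{X}^s_t$ and $\mathcal{Z}^S_t$, using Corollary \ref{cor:invarianceSb} and Lemma \ref{lem:semigroup} as you indicate).

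There is, however, one genuine error in your treatment of the martingale term. You claim its quadratic variation is $O(3^N/|V_N|)\to 0$ and, in Step 2, that $\mathcal{M}^N_\cdot(F)\to 0$. But $|V_N|=\tfrac32(3^N+1)$, so $3^N/|V_N|\to \tfrac23$, and by \eqref{eq:EQVY}
\begin{equation*}
\mathbb{E}^{N,b}_\rho\left[|\mathcal{M}^N_t(F)|^2\right]\xrightarrow[N\to\infty]{}\tfrac{2}{3}\cdot 2\chi(\rho)\,t\,\mathscr{E}_b(F)\neq 0 .
\end{equation*}
The martingale does \emph{not} vanish in the fluctuation scaling (unlike in the hydrodynamic limit, where the extra factor $|V_N|^{-1}$ kills it); it is precisely the source of the noise in the Ornstein--Uhlenbeck equation. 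If $\mathcal{M}^N_\cdot(F)\to 0$ were true, any limit point would satisfy a deterministic heat equation and $\mathcal{N}_t(F)=(\mathcal{M}_t(F))^2-\tfrac23\cdot 2\chi(\rho)t\mathscr{E}_b(F)$ could not be a martingale, so \ref{OU1} would fail. Your own Step 2 treatment of $\mathcal{N}_t(F)$, where you correctly use $3^N/|V_N|\to\tfrac23$ and $\mathcal{E}_N(F)+\tfrac{5^N}{3^Nb^N}\lambda_\Sigma\sum_a F^2(a)\to\mathscr{E}_b(F)$, contradicts the earlier claim. The correct step, as in the paper, is to show that $\{\mathcal{M}^N_\cdot(F)\}_N$ converges in distribution to a continuous martingale $\mathcal{M}_\cdot(F)$ with the stated quadratic variation: the second-moment bound gives uniform integrability of $\{\mathcal{M}^N_t(F)\}_N$, and to pass to the limit in $(\mathcal{M}^N_t(F))^2-\langle\mathcal{M}^N(F)\rangle_t$ one additionally needs uniform integrability of $\{(\mathcal{M}^N_t(F))^2\}_N$, obtained from a fourth-moment bound via Burkholder--Davis--Gundy together with the jump-size estimate $\sup_t|\mathcal{M}^N_t(F)-\mathcal{M}^N_{t^-}(F)|\leq C(F)|V_N|^{-1/2}$ (which also yields continuity of the limit). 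With that correction, the rest of your argument matches the paper's proof.
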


Let us note that the existence of solutions to the Ornstein-Uhlenbeck equation follows from the tightness of the density fluctuation fields $\{\mathcal{Y}^N_\cdot\}_N$ (\S\ref{sec:YTight}), while uniqueness of the solution is proved separately (\S\ref{sec:!limitOU}).

%%%
%1D result
%%%

\begin{remark}[Comparison to 1D OU limit]
In the case of the symmetric simple exclusion process on the discrete interval $\{0,\frac{1}{N},\dotsc, \frac{N-1}{N}, 1\}$ with boundary reservoirs at $\{0,1\}$, the following fluctuation results hold.
Consider the Markov process generated by $N^2 \left(\mathcal{L}_N^{\rm bulk} + N^{-\theta} \mathcal{L}_N^{\rm boundary}\right)$ where $\mathcal{L}_N^{\rm bulk}$ and $\mathcal{L}^{\rm boundary}_N$ are the analogs of \eqref{eq:Lbulk} and \eqref{eq:Lboundary} on the discrete interval, and $\theta\in \mathbb{R}$.
Suppose the reservoir rates are $\lambda_+(0)=\lambda_+(1)=\lambda_+>0$ and $\lambda_-(0)=\lambda_-(1)=\lambda_->0$, so that the process is reversible with respect to the product Bernoulli measure $\nu^N_\rho$ with constant density $\rho=\frac{\lambda_+}{\lambda_++\lambda_-}$.
Then as $N\to\infty$, the density fluctuation fields converge in distribution to the unique solution of the 1D Ornstein-Uhlenbeck equation---the analog of Definition \ref{def:OUMG} with: $K=[0,1]$; $\frac{2}{3}$ replaced by $\frac{1}{2}$; $\Delta_b$ replaced by the second derivative operator $\Delta_\theta$ with Dirichlet (resp.\@ Robin, Neumann) boundary condition if $\theta<1$ (resp.\@ $\theta=1$, $\theta>1$); and the energy $\mathscr{E}_b(F)$ replaced by $\mathscr{E}_\theta(F) = \int_K\, |F'|^2\,dx + \mathbbm{1}_{\{\theta=1\}} \sum_{a\in \{0,1\}} (\lambda_++\lambda_-) [F(a)]^2$.
These results are subsumed under the non-equilibrium fluctuation theorems of \cite{LMO} (for $\theta=0$), \cite{GJMN} (for $\theta>0$), and \cite{BCJS20} (for $\theta<0$).
\end{remark}

%
%We end this section with additional remarks on our choice of the function space $\mathcal{S}_b$ beyond Remark \ref{rem:choicetestfunctions}.

\begin{remark}[Choice of the test function space $\mathcal{S}_b$---follow up to Remark \ref{rem:choicetestfunctions}]
%Why we need to invoke the Frechet space---to check tightness \`a la Mitoma
\quad
\begin{enumerate}[wide]
\item In the analysis of exclusion processes on the 1D interval $[0,1]$ \cite{GPS17,FGN17} (resp.\@ the real line $\mathbb{R}$ \cite{FGN13}), the nuclear Fr\'echet space of choice is the completion of $C^\infty([0,1])$ (resp.\@ $C^\infty_c(\mathbb{R})$) with respect to the seminorms
\[
\|f\|_k := \sup_{x\in [0,1]}\left|f^{(k)}(x)\right| 
\quad\left(\text{resp.\@ } \|f\|_k:=\sup_{x\in \mathbb{R}}\left|f^{(k)}(x)\right|\right), \quad k\in \mathbb{N}_0.
\]
Our \eqref{eq:Sb} and \eqref{eq:seminorms} generalizes this idea to $SG$.
The nuclear Fr\'echet space structure is needed to prove tightness of $\{\mathcal{Y}^N_\cdot\}_N$ via Mitoma's theorem (Lemma \ref{lem:Mitoma}).

\item Our choice $\mathcal{S}_b$ for the space of test functions is dictated by the martingale problem arising from the particle system. First, the test function $F$ must satisfy the right boundary condition in order that the boundary term in $\mathcal{M}^N_t(F)$ vanishes as $N\to\infty$. Then, in order to prove tightness of $\{\mathcal{Y}^N_\cdot\}_N$, the integral term involving $\mathcal{Y}^N_\cdot(\Delta F)$ should carry the same boundary condition as $\mathcal{Y}^N_\cdot(F)$. 
Lastly, in the proof of uniqueness of the OU limit (\S\ref{sec:!limitOU}), Lemma \ref{lem:MGZ}  requires Corollary \ref{cor:invarianceSb} that the action of the semigroup ${\sf T}^b_t$ leaves invariant the space of test functions.

If the particle system does not involve boundary reservoirs, then the aforementioned issue of the boundary condition does not exist, and one can prove the convergence of $\mathcal{Y}^N_\cdot$ to the Ornstein-Uhlenbeck limit in the space $C([0,T],H_{-k}(K,m))$, where $H_{-k}(K,m)$ is the negative-indexed Sobolev space with a sufficiently large $k>0$. 
See \cite{KipnisLandim}*{Chapter 11} for details.
\end{enumerate}
\end{remark}

%%%
%RL Section
%%%

\section{Replacement lemmas} \label{sec:rl}
In this section we prove all the replacement lemmas that we need in this article. We divide it into four subsections. \S\ref{sec:functional} deals with some inequalities that will be used in subsequent proofs. \S\ref{sec:CC} is concerned with the relation between the Dirichlet form and the carr\'e du champ operator in the exclusion process, to be defined ahead. In \S\ref{sec:denRL} and \S\ref{sec:DFRL}  we present the replacement lemmas needed for the hydrodynamics and density fluctuations, respectively.

\subsection{Functional inequalities} \label{sec:functional}

%%%
%Notation
%%%

Given a finite set $\Lambda$ and a function $g:\Lambda\to\mathbb{R}$, we denote the average of $g$ over $\Lambda$ by $${\rm Av}_\Lambda[g]= |\Lambda|^{-1} \sum_{x\in \Lambda}g(x).$$

An essential functional inequality we will need is the \emph{moving particle lemma}, stated and proved in \cite{ChenMPL}*{Theorem 1.1}.
On $SG$ this replaces the telescoping sum and Cauchy-Schwarz arguments in the 1D case.
For a discussion of the rationale behind the moving particle lemma, see \cite{ChenMPL}*{\S1.1}.

%%%
%Functional inequalities go here
%%%

\begin{lemma}[Moving particle lemma]
\label{lem:MPL}
Let $G=(V,E)$ be a finite connected graph endowed with positive edge weights $\{c_{xy}\}_{xy\in E}$. Then for any $f: \{0,1\}^V\to\mathbb{R}$ and any product Bernoulli measure $\nu_\rho$ with constant density $\rho\in [0,1]$ on $\{0,1\}^V$,
\begin{align}
\frac{1}{2}\int\, (f(\eta^{xy})-f(\eta))^2\, d\nu_\rho(\eta) \leq 
R_{\rm eff}(x,y) \frac{1}{2} \int\, \sum_{zw\in E} c_{zw} (f(\eta^{zw})-f(\eta))^2\,d\nu_\rho(\eta),
\end{align}
where
\begin{align}
R_{\rm eff}(x,y) := \sup\left\{ \frac{(h(x)-h(y))^2}{\sum_{zw\in E} c_{zw} (h(z)-h(w))^2} ~\bigg|~ h:V\to\mathbb{R}\right\}
\end{align}
is the effective resistance between $x$ and $y$.
\end{lemma}
We will employ Lemma \ref{lem:MPL} in the special case where $G=\mathcal{G}_N$ and the edge weights $c_{xy}=1$ for all $xy\in E_N$.
Recall the product Bernoulli measure on $\{0,1\}^{V_N}$ is written $\nu_\rho^N$ with a superscript $N$.

The other tools are known to practitioners of 1D exclusion processes.
For the density replacement lemmas (\S\ref{sec:denRL}) and the density fluctuation replacement lemmas (\S\ref{sec:DFRL}), the main inequalities used are the Feynman-Kac formula---see \cite[Appendix 1, Proposition 7.1]{KipnisLandim} in the case of an invariant reference measure, and \cite[Lemma A.1]{BMNS17} in the case of a non-invariant reference measure---and the Kipnis-Varadhan inequality \cite[Appendix 1, Proposition 6.1]{KipnisLandim}, respectively.

Last but not least, we will use the following estimate which is stated and proved in \cite{BGJ17}*{Lemma 5.1}.

\begin{lemma}
\label{lem:transf}
Let $T: \Omega_N \to \Omega_N$ be a transformation,
$\omega: \Omega_N \to \mathbb{R}_+$ be a positive local function,
and $f$ be a density with respect to a probability measure $\mu$ on $\Omega_N$.
Then
\begin{equation}
\begin{aligned}
\left\langle \omega(\eta)\left(\sqrt{f(T(\eta))} -\sqrt{f(\eta)}\right), \sqrt{f(\eta)}\right\rangle_\mu 
\leq -\frac{1}{4}\int\, \omega(\eta) \left(\sqrt{f(T(\eta))} - \sqrt{f(\eta)}\right)^2\,d\mu \\
+ \frac{1}{16}\int\,\frac{1}{\omega(\eta)} \left(\omega(\eta) - \omega(T(\eta))\frac{d\mu(T(\eta))}{d\mu(\eta)}\right)^2 \left(\sqrt{f(T(\eta))}+\sqrt{f(\eta)}\right)^2\,d\mu.
\end{aligned}
\end{equation}
\end{lemma}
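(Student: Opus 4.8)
The plan is to \emph{symmetrize} the inner product with respect to the transformation $T$, and then split the resulting expression into a negative ``carr\'e du champ'' term and an error term handled by Young's inequality. Throughout I would use that $T$ is an involution of the finite state space $\Omega_N$ --- in every application $T$ is a particle swap $\eta\mapsto\eta^{xy}$ or a boundary flip $\eta\mapsto\eta^a$ --- and that the reference measure $\mu$ charges every configuration, so that $\tfrac{d\mu(T(\eta))}{d\mu(\eta)}=\mu(\{T(\eta)\})/\mu(\{\eta\})$ is well defined and the change of variables $\eta\mapsto T(\eta)$ is its own inverse.

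First I would set $h=\sqrt{f}$ and apply the elementary identity $(a-b)b=-\tfrac12(a-b)^2+\tfrac12(a^2-b^2)$ with $a=\sqrt{f(T(\eta))}$ and $b=\sqrt{f(\eta)}$, which gives
\[
\left\langle \omega(\eta)\big(\sqrt{f(T(\eta))}-\sqrt{f(\eta)}\big),\,\sqrt{f(\eta)}\right\rangle_\mu
= -\tfrac12\!\int \omega(\eta)\big(\sqrt{f(T(\eta))}-\sqrt{f(\eta)}\big)^2 d\mu
+ \tfrac12\!\int \omega(\eta)\big(f(T(\eta))-f(\eta)\big)\, d\mu .
\]
The first term is already in the desired form, with a factor $\tfrac12$ to spare. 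For the second, the change of variables $\eta\mapsto T(\eta)$ gives $\int\omega(\eta)f(T(\eta))\,d\mu=\int\omega(T(\eta))\tfrac{d\mu(T(\eta))}{d\mu(\eta)}f(\eta)\,d\mu$, so setting $\Phi(\eta):=\omega(\eta)-\omega(T(\eta))\tfrac{d\mu(T(\eta))}{d\mu(\eta)}$ the second term equals $-\tfrac12\int\Phi(\eta)f(\eta)\,d\mu$. A further application of the same change of variables yields the antisymmetry $\Phi(T(\eta))\tfrac{d\mu(T(\eta))}{d\mu(\eta)}=-\Phi(\eta)$, and hence
\[
\int\Phi f\,d\mu=\tfrac12\!\int\Phi(\eta)\big(f(\eta)-f(T(\eta))\big)\,d\mu
=-\tfrac12\!\int\Phi(\eta)\big(\sqrt{f(T(\eta))}-\sqrt{f(\eta)}\big)\big(\sqrt{f(T(\eta))}+\sqrt{f(\eta)}\big)\,d\mu .
\]

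Combining the two displays, the inner product equals $-\tfrac12\int\omega\,(\sqrt{f\circ T}-\sqrt f)^2\,d\mu+\tfrac14\int\Phi\,(\sqrt{f\circ T}-\sqrt f)(\sqrt{f\circ T}+\sqrt f)\,d\mu$. To close, I would apply Young's inequality $AB\le A^2+\tfrac14 B^2$ to the cross term with $A=\sqrt{\omega(\eta)}\,\big(\sqrt{f(T(\eta))}-\sqrt{f(\eta)}\big)$ and $B=\tfrac1{\sqrt{\omega(\eta)}}\,\Phi(\eta)\,\big(\sqrt{f(T(\eta))}+\sqrt{f(\eta)}\big)$, which is legitimate precisely because $\omega>0$: the $\tfrac14 A^2$ contribution is $\tfrac14\int\omega\,(\sqrt{f\circ T}-\sqrt f)^2\,d\mu$, which absorbs into the first term to leave $-\tfrac14\int\omega\,(\sqrt{f\circ T}-\sqrt f)^2\,d\mu$, while the $\tfrac1{16}B^2$ contribution is exactly the stated error term $\tfrac1{16}\int\tfrac1\omega\big(\omega-\omega\circ T\,\tfrac{d\mu\circ T}{d\mu}\big)^2(\sqrt{f\circ T}+\sqrt f)^2\,d\mu$. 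I do not expect any genuine obstacle: the only point requiring attention --- rather than difficulty --- is the bookkeeping in the two changes of variables, which is where the involution property of $T$ (and the consistency of the Radon--Nikodym factors) is used. This reproduces \cite{BGJ17}*{Lemma 5.1}.
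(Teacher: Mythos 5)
Your proof is correct: the splitting $(a-b)b=-\tfrac12(a-b)^2+\tfrac12(a^2-b^2)$, the change of variables $\eta\mapsto T(\eta)$ (legitimate because $T$ is an involution and $\mu$ charges every configuration), the antisymmetry $\Phi(T(\eta))\tfrac{d\mu(T(\eta))}{d\mu(\eta)}=-\Phi(\eta)$, and Young's inequality in the form $AB\le A^2+\tfrac14B^2$ yield exactly the stated bound with the constants $\tfrac14$ and $\tfrac1{16}$. The paper does not prove this lemma but quotes it from \cite{BGJ17}*{Lemma 5.1}, and your argument is essentially that standard symmetrization-plus-Young proof; your explicit use of the involution property is harmless, since every application in the paper takes $T$ to be a swap $\eta\mapsto\eta^{xy}$ or a flip $\eta\mapsto\eta^{a}$.
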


\subsection{Exclusion process Dirichlet form estimates} \label{sec:CC}

Given a function $f: \Omega_N\to\mathbb{R}$ and a measure $\mu$ on $\Omega_N$, we define the \emph{carr\'e du champ} operator by
\begin{align}
\Gamma_N(f, \mu) := \int\, \frac{1}{2} \sum_{xy\in E_N} (f(\eta^{xy})-f(\eta))^2 \, d\mu(\eta)
\end{align}
and the \emph{Dirichlet form}  by
\begin{equation*}
\begin{aligned}
\langle \sqrt{f},& -\mathcal{L}_N \sqrt{f}\rangle_{\mu}
:= \langle \sqrt{f}, -\mathcal{L}_N^{\text{bulk}}\sqrt{f}\rangle_{\mu}
+ \frac{1}{b^N} \langle \sqrt{f}, -\mathcal{L}_N^{\text{boundary}}\sqrt{f}\rangle_{\mu}.
\end{aligned}
\end{equation*}
The goal of this subsection is to obtain a quantitative comparison between the Dirichlet form $\langle \sqrt{f}, -\mathcal{L}_N \sqrt{f}\rangle_{\mu_N}$ and the carr\'e du champ $\Gamma_N(\sqrt{f}, \mu_N)$ for specific choices of the measure $\{\mu_N\}_N$.

Let us first take $\mu_N=\nu_\rho^N$, the product Bernoulli measure on $\Omega_N$ with constant density $\rho$.

\begin{lemma}
\label{lem:bgeq53}
There exists a positive constant $C=C\left(\rho, \{\lambda_{\pm}(a)\}_{a\in V_0}\right)$ such that for all $N\in\mathbb{N}$,
\begin{align}
\label{eq:ccest1}
\langle\sqrt{f},-\mathcal{L}_N \sqrt{f}\rangle_{\nu^N_\rho} 
\geq
\Gamma_N(\sqrt{f}, \nu^N_\rho) - \frac{C}{b^N}.
\end{align}
\end{lemma}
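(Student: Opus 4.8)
The plan is to treat the bulk and boundary parts of the Dirichlet form separately. Recalling the decomposition $\langle \sqrt{f}, -\mathcal{L}_N\sqrt{f}\rangle_{\nu^N_\rho} = \langle \sqrt{f}, -\mathcal{L}^{\text{bulk}}_N\sqrt{f}\rangle_{\nu^N_\rho} + \tfrac{1}{b^N}\langle \sqrt{f}, -\mathcal{L}^{\text{boundary}}_N\sqrt{f}\rangle_{\nu^N_\rho}$, I would first note that $\nu^N_\rho$ is exchangeable, hence reversible for the symmetric exclusion dynamics $\mathcal{L}^{\text{bulk}}_N$; the standard reversibility computation then yields the \emph{exact} identity $\langle \sqrt{f}, -\mathcal{L}^{\text{bulk}}_N\sqrt{f}\rangle_{\nu^N_\rho} = \Gamma_N(\sqrt{f}, \nu^N_\rho)$, using also that $(\sqrt{f(\eta^{xy})}-\sqrt{f(\eta)})^2$ vanishes whenever $\eta(x)=\eta(y)$. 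Hence it suffices to produce a constant $C = C(\rho, \{\lambda_\pm(a)\}_{a\in V_0})$, independent of $N$ and of the density $f$, such that $\langle \sqrt{f}, -\mathcal{L}^{\text{boundary}}_N\sqrt{f}\rangle_{\nu^N_\rho} \geq -C$; throughout we may take $\rho\in(0,1)$, the only case that occurs here.

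For the boundary term, write, following \eqref{eq:Lboundary}, $(\mathcal{L}^{\text{boundary}}_N g)(\eta) = \sum_{a\in V_0}\omega_a(\eta)(g(\eta^a)-g(\eta))$ with $\omega_a(\eta) := \lambda_-(a)\eta(a)+\lambda_+(a)(1-\eta(a)) \in \{\lambda_-(a),\lambda_+(a)\}$, so that $\omega_a$ is bounded above and below away from $0$. Since $\nu^N_\rho$ is in general \emph{not} reversible for this Glauber-type dynamics, the natural tool is Lemma~\ref{lem:transf}, applied with $\mu = \nu^N_\rho$, the transformation $T(\eta) = \eta^a$, and the positive local function $\omega = \omega_a$. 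This bounds $\langle \omega_a(\eta)(\sqrt{f(\eta^a)}-\sqrt{f(\eta)}), \sqrt{f(\eta)}\rangle_{\nu^N_\rho}$ from above by a nonpositive term, which we discard, plus the ``error term'' $\tfrac{1}{16}\int \omega_a(\eta)^{-1}\bigl(\omega_a(\eta) - \omega_a(\eta^a)\tfrac{d\nu^N_\rho(\eta^a)}{d\nu^N_\rho(\eta)}\bigr)^2 (\sqrt{f(\eta^a)}+\sqrt{f(\eta)})^2\, d\nu^N_\rho$.

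It then remains to bound this error term by a constant. Since $\omega_a, \omega_a\circ T_a$ take values in $\{\lambda_-(a),\lambda_+(a)\}$ and the Radon--Nikodym factor $\tfrac{d\nu^N_\rho(\eta^a)}{d\nu^N_\rho(\eta)} \in \{\rho/(1-\rho),(1-\rho)/\rho\}$ (this is where $\rho\in(0,1)$ is used), the prefactor $\omega_a(\eta)^{-1}\bigl(\omega_a(\eta) - \omega_a(\eta^a)\tfrac{d\nu^N_\rho(\eta^a)}{d\nu^N_\rho(\eta)}\bigr)^2$ is bounded by a constant depending only on $\rho$ and $\lambda_\pm(a)$. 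Using $(\sqrt{u}+\sqrt{v})^2 \leq 2u+2v$, $\int f\, d\nu^N_\rho = 1$, and the change of variables $\int f(\eta^a)\, d\nu^N_\rho(\eta) = \int f(\eta)\tfrac{d\nu^N_\rho(\eta^a)}{d\nu^N_\rho(\eta)}\,d\nu^N_\rho(\eta) \leq \max\bigl(\tfrac{\rho}{1-\rho},\tfrac{1-\rho}{\rho}\bigr)$, the error term is bounded by a constant $C_a = C_a(\rho, \lambda_\pm(a))$. Summing over the three vertices $a\in V_0$ gives $-\langle \sqrt{f}, -\mathcal{L}^{\text{boundary}}_N\sqrt{f}\rangle_{\nu^N_\rho} = \sum_{a\in V_0}\langle \omega_a(\eta)(\sqrt{f(\eta^a)}-\sqrt{f(\eta)}), \sqrt{f(\eta)}\rangle_{\nu^N_\rho} \leq \sum_{a\in V_0} C_a =: C$, which combined with the bulk identity is exactly \eqref{eq:ccest1}.

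The only genuine obstacle is the non-reversibility of $\nu^N_\rho$ under the boundary dynamics; everything else is bookkeeping with the product structure of $\nu^N_\rho$. In fact, for this particular estimate one could bypass Lemma~\ref{lem:transf} altogether: $\mathcal{L}^{\text{boundary}}_N$ acts only on the finitely many sites in $V_0$ with $N$-independent bounded rates, so it is a bounded operator on $L^2(\nu^N_\rho)$ with operator norm bounded uniformly in $N$ (the flip $g\mapsto g\circ T_a$ has $L^2(\nu^N_\rho)$-norm $\leq \sqrt{\max(\rho/(1-\rho),(1-\rho)/\rho)}$), and Cauchy--Schwarz together with $\|\sqrt{f}\|_{L^2(\nu^N_\rho)}^2 = \int f\,d\nu^N_\rho = 1$ gives $|\langle \sqrt{f}, -\mathcal{L}^{\text{boundary}}_N\sqrt{f}\rangle_{\nu^N_\rho}| \leq C$ directly. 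I would nonetheless keep the Lemma~\ref{lem:transf}-based argument, since the same scheme is what is needed for the companion Dirichlet-form estimate with a non-invariant reference measure.
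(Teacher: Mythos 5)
Your proposal is correct and follows essentially the same route as the paper: split the Dirichlet form, identify the bulk part with $\Gamma_N(\sqrt{f},\nu^N_\rho)$ by reversibility, and control the boundary part via Lemma~\ref{lem:transf}, discarding the nonpositive carr\'e du champ contribution and bounding the error term by an $N$-independent constant (your bound is cruder than the paper's, which isolates a factor $(\rho-\bar\rho(a))^2$, but a constant is all the lemma requires). Your Cauchy--Schwarz aside, using that $\mathcal{L}^{\text{boundary}}_N$ is uniformly bounded on $L^2(\nu^N_\rho)$ and $\|\sqrt{f}\|_{L^2(\nu^N_\rho)}=1$, is also a valid shortcut for this particular estimate.
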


\begin{proof}
A simple computation shows that 
\begin{equation}
\label{eq:CDDF}
\begin{aligned}
\langle \sqrt{f},& -\mathcal{L}_N \sqrt{f}\rangle_{\nu^N_\rho}
= \int\, \sum_{x\in V_N} \sum_{\substack{y\in V_N\\y\sim x}} \eta(x)(1-\eta(y))\sqrt{f(\eta)}(\sqrt{f(\eta)} - \sqrt{f(\eta^{xy})})\,d\nu^N_\rho(\eta) \\
&\quad + \frac{1}{b^N}\int\, \sum_{a\in V_0}[\lambda_-(a)\eta(a) + \lambda_+(a)(1-\eta(a))]\sqrt{f(\eta)}\left(\sqrt{f(\eta)}- \sqrt{f(\eta^{a})}\right)\,d\nu^N_\rho(\eta).
\end{aligned}
\end{equation}
It is direct to verify that the first (bulk) term on the right-hand side equals the carr\'e du champ $\Gamma_N(\sqrt{f},\nu^N_\rho)$.
Then we use Lemma \ref{lem:transf} to bound the second (boundary) term from below by
\begin{equation}
\begin{aligned}
\frac{1}{b^N}\sum_{a\in V_0} &\left(\frac{1}{4} \int\,\omega_{a}(\eta) \left(\sqrt{f(\eta^{a})}-\sqrt{f(\eta)}\right)^2\, d\nu^N_\rho(\eta) \right.\\
&\left. -\frac{1}{16} \int\, \frac{1}{\omega_{a}(\eta)} \left(\omega_{a}(\eta) - \omega_{a}(\eta^{a})\frac{d\nu^N_\rho(\eta^{a})}{d\nu^N_\rho(\eta)}\right)^2 \left(\sqrt{f(\eta^{a})}+\sqrt{f(\eta)}\right)^2\,d\nu^N_\rho(\eta)\right)
\end{aligned}
\end{equation}
with $\omega_{a}(\eta) = \lambda_-(a)\eta(a)+\lambda_+(a)(1-\eta(a))$.
The second term in the last expression represents the error of replacing the boundary Dirichlet form by the boundary carr\'e du champ (the first term).
We estimate it as follows. 
For each $a\in V_0$, denote $\eta=(\eta(a);\tilde\eta)$ where $\tilde\eta$ represents the configuration except at $a$. Then
\begin{equation}
\begin{aligned}
&\frac{1}{16} \int\, \frac{1}{\omega_a(\eta)} \left(\omega_a(\eta) - \omega_a(\eta^a)\frac{d\nu^N_\rho(\eta^a)}{d\nu^N_\rho(\eta)}\right)^2 \left(\sqrt{f(\eta^a)}+\sqrt{f(\eta)}\right)^2\,d\nu^N_\rho(\eta)\\
&\leq \frac{1}{8} \int\, \frac{1}{\omega_a(\eta)} \left(\omega_a(\eta) - \omega_a(\eta^a)\frac{d\nu^N_\rho(\eta^a)}{d\nu^N_\rho(\eta)}\right)^2 \left(f(\eta^a)+f(\eta)\right)\,d\nu^N_\rho(\eta)\\
&=\frac{1}{8} \int\, \frac{1}{\lambda_-(a)}\left(\frac{\lambda_\Sigma(a)(\rho-\bar\rho(a))}{\rho}\right)^2 \left(f(0;\tilde\eta) + f(1;\tilde\eta)\right) \rho\,d\nu^N_\rho(\tilde\eta)\\
&\quad+\frac{1}{8} \int\, \frac{1}{\lambda_+(a)}\left(\frac{\lambda_\Sigma(a) (\bar\rho(a)-\rho)}{1-\rho}\right)^2(f(1;\tilde\eta)+f(0;\tilde\eta))(1-\rho)\,d\nu^N_\rho(\tilde\eta)\\
&= \frac{1}{8}(\lambda_\Sigma(a))^2\left(\frac{1}{\lambda_-(a)\rho} + \frac{1}{\lambda_+(a)(1-\rho)}\right)
(\rho-\bar\rho(a))^2 \int\, (f(0;\tilde\eta) + f(1;\tilde\eta))\,d\nu^N_\rho(\tilde\eta)\\
& \leq C'(\rho, \lambda_\pm(a)) (\rho-\bar\rho(a))^2
\end{aligned}
\end{equation}
Above we used the fact that $f$ is a probability density with respect to $\nu^N_\rho$ to conclude that the integral in the penultimate display equals $1$, so that we can choose the positive constant $C'(\rho, \lambda_\pm(a)):= \frac{1}{8}(\lambda_{\Sigma}(a))^2\left(\frac{1}{\lambda_-(a)\rho} + \frac{1}{\lambda_+(a)(1-\rho)}\right)$ in the last bound.
Putting these altogether, and using the fact that $V_0$ is a finite set, yields \eqref{eq:ccest1}.
\end{proof}

Lemma \ref{lem:bgeq53} will be used to prove the boundary density replacement lemma (Lemma \ref{lem:replace1}) in the regime $b\geq 5/3$.
Unfortunately, this does not suffice to prove the analogous replacement lemma (Lemma \ref{lem:replace2}) in the regime $b<5/3$, because the error term blows up as $N\to\infty$.
To address this issue, we have to take $\mu_N=\nu^N_{\rho(\cdot)}$, where $\rho(\cdot)$ is a suitably chosen non-constant density profile such that the error term vanishes as $N\to\infty$.
More precisely, we insist that the profile  $\rho(\cdot) \in \mathcal{F}$ satisfy  $\rho(a)=\bar\rho(a)$ for all $a \in V_0$, and also \begin{equation}\label{boundness_rho}
\min_{a\in V_0}{\bar \rho(a)}\leq\rho(x)\leq \max_{a\in V_0}{\bar \rho(a)}.
\end{equation}

\begin{corollary}
\label{cor:blessthan53}
With the choice of the profile $\rho(\cdot)$ stated above, there exists a positive constant $C'=C'\left(\rho(\cdot), \{\lambda_{\pm}(a)\}_{a\in V_0}\right)$ such that for all $N\in\mathbb{N}$,
\begin{equation}
\begin{aligned}
\label{eq:DFCC2}
\langle \sqrt{f} , -\mathcal{L}_N \sqrt{f}\rangle_{\nu^N_{\rho(\cdot)}} &\geq \Gamma_N(\sqrt{f}, \nu^N_{\rho(\cdot)}) - C'(\rho) \sum_{xy\in E_N}(\rho(x)-\rho(y))^2 \\
& \quad+ \frac{1}{b^N} \frac{1}{2} \int\,\sum_{a\in V_0}\omega_a(\eta) (\sqrt{f(\eta^a)}-\sqrt{f(\eta)})^2\, d\nu^N_{\rho(\cdot)}(\eta),
\end{aligned}
\end{equation}
where
$
\omega_{a}(\eta) = \lambda_-(a)\eta(a) + \lambda_+(a)(1-\eta(a))
$.
\end{corollary}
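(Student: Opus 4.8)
The plan is to follow the same strategy as in the proof of Lemma~\ref{lem:bgeq53}, but now with the non-constant reference measure $\nu^N_{\rho(\cdot)}$, and to exploit the boundary-matching condition $\rho(a)=\bar\rho(a)$ to kill the boundary error term. First I would write out the analogue of the decomposition \eqref{eq:CDDF}: since $\nu^N_{\rho(\cdot)}$ is not reversible for $\mathcal{L}_N^{\rm bulk}$, the bulk Dirichlet form $\langle\sqrt{f},-\mathcal{L}_N^{\rm bulk}\sqrt{f}\rangle_{\nu^N_{\rho(\cdot)}}$ will not equal the bulk carr\'e du champ exactly; there is a correction term coming from the fact that $\frac{d\nu^N_{\rho(\cdot)}(\eta^{xy})}{d\nu^N_{\rho(\cdot)}(\eta)}\neq 1$. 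Applying Lemma~\ref{lem:transf} with $T=$ the edge-swap $\eta\mapsto\eta^{xy}$ and $\omega(\eta)=\eta(x)(1-\eta(y))$ (together with the symmetric term $\eta(y)(1-\eta(x))$), the quadratic gain term reconstructs $\Gamma_N(\sqrt{f},\nu^N_{\rho(\cdot)})$, while the error term is, for each edge $xy$, of the form $\frac1{16}\int\frac1{\omega}(\omega-\omega\circ T\cdot\frac{d\nu(\eta^{xy})}{d\nu(\eta)})^2(\sqrt{f\circ T}+\sqrt f)^2\,d\nu$; a direct computation of the Radon--Nikodym derivative for a product measure shows this is bounded by $C'(\rho(\cdot))(\rho(x)-\rho(y))^2$ times $\int(f(\eta^{xy})+f(\eta))\,d\nu \le 2$ (using $(\sqrt a+\sqrt b)^2\le 2(a+b)$ and that $f$ is a density), under the assumed bounds \eqref{boundness_rho} which keep $\rho(x)$ away from $0$ and $1$ uniformly. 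Summing over $xy\in E_N$ produces the term $-C'(\rho)\sum_{xy\in E_N}(\rho(x)-\rho(y))^2$.

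Next I would handle the boundary part. Here the key point is the hypothesis $\rho(a)=\bar\rho(a)=\lambda_+(a)/\lambda_\Sigma(a)$ for all $a\in V_0$, which is precisely the condition under which $\nu^N_{\rho(\cdot)}$ \emph{is} reversible with respect to the single-site flip dynamics $\mathcal{L}_N^{\rm boundary}$ at each $a$: indeed one checks that $[\lambda_-(a)\eta(a)+\lambda_+(a)(1-\eta(a))]\,\nu^N_{\rho(\cdot)}(\eta) = [\lambda_-(a)\eta^a(a)+\lambda_+(a)(1-\eta^a(a))]\,\nu^N_{\rho(\cdot)}(\eta^a)$ when $\rho(a)=\bar\rho(a)$. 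Consequently, by the standard reversibility identity,
\begin{equation*}
\frac{1}{b^N}\langle\sqrt f,-\mathcal{L}_N^{\rm boundary}\sqrt f\rangle_{\nu^N_{\rho(\cdot)}}
= \frac{1}{b^N}\cdot\frac12\int\sum_{a\in V_0}\omega_a(\eta)\big(\sqrt{f(\eta^a)}-\sqrt{f(\eta)}\big)^2\,d\nu^N_{\rho(\cdot)}(\eta),
\end{equation*}
with no error term at all. This is exactly the last line of \eqref{eq:DFCC2}, and it is why Corollary~\ref{cor:blessthan53} is cleaner at the boundary than Lemma~\ref{lem:bgeq53} (there, the mismatch $\rho\neq\bar\rho(a)$ forced the $(\rho-\bar\rho(a))^2$ error, which is harmless for $b\ge5/3$ but not for $b<5/3$ once multiplied by $5^N/(3^Nb^N)$ in later applications). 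Combining the bulk lower bound with this boundary identity yields \eqref{eq:DFCC2}.

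The main obstacle, such as it is, is the bookkeeping in the bulk error estimate: one must verify that the Radon--Nikodym factor $\frac{d\nu^N_{\rho(\cdot)}(\eta^{xy})}{d\nu^N_{\rho(\cdot)}(\eta)}$ only differs from $1$ by an amount controlled by $\rho(x)-\rho(y)$ (and not by the full configuration), and that the resulting constant $C'$ depends only on $\rho(\cdot)$ through $\min_{V_0}\bar\rho$, $\max_{V_0}\bar\rho$ and the rates $\lambda_\pm(a)$, uniformly in $N$ and in the edge. This is routine given \eqref{boundness_rho}: writing the product measure explicitly, the swap $\eta\mapsto\eta^{xy}$ changes only the two factors at $x$ and $y$, and on the event $\omega(\eta)=\eta(x)(1-\eta(y))=1$ the ratio is $\frac{\rho(y)(1-\rho(x))}{\rho(x)(1-\rho(y))}$, so $\omega(\eta)-\omega(\eta^{xy})\frac{d\nu(\eta^{xy})}{d\nu(\eta)} = 1 - \frac{\rho(y)(1-\rho(x))}{\rho(x)(1-\rho(y))}$, whose square is $O((\rho(x)-\rho(y))^2)$ with constant controlled by the uniform lower and upper bounds on $\rho(\cdot)$; the symmetric term is treated identically. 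Everything else is a repetition of the computation already carried out in the proof of Lemma~\ref{lem:bgeq53}.
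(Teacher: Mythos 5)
Your proposal is correct and follows essentially the same route as the paper: the condition $\rho(a)=\bar\rho(a)$ makes the boundary Dirichlet form equal its carr\'e du champ with no error, and the bulk error produced by Lemma~\ref{lem:transf} is controlled edgewise by $(\rho(x)-\rho(y))^2$ using \eqref{boundness_rho}, then summed over $E_N$. The only small imprecision is the claim $\int\,(f(\eta^{xy})+f(\eta))\,d\nu^N_{\rho(\cdot)}\leq 2$: since $\nu^N_{\rho(\cdot)}$ is not invariant under the swap, $\int f(\eta^{xy})\,d\nu^N_{\rho(\cdot)}(\eta)$ is only bounded by a constant depending on the uniform bounds on $\rho(\cdot)$ (via the Radon--Nikodym factor), which is harmless because $C'$ absorbs it, exactly as in the paper's own estimate.
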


\begin{proof}
The proof proceeds in the same fashion as in Lemma \ref{lem:bgeq53} above.
Note that with our choice of $\rho(\cdot)$, the boundary part of the Dirichlet form equals a carr\'e du champ:
\begin{align}
\frac{1}{b^N} \langle \sqrt{f}, -\mathcal{L}_N^{\text{boundary}}\sqrt{f} \rangle_{\nu^N_{\rho(\cdot)}} 
= \frac{1}{b^N}\frac{1}{2}\int\, \sum_{a \in V_0} \omega_{a}(\eta)\left(\sqrt{f(\eta^{a})}-\sqrt{f(\eta)}\right)^2\,d\nu^N_{\rho(\cdot)}(\eta).
\end{align}
For the bulk part of the Dirichlet form, we apply Lemma \ref{lem:transf} to find
\begin{equation}
\label{eq:bulkD}
\begin{aligned}
\langle & \sqrt{f}, -\mathcal{L}_N^{\text{bulk}}\sqrt{f}\rangle_{\nu^N_{\rho(\cdot)}}
= 2 \int\, \sum_{xy\in E_N} (\eta(x)-\eta(y))^2 \sqrt{f(\eta)} \left(\sqrt{f(\eta)} - \sqrt{f(\eta^{xy})}\right)\,d\nu^N_{\rho(\cdot)}(\eta)\\
&\geq \frac{1}{2} \int\, \sum_{xy\in E_N} (\eta(x)-\eta(y))^2\left(\sqrt{f(\eta^{xy})}-\sqrt{f(\eta)}\right)^2\,d\nu^N_{\rho(\cdot)}(\eta) \\
& \quad  - \frac{1}{8} \int\, \sum_{xy\in E_N} (\eta(x)-\eta(y))^2 \left(1- \frac{\rho(y)(1-\rho(x))}{\rho(x)(1-\rho(y))} {\bf 1}_{\{\eta(x)=1, \eta(y)=0\}}-\frac{\rho(x)(1-\rho(y))}{\rho(y)(1-\rho(x))} {\bf 1}_{\{\eta(x)=0, \eta(y)=1\}}\right)^2\\
& \qquad \times \left(\sqrt{f(\eta^{xy})} + \sqrt{f(\eta)}\right)^2\,d\nu^N_{\rho(\cdot)}(\eta).
\end{aligned}
\end{equation}
The first term in the right-hand side of \eqref{eq:bulkD} equals $\Gamma_N(\sqrt{f}, \nu^N_{\rho(\cdot)})$.
For the second term, or the error, in the right-hand side of \eqref{eq:bulkD}, observe that for each $xy\in E_N$, the integrand is nonzero if and only if $\eta(x)\neq \eta(y)$, and that
\begin{align}
1-\frac{\rho(y)(1-\rho(x))}{\rho(x)(1-\rho(y))} = \frac{\rho(x)-\rho(y)}{\rho(x)(1-\rho(y))}.
\end{align}
Since $\rho(\cdot)$ satisfies \eqref{boundness_rho}, we can bound the second term in the right-hand side of \eqref{eq:bulkD} from below by
\begin{equation}
\begin{aligned}
-\frac{1}{8}& \int\,\sum_{xy\in E_N} (\eta(x)-\eta(y))^2  \frac{(\rho(x)-\rho(y))^2}{(\min\left(\rho(x)(1-\rho(y)), \rho(y)(1-\rho(x))\right))^2}  \left(\sqrt{f(\eta^{xy})} + \sqrt{f(\eta)}\right)^2\,d\nu^N_{\rho(\cdot)}(\eta)\\
&\geq -\frac{C(\rho)}{4} \int\, \sum_{xy\in E_N} (\eta(x)-\eta(y))^2 (\rho(x)-\rho(y))^2 (f(\eta^{xy})+f(\eta)) \,d\nu^N_{\rho(\cdot)}(\eta)\\
&\geq -C'(\rho) \sum_{xy\in E_N} (\rho(x)-\rho(y))^2.
\end{aligned}
\end{equation}
for a bounded positive constant $C'(\rho)$.
Putting all the estimates together, we obtain \eqref{eq:DFCC2}.
\end{proof}

\subsection{Density replacement lemmas} \label{sec:denRL}

In this subsection the initial measure $\mu_N$ is arbitrary.
We also recall the definition of a $j$-cell from \S\ref{sec:SG}.

%%%
%RL for density N/R
%%%

\begin{lemma}[Boundary replacement for the empirical density, $b\geq 5/3$]
\label{lem:replace1}
For every $a\in V_0$, let $K_j(a)$ denote the unique $j$-cell $K_w$, $|w|=j$, which contains $a$. Then
\begin{align}
\varlimsup_{j\to\infty} \varlimsup_{N\to\infty} \mathbb{E}_{\mu_N}\left[\left|\int_0^t\, \left(\eta_s^N(a) - {\rm Av}_{K_j(a)\cap V_N}[\eta_s^N]\right)\,ds \right|\right]=0.
\end{align}
\end{lemma}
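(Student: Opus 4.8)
The plan is to run the standard replacement‑lemma machinery --- entropy inequality, Feynman--Kac bound, and a static variational estimate --- with the moving particle lemma (Lemma~\ref{lem:MPL}) playing the role that the telescoping/Cauchy--Schwarz step plays in the one‑dimensional proof. Fix a reference product Bernoulli measure $\nu^N_\rho$ with constant density $\rho\in(0,1)$. Since $\nu^N_\rho$ charges each configuration a mass at least $(\rho\wedge(1-\rho))^{|V_N|}$, one has $H(\mu_N\mid\nu^N_\rho)\le C_0|V_N|$ with $C_0=C_0(\rho)$. Write $V_j(\eta):=\eta(a)-{\rm Av}_{K_j(a)\cap V_N}[\eta]$, a function supported on $K_j(a)\cap V_N$ and bounded by $1$. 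By the entropy inequality (with parameter $\gamma|V_N|$), the bound $e^{|x|}\le e^{x}+e^{-x}$, and then the Feynman--Kac formula for a non‑invariant reference measure (\cite{BMNS17}*{Lemma A.1}), one gets for every $\gamma>0$, up to an additive $\tfrac{\log 2}{\gamma|V_N|}$,
\[
\mathbb{E}_{\mu_N}\left[\,\Big|\int_0^t V_j(\eta^N_s)\,ds\Big|\right]\le \frac{C_0}{\gamma}+\frac{t}{\gamma|V_N|}\max_{\pm}\ \sup_{f}\left\{\pm\gamma|V_N|\,\langle V_j,f\rangle_{\nu^N_\rho}-5^N\langle\sqrt f,-\mathcal{L}_N\sqrt f\rangle_{\nu^N_\rho}\right\},
\]
the supremum running over densities $f$ with respect to $\nu^N_\rho$; only the $\nu^N_\rho$‑symmetric part of $5^N\mathcal{L}_N$ survives in the quadratic form, so precisely the Dirichlet form controlled by Lemma~\ref{lem:bgeq53} appears.

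\textbf{Moving particles to the boundary.} It remains to bound the variational expression uniformly in $N$. Expanding $\langle V_j,f\rangle_{\nu^N_\rho}=|K_j(a)\cap V_N|^{-1}\sum_{z\in K_j(a)\cap V_N}\int(\eta(a)-\eta(z))f\,d\nu^N_\rho$ and using that $\nu^N_\rho$ is invariant under the transposition $\eta\mapsto\eta^{az}$, we symmetrize each summand, $\int(\eta(a)-\eta(z))f\,d\nu^N_\rho=\tfrac12\int(\eta(a)-\eta(z))(f(\eta)-f(\eta^{az}))\,d\nu^N_\rho$, factor $f(\eta)-f(\eta^{az})=(\sqrt{f(\eta)}-\sqrt{f(\eta^{az})})(\sqrt{f(\eta)}+\sqrt{f(\eta^{az})})$, and apply Young's inequality with a parameter $\beta>0$. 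The error half is controlled by $(\sqrt{f(\eta)}+\sqrt{f(\eta^{az})})^2\le 2(f(\eta)+f(\eta^{az}))$ together with $\int(f(\eta)+f(\eta^{az}))\,d\nu^N_\rho=2$, while the main half is controlled by Lemma~\ref{lem:MPL} on $G=\mathcal{G}_N$ with unit weights, $\int(\sqrt{f(\eta^{az})}-\sqrt{f(\eta)})^2\,d\nu^N_\rho\le 2R_{\rm eff}(a,z)\,\Gamma_N(\sqrt f,\nu^N_\rho)$. This yields
\[
\big|\langle V_j,f\rangle_{\nu^N_\rho}\big|\le \frac{\beta}{2}\Big(\max_{z\in K_j(a)\cap V_N}R_{\rm eff}(a,z)\Big)\Gamma_N(\sqrt f,\nu^N_\rho)+\frac{1}{\beta}.
\]

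\textbf{The fractal scaling and conclusion.} The $j$‑cell containing $a=a_i\in V_0$ is $K_{i^j}$, and the subgraph of $\mathcal{G}_N$ it spans is graph‑isomorphic, with unit conductances, to $\mathcal{G}_{N-j}$; by Rayleigh monotonicity and the resistance scaling on $SG$ (the unit‑conductance resistance diameter of $\mathcal{G}_m$ being comparable to $(5/3)^m$) we obtain $\max_{z\in K_j(a)\cap V_N}R_{\rm eff}(a,z)\le C_1(5/3)^{N-j}$. Since $|V_N|\asymp 3^N$,
\[
\frac{5^N}{|V_N|\,\max_{z} R_{\rm eff}(a,z)}\ \ge\ c_1\,\frac{5^N}{3^N(5/3)^{N-j}}\ =\ c_1\,(5/3)^{j}\qquad\text{for all }N.
\]
Choosing $\beta=\beta_j$ to be a suitable constant multiple of $(5/3)^j\gamma^{-1}$, and combining the previous display with Lemma~\ref{lem:bgeq53} in the form $5^N\langle\sqrt f,-\mathcal{L}_N\sqrt f\rangle_{\nu^N_\rho}\ge 5^N\Gamma_N(\sqrt f,\nu^N_\rho)-C(5/b)^N$, the coefficient of $\Gamma_N(\sqrt f,\nu^N_\rho)$ in $\gamma|V_N|\,|\langle V_j,f\rangle|-5^N\langle\sqrt f,-\mathcal{L}_N\sqrt f\rangle_{\nu^N_\rho}$ becomes nonpositive; dropping that nonnegative term and using $b\ge 5/3$ (so $(5/(3b))^N\le 1$) leaves
\[
\mathbb{E}_{\mu_N}\left[\,\Big|\int_0^t V_j(\eta^N_s)\,ds\Big|\right]\le \frac{C_0}{\gamma}+\frac{t\gamma}{c_1}(3/5)^j+\frac{tC}{\gamma}\Big(\tfrac{5}{3b}\Big)^{N}.
\]
Taking $N\to\infty$, then $j\to\infty$, and finally $\gamma\to\infty$ --- in this order --- kills the right‑hand side, which is the claim.

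\textbf{Main obstacle.} The delicate point is the scaling step: one must verify that the cost of transporting a particle from cell‑depth $j$ down to a boundary vertex --- as measured through the effective resistance in the moving particle lemma --- is, \emph{uniformly in $N$}, absorbed by the diffusive acceleration $5^N$ of the bulk Dirichlet form, leaving only the decaying factor $(3/5)^j$. This is where the identity $5=3\cdot(5/3)$ among the mass ($3^N$), time ($5^N$) and resistance ($(5/3)^N$) scalings of $SG$ enters, and where the hypothesis $b\ge 5/3$ is used: it is precisely what keeps the non‑invariance error $\asymp 5^N/b^N$ from Lemma~\ref{lem:bgeq53}, after normalization by $|V_N|\asymp 3^N$, from blowing up.
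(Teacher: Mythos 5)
Your proposal is correct and follows essentially the same route as the paper's proof: entropy inequality to pass to $\nu^N_\rho$, Feynman--Kac with the non-invariant reference, symmetrization and Young's inequality on $\eta(a)-\eta(z)$, the moving particle lemma with the resistance-diameter bound ${\rm diam}_{R^N_{\rm eff}}(K_j(a)\cap V_N)\lesssim (5/3)^{N-j}$, and Lemma \ref{lem:bgeq53} to absorb the carr\'e du champ, with the same order of limits $N\to\infty$, $j\to\infty$, $\gamma\to\infty$. The only cosmetic differences are a single Young parameter in place of the per-site constants $A_z$ (which the paper sets equal anyway) and justifying the resistance scaling via self-similarity and Rayleigh monotonicity rather than citing Strichartz.
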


\begin{proof}
Consider a bounded function $g: \Omega_N \to \mathbb{R}$. 
From the computations developed in the proof of Lemma \ref{lem:bgeq53}, and that $b\geq 5/3$, we can 
use the entropy inequality, and transfer the initial measure from $\mu_N$ to the product Bernoulli measure $\nu_{\rho}^N$ with constant density profile $\rho$:
\begin{align}
\label{eq:entropyineq}
\mathbb{E}_{\mu_N}\left[\left|\int_0^t\, g(\eta_s^N)\,ds\right|\right]
\leq \frac{{\rm Ent}(\mu_N| \nu^N_{\rho})}{\kappa|V_N|} + \frac{1}{\kappa|V_N|}\log \mathbb{E}_{\nu^N_\rho}\left[\exp\left(\kappa|V_N|\left|\int_0^t\, g(\eta_s^N)\,ds\right|\right) \right]
\end{align}
for every $\kappa>0$.
For the first term on the right-hand side, we use an easy estimate for the relative entropy that there exists $C>0$ such that ${\rm Ent}(\mu_N|\nu^N_{\rho}) \leq C |V_N|$ for all $N$.
Then we use the inequality $e^{|z|} \leq \max(e^z, e^{-z})$ and the Feynman-Kac formula \cite[Lemma A.1]{BMNS17} to bound the logarithm in the second term on the right-hand side by 
\[
\max\left\{\text{largest eigenvalue of } -5^N \mathcal{L}_N +  \kappa|V_N|\mathfrak{g},~ \text{largest eigenvalue of }-5^N\mathcal{L}_N-\kappa|V_N|\mathfrak{g}\right\},
\]
where the operator $\mathfrak{g}: \Omega_N\to\mathbb{R}$ is defined by $\mathfrak{g}\eta= g(\eta)$. 
Thus
\begin{align}
\label{eq:var}
\text{right-hand side}_{\eqref{eq:entropyineq}} \leq \frac{C}{\kappa} + t \sup_f \left\{\int\, \pm g(\eta)f(\eta)\,d\nu^N_{\rho}(\eta) - \frac{5^N}{\kappa|V_N|} \langle \sqrt{f}, -\mathcal{L}_N \sqrt{f}\rangle_{\nu^N_{\rho}} \right\},
\end{align}
where the supremum is taken over all probability densities $f$ with respect to $\nu^N_{\rho}$.
Without loss of generality we estimate the $+$ case.

Let us now specialize to
\begin{align}
g(\eta)= \eta(a)- {\rm Av}_B[\eta] = \frac{1}{|B|}\sum_{z\in B} (\eta(a)-\eta(z)),
\end{align}
with $B=K_j(a) \cap V_N$, and $\rho(\cdot)=\rho\in (0,1)$.
Using a change of variables, followed by the identity $\alpha^2-\beta^2= (\alpha+\beta)(\alpha-\beta)$ and Young's inequality $2xy\leq Ax^2+A^{-1}y^2$ for any $A>0$, we obtain
\begin{align}
\nonumber \int\, &g(\eta)f(\eta)\,d\nu_\rho(\eta) = \frac{1}{|B|}\sum_{z\in B} \int\, (\eta(a)-\eta(z))f(\eta)\,d\nu^N_\rho(\eta)
=\frac{1}{|B|}\sum_{z\in B} \int\, (\eta(z)-\eta(a))f(\eta^{za})\,d\nu^N_\rho(\eta)\\
\nonumber &= \frac{1}{|B|}\sum_{z\in B} \frac{1}{2}\int\, (\eta(z)-\eta(a))(f(\eta^{za})-f(\eta))\,d\nu^N_\rho(\eta)\\
\nonumber &= \frac{1}{2|B|}\sum_{z\in B} \int\,(\eta(z)-\eta(a))(\sqrt{f(\eta^{za})} + \sqrt{f(\eta)})(\sqrt{f(\eta^{za})} - \sqrt{f(\eta)})\,d\nu^N_\rho(\eta)\\
\label{last} &\leq \frac{1}{2|B|}\sum_{z\in B} \left(\frac{A_z}{2} \int\, (\eta(z)-\eta(a))^2 (\sqrt{f(\eta^{za})}+\sqrt{f(\eta)})^2\,d\nu^N_\rho(\eta) + \frac{1}{2A_z} \int\, (\sqrt{f(\eta^{za})}-\sqrt{f(\eta)})^2\,d\nu^N_\rho(\eta)\right)
\end{align}
for any family of positive numbers $\{A_z\}_{z\in B}$.

For the first term in the bracket in \eqref{last}, we obtain an upper bound by using $(\alpha+\beta)^2 \leq 2(\alpha^2+ \beta^2)$ and the fact that $f$ is a probability density:
\begin{align}
\frac{1}{2}\int\,& \int\, (\eta(z)-\eta(a))^2 (\sqrt{f(\eta^{za})}+\sqrt{f(\eta)})^2\,d\nu^N_\rho(\eta) 
\leq \int\, (\eta(z)-\eta(a))^2 (f(\eta^{za}) + f(\eta))\,d\nu^N_\rho(\eta) \leq 2.
\end{align}
For the second term in the bracket in \eqref{last}, we use the moving particle Lemma \ref{lem:MPL} to get
\begin{align}
\frac{1}{2} \int\, (\sqrt{f(\eta^{za}) }-\sqrt{f(\eta)})^2\,d\nu^N_\rho(\eta) \leq R^N_{\rm eff}(z,a) \Gamma_N(\sqrt{f}, \nu^N_\rho),
\end{align}
where $R^N_{\rm eff}(x,y)$ denotes the effective resistance between $x$ and $y$ in the graph $G_N$.
Since $z,a\in B$, we may bound $R^N_{\rm eff}(z,a)$ from above by the diameter of $B$ in the effective resistance metric, ${\rm diam}_{R^N_{\rm eff}}(B)$.
Altogether the expression \eqref{last} is bounded above by
\begin{align}
\frac{1}{2|B|} \sum_{z\in B} \left(A_z C(\rho) + \frac{1}{A_z} {\rm diam}_{R^N_{\rm eff}}(B) \Gamma_N(\sqrt{f},\nu^N_\rho)\right).
\end{align}
We then set $2A_z=\frac{\kappa|V_N|}{5^N}{\rm diam}_{R^N_{\rm eff}}(B)$ for all $z\in B$ to bound the last expression from above by
\begin{align}
\label{eq:1stterm}
\frac{\kappa|V_N|}{5^N}{\rm diam}_{R^N_{\rm eff}}(B)  C(\rho) + \frac{5^N}{\kappa|V_N|} \Gamma_N(\sqrt{f},\nu^N_\rho).
\end{align}
It is known (\emph{cf.\@} \cite{StrichartzBook}*{Lemma 1.6.1}) that there exists $C>0$ such that ${\rm diam}_{R^N_{\rm eff}}(B)={\rm diam}_{R^N_{\rm eff}}(K_j(a)\cap V_N) \leq C (5/3)^{N-j}$ for all $N$ and $j$, so the first term tends to $0$ in the limit $N\to\infty$ followed by $j\to\infty$.

Recalling \eqref{eq:ccest1} and  harkening to \eqref{eq:var} and \eqref{eq:1stterm}, we have that the left-hand side of \eqref{eq:entropyineq} is bounded above by
\begin{equation}
\label{eq:finalest}
\begin{aligned}
\frac{C}{\kappa} &+ \sup_f \left\{\frac{\kappa|V_N|}{5^N} {\rm diam}_{R^N_{\rm eff}}(K_j(a)\cap V_N)C(\rho) + \frac{5^N}{\kappa |V_N|}\Gamma_N(\sqrt{f},\nu^N_\rho) - \frac{5^N}{\kappa|V_N|} \left(\Gamma_N(\sqrt{f},\nu^N_\rho) - \frac{C''(\rho)}{b^N}\right)\right\}\\
\
&\leq \frac{C}{\kappa} + \frac{\kappa |V_N|}{5^N} {\rm diam}_{R^N_{\rm eff}}(K_j(a)\cap V_N)C(\rho) + \frac{1}{\kappa}\frac{5^N}{|V_N| b^N} C''(\rho).
\end{aligned}
\end{equation}
When $b>5/3$, the final term goes to $0$ as $N\to\infty$. 
When $b=5/3$, the final term tends to $\kappa^{-1}$ times a constant as $N\to\infty$. 
In any case, taking the limit $N\to\infty$, then $j\to\infty$, and finally $\kappa\to\infty$, the right-hand side of \eqref{eq:finalest} tends to $0$. 
This proves the lemma.
\end{proof}

%%%
%RL for density D
%%%

\begin{lemma}[Boundary replacement for the empirical density, $b<5/3$]
\label{lem:replace2}
For every $a\in V_0$,
\begin{align}
\varlimsup_{N\to\infty} \mathbb{E}_{\mu_N} \left[\left|\int_0^t\, \left(\eta_s^N(a)-\bar\rho(a)\right)\,ds\right|\right] =0.
\end{align}
\end{lemma}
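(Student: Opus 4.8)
The plan is to run the entropy/Feynman--Kac scheme exactly as in the proof of Lemma~\ref{lem:replace1}, with one essential change: the constant-density reference measure must be replaced by $\nu^N_{\rho(\cdot)}$ for a fixed profile $\rho(\cdot)\in\mathcal{F}$ with $\rho(a)=\bar\rho(a)$ for every $a\in V_0$ and satisfying the two-sided bound \eqref{boundness_rho}. The harmonic function on $K$ with boundary values $\bar\rho(a)$ is a convenient choice: the maximum principle yields \eqref{boundness_rho}, and $\sup_N\mathcal{E}_N(\rho)=\mathcal{E}(\rho)<\infty$. First I would apply the entropy inequality as in \eqref{eq:entropyineq} with $\nu^N_{\rho(\cdot)}$ in place of $\nu^N_\rho$, use ${\rm Ent}(\mu_N\,|\,\nu^N_{\rho(\cdot)})\leq C|V_N|$ (valid since $\rho(\cdot)$ is bounded away from $0$ and $1$), and then the Feynman--Kac bound for a non-invariant reference measure \cite[Lemma A.1]{BMNS17}, reducing matters to showing that, for $g(\eta)=\eta(a)-\bar\rho(a)=\eta(a)-\rho(a)$,
\[
\frac{C}{\kappa}+t\sup_f\left\{\pm\int\,g(\eta)f(\eta)\,d\nu^N_{\rho(\cdot)}(\eta)-\frac{5^N}{\kappa|V_N|}\big\langle\sqrt{f},-\mathcal{L}_N\sqrt{f}\big\rangle_{\nu^N_{\rho(\cdot)}}\right\}
\]
(supremum over probability densities $f$ w.r.t.\ $\nu^N_{\rho(\cdot)}$; the $+$ sign is treated without loss of generality) vanishes after $N\to\infty$ and then $\kappa\to\infty$. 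Here I would invoke Corollary~\ref{cor:blessthan53} to bound the Dirichlet form from below by \eqref{eq:DFCC2}, discard the nonnegative bulk carr\'e du champ $\Gamma_N(\sqrt{f},\nu^N_{\rho(\cdot)})$, keep the boundary carr\'e du champ $\frac1{b^N}B$ with $B:=\frac12\int\sum_{a'\in V_0}\omega_{a'}(\eta)(\sqrt{f(\eta^{a'})}-\sqrt{f(\eta)})^2\,d\nu^N_{\rho(\cdot)}(\eta)$, and note that $\frac{5^N}{|V_N|}\sum_{xy\in E_N}(\rho(x)-\rho(y))^2\asymp\mathcal{E}_N(\rho)$ is bounded in $N$, so the profile-energy error contributes at most $C'/\kappa$. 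Note that, unlike in Lemma~\ref{lem:replace1}, the moving particle lemma is not needed, since $g$ and all relevant terms are localized at the single vertex $a$.

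\textbf{The main term.} Writing $\eta=(\eta(a);\tilde\eta)$ and integrating out $\eta(a)$ under the product measure gives
\[
\int\,g(\eta)f(\eta)\,d\nu^N_{\rho(\cdot)}(\eta)=\rho(a)(1-\rho(a))\int\big(f(1;\tilde\eta)-f(0;\tilde\eta)\big)\,d\nu^N_{\rho(\cdot)}(\tilde\eta),
\]
where $\nu^N_{\rho(\cdot)}(\tilde\eta)$ is the marginal off site $a$; this step is exactly where the requirement $\rho(a)=\bar\rho(a)$ is used. Factoring $f(1;\tilde\eta)-f(0;\tilde\eta)=(\sqrt{f(1;\tilde\eta)}+\sqrt{f(0;\tilde\eta)})(\sqrt{f(1;\tilde\eta)}-\sqrt{f(0;\tilde\eta)})$ and applying Young's inequality with a parameter $A_N>0$, the main term is bounded by
\[
\rho(a)(1-\rho(a))\left[\frac{A_N}{2}\int\big(\sqrt{f(1;\tilde\eta)}+\sqrt{f(0;\tilde\eta)}\big)^2 d\nu^N_{\rho(\cdot)}(\tilde\eta)+\frac{1}{2A_N}\int\big(\sqrt{f(1;\tilde\eta)}-\sqrt{f(0;\tilde\eta)}\big)^2 d\nu^N_{\rho(\cdot)}(\tilde\eta)\right].
\]
The first integral is at most $2\int(f(1;\tilde\eta)+f(0;\tilde\eta))\,d\nu^N_{\rho(\cdot)}(\tilde\eta)\leq 2/\min(\rho(a),1-\rho(a))$ since $f$ is a density. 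For the second, a direct computation using $\bar\rho(a)=\rho(a)$ shows that $\int\omega_a(\eta)(\sqrt{f(\eta^a)}-\sqrt{f(\eta)})^2\,d\nu^N_{\rho(\cdot)}(\eta)=c_a\int(\sqrt{f(1;\tilde\eta)}-\sqrt{f(0;\tilde\eta)})^2\,d\nu^N_{\rho(\cdot)}(\tilde\eta)$ with $c_a:=\rho(a)\lambda_-(a)+(1-\rho(a))\lambda_+(a)=2\lambda_+(a)\lambda_-(a)/\lambda_\Sigma(a)>0$, so that $\frac12\int(\sqrt{f(1;\tilde\eta)}-\sqrt{f(0;\tilde\eta)})^2 d\nu^N_{\rho(\cdot)}(\tilde\eta)\leq B/c_a$.

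\textbf{Balancing and conclusion.} This is the crux, and the place where $b<5/3$ is used. Since $b<5/3$ we have $|V_N|\asymp 3^N$ and $\frac{5^N}{|V_N|b^N}\asymp(5/(3b))^N\to\infty$, equivalently $\frac{|V_N|b^N}{5^N}\to0$. Choosing $A_N:=\dfrac{\rho(a)(1-\rho(a))\,\kappa|V_N|b^N}{c_a\,5^N}$, which tends to $0$, the contribution of the $\frac{1}{2A_N}$-term is at most $\frac{\rho(a)(1-\rho(a))}{A_N c_a}B=\frac{5^N}{\kappa|V_N|b^N}B$, which is exactly cancelled by the boundary carr\'e du champ $\frac{5^N}{\kappa|V_N|}\cdot\frac1{b^N}B$ coming from the lower bound on the Dirichlet form; meanwhile the $\frac{A_N}{2}$-term is at most $\max(\rho(a),1-\rho(a))\,A_N\to0$. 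Collecting everything, the supremum is bounded by $o_N(1)+C'/\kappa$, so the displayed expression is $\leq C/\kappa+t(C'/\kappa+o_N(1))$; letting $N\to\infty$ and then $\kappa\to\infty$, and using $t\leq T$, proves the lemma. I expect the delicate point to be precisely this combination: one must choose a slowly varying reference profile (so that the boundary part of the Dirichlet form becomes a genuine carr\'e du champ and the error term in Corollary~\ref{cor:blessthan53} stays bounded), and then tune $A_N\to0$ so that the carr\'e du champ generated by the integration by parts at $a$ is absorbed. The strength needed for this absorption is exactly the divergence of $5^N/(|V_N|b^N)$ when $b<5/3$, which is the microscopic manifestation of the fast reservoirs pinning the boundary density to $\bar\rho(a)$.
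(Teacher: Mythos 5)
Your proposal is correct and follows essentially the same route as the paper's proof: entropy inequality with the non-constant reference profile $\nu^N_{\rho(\cdot)}$ (with $\rho(a)=\bar\rho(a)$), the Feynman--Kac variational bound, the Dirichlet-form lower bound of Corollary \ref{cor:blessthan53}, and Young's inequality with a parameter of order $\kappa|V_N|b^N/5^N\to 0$ to absorb the boundary carr\'e du champ, with the profile-energy error controlled by $\mathcal{E}(\rho)<\infty$. The only differences (integrating out $\eta(a)$ directly rather than a change of variables, tuning $A_N$ for exact cancellation instead of using $\omega_a\geq\min(\lambda_\pm)$, and the explicit harmonic choice of $\rho(\cdot)$) are cosmetic.
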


\begin{proof}
As in the proof of Lemma \ref{lem:replace1}, we use the entropy inequality to transfer the initial measure from $\mu_N$ to $\nu^N_{\rho(\cdot)}$, where not only $\rho(\cdot) \in \mathcal{F}$ but also $\rho(a)=\bar\rho(a)$ for all $a\in V_0$.%%%
%Comparison with the analogous condition in 1D
%%%
(As mentioned previously, we cannot use a constant density profile $\rho$ here because the bounds obtained in Lemma \ref{lem:bgeq53} will not  be good  enough to control the error term as $N\to \infty$.)
By the Feynman-Kac formula \cite[Lemma A.1]{BMNS17} and the variational characterization of the largest eigenvalue, we obtain the estimate
\begin{align}
\label{eq:rlest}
\mathbb{E}_{\mu_N}\left[\left|\int_0^t\, (\eta_s^N(a)-\bar\rho(a))\,ds\right|\right]
\leq \frac{C}{\kappa} + \sup_f\left\{\int\,\pm(\eta(a)-\bar\rho(a))f(\eta)\,d\nu^N_{\rho(\cdot)}(\eta) - \frac{5^N}{\kappa |V_N|} \langle\sqrt{f},-\mathcal{L}_N\sqrt{f}\rangle_{\nu^N_\rho(\cdot)} \right\},
\end{align}
where the supremum is taken over all probability densities with respect to $\nu^N_{\rho(\cdot)}$.

The first term in the variational functional reads
\begin{equation}
\begin{aligned}
\int\,&(\eta(a)-\bar\rho(a)) f(\eta)\,d\nu^N_{\rho(\cdot)}(\eta) = \int\, (1-\eta(a)-\bar\rho(a))f(\eta^a)\frac{d\nu^N_{\rho(\cdot)}(\eta^a)}{d\nu^N_{\rho(\cdot)}(\eta)} \, d\nu^N_{\rho(\cdot)}(\eta)\\
&= \int\, \left(-\bar\rho(a) \frac{1-\bar\rho(a)}{\bar\rho(a)} \mathbf{1}_{\{\eta(a)=1\}}
+ (1-\bar\rho(a))\frac{\bar\rho(a)}{1-\bar\rho(a)} \mathbf{1}_{\{\eta(a)=0\}}\right) f(\eta^a)\,d\nu^N_{\rho(\cdot)}(\eta)\\
&= -\int\,(\eta(a)-\bar\rho(a))f(\eta^a) \,d\nu^N_{\rho(\cdot)}(\eta) = \int\,(\eta(a)-\bar\rho(a))(f(\eta)-f(\eta^a))\,d\nu^N_{\rho(\cdot)}(\eta)\\
&= \int\, (\eta(a)-\bar\rho(a)) (\sqrt{f(\eta)}+\sqrt{f(\eta^a)}) (\sqrt{f(\eta)}-\sqrt{f(\eta^a)}) \,d\nu^N_{\rho(\cdot)}(\eta)\\
&\leq \frac{A}{2} \int\, (\eta(a)-\bar\rho(a))^2 (\sqrt{f(\eta)}+\sqrt{f(\eta^a)})^2 \,d\nu^N_{\rho(\cdot)}(\eta) + \frac{1}{2A} \int\, (\sqrt{f(\eta)}-\sqrt{f(\eta^a)})^2\,d\nu^N_{\rho(\cdot)}(\eta)
\end{aligned}
\end{equation}
for any $A>0$, using Young's inequality at the end.
The first term on the last expression can be bounded above by $AC(\bar\rho(a))$, using the inequality $(\alpha+\beta)^2\leq 2(\alpha^2+\beta^2)$ and that $f$ is a density with respect to $\nu^N_{\rho(\cdot)}$.
Indeed, let us write $\eta= (\eta(a); \tilde\eta)$ where $\tilde\eta$ denotes the configuration except at $a$. Then
\begin{equation}
\begin{aligned}
\frac{A}{2}&\int\, (\eta(a)-\bar\rho(a))^2 (\sqrt{f(\eta)}+\sqrt{f(\eta^a)})^2\,d\nu^N_{\rho(\cdot)}(\eta) \leq A \int\, (\eta(a)-\bar\rho(a))^2 (f(\eta)+f(\eta^a))\,d\nu^N_{\rho(\cdot)}(\eta)\\
& = A \left( \int\, (1-\bar\rho(a))^2 (f(0; \tilde\eta)+f(1; \tilde\eta)) \bar\rho(a) \, d\nu^N_{\rho(\cdot)}(\tilde\eta) + \int\, \bar\rho(a)^2 (f(1;\tilde\eta)+f(0;\tilde\eta)) (1-\bar\rho(a))\,d\nu^N_{\rho(\cdot)}(\tilde\eta)\right)\\
& = A \chi(\bar\rho(a)) \int\,(f(0;\tilde\eta)+ f(1;\tilde\eta))\,d\nu^N_{\rho(\cdot)}(\tilde\eta) \leq A C(\tilde\rho(a)).
\end{aligned}
\end{equation}

Recalling Corollary \ref{cor:blessthan53},  we can estimate \eqref{eq:rlest} from above by
\begin{equation}
\label{eq:Ckappa}
\begin{aligned}
\frac{C}{\kappa} &+ \sup_f \left\{ AC(\bar\rho(a)) + \frac{1}{2A} \int\, (\sqrt{f(\eta)}-\sqrt{f(\eta^a)})^2\,d\nu^N_{\rho(\cdot)}(\eta) \right.\\
&\left. - \frac{5^N}{\kappa |V_N|} \left(\Gamma_N(\sqrt{f}, \nu^N_{\rho(\cdot)})-C'(\rho) \sum_{xy\in E_N}(\rho(x)-\rho(y))^2 + \frac{1}{b^N}\frac{1}{2} \int\, \omega_{a}(\eta) \left(\sqrt{f(\eta^{a})}-\sqrt{f(\eta)}\right)^2\,d\nu^N_{\rho(\cdot)}(\eta)\right)\right\}.
\end{aligned}
\end{equation}
To obtain a further upper bound on \eqref{eq:Ckappa}, set $A=\frac{1}{\min(\lambda_+(a), \lambda_-(a))} \frac{\kappa |V_N| b^N}{5^N}$ to eliminate the boundary carr\'e du champ, and replace $\Gamma_N(\sqrt{f}, \nu^N_{\rho(\cdot)})$ with the crude lower bound $0$; that is, \eqref{eq:Ckappa} is bounded above by
\begin{align}
\label{eq:Dend}
\frac{C}{\kappa} + \frac{1}{\min(\lambda_+(a), \lambda_-(a))} \frac{\kappa |V_N| b^N}{5^N} C(\bar\rho(a)) + \frac{1}{\kappa}\frac{5^N}{|V_N|} \sum_{xy\in E_N} (\rho(x)-\rho(y))^2.
\end{align}
Since $b<5/3$, the second term tends to $0$ as $N\to\infty$. 
On the other hand, $\rho \in \mathcal{F}$ implies that $\displaystyle \sup_N \frac{5^N}{3^N}\sum_{xy\in E_N} (\rho(x)-\rho(y))^2<\infty$, so the final term is bounded above by $\kappa^{-1}$ times a constant as $N\to\infty$.
Therefore \eqref{eq:Dend} tends to $0$ in the limit $N\to\infty$ followed by $\kappa\to\infty$.
This proves the lemma.
\end{proof}

\begin{remark}
In the proof of the replacement lemma for the 1D interval analogous to our Lemma \ref{lem:replace2}, \emph{cf.\@} \cite{G18}*{Lemma 9 in Appendix A.4}, it is assumed that the profile $\rho(\cdot)$ is Lipschitz.
Here we point out that it is enough to assume the weaker condition that $\rho(\cdot)\in \mathcal{F}$.
Indeed, on a compact resistance space $(K, R)$ equipped with the effective resistance metric $R$, we have the inequality $|g(x)-g(y)|^2\leq R(x,y)\mathcal{E}(g) \leq {\rm diam}_R(K) \mathcal{E}(g)$ for all $g\in \mathcal{F}$.
So any function in $\mathcal{F}$ is $\frac{1}{2}$-H\"older continuous with respect to $R$.
When $K$ is the closed unit interval, $R$ agrees with the Euclidean metric, so we recover the well-known result that functions in $H^1([0,1])$ have $\frac{1}{2}$-H\"older regularity with respect to the Euclidean distance.
\end{remark}

\subsection{Density fluctuation replacement lemmas} \label{sec:DFRL}

%%%
%RL for density fluctuation N
%%%

In this subsection we prove the replacement lemmas for the density fluctuation field in the equilibrium setting, $\bar\rho(a)=\rho$ for all $a\in V_0$.
Thus the invariant measure is the product Bernoulli measure $\nu^N_\rho$, which is reversible for both the bulk generator $\mathcal{L}_N^{\text{bulk}}$ and the boundary generator $\mathcal{L}_N^{\text{boundary}}$.

\begin{lemma}[Boundary replacement for the DFF, $b>5/3$]
\label{lem:RLDFFNeu}
For every $a\in V_0$,
\begin{align}
\varlimsup_{N\to\infty} \mathbb{E}^{N,b}_\rho \left[\left(\int_0^t\, \frac{5^N}{b^N \sqrt{|V_N|}} \bar\eta_s^N(a)\,ds\right)^2\right]=0.
\end{align}
\end{lemma}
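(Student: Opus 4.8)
The plan is to invoke the Kipnis--Varadhan inequality \cite[Appendix 1, Proposition 6.1]{KipnisLandim}, which is available here because we work in the equilibrium setting where $\nu^N_\rho$ is reversible for $5^N\mathcal{L}_N$. Set $V_N(\eta):= \frac{5^N}{b^N\sqrt{|V_N|}}\bar\eta(a)$ with $\bar\eta(a):=\eta(a)-\rho$; since $\mathbb{E}_{\nu^N_\rho}[V_N]=0$, the inequality yields a universal constant $C>0$ with
\[
\mathbb{E}^{N,b}_\rho\left[\left(\int_0^t V_N(\eta^N_s)\,ds\right)^2\right] \leq C\,t\,\|V_N\|_{-1,N}^2, \qquad \|V_N\|_{-1,N}^2 := \sup_{g}\left\{2\langle V_N, g\rangle_{\nu^N_\rho} - \langle g, -5^N\mathcal{L}_N g\rangle_{\nu^N_\rho}\right\},
\]
the supremum being over $g$ in the domain of $\mathcal{L}_N$. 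Thus it suffices to show $\|V_N\|_{-1,N}^2 \to 0$ as $N\to\infty$. This is the analogue, in the CLT scaling, of the entropy-method argument used for Lemma \ref{lem:replace1}, but the Kipnis--Varadhan route is cleaner since the reference measure is now invariant.

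To estimate the variational expression I would discard the nonnegative bulk part of the Dirichlet form and retain only the flip at the vertex $a$ in the boundary part; using that $\nu^N_\rho$ is reversible for $\mathcal{L}^{\text{boundary}}_N$ in equilibrium,
\[
\langle g, -5^N\mathcal{L}_N g\rangle_{\nu^N_\rho} \;\geq\; \frac{5^N}{b^N}\,\frac{\lambda_0}{2}\int\, (g(\eta^a)-g(\eta))^2\, d\nu^N_\rho(\eta), \qquad \lambda_0 := \min(\lambda_+,\lambda_-)>0.
\]
For the linear term, conditioning $g$ on the configuration away from $a$ (equivalently, the change of variables $\eta\leftrightarrow\eta^a$, using $\mathbb{E}_{\nu^N_\rho}[\bar\eta(a)\mid\tilde\eta]=0$) gives the identity $\int \bar\eta(a)\,g\,d\nu^N_\rho = \int h(\eta(a))\,(g(\eta)-g(\eta^a))\,d\nu^N_\rho$ with $h(1)=(1-\rho)^2$, $h(0)=-\rho^2$, so $|h|\leq 1$. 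Young's inequality then gives, for any $A>0$,
\[
2\langle V_N, g\rangle_{\nu^N_\rho} \;\leq\; \frac{5^N}{b^N\sqrt{|V_N|}}\left( A + \frac{1}{A}\int\, (g(\eta)-g(\eta^a))^2\,d\nu^N_\rho(\eta)\right).
\]

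Choosing $A = 2/(\lambda_0\sqrt{|V_N|})$ makes the two carr\'e du champ integrals cancel, leaving $\|V_N\|_{-1,N}^2 \leq \frac{2}{\lambda_0}\,\frac{5^N}{b^N|V_N|}$. Since $|V_N|=\frac32(3^N+1)$, this is at most $\frac{C}{\lambda_0}(5/(3b))^N$, which tends to $0$ because $b>5/3$ forces $5/(3b)<1$; inserting this into the Kipnis--Varadhan bound proves the lemma. The argument is essentially routine: the only steps demanding attention are the reduction of the linear term via conditioning, so that it is dominated by the boundary carr\'e du champ with a bounded coefficient, and the bookkeeping of the scalings $5^N$, $b^{-N}$, and $|V_N|^{-1}$, which combine to the decaying geometric factor $(5/(3b))^N$ precisely in the Neumann regime $b>5/3$ (and would fail to decay for $b\le 5/3$, consistent with the fact that a different treatment is needed there).
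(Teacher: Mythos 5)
Your proposal is correct and follows essentially the same route as the paper: Kipnis--Varadhan with the variational $H_{-1}$-norm, discarding all of the Dirichlet form except the boundary carr\'e du champ at $a$, rewriting the linear term via the flip $\eta\leftrightarrow\eta^a$, and optimizing the Young constant $A$ so that the carr\'e du champ cancels, leaving a bound of order $(5/(3b))^N\to 0$. The only differences are cosmetic (your identity with $h(1)=(1-\rho)^2$, $h(0)=-\rho^2$ versus the paper's $2\int\bar\eta(a)f\,d\nu=\int\bar\eta(a)(f(\eta)-f(\eta^a))\,d\nu$, and slightly different constants), so the argument is sound as written.
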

\begin{proof}
By the Kipnis-Varadhan inequality \cite{KipnisLandim}*{Appendix 1, Proposition 6.1}, the expectation on the left-hand side can be bounded above by
\begin{align}
\label{eq:KV}
Ct\sup_{f\in L^2(\nu^N_\rho)} \left\{2\int\, \frac{5^N}{b^N \sqrt{|V_N|}} \bar\eta(a)f(\eta)\,d\nu^N_\rho(\eta) - 5^N\langle f, -\mathcal{L}_N f\rangle_{\nu^N_\rho}  \right\}.
\end{align}
Thanks to reversibility, we may rewrite the second term in the variational functional in terms of carr\'es du champ with no error:
\begin{align}
\langle f, -\mathcal{L}_N f\rangle_{\nu^N_\rho} = \Gamma_N(f,\nu^N_\rho) + \frac{1}{b^N}\sum_{a'\in V_0} \frac{1}{2} \int\, \omega_{a'
}(\eta)(f(\eta^{a'})-f(\eta))^2\,d\nu^N_\rho(\eta)
\end{align}
where $\omega_{a'}(\eta) = \lambda_-\eta(a') + \lambda_+(1-\eta(a'))$.
For the ensuing estimate we discard the bulk carr\'e du champ and the boundary carr\'e du champ except at $a$, that is:
\begin{align}
\label{eq:2t}
\langle f, -\mathcal{L}_N f\rangle_{\nu^N_\rho} \geq \frac{1}{b^N} \frac{1}{2} \int\,\omega_a(\eta) (f(\eta^a)-f(\eta))^2\,d\nu^N_\rho(\eta).
\end{align}
On the other hand, we may write the first term in the variational functional as $\frac{5^N}{b^N\sqrt{|V_N|}}$ times
\begin{equation}
\label{eq:1t}
\begin{aligned}
2&\int\, (\eta(a)-\rho)f(\eta)\,d\nu^N_\rho(\eta) = 2\int \, (1-\eta(a)-\rho)f(\eta^a)\frac{d\nu^N_\rho(\eta^a)}{d\nu^N_\rho(\eta)}\,d\nu^N_\rho(\eta)\\
&=2 \int\, \left(-\rho\frac{1-\rho}{\rho}\mathbf{1}_{\{\eta(a)=1\}} + (1-\rho) \frac{\rho}{1-\rho} \mathbf{1}_{\{\eta(a)=0\}}\right)f(\eta^a)\,d\nu^N_\rho(\eta)\\
&= -2\int\,(\eta(a)-\rho) f(\eta^a)\,d\nu^N_\rho(\eta)= \int\, (\eta(a)-\rho)(f(\eta)-f(\eta^a))\,d\nu^N_\rho(\eta)\\
&\leq \frac{A}{2} \int\,(\eta(a)-\rho)^2 \, d\nu^N_\rho(\eta) + \frac{1}{2A} \int\, (f(\eta)-f(\eta^a))^2\,d\nu^N_\rho(\eta)
\end{aligned}
\end{equation}
for any $A>0$.
Now implement the estimates \eqref{eq:2t} and \eqref{eq:1t} into the variational functional in \eqref{eq:KV}.
To eliminate the boundary carr\'e du champ at $a$, we set $A= \frac{1}{\min(\lambda_+, \lambda_-)}\frac{1}{\sqrt{|V_N|}}$, and this yields an upper bound on the variational functional in \eqref{eq:KV}:
\begin{equation}
\begin{aligned}
\frac{1}{\min(\lambda_+, \lambda_-)}&\frac{5^N}{2 b^N |V_N|} \chi(\rho) + \min(\lambda_+, \lambda_-)\frac{5^N}{b^N} \frac{1}{2} \int\, (f(\eta)-f(\eta^a))^2\,d\nu^N_\rho(\eta)  \\
& - \frac{5^N}{b^N} \frac{1}{2} \int\, \omega_a(\eta) (f(\eta^a)-f(\eta))^2\,d\nu^N_\rho(\eta)
\leq \frac{1}{\min(\lambda_+,\lambda_-)} \frac{5^N}{2b^N|V_N|} \chi(\rho).
\end{aligned}
\end{equation}
Since $b>5/3$, the right-hand side goes to $0$ as $N\to\infty$.
This proves the lemma.
\end{proof}

%%%
%RL for density fluctuation D
%%%

\begin{lemma}[Boundary replacement for the DFF, $b<5/3$]
\label{lem:RLDFFDir}
For every $a\in V_0$,
\begin{align}
\varlimsup_{N\to\infty} \mathbb{E}^{N,b}_\rho\left[\left(\int_0^t\, \frac{3^N}{\sqrt{|V_N|}}\bar\eta_s^N(a) \,ds\right)^2\right]=0.
\end{align}
\end{lemma}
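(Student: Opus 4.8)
The plan is to follow almost verbatim the proof of Lemma~\ref{lem:RLDFFNeu}, the only change being that the diverging prefactor is now $3^N/\sqrt{|V_N|}$ instead of $5^N/(b^N\sqrt{|V_N|})$. First I would invoke the Kipnis--Varadhan inequality \cite{KipnisLandim}*{Appendix 1, Proposition 6.1}: since we are in the equilibrium setting, $\nu^N_\rho$ is reversible for $5^N\mathcal{L}_N$ and $\mathbb{E}_{\nu^N_\rho}[\bar\eta^N(a)]=0$, so the expectation in the statement is bounded above by
\[
Ct\sup_{f\in L^2(\nu^N_\rho)}\left\{ 2\int\, \frac{3^N}{\sqrt{|V_N|}}\,\bar\eta(a)\,f(\eta)\,d\nu^N_\rho(\eta) \;-\; 5^N\langle f, -\mathcal{L}_N f\rangle_{\nu^N_\rho}\right\}.
\]
By reversibility the Dirichlet form splits with no error into the bulk carr\'e du champ $\Gamma_N(f,\nu^N_\rho)$ plus boundary carr\'es du champ $\tfrac1{b^N}\sum_{a'\in V_0}\tfrac12\int\omega_{a'}(\eta)(f(\eta^{a'})-f(\eta))^2\,d\nu^N_\rho$ with $\omega_{a'}(\eta)=\lambda_-\eta(a')+\lambda_+(1-\eta(a'))$; I would discard all of these except the boundary term at the single vertex $a$, giving $5^N\langle f, -\mathcal{L}_N f\rangle_{\nu^N_\rho}\ge \frac{5^N}{b^N}\frac12\int\omega_a(\eta)(f(\eta^a)-f(\eta))^2\,d\nu^N_\rho$, where $\omega_a(\eta)\ge\min(\lambda_+,\lambda_-)$.

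Next I would handle the linear term by the same change of variables $\eta\mapsto\eta^a$ as in \eqref{eq:1t}, which symmetrizes it into $\int(\eta(a)-\rho)(f(\eta)-f(\eta^a))\,d\nu^N_\rho$, followed by Young's inequality $2xy\le Ax^2+A^{-1}y^2$ to split off $\frac{A}{2}\int(\eta(a)-\rho)^2\,d\nu^N_\rho=\frac{A}{2}\chi(\rho)$ and $\frac{1}{2A}\int(f(\eta)-f(\eta^a))^2\,d\nu^N_\rho$, for any $A>0$. The variational functional is then at most
\[
\frac{3^N}{\sqrt{|V_N|}}\left(\frac{A}{2}\chi(\rho)+\frac1{2A}\int(f(\eta)-f(\eta^a))^2\,d\nu^N_\rho\right)-\frac{5^N}{b^N}\,\frac{\min(\lambda_+,\lambda_-)}{2}\int(f(\eta^a)-f(\eta))^2\,d\nu^N_\rho .
\]
Choosing $A=\dfrac{1}{\min(\lambda_+,\lambda_-)}\dfrac{3^N b^N}{5^N\sqrt{|V_N|}}$ makes the two integral terms cancel (the positive one is dominated), leaving the bound $\dfrac{3^N}{\sqrt{|V_N|}}\cdot\dfrac{A}{2}\chi(\rho)=\dfrac{\chi(\rho)}{2\min(\lambda_+,\lambda_-)}\cdot\dfrac{9^N b^N}{5^N|V_N|}$.

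Finally, since $|V_N|=\frac32(3^N+1)\asymp 3^N$, the last quantity is $\asymp (3b/5)^N$, which tends to $0$ precisely because $b<5/3$; multiplying by $Ct$ preserves this, which proves the lemma. I do not expect a real obstacle: the argument is structurally identical to Lemma~\ref{lem:RLDFFNeu}, and the only point to watch is that the optimal choice of $A$ produces a residual $(3b/5)^N$ that decays exactly under the hypothesis $b<5/3$ (versus the residual $(5/(3b))^N$ which decayed under $b>5/3$ in the Neumann case). One could equivalently phrase this via the spectral-gap/effective-resistance route, but the variational argument above is the most economical.
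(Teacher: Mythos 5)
Your proposal is correct and follows essentially the same route as the paper: the paper's proof of Lemma \ref{lem:RLDFFDir} is exactly the Kipnis--Varadhan argument of Lemma \ref{lem:RLDFFNeu} with the prefactor $3^N/\sqrt{|V_N|}$, the same choice $A=\frac{1}{\min(\lambda_+,\lambda_-)}\frac{3^Nb^N}{5^N\sqrt{|V_N|}}$ to cancel the boundary carr\'e du champ at $a$, and the same residual bound $\frac{3^{2N}b^N}{2|V_N|5^N}\chi(\rho)\asymp(3b/5)^N\to 0$ for $b<5/3$. Nothing further is needed.
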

\begin{proof}
The proof is virtually identical to that of Lemma \ref{lem:RLDFFNeu}.
The only difference is in the scaling parameter, which is $\frac{3^N}{\sqrt{|V_N|}}$ instead of $\frac{5^N}{b^N \sqrt{|V_N|}}$.
We follow the proof up to \eqref{eq:1t}, and then set $A= \frac{1}{\min(\lambda_+,\lambda_-)} \frac{3^N b^N}{5^N \sqrt{|V_N|}}$ to eliminate the boundary carr\'e du champ at $a$.
This yields
\begin{align}
\frac{1}{\min(\lambda_+,\lambda_-)} \frac{3^{2N} b^N}{2|V_N| 5^N}\chi(\rho),
\end{align}
as an upper bound on the variational functional.
Since $b<5/3$, the last expression goes to $0$ as $N\to\infty$.
\end{proof}

%%%
%RL for density fluctuation R
%%%
\begin{lemma}[Boundary replacement for the DFF, $b=5/3$]
\label{lem:RLDFFRob}
Let $\{\beta_N\}_N$ be a sequence of numbers tending to $0$ as $N\to\infty$. For every $a\in V_0$, 
\begin{align}
\varlimsup_{N\to\infty} \mathbb{E}^{N,b}_\rho\left[\left(\int_0^t\, \frac{3^N}{\sqrt{|V_N|}}\bar\eta_s^N(a) \beta_N \,ds\right)^2\right]=0.
\end{align}
\end{lemma}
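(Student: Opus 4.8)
The plan is to reduce this to the template already executed in the proofs of Lemmas~\ref{lem:RLDFFNeu} and~\ref{lem:RLDFFDir}. Since $\beta_N$ is a deterministic (albeit $N$-dependent) scalar, it pulls out of the square, so
\[
\mathbb{E}^{N,b}_\rho\!\left[\left(\int_0^t \frac{3^N}{\sqrt{|V_N|}}\bar\eta^N_s(a)\beta_N\,ds\right)^2\right] = \beta_N^2\;\mathbb{E}^{N,b}_\rho\!\left[\left(\int_0^t \frac{3^N}{\sqrt{|V_N|}}\bar\eta^N_s(a)\,ds\right)^2\right].
\]
It therefore suffices to show that the expectation on the right is bounded uniformly in $N$; the extra factor $\beta_N^2\to0$ then closes the argument.

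To bound that expectation I would invoke the Kipnis--Varadhan inequality exactly as in~\eqref{eq:KV} (noting that $\bar\eta(a)$ has $\nu^N_\rho$-mean zero), obtaining the upper bound $Ct\sup_f\big\{2\int \frac{3^N}{\sqrt{|V_N|}}\bar\eta(a)f(\eta)\,d\nu^N_\rho(\eta) - 5^N\langle f,-\mathcal{L}_N f\rangle_{\nu^N_\rho}\big\}$ over densities $f\in L^2(\nu^N_\rho)$. Using reversibility I would rewrite $\langle f,-\mathcal{L}_N f\rangle_{\nu^N_\rho}$ as the sum of the bulk and boundary carr\'es du champ with no error, discard the bulk term and all boundary terms except the one at $a$, and use $\omega_a(\eta)\ge\min(\lambda_+,\lambda_-)$ together with the identity $5^N/b^N=3^N$ (valid precisely because $b=5/3$) to get $5^N\langle f,-\mathcal{L}_N f\rangle_{\nu^N_\rho}\ge 3^N\tfrac{\min(\lambda_+,\lambda_-)}{2}\int (f(\eta^a)-f(\eta))^2\,d\nu^N_\rho(\eta)$. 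For the linear term I would repeat the change-of-variables computation of~\eqref{eq:1t} verbatim to rewrite it as $\frac{3^N}{\sqrt{|V_N|}}\int(\eta(a)-\rho)(f(\eta)-f(\eta^a))\,d\nu^N_\rho(\eta)$, then apply Young's inequality with a parameter $A>0$ to split it into $\frac{3^N}{\sqrt{|V_N|}}\big(\tfrac{A}{2}\chi(\rho)+\tfrac{1}{2A}\int(f(\eta)-f(\eta^a))^2\,d\nu^N_\rho(\eta)\big)$.

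Choosing $A=\big(\min(\lambda_+,\lambda_-)\sqrt{|V_N|}\big)^{-1}$ cancels the boundary carr\'e du champ exactly, leaving $\frac{3^N}{\sqrt{|V_N|}}\cdot\frac{A}{2}\chi(\rho)=\frac{3^N\chi(\rho)}{2\min(\lambda_+,\lambda_-)\,|V_N|}$ as an upper bound on the variational functional. Since $|V_N|=\tfrac{3}{2}(3^N+1)$, one has $3^N/|V_N|\to\tfrac{2}{3}$, so this quantity is bounded uniformly in $N$. Combined with the first paragraph, the full expectation is at most a constant times $\beta_N^2$, which tends to $0$, proving the lemma.

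I do not expect a genuine obstacle here: the proof is a routine variant of the two preceding boundary-replacement lemmas for the density fluctuation field. The only structural point worth emphasizing is \emph{why} this lemma needs the hypothesis $\beta_N\to0$ at all: in the regimes $b>5/3$ and $b<5/3$ the scaling prefactor itself furnishes a vanishing factor ($5^N/(b^N|V_N|)\to0$, resp.\ $3^{2N}b^N/(|V_N|5^N)\to0$), whereas at the critical value $b=5/3$ the natural Kipnis--Varadhan estimate is only $O(1)$. Hence at $b=5/3$ the decay must be supplied externally through $\beta_N\to0$; in the application to the Robin regime this $\beta_N$ is $(\partial_N^\perp F)(a)-(\partial^\perp F)(a)$, which vanishes by Lemma~\ref{lem:domLap}-\eqref{NormalDerivative} together with the rate of convergence of the discrete normal derivative, cf.\ the discussion around~\eqref{eq:MY}.
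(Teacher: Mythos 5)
Your proof is correct and is essentially the paper's argument: the paper also reduces to the Kipnis--Varadhan bound of Lemma \ref{lem:RLDFFDir} with the same choice of $A$ (which at $b=5/3$ equals your $A=(\min(\lambda_+,\lambda_-)\sqrt{|V_N|})^{-1}$, since $3^Nb^N/5^N=1$), arriving at the same bound $\beta_N^2\,\tfrac{3^N}{2|V_N|}\chi(\rho)/\min(\lambda_+,\lambda_-)\to 0$; whether $\beta_N$ is pulled out of the square first or kept inside the variational functional is an immaterial bookkeeping difference.
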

\begin{proof}
Follow the proof of Lemma \ref{lem:RLDFFDir} and set the same $A$. 
Then we obtain on the variational functional an upper bound
\begin{align}
\frac{1}{\min(\lambda_+,\lambda_-)} \frac{3^{N}}{2|V_N|} \beta_N^2 \chi(\rho),
\end{align}
which tends to $0$ as $N\to\infty$.
\end{proof}

\section{Hydrodynamic limits of the empirical density} \label{sec:Hydro}

In this section we rigorously prove Theorem \ref{thm:Hydro}. Throughout the proof, we fix a time horizon $T>0$, the boundary scaling parameter $b$, the initial density profile $\varrho$, and a sequence of probability measures $\{\mu_N\}_N$ on $\Omega_N$ associated with $\varrho$ (\emph{cf.\@} Definition \ref{def:associated}).
Recall that $\mathbb{P}_{\mu_N}$ is the probability measure on the Skorokhod space $D([0,T], \Omega_N)$ induced by the Markov process $\{\eta^N_t: t\geq 0\}$ with infinitesimal generator $5^N \mathcal{L}_N$.
Expectation with respect to $\mathbb{P}_{\mu_N}$ is written $\mathbb{E}_{\mu_N}$.

Let $\mathcal{M}_+$ be the space of nonnegative measures on $K$ with total mass bounded by $1$.
Then we denote by $\mathbb{Q}_N$ the probability measure on the Skorokhod space $D([0,T], \mathcal{M}_+)$ induced by $\{\pi^N_t : t\geq 0\}$ and by $\mathbb{P}_{\mu_N}$.
The proof proceeds as follows: we show tightness of the sequence $\{\mathbb{Q}_N\}_N$, and then we characterize uniquely the limit point, by showing that it is a Dirac measure on the trajectory of measures $d\pi_t(x)=\rho(t,x)\,dm(x)$, where $\rho(t,x)$ is the unique weak solution of the corresponding hydrodynamic equation.

\subsection{Tightness} \label{sec:dentight}

In this subsection we show that $\{\mathbb{Q}_N\}_N$ is tight via the application of Aldous' criterion.

\begin{lemma}[Aldous' criterion]
\label{lem:Aldous}
Let $(E, d)$ be a complete separable metric space.
A sequence $\{P_N\}_N$ of probability measures on $D([0,T], E)$ is tight if the following hold:
\begin{enumerate}[wide,label=(A\arabic*)]
\item \label{A1} For every $t\in [0,T]$ and every $\epsilon>0$, there exists compact $K_\epsilon^t \subset E$ such that
\[
\sup_N P_N\left(X_t \notin K_\epsilon^t\right) \leq \epsilon.
\]
\item \label{A2} For every $\epsilon>0$,
\[
\lim_{\gamma \to 0} \varlimsup_{N\to\infty} \sup_{\substack{\tau\in \mathcal{T}_T\\ \theta\leq \gamma}} P_N\left(d(X_{(\tau+\theta) \wedge T}, X_\tau)>\epsilon\right)=0,
\]
where $\mathcal{T}_T$ denotes the family of stopping times (with respect to the canonical filtration) bounded by $T$.
\end{enumerate}
\end{lemma}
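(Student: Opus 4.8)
Since this is Aldous' classical tightness criterion, the plan is not to prove anything new but to assemble the standard argument; for the most delicate parts I would ultimately follow \cite{KipnisLandim}*{Chapter 4} (or Billingsley, or Ethier--Kurtz). First I would invoke Prokhorov's theorem: since $(E,d)$ is Polish, the Skorokhod space $D([0,T],E)$ with the $J_1$-topology is again Polish, so it is enough to prove that $\{P_N\}_N$ is relatively compact for the weak topology, and for that it suffices to produce, for each $\epsilon>0$, a set $A\subset D([0,T],E)$ with compact closure and $\sup_N P_N(X_\cdot\notin A)\le\epsilon$. Here I would use the Arzel\`a--Ascoli characterization of compactness in $D([0,T],E)$: the closure of $A$ is compact iff $\{x_t:x\in A\}$ is relatively compact in $E$ for each $t$ in a dense subset of $[0,T]$, and $\lim_{\delta\downarrow0}\sup_{x\in A}w'_x(\delta)=0$, where $w'_x(\delta)=\inf\max_{1\le i\le v}\sup_{s,t\in[t_{i-1},t_i)}d(x_s,x_t)$ is the Skorokhod modulus, the infimum being over partitions $0=t_0<\cdots<t_v=T$ with $\min_i(t_i-t_{i-1})>\delta$. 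In this way the lemma reduces to two claims: \textbf{(T1)} for every $t\in[0,T]$ and every $\epsilon>0$ there is a compact $K^t_\epsilon\subset E$ with $\sup_N P_N(X_t\notin K^t_\epsilon)\le\epsilon$; and \textbf{(T2)} for all $\epsilon,\eta>0$ there exist $\delta>0$ and $N_0$ with $\sup_{N\ge N_0}P_N(w'_X(\delta)\ge\epsilon)\le\eta$. Claim (T1) is precisely hypothesis \ref{A1}, so all the work is in deducing (T2) from \ref{A2}.

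For (T2) I would carry out the usual two-stage reduction. First, replace $w'$ by the auxiliary modulus $w''_x(\delta):=\sup\{d(x_t,x_{t_1})\wedge d(x_{t_2},x_t):t_1\le t\le t_2,\ t_2-t_1\le\delta\}$, using Billingsley's lemma, which bounds $w'_x(\delta)$ in terms of $w''_x(\delta)$ together with the oscillation of $x$ in a left-neighbourhood of $T$ — the latter being handled with \ref{A1} at $t=T$ and \ref{A2}. Second, fix $\epsilon>0$ and introduce the exit-time sequence $\tau_0=0$, $\tau_{k+1}=\inf\{t>\tau_k:d(X_t,X_{\tau_k})\ge\epsilon/3\}\wedge T$, which are stopping times for the usual augmentation of the natural filtration. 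Right-continuity of the paths shows that $d(X_{\tau_{k+1}},X_{\tau_k})\ge\epsilon/3$ whenever the threshold $\epsilon/3$ is actually reached, and that $\tau_k<T$ for only finitely many $k$ (otherwise $X$ would have no left limit at $\sup_k\tau_k$). A short triangle-inequality argument — choosing, for a triple $t_1\le t\le t_2$ witnessing $w''_X(\delta)\ge\epsilon$, the largest $\tau_j\le t$ — then yields $\{w''_X(\delta)\ge\epsilon\}\subset\bigcup_j\{\tau_j<T,\ \tau_{j+1}-\tau_j\le\delta\}$. On each event in this union, conditioning on $\mathcal{F}_{\tau_j}$ rewrites $P_N(\tau_j<T,\ \tau_{j+1}-\tau_j\le\delta)$ as a quantity of the form $P_N\bigl(d(X_{(\tau+\theta)\wedge T},X_\tau)\ge\epsilon/3\bigr)$ with $\tau=\tau_j$ a stopping time and $\theta=\tau_{j+1}-\tau_j\le\delta$, which is exactly the object controlled by \ref{A2}.

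It then remains to sum over $j$ without the number of exit times blowing up. I would split the sum as $\sum_{j<\Lambda}(\cdots)+\sum_{j\ge\Lambda}P_N(\tau_j<T)$: for a fixed $\Lambda$ the first piece is at most $\Lambda$ times the \ref{A2}-quantity at window length $\delta$, which vanishes upon taking $\limsup_N$ and then $\delta\downarrow0$; the tail piece forces one to show that the $\tau_j$ are, with high probability, spread out over $[0,T]$ — equivalently $\lim_{\Lambda\to\infty}\limsup_N P_N(\tau_\Lambda<T)=0$ — which is the other, more subtle, place where \ref{A2} is used. Choosing $\Lambda$ large, then $\delta$ small, then $N$ large then makes $P_N(w''_X(\delta)\ge\epsilon)\le\eta$, and together with (T1) this gives (T2); Prokhorov and the compactness characterization then complete the argument.

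The main obstacle is precisely the pair of classical facts invoked above: Billingsley's $w'$-versus-$w''$ lemma, and the passage from the purely path-wise, non-adapted modulus $w''$ to estimates evaluated at adapted stopping times while controlling both the clustering and the total number of the exit times $\tau_j$ — using nothing beyond \ref{A2} in its $\lim_{\gamma\to0}\limsup_N$ form. All of this is measure-independent: no feature of the exclusion process or of the gasket enters, which is also why it is entirely legitimate to settle the whole lemma by citing \cite{KipnisLandim}*{Chapter 4}.
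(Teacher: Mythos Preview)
The paper does not prove this lemma at all: it is stated as a classical result (Aldous' criterion) and immediately applied, with the implicit reference being \cite{KipnisLandim}*{Chapter 4}. Your proposal goes well beyond what the paper does, by sketching the standard Prokhorov/Arzel\`a--Ascoli reduction and the exit-time argument; this is a reasonable outline of the classical proof, and your closing remark that one may simply cite \cite{KipnisLandim}*{Chapter 4} is exactly what the paper does in practice.
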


By \cite{KipnisLandim}*{Proposition 4.1.7}, it suffices to show that for every $F$ in a dense subset of $C(K)$, with respect to the uniform topology, the sequence of measures on $D([0,T],\mathbb{R})$ that correspond to the $\mathbb{R}$-valued processes $\pi_t^N(F)$ is tight.
Part \ref{A1} of Aldous' criterion says that
\begin{align}
\lim_{M\to\infty} \sup_N \mathbb{P}_{\mu_N} \left(\eta^N_\cdot:|\pi_t^N(F)|>M \right) =0.
\end{align}
This is directly verified using Chebyshev's inequality and the exclusion dynamics fact that the total mass of $\pi^N_\cdot$ is bounded above by $1$.
As for Part \ref{A2} of Aldous' criterion, we need to verify that for every $\epsilon>0$,
\begin{align}
\label{eq:A2b}
\lim_{\gamma\to 0} \varlimsup_{N\to\infty}\sup_{\substack{\tau\in \mathcal{T}_T\\\theta \leq \gamma}} \mathbb{P}_{\mu_N}\left(\eta^N_\cdot:\left|\pi^N_{(\tau+\theta)\wedge T}(F) - \pi^N_\tau(F)\right|>\epsilon\right) =0
\end{align}
To avoid an overcharged notation we shall write $\tau+\theta$ for $(\tau+\theta)\wedge T$ in what follows.
By \eqref{eq:Dynkin1} we have
\begin{equation}
\label{eq:diffAldous}
\begin{aligned}
\pi^N_{\tau+\theta}(F) &- \pi^N_\tau(F) = \left(M^N_{\tau+\theta}(F) - M^N_\tau(F)\right) + \int_{\tau}^{\tau+\theta}\, \pi_s^N\left(\frac{2}{3}\Delta F\right)\,ds \\
&{-} \int_{\tau}^{\tau+\theta}\, \frac{3^N}{|V_N|}\sum_{a\in V_0}  \left[\eta^N_s(a) (\partial^\perp F)(a) + \frac{5^N}{3^N b^N} \lambda_\Sigma(a) (\eta^N_s(a)-\bar\rho(a))F(a) \right]\,ds + o_N(1)
\end{aligned}
\end{equation}
Denoting the last integral term as $\mathcal{B}^N_{\tau, \tau+\theta}(F)$, it follows that
\begin{equation}
\begin{aligned}
\mathbb{P}_{\mu_N} & \left(\left|\pi^N_{\tau+\theta}(F) - \pi^N_\tau(F)\right|>\epsilon\right)
\leq 
\mathbb{P}_{\mu_N} \left(\left|M^N_{\tau+\theta}(F) - M^N_\tau(F)\right|>\frac{\epsilon}{3}\right) \\
&+ 
\mathbb{P}_{\mu_N} \left(\left|\int_\tau^{\tau+\theta}\, \pi^N_s\left(\frac{2}{3}\Delta F\right)\,ds\right|>\frac{\epsilon}{3}\right)
+ \mathbb{P}_{\mu_N} \left(\left|\mathcal{B}^N_{\tau, \tau+\theta}(F)\right|>\frac{\epsilon}{3}\right)\\
& \leq \frac{9}{\epsilon^2} \left(
 \mathbb{E}_{\mu_N}\left[\left|M_{\tau+\theta}^N(F)-M_\tau^N(F)\right|^2\right] 
+
\mathbb{E}_{\mu_N}\left[\left|\int_\tau^{\tau+\theta}\, \pi_s^N\left(\frac{2}{3}\Delta F\right)\,ds\right|^2\right]
+
\mathbb{E}_{\mu_N}\left[\left|\mathcal{B}^N_{\tau, \tau+\theta}(F)\right|^2\right]
\right)
\end{aligned}
\end{equation}
where we used Chebyshev's inequality at the end.
Our goal is to show that all three terms on the right-hand side of last display ---the martingale term, the Laplacian term, and the boundary term---vanish in the limit stated in \eqref{eq:A2b}.

Before carrying out the estimates, we comment on the space of test functions $F$.
When $b\geq 5/3$, we take $F$ from ${\rm dom}\Delta$, which is dense in $C(K)$.
When $b< 5/3$, we take $F$ from ${\rm dom}\Delta_0$, which however is not dense in $C(K)$. This will be addressed at the end of the subsection.

\subsubsection*{The martingale term.}
We have
\begin{equation}
\label{eq:MGest}
\begin{aligned}
\mathbb{E}_{\mu_N}&\left[\left|M_{\tau+\theta}^N(F)-M_\tau^N(F)\right|^2\right]
= \mathbb{E}_{\mu_N} \left[\langle M^N(F)\rangle_{\tau+\theta} - \langle M^N(F)\rangle_{\tau}\right]\\
&\underset{\eqref{eq:MG1}}{=} \mathbb{E}_{\mu_N}\left[\int_\tau^{\tau+\theta}\,\frac{5^N}{|V_N|^2} \sum_{x\in V_N} \sum_{\substack{y\in V_N\\y\sim x}}(\eta^N_s(x)-\eta_s^N(y))^2(F(x)-F(y))^2 \,ds \right]\\
&
\quad +\mathbb{E}_{\mu_N}\left[\int_\tau^{\tau+\theta}\, \sum_{a\in V_0}\frac{5^N}{b^N|V_N|^2}\{\lambda_-(a)\eta_s^N(a)+\lambda_+(a)(1-\eta_s^N(a))\}F^2(a)\,ds\right] \\
&\leq C \theta\left(\frac{1}{3^N}\frac{5^N}{3^N}\sum_{\substack{x,y\in V_N\\x\sim y}} (F(x)-F(y))^2 + \frac{5^N}{b^N 3^{2N}} \sum_{a\in V_0} \max(\lambda_+(a), \lambda_-(a)) F^2(a)\right)\\
&\leq C \theta \left(\frac{1}{3^N}\mathcal{E}_N(F) + \frac{5^N}{b^N 3^{2N}}\sum_{a\in V_0} F^2(a)\right).
\end{aligned}
\end{equation}
Since $\sup_N \mathcal{E}_N(F) <\infty$, the first term is $o_N(1)$.
As for the second term, it is $o_N(1)$ when $b>5/9$.
When $b\leq 5/9$, we are in the Dirichlet regime and $F(a)=0$ for all $a\in V_0$, so the term vanishes anyway.

\subsubsection*{The Laplacian term.}
By Cauchy-Schwarz, that $\pi^N_\cdot$ has total mass bounded by $1$, and that $F\in {\rm dom}\Delta$, we obtain
\begin{align}
\mathbb{E}_{\mu_N}\left[\left|\int_\tau^{\tau+\theta}\, \pi_s^N\left(\frac{2}{3}\Delta F\right)\,ds\right|^2\right]
\leq \mathbb{E}_{\mu_N}\left[\theta \int_\tau^{\tau+\theta} \, \left|\pi^N_s\left(\frac{2}{3}\Delta F\right)\right|^2\,ds\right]
\leq C\theta^2 \left(\sup_{x\in K}|\Delta F(x)|\right)^2 \leq C\theta^2.
\end{align}
The right-hand side vanishes as $\theta\to 0$, so tightness of the Laplacian term follows.

\subsubsection*{The boundary term.}
When $b>5/3$, the second term of the integrand of $\mathcal{B}^N_{\tau,\tau+\theta}(F)$ is $o_N(1)$, and
\begin{align}
\label{eq:Nb}
\mathbb{E}_{\mu_N}\left[\left|\mathcal{B}^N_{\tau, \tau+\theta}(F)\right|^2\right] \leq C\theta^2\left(\sum_{a\in V_0} (\partial^\perp F)(a)\right)^2+o_N(1).
\end{align}
When $b=5/3$, both terms in the integrand of $\mathcal{B}^N_{\tau,\tau+\theta}(F)$ contribute equally:
\begin{align}
\mathbb{E}_{\mu_N}\left[\left|\mathcal{B}^N_{\tau, \tau+\theta}(F)\right|^2\right] 
\leq C \theta^2 \left(\sum_{a\in V_0} \left((\partial^\perp F)(a) + F(a)\right)\right)^2.
\end{align}
When $b<5/3$, the second term vanishes since $F(a)=0$ for all $a\in V_0$, and we have the same estimate as \eqref{eq:Nb} without the additive $o_N(1)$.
In all cases the right-hand side estimate vanishes as $\theta\to 0$, from which we obtain tightness of the boundary term.

%%%
%L1 approx argument
%%%

We have thus far proved tightness of $\{\mathbb{Q}_N\}_N$ for $b\geq 5/3$.
That said, there remains a loose end in the case $b<5/3$, since  our test function space ${\rm dom}\Delta_0$ is not uniformly dense in $C(K)$.
To tackle this issue, we follow the $L^1$-approximation scheme given in \cite{G18}*{\S2.9}. 
Note that ${\rm dom}\Delta_0 \subset \mathcal{F} \subset L^2(K,m) \subset L^1(K,m)$, and that $\mathcal{F}$ is dense in $C(K)$.
So it suffices to show that for any $F\in \mathcal{F}$ and any $\epsilon>0$,
\begin{align}
\label{eq:A2c}
\lim_{\gamma\to 0} \varlimsup_{N\to\infty} \sup_{\substack{\tau\in\mathcal{T}_T\\ \theta\leq \gamma}}\mathbb{P}_{\mu_N}\left(\eta^N_\cdot: \left|\pi^N_{\tau+\theta}(F) -\pi^N_{\tau}(F) \right| >\epsilon\right) =0.
\end{align}
Given $F\in \mathcal{F}$, let $F_k$ be a sequence in ${\rm dom}\Delta_0$ which converges to $F$ in $L^1(K,m)$.
Then
\begin{equation}
\begin{aligned}
\mathbb{P}_{\mu_N}\left(\eta^N_\cdot:  \left|\pi^N_{\tau+\theta}(F) -\pi^N_{\tau}(F) \right| >\epsilon\right)
&\leq
\mathbb{P}_{\mu_N}\left(\eta^N_\cdot:  \left|\pi^N_{\tau+\theta}(F-F_k) -\pi^N_{\tau}(F-F_k) \right| >\frac{\epsilon}{2}\right)\\
&+
\mathbb{P}_{\mu_N}\left(\eta^N_\cdot:  \left|\pi^N_{\tau+\theta}(F_k) -\pi^N_{\tau}(F_k) \right| >\frac{\epsilon}{2}\right).
\end{aligned}
\end{equation}
We have already shown that the second term on the right-hand side goes to $0$ in the stated limit.
As for the first term, we use the triangle inequality, that $\pi^N_\cdot$ is bounded above by the uniform probability measure on $V_N$, and the weak convergence of the latter measure to the self-similar measure $m$ on $K$, to get
\begin{equation}
 \left|\pi^N_{\tau+\theta}(F-F_k) -\pi^N_{\tau}(F-F_k) \right|\leq \frac{2}{|V_N|} \sum_{x\in V_N} |F-F_k|(x) \leq 2 \|F-F_k\|_{L^1(K,m)} + o_N(1).
\end{equation}
The right-hand side vanishes in the limit $N\to\infty$ followed by $k\to\infty$. This proves \eqref{eq:A2c} and hence completes the proof of tightness.

\subsection{Identification of limit points} 
\label{sec:denlimit}

Now that we have proved tightness of $\{\mathbb{Q}_N\}_N$, let $\mathbb{Q}$ denote a limit point of this sequence.
The goal of this subsection is to prove:
\begin{proposition}
\label{prop:Qlimitdensity}
For any limit point $\mathbb{Q}$,
\begin{align}
\mathbb{Q}(\pi_\cdot: \pi_t(dx) = \rho_t(x)\,dm(x),~\forall t\in [0,T])=1,
\end{align}
where $\rho\in L^2(0,T,\mathcal{F})$ is a weak solution of the heat equation with the appropriate boundary condition.
\end{proposition}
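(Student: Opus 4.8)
The plan is to follow the classical entropy method, so I would first show that $\mathbb{Q}$ is supported on paths $t\mapsto\pi_t$ with $\pi_t(dx)=\rho_t(x)\,dm(x)$ for some measurable $\rho_t:K\to[0,1]$. This is a soft consequence of the exclusion rule: for every nonnegative $F\in C(K)$ one has $0\le\pi^N_t(F)\le|V_N|^{-1}\sum_{x\in V_N}F(x)$, and the right-hand side converges to $\int_K F\,dm$ since $m_N$ converges weakly to $m$; passing to the limit along the subsequence defining $\mathbb{Q}$---and noting that the jumps of $\pi^N_\cdot$ have size $O(|V_N|^{-1})$, so the limiting path is continuous---yields $0\le\pi_t(F)\le\int_K F\,dm$ for all $t$, $\mathbb{Q}$-a.s., whence $\pi_t\ll m$ with Radon--Nikodym derivative $\rho_t\in[0,1]$. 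To upgrade $\rho$ to $L^2(0,T,\mathcal{F})$ I would run the standard energy estimate of the entropy method: via the entropy inequality, the Feynman--Kac bound, the Dirichlet-form/carr\'e-du-champ comparison of \S\ref{sec:CC}, and the moving particle lemma (Lemma~\ref{lem:MPL}), one bounds, uniformly in $N$, the $\mathbb{E}_{\mathbb{Q}_N}$-expectation of a variational functional pairing $\pi^N$ against $\Delta_N H$ minus a multiple of $\mathcal{E}_N(H)$; lower semicontinuity of $\mathcal{E}$ together with an approximation argument then force $\int_0^T\mathcal{E}(\rho_s)\,ds<\infty$.

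\textbf{Step 2: Dynkin's martingale away from the boundary.} Fix a test function $F\in C([0,T],{\rm dom}\Delta)\cap C^1((0,T),{\rm dom}\Delta)$, with the extra constraint $F_t|_{V_0}=0$ when $b<5/3$, and recall \eqref{eq:Dynkin1}--\eqref{eq:QVDensity}. The quadratic-variation estimate already established in \eqref{eq:MGest} (with $\tau=0$, $\theta=T$) gives $\mathbb{E}_{\mu_N}[(M^N_T(F))^2]\to0$, so the martingale term is asymptotically negligible. For the bulk terms I use the weak convergence $\pi^N_\cdot\Rightarrow\pi_\cdot$ along the subsequence and the continuity of the limit: $\pi^N_t(F_t)\to\int_K\rho_t F_t\,dm$, $\pi^N_0(F_0)\to\int_K\varrho\,F_0\,dm$ (using that $\mu_N$ is associated with $\varrho$), and $\int_0^t\pi^N_s((\tfrac23\Delta+\partial_s)F_s)\,ds\to\int_0^t\int_K\rho_s(\tfrac23\Delta+\partial_s)F_s\,dm\,ds$, where $\tfrac23\Delta F$ is the continuous extension to $K$ of $\lim_N\Delta_N F$ supplied by Lemma~\ref{lem:domLap}-\eqref{Lapunif} (as in \eqref{eq:throwaway}). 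What remains is to identify the boundary contribution.

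\textbf{Step 3: The boundary terms---the crux.} In \eqref{eq:Dynkin1} the boundary integrand is $\frac{3^N}{|V_N|}\sum_{a\in V_0}[\eta^N_s(a)(\partial^\perp_N F_s)(a)+\frac{5^N}{3^N b^N}\lambda_\Sigma(a)(\eta^N_s(a)-\bar\rho(a))F_s(a)]$, with $\frac{3^N}{|V_N|}\to\tfrac23$ and $(\partial^\perp_N F_s)(a)\to(\partial^\perp F_s)(a)$ by Lemma~\ref{lem:domLap}-\eqref{NormalDerivative}. To convert the microscopic variable $\eta^N_s(a)$ into the macroscopic density I replace it by a small-cell average using Lemma~\ref{lem:replace1} when $b\ge5/3$, i.e.\@ $\eta^N_s(a)={\rm Av}_{K_j(a)\cap V_N}[\eta^N_s]+o_{N,j}(1)$; then $N\to\infty$ turns ${\rm Av}_{K_j(a)\cap V_N}[\eta^N_s]$ into the $m$-average of $\rho_s$ over $K_j(a)$ (approximating $|K_j(a)|^{-1}\mathbf 1_{K_j(a)}$ by continuous functions, legitimate since the set of discontinuities has $m$-measure zero), and finally $j\to\infty$ yields $\rho_s(a)$ by the $\tfrac12$-H\"older continuity of $\rho_s\in\mathcal{F}$ in the resistance metric (the footnote after Lemma~\ref{lem:replace2}). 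This produces precisely the boundary terms of $\Theta_{\rm Rob}(t)$: for $b>5/3$ the Glauber term carries the prefactor $5^N/(3^Nb^N)\to0$ and disappears, leaving $\tfrac23\int_0^t\sum_{a}\rho_s(a)(\partial^\perp F_s)(a)\,ds$ (Neumann, ${\sf r}=0$); for $b=5/3$ the prefactor equals $1$, adding $\tfrac23\int_0^t\sum_{a}\lambda_\Sigma(a)(\rho_s(a)-\bar\rho(a))F_s(a)\,ds$, i.e.\@ ${\sf r}(a)=\lambda_\Sigma(a)$, ${\sf g}(a)=\bar\rho(a)$. When $b<5/3$ the Glauber term vanishes because $F_s(a)=0$, while Lemma~\ref{lem:replace2} replaces $\eta^N_s(a)$ directly by $\bar\rho(a)={\sf g}(a)$ in the normal-derivative term, giving $\tfrac23\int_0^t\sum_{a}{\sf g}(a)(\partial^\perp F_s)(a)\,ds$; the extra requirement $\rho(t,a)={\sf g}(a)$ for a.e.\@ $t$ then follows by testing Lemma~\ref{lem:replace2} against arbitrary $\phi\in C([0,T])$ and comparing with ${\rm Av}^m_{K_j(a)}[\rho_s]\to\rho_s(a)$. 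The real difficulty is precisely here: the boundary replacements are \emph{not} continuous functionals of $\pi^N_\cdot$ and must be executed on a graph with no translation invariance, which is where the moving particle lemma, the resistance-diameter bound ${\rm diam}_{R^N_{\rm eff}}(K_j(a)\cap V_N)\le C(5/3)^{N-j}$, and the correct reference measures ($\nu^N_\rho$ versus the non-constant $\nu^N_{\rho(\cdot)}$ of Corollary~\ref{cor:blessthan53}) come into play, and where the competition between the time scale $5^N$ and the boundary scale $(5/(3b))^N$ selects the boundary condition.

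\textbf{Step 4: Conclusion.} Collecting Steps 2--3 by the standard device---bounding $\mathbb{E}_{\mu_N}[|M^N_t(F)|]$ above by the expectation of the bounded continuous functional of $\pi^N_\cdot$ produced after the replacements, plus error terms that vanish by the replacement lemmas and Lemma~\ref{lem:domLap}, then invoking weak convergence together with $\mathbb{E}_{\mu_N}[|M^N_t(F)|]\to0$---gives $\mathbb{E}_{\mathbb{Q}}[|\Theta_\bullet(t)|]=0$ for every $t$ and every admissible $F$, where $\Theta_\bullet$ is $\Theta_{\rm Dir}$, $\Theta_{\rm Rob}$, or the Neumann specialization of $\Theta_{\rm Rob}$ according to the regime of $b$. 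By continuity in $t$ and a countable-dense argument in $F$, $\mathbb{Q}$-a.s.\@ $\rho$ satisfies the weak formulation of the appropriate heat equation; together with $\rho\in L^2(0,T,\mathcal{F})$ from Step~1 and, when $b<5/3$, the boundary identity of Step~3, all conditions of Definition~\ref{def:HeatD}/\ref{def:HeatR}/\ref{def:HeatN} are met, which is the assertion. (Lemma~\ref{lem:uniqueness} then promotes this to convergence of the full sequence $\{\mathbb{Q}_N\}_N$, completing the proof of Theorem~\ref{thm:Hydro}.)
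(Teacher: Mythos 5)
Your outline follows the paper's proof essentially step for step: absolute continuity of $\pi_t$ from the exclusion constraint, the energy estimate via the entropy inequality and Feynman--Kac to place $\rho$ in $L^2(0,T,\mathcal{F})$ (the paper's Proposition \ref{prop:densityF}), Dynkin's martingale with the quadratic-variation bound, and the two-scale boundary replacement carried out through continuous approximations of the cell indicators so that Portmanteau applies. There is, however, one step that does not close as written, in the Dirichlet regime $b<5/3$: you claim that Condition (3) of Definition \ref{def:HeatD}, i.e. $\rho(t,a)=\bar\rho(a)$ for a.e.\ $t$, follows by testing Lemma \ref{lem:replace2} against $\phi\in C([0,T])$ and comparing with the limit of the averages of $\rho_s$ over $K_j(a)$. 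But Lemma \ref{lem:replace2} concerns only the single occupation variable $\eta^N_s(a)$, and the empirical measure does not see individual sites in the limit; so the information $\int_0^t(\eta^N_s(a)-\bar\rho(a))\,ds\to 0$ cannot be transferred to a statement about the macroscopic density near $a$ without an intermediate comparison between $\eta^N_s(a)$ (equivalently $\bar\rho(a)$) and the cell averages ${\rm Av}_{K_j(a)\cap V_N}[\eta^N_s]$. That comparison is exactly Lemma \ref{lem:replace1}, which you explicitly restricted to $b\geq 5/3$; in the Dirichlet regime it has to be re-proved with the non-constant reference measure $\nu^N_{\rho(\cdot)}$ of Corollary \ref{cor:blessthan53}, which is precisely the content of the paper's Lemma \ref{lem:fixing_profile}. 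Once that lemma is available, your comparison argument goes through and yields the boundary identity.

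A lesser remark on Step 1: the uniform variational bound you describe is the right one (the paper's Lemma \ref{lem:kappa}, proved via Feynman--Kac as in Lemma \ref{lem:C2}; the moving particle lemma is not needed there, only summation by parts and the Dirichlet-form/carr\'e-du-champ comparison), but the passage from that bound to $\rho\in L^2(0,T,\mathcal{F})$ is not by lower semicontinuity of $\mathcal{E}$; it is completed by a duality argument, extending the linear functional $\ell^1_\rho$ to $L^2(0,T,\mathcal{F})$ and invoking the Riesz representation theorem. This is a presentational imprecision rather than a gap, since the substantive estimate is the one you state.
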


In what follows we will fix one such limit point $\mathbb{Q}$.
For ease of notation, we will suppress the subsequence subscript $k$ from the notation.
Alternatively one can assume without loss of generality that $\mathbb{Q}_N$ converges to $\mathbb{Q}$.

\subsubsection{Characterization of absolute continuity}

We first show that $\mathbb{Q}$ is concentrated on trajectories which are absolutely continuous with respect to the self-similar measure $m$ on $K$:
\begin{align}
\label{eq:ac}
\mathbb{Q}(\pi_\cdot : \pi_t(dx) = \pi(t,x)\,dm(x) ,~\forall t\in [0,T]) =1.
\end{align}
To see this, fix a $F\in C(K)$. Since there is at most one particle per site, we have that
\[
\sup_{t\in [0,T]} |\pi^N_t(F)| \leq \frac{1}{|V_N|} \sum_{x\in V_N} |F(x)|.
\]
It follows that the map $\pi_\cdot \mapsto \sup_{t\in [0,T]} |\pi_t(F)|$ is continuous.
Consequently, all limit points are concentrated on trajectories $\pi_\cdot$ such that
\begin{align}
\label{eq:pim}
|\pi_t(F)| \leq \int_K\, |F(x)|\,dm(x).
\end{align}
To see that $\pi_t$ is absolutely continuous with respect to $m$, we will show that for any set $A\subset K$, $m(A)=0$ implies $\pi_t(A)=0$.
Indeed, let $\{F_j\}_j$ be a sequence in $C(K)$ which converges to the indicator function ${\bf 1}_A$.
Then the estimate \eqref{eq:pim} gives $|\pi_t(A)| \leq m(A)$, which is what we need to deduce \eqref{eq:ac}.

\subsubsection{Characterization of the initial measure}

Next we show that $\mathbb{Q}$ is concentrated on a Dirac measure equal to $\varrho(x) \,dm(x)$ at time $0$.
Fix $\epsilon>0$ and $F\in C(K)$.
By the tightness result in the previous subsection \S\ref{sec:dentight} and Portmanteau's lemma, we have
\begin{equation}
\label{eq:init}
\begin{aligned}
\mathbb{Q}&\left(\left|\pi_0(F)- \int_K\, F(x) \varrho(x)\,dm(x)\right|>\epsilon\right)
\leq
\varlimsup_{N\to\infty} \mathbb{Q}_N \left(\left|\pi^N_0(F) - \int_K\, F(x)\varrho(x)\,dm(x)\right|>\epsilon\right)\\
&=\varlimsup_{N\to\infty} \mu_N\left(\eta \in \Omega_N :\left| \frac{1}{|V_N|}\sum_{x\in V_N} F(x)\eta(x) - \int_K\, F(x)\varrho(x)\,dm(x)\right|>\epsilon \right)=0,
\end{aligned}
\end{equation}
since we assumed that $\{\mu_N\}_N$ is associated with $\varrho$, \emph{cf.\@} Definition \ref{def:associated}.
This holds for any $\epsilon>0$ and $F\in C(K)$, so we obtain the desired claim.

\subsubsection{Characterization of the limit density in $L^2(0,T,\mathcal{F})$}

Next, we show that $\mathbb{Q}$ is concentrated on trajectories $\pi_\cdot$ whose density $\rho$ is in $L^2(0,T,\mathcal{F})$.
This is a technical step, but is crucial to our mission of showing that $\rho_\cdot$ is a weak solution of the heat equation (as defined in Definitions \ref{def:HeatD} through \ref{def:HeatN}).

\begin{proposition}
\label{prop:densityF}
$
\mathbb{Q}(\pi_\cdot: \rho \in L^2(0,T,\mathcal{F}))=1.
$
\end{proposition}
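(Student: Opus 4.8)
The plan is to run the classical \emph{energy estimate} of Kipnis--Landim, adapted to the fractal setting, with the role of the gradient played throughout by the Dirichlet form $\mathcal{E}$ together with the integration-by-parts identity of Lemma~\ref{lem:domLap}. We already know that $\mathbb{Q}$ is concentrated on trajectories $\pi_\cdot$ with $\pi_s(dx)=\rho_s(x)\,dm(x)$ and $0\le\rho_s\le1$; hence $\|\rho_s\|_{L^2(K,m)}\le m(K)^{1/2}=1$, and $\rho\in L^2(0,T,\mathcal{F})$ is equivalent to $\int_0^T\mathcal{E}(\rho_s)\,ds<\infty$. So it suffices to exhibit a finite constant $K_0$, depending only on $T$, $b$, the boundary rates, and (when $b<5/3$) the auxiliary profile, such that $\mathbb{Q}\big(\int_0^T\mathcal{E}(\rho_s)\,ds\le K_0\big)=1$.

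Fix a countable family $\mathcal{D}$ dense in $C([0,T],{\rm dom}\,\Delta_{\rm Neu})$ for the $L^2(0,T,\mathcal{F})$-norm. Since ${\rm dom}\,\Delta_{\rm Neu}$ is $\mathcal{E}_1$-dense in $\mathcal{F}$ (Lemma~\ref{lem:DFR}) and $\mathcal{E}(G_s,\xi_s)=\int_K(-\Delta G_s)\,\xi_s\,dm$ for $G_s\in{\rm dom}\,\Delta_{\rm Neu}$ (the boundary term in Lemma~\ref{lem:domLap}-\eqref{IBP} vanishes because $\partial^\perp G_s|_{V_0}=0$), the identity $2\mathcal{E}(\xi_s,G_s)-\mathcal{E}(G_s)=\mathcal{E}(\xi_s)-\mathcal{E}(\xi_s-G_s)$ gives
\[
\int_0^T\mathcal{E}(\xi_s)\,ds=\sup_{G\in\mathcal{D}}\Big\{2\int_0^T\!\!\int_K(-\Delta G_s)\,\xi_s\,dm\,ds-\int_0^T\mathcal{E}(G_s)\,ds\Big\}\qquad\text{for all }\xi\in L^2(0,T,\mathcal{F}).
\]
For a general trajectory $\pi_\cdot$ the functional $J_G(\pi_\cdot):=\int_0^T\big(2\,\pi_s(-\Delta G_s)-\mathcal{E}(G_s)\big)\,ds$ is well-defined, bounded (since $\pi_s(K)\le1$), and continuous on $D([0,T],\mathcal{M}_+)$ (because $(s,x)\mapsto\Delta G_s(x)$ is jointly continuous by Lemma~\ref{lem:domLap}). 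Hence it is enough to establish the uniform bound
\[
\mathbb{E}_{\mathbb{Q}}\Big[\max_{1\le i\le k}J_{G^{(i)}}(\pi_\cdot)\Big]\le K_0\qquad\text{for every finite }\{G^{(1)},\dots,G^{(k)}\}\subset\mathcal{D};
\]
monotone convergence then yields $\mathbb{E}_{\mathbb{Q}}[\sup_{G\in\mathcal{D}}J_G]\le K_0$, so $\sup_{G\in\mathcal{D}}J_G(\pi_\cdot)<\infty$ $\mathbb{Q}$-a.s., and a Riesz-representation argument (the functional $G\mapsto\int_0^T\pi_s(-\Delta G_s)\,ds$ is then $\mathcal{E}$-bounded on $\mathcal{D}$, and $\{-\Delta G_s:G\in{\rm dom}\,\Delta_{\rm Neu}\}$ is dense in the mean-zero part of $L^2$) identifies $\rho$ with an element of $L^2(0,T,\mathcal{F})$ of energy at most $K_0$.

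To get the uniform bound, note that since $\max_iJ_{G^{(i)}}$ is bounded and continuous and $\mathbb{Q}_N\to\mathbb{Q}$, $\mathbb{E}_{\mathbb{Q}}[\max_iJ_{G^{(i)}}]=\lim_N\mathbb{E}_{\mathbb{P}_{\mu_N}}[\max_iJ_{G^{(i)}}(\pi^N_\cdot)]$. Using the discrete integration-by-parts identity~\eqref{eq:IBPdiscrete} with $f=G^{(i)}_s$, $g=\eta^N_s$, together with $\tfrac32\Delta_NG^{(i)}_s\to\Delta G^{(i)}_s$ uniformly (Lemma~\ref{lem:domLap}-\eqref{Lapunif}) and $\partial^\perp_NG^{(i)}_s|_{V_0}\to 0$ (Lemma~\ref{lem:domLap}-\eqref{NormalDerivative}), one has $\pi^N_s(-\Delta G^{(i)}_s)=\mathcal{E}_N(\eta^N_s,G^{(i)}_s)+o_N(1)$ uniformly in $s$; combined with $\mathcal{E}_N(G^{(i)}_s)\le\mathcal{E}(G^{(i)}_s)$ this gives $\max_iJ_{G^{(i)}}(\pi^N_\cdot)\le\int_0^T\max_i\{2\mathcal{E}_N(\eta^N_s,G^{(i)}_s)-\mathcal{E}_N(G^{(i)}_s)\}\,ds+o_N(1)$. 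Then pass from $\mathbb{P}_{\mu_N}$ to $\nu^N_\rho$ (if $b\ge5/3$) or to $\nu^N_{\rho(\cdot)}$ with a profile satisfying~\eqref{boundness_rho} and $\rho(a)=\bar\rho(a)$ (if $b<5/3$) via the entropy inequality, using ${\rm Ent}(\mu_N|\cdot)\le C|V_N|$; dispose of the maximum through $\exp(\gamma|V_N|\int\max_i(\cdot))\le\sum_i\exp(\gamma|V_N|\int(\cdot))$ at a cost $(\log k)/(\gamma|V_N|)=o_N(1)$; and invoke the (time-inhomogeneous) Feynman--Kac formula \cite{KipnisLandim,BMNS17} to reduce each surviving term to $\int_0^T\sup_f\big\{2\mathcal{E}_N(\rho^f_s,G^{(i)}_s)-\mathcal{E}_N(G^{(i)}_s)-\tfrac{5^N}{\gamma|V_N|}\langle\sqrt f,-\mathcal{L}_N\sqrt f\rangle_\nu\big\}\,ds$, where $\rho^f_s(x)=\int\eta(x)f\,d\nu$ and $f$ ranges over densities. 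The change of variables and Young's inequality from the proof of Lemma~\ref{lem:replace1}, followed by Cauchy--Schwarz over the edges, give $2|\mathcal{E}_N(\rho^f_s,G^{(i)}_s)|\le\delta\,\Gamma_N(\sqrt f,\nu)+\delta^{-1}\tfrac{5^N}{3^N}\mathcal{E}_N(G^{(i)}_s)$ for any $\delta>0$ (plus, for $\nu^N_{\rho(\cdot)}$, a term $O(\sum_{xy\in E_N}(\rho(x)-\rho(y))^2)$). Choosing $\delta=\tfrac{5^N}{\gamma|V_N|}$ to cancel the carré du champ against $-\langle\sqrt f,-\mathcal{L}_N\sqrt f\rangle_\nu$ — absorbing the Dirichlet-form/carré-du-champ discrepancy by Lemma~\ref{lem:bgeq53} (resp.\ Corollary~\ref{cor:blessthan53}) — and $\gamma<2/3$ small enough that $\delta^{-1}\tfrac{5^N}{3^N}-1\le 0$ for large $N$, every leftover is $o_N(1)$ except the fixed entropy cost $C/\gamma$ and, when $b<5/3$, a term $\le\tfrac{3^N}{\gamma|V_N|}\cdot2\mathcal{E}_N(\rho(\cdot))\to\tfrac{4}{3\gamma}\mathcal{E}(\rho(\cdot))<\infty$. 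Letting $N\to\infty$ with $k$ fixed furnishes $K_0$, uniformly in $k$.

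The main obstacle is precisely this last reduction: because the product Bernoulli measure is not invariant for the boundary-driven dynamics, the entropy-production estimate carries an error that must be shown to be of lower order, and this is exactly what the quantitative Dirichlet-form/carré-du-champ comparisons of Lemma~\ref{lem:bgeq53} and Corollary~\ref{cor:blessthan53} provide; in the regime $b<5/3$ one must moreover choose the auxiliary profile $\rho(\cdot)\in\mathcal{F}$ so that $\tfrac{5^N}{|V_N|}\sum_{xy\in E_N}(\rho(x)-\rho(y))^2$ stays bounded, which is the content of $\rho(\cdot)\in\mathcal{F}$. A secondary technical point is the uniform-in-$s$ control of $\tfrac32\Delta_NG_s-\Delta G_s$ and of $\partial^\perp_NG_s|_{V_0}$, most conveniently arranged by taking $\mathcal{D}$ to consist of finite sums of time-smooth multiples of Neumann eigenfunctions (cf.\ Appendix~\ref{sec:HS}).
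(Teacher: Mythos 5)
Your proposal follows essentially the same route as the paper: the uniform bound you state is precisely the paper's Lemma \ref{lem:kappa} (proved, as you do, via Portmanteau, the entropy inequality, Feynman--Kac, Young's inequality, and the Dirichlet-form/carr\'e-du-champ comparisons of Lemma \ref{lem:bgeq53} and Corollary \ref{cor:blessthan53} with the non-constant profile measure when $b<5/3$), and the concluding Riesz-representation identification of $\rho$ with an element of $L^2(0,T,\mathcal{F})$ is the paper's proof of Proposition \ref{prop:densityF}; the only differences are cosmetic (Neumann-domain test functions instead of functions compactly supported in $[0,T]\times(K\setminus V_0)$, and the exact duality constant $1$ in front of $\int_0^T\mathcal{E}(G_s)\,ds$ instead of an unspecified $\kappa$). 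One small caveat on that last choice: in the regime $b<5/3$ the change of variables under $\nu^N_{\rho(\cdot)}$ also produces a cross term coupling the gradients of $G_s$ and of the profile (cf.\ \eqref{eq:cov4}), whose coefficient in front of $\mathcal{E}_N(G_s)$ is not small in $\gamma$; either split it asymmetrically by Young (small weight on $\mathcal{E}_N(G_s)$, large on $\mathcal{E}_N(\rho(\cdot))$) or, as the paper does, settle for an arbitrary fixed $\kappa>0$ --- your scaling-plus-Riesz step only needs some finite $\kappa$, so nothing essential is lost.
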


To prove Proposition \ref{prop:densityF}, we use a variational approach which is reminiscent of the quadratic minimization principle in PDE theory.

\begin{lemma}
\label{lem:kappa}
There exists $\kappa>0$ such that
\begin{align}
\label{eq:varfunctional}
\mathbb{E}_{\mathbb{Q}}\left[\sup_F \left\{ \int_0^T \int_K \, (-\Delta F_s)(x) \rho_s(x) \,dm(x)\,ds  
- \kappa\int_0^T\, \mathcal{E}(F_s)\,ds \right\} \right] <\infty,
\end{align}
where the supremum is taken over all $F\in C([0,T], {\rm dom}\Delta)$ with compact support in $[0,T]\times ( K\setminus V_0)$.
\end{lemma}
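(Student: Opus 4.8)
The plan is to derive the variational bound directly from the martingale decomposition \eqref{eq:Dynkin1} and the quadratic variation estimate \eqref{eq:QVDensity}, exploiting the fact that the test functions here are compactly supported in $[0,T]\times(K\setminus V_0)$, so that \emph{all boundary terms vanish identically}. First I would fix $F\in C([0,T],{\rm dom}\Delta)$ with ${\rm supp}(F_s)\subset K\setminus V_0$ for every $s$; since $F_s$ vanishes on a neighborhood of $V_0$, the terms $(\partial^\perp F_s)(a)$ and $F_s(a)$ in \eqref{eq:Dynkin1} are all zero, and \eqref{eq:Dynkin1} reduces to
\begin{align*}
M^N_t(F) = \pi^N_t(F_t) - \pi^N_0(F_0) - \int_0^t\, \pi^N_s\left(\Big(\tfrac{2}{3}\Delta+\partial_s\Big)F_s\right)\,ds + o_N(1).
\end{align*}
Taking $F$ time-independent and $t=T$ kills the $\partial_s$ term, rearranging gives $\int_0^T \pi^N_s(\tfrac{2}{3}\Delta F)\,ds = \pi^N_T(F) - \pi^N_0(F) - M^N_T(F) + o_N(1)$; replacing $\tfrac 23 \Delta F$ by $-\Delta F$ only rescales constants. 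The point is that this expression is controlled uniformly in $N$: $|\pi^N_t(F)|\le \|F\|_\infty$ since $\pi^N_t$ has mass $\le 1$, and $\mathbb{E}_{\mu_N}[(M^N_T(F))^2] = \mathbb{E}_{\mu_N}[\langle M^N(F)\rangle_T]$ is bounded, via \eqref{eq:QVDensity} and $(\eta(x)-\eta(y))^2\le 1$, by
\begin{align*}
\frac{5^N}{|V_N|^2}\int_0^T \sum_{x\sim y}(F(x)-F(y))^2\,ds \;\le\; \frac{C}{|V_N|}\cdot \frac{5^N}{3^N}\sum_{x\sim y}(F(x)-F(y))^2 \;=\; \frac{C\,T}{|V_N|}\,\mathcal{E}_N(F)\;\longrightarrow\;0,
\end{align*}
the boundary part of the quadratic variation again being zero by compact support. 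Hence there is a deterministic constant so that $\mathbb{E}_{\mathbb{Q}_N}\big[\big|\int_0^T\pi^N_s(-\Delta F)\,ds\big|\big] \le C(\|F\|_\infty + 1)$ uniformly in $N$.

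The next step is to upgrade from a bound for a \emph{single} $F$ to a bound for the \emph{supremum} over $F$, which is where the quantity $\kappa\int_0^T\mathcal{E}(F_s)\,ds$ enters. Here I would invoke the standard entropy-method machinery exactly as in \cite{KipnisLandim}*{Chapter 5} (see also \cite{G18}): for a fixed finite collection $\{F^{(1)},\dots,F^{(k)}\}$ of test functions, apply the exponential Chebyshev/entropy inequality together with the Feynman--Kac bound to get, for suitable $\kappa$,
\begin{align*}
\mathbb{E}_{\mu_N}\left[\exp\left( \max_{1\le i\le k}\Big\{ |V_N|\int_0^T \pi^N_s(-\Delta F^{(i)})\,ds - \kappa|V_N|\int_0^T \mathcal{E}_N(F^{(i)})\,ds\Big\}\right)\right] \le C,
\end{align*}
uniformly in $N$ and $k$; the key input is the Dirichlet-form computation from \S\ref{sec:CC} (with the constant-density measure $\nu^N_\rho$, which works here precisely because the compact-support condition makes the boundary error term of Lemma \ref{lem:bgeq53} irrelevant), which shows the generator term $5^N\mathcal{L}_N$ dominates the relevant quadratic functional with a universal constant. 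Dividing by $|V_N|$, taking $N\to\infty$ using the continuous-limit identities (Lemma \ref{lem:domLap} for $\tfrac 32\Delta_N\to\Delta$, and $\mathcal{E}_N\uparrow\mathcal{E}$), and using lower semicontinuity of $\pi_\cdot\mapsto \int_0^T\pi_s(-\Delta F)\,ds - \kappa\int_0^T\mathcal{E}(F_s)\,ds$ under weak convergence (it is continuous for fixed $F$, being linear in $\pi$), we obtain the bound for the max over any finite subcollection with an $N$-independent right-hand side. Finally, take a countable dense family $\{F^{(i)}\}_{i\ge1}$ in $C([0,T],{\rm dom}\Delta)$ with the compact support constraint — dense in the $C^0$-norm of $F$ together with the $\mathcal{E}$-norm — let $k\to\infty$ by monotone convergence, and conclude that $\mathbb{E}_{\mathbb{Q}}[\sup_i\{\cdots\}]<\infty$; a density/approximation argument then passes the supremum over the countable family to the supremum over all admissible $F$, since both the linear functional and $\mathcal{E}(F_s)$ are continuous along an approximating sequence.

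The main obstacle I anticipate is the passage to the limit in the entropy/Feynman--Kac step: one must ensure the variational functional $\sup_f\{\int (-\Delta F)f\,d\nu^N_\rho - \tfrac{5^N}{\kappa|V_N|}\langle\sqrt f,-\mathcal{L}_N\sqrt f\rangle_{\nu^N_\rho}\}$ is bounded \emph{uniformly in $N$}, which requires the carr\'e-du-champ comparison of \S\ref{sec:CC} together with a careful choice of $\kappa$ depending only on $\rho$ (not on $F$ or $N$) — this amounts to an integration-by-parts on the graph, $\sum_{x\in V_N\setminus V_0}(-\tfrac32\Delta_N F)(x)\,\pi^N(\cdot)$, combined with Young's inequality and the moving particle lemma (Lemma \ref{lem:MPL}) to absorb the cross terms into the Dirichlet form, exactly in the spirit of the proof of Lemma \ref{lem:replace1}. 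Once that uniform bound is in hand, the remaining steps (lower semicontinuity, countable dense family, monotone convergence) are routine. I would also take care that the $o_N(1)$ term in \eqref{eq:Dynkin1} — coming from $\Delta_N F - \tfrac23\Delta F$ — genuinely vanishes in $L^1(\mathbb{P}_{\mu_N})$, which is guaranteed by Lemma \ref{lem:domLap}\eqref{Lapunif} and the uniform mass bound on $\pi^N_\cdot$.
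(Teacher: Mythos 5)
Your overall architecture is the same as the paper's: reduce to a maximum over finitely many test functions, pass from $\mathbb{Q}$ to $\mathbb{Q}_N$ by Portmanteau, apply the entropy inequality with $\exp(\max_i a_i)\le\sum_i e^{a_i}$ and the Feynman--Kac bound, and control the resulting variational functional by summation by parts on the graph, a change of variables $\eta\to\eta^{xy}$, Young's inequality, and the Dirichlet-form/carr\'e-du-champ comparison of \S\ref{sec:CC}, choosing $\kappa$ of the order of $\lim_N 3^{-N}|V_N|$. (Two small remarks: the moving particle lemma is not needed here, since the exchanges produced by the summation by parts are nearest-neighbor and already appear in $\Gamma_N$; and your first paragraph's per-$F$ martingale bound plays no role in the actual argument.)

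There is, however, a genuine gap in the regime $b<5/3$. You justify working throughout with the constant-density measure $\nu^N_\rho$ by saying that ``the compact-support condition makes the boundary error term of Lemma \ref{lem:bgeq53} irrelevant.'' This is not correct: that error term has nothing to do with the test function $F$. It comes from comparing the \emph{boundary} part of the Dirichlet form $\langle\sqrt f,-\mathcal{L}_N\sqrt f\rangle_{\nu^N_\rho}$ with the boundary carr\'e du champ for the Feynman--Kac density $f$, and it is present whenever $\rho\neq\bar\rho(a)$, since then $\nu^N_\rho$ is not invariant for the Glauber dynamics and the boundary Dirichlet contribution is not sign-definite. In the variational functional this error appears multiplied by $\tfrac{5^N}{\kappa|V_N|}$, i.e.\ it is of order $\kappa^{-1}(5/(3b))^N$, which stays bounded only for $b\ge 5/3$ and diverges for $b<5/3$, no matter where $F$ is supported. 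Since the lemma must hold for the fixed but arbitrary $b$ of Theorem \ref{thm:Hydro}, your proof as written covers only $b\ge 5/3$. The missing idea is the one in Lemma \ref{lem:C2}: for $b<5/3$ one must switch the reference measure to $\nu^N_{\rho(\cdot)}$ with a profile $\rho(\cdot)\in\mathcal{F}$, bounded away from $0$ and $1$, satisfying $\rho(a)=\bar\rho(a)$ on $V_0$, so that the boundary Dirichlet form becomes exactly a (nonnegative) carr\'e du champ; the price is a non-product bulk measure, whose Radon--Nikodym factors generate additional terms that must be absorbed using $\rho(\cdot)\in[\delta,1-\delta]$ and the finiteness of $\sup_N\frac{5^N}{3^N}\sum_{xy\in E_N}(\rho(x)-\rho(y))^2$, i.e.\ $\mathcal{E}(\rho)<\infty$ (cf.\ Corollary \ref{cor:blessthan53}). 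Your displayed claim that the expectation of the exponential of the max is bounded ``uniformly in $N$ and $k$'' is also slightly overstated---one only gets a bound after taking $\tfrac{1}{|V_N|}\log$, which produces an extra $\tfrac{\log k}{|V_N|}$ that is harmless in the limit---but that is presentational; the substantive issue is the $b<5/3$ regime.
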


\begin{remark}
We invite the reader to compare the linear functional in \eqref{eq:varfunctional} to the one used in the 1D setting, \emph{e.g.\@} \cite{BMNS17}*{Lemma 5.11}.
A key difference is that on $SG$ we do not have an easy notion of a 1st derivative (gradient); instead we appeal to the Laplacian.
\end{remark}

Before proving Lemma \ref{lem:kappa}, let us observe how Proposition \ref{prop:densityF} follows from the lemma and the Riesz representation theorem.

\begin{proof}[Proof of Proposition \ref{prop:densityF} assuming Lemma \ref{lem:kappa}]
Given a density $\rho: [0,T]\times K \to [0,1]$, define the linear functional $\ell_\rho: C([0,T], {\rm dom}\Delta)\to\mathbb{R}$ by
\begin{align}
\label{eq:lrhoF}
\ell_\rho(F) = \int_0^T\, \int_K\, (-\Delta F_s)(x)\rho_s(x) \,dm(x)\,ds + \int_0^T \,\sum_{a\in V_0} (\partial^\perp F_s)(a) \rho_s(a)\,ds.
\end{align}
Observe that if we had known in advance that $\rho\in L^2(0,T,\mathcal{F})$, then $\ell_\rho(F) = \int_0^T\, \mathcal{E}(F_s,\rho_s)\,ds$ by the integration by parts formula (Lemma \ref{lem:domLap}-\eqref{IBP}).
In fact we will prove the reverse implication.
For the rest of the proof all statements hold $\mathbb{Q}$-a.s.

Let us assume that $F$ has compact support in $[0,T]\times (K\setminus V_0)$, so the boundary term in $\ell_\rho(F)$ vanishes.
On the one hand, Lemma \ref{lem:kappa} implies that  there exists a constant $C=C(\rho)$ independent of $F$ such that
\begin{align}
\label{eq:bf1}
\ell_\rho(F) - \kappa \int_0^T\, \mathcal{E}(F_s)\,ds \leq C.
\end{align}
On the other hand, by \eqref{eq:pim} we have $\|\rho_t\|_{L^2(K,m)}\leq 1$ for every $t\in [0,T]$.
So by Cauchy-Schwarz, for any $\kappa>0$,
\begin{align}
\label{eq:bf2}
\int_0^T\, \langle F_s,\rho_s\rangle_{L^2(K,m)}\,ds - \kappa \int_0^T\, \|F_s\|_{L^2(K,m)}^2\,ds \leq -\kappa \int_0^T\, \left(\|F_s\|_{L^2(K,m)} - \frac{1}{2\kappa} \right)^2\,ds + \frac{T}{4\kappa} \leq \frac{T}{4\kappa}.
\end{align}
Adding \eqref{eq:bf1} and \eqref{eq:bf2} together, we see that
\begin{align}
\left(\ell_\rho(F) + \int_0^T\, \langle F_s,\rho_s\rangle_{L^2}\,ds\right) - \kappa \|F\|_{L^2(0,T,\mathcal{F})}^2 \leq C' := C+\frac{T}{4\kappa},
\end{align}
the right-hand side being independent of $F$.
Let us denote $\ell^1_\rho(F) := \ell_\rho(F) + \int_0^T\, \langle F_s,\rho_s\rangle_{L^2(K,m)}\,ds$.
Observe that we can apply the transformation $F\to \alpha F$ for any number $\alpha$ to get
\begin{align}
\alpha \ell^1_\rho(F) - \alpha^2 \kappa \|F\|_{L^2(0,T,\mathcal{F})}^2 \leq C'.
\end{align}
Making the square on the left-hand side we obtain that
\begin{align}
-\kappa \|F\|_{L^2(0,T,\mathcal{F})}^2 \left(\alpha - \frac{\ell^1_\rho(F)}{2\kappa \|F\|_{L^2(0,T,\mathcal{F})}^2}\right)^2 + \frac{(\ell^1_\rho(F))^2}{4\kappa \|F\|_{L^2(0,T,\mathcal{F})}^2} \leq C'.
\end{align}
Minimizing the left-hand side we find
\begin{align}
\ell^1_\rho(F) \leq (4\kappa C')^{1/2} \|F\|_{L^2(0,T,\mathcal{F})},
\end{align}
which shows that $\ell^1_\rho$ is a bounded linear functional on all $f\in C([0,T], {\rm dom}\Delta)$ with compact support in $[0,T]\times (K\setminus V_0)$.

Since ${\rm dom}\Delta$ is $\mathcal{E}_1$-dense in $\mathcal{F}$, and $C([0,T])$ is dense in $L^2(0,T)$, we can extend $\ell_\rho^1$ via density to a bounded linear functional on the Hilbert space $L^2(0,T,\mathcal{F})$.
By the Riesz representation theorem, there exists $\mathfrak{R}\in L^2(0,T, \mathcal{F})$ such that
\begin{align}
\ell^1_\rho(F)  = \langle F, \mathfrak{R}\rangle_{L^2(0,T,\mathcal{F})} = \int_0^T\, \mathcal{E}_1(F_s, \mathfrak{R}_s)\,ds, \quad \forall F\in L^2(0,T,\mathcal{F}).
\end{align}
By \eqref{eq:lrhoF} and the integration by parts formula, deduce that for all $F\in C([0,T],{\rm dom}\Delta)$
\begin{equation}
\begin{aligned}
&\int_0^T\int_K\, [(-\Delta F_s)(x)+F_s(x)]\rho_s(x)\,dm(x)\,ds + \int_0^T\, \sum_{a\in V_0}(\partial^\perp F_s)(a)\rho_s(a)\,ds\\
&=
\int_0^T\int_K\, [(-\Delta F_s)(x)+F_s(x)]\mathfrak{R}_s(x)\,dm(x)\,ds + \int_0^T\, \sum_{a\in V_0}(\partial^\perp F_s)(a)\mathfrak{R}_s(a)\,ds.
\end{aligned}
\end{equation}
Infer that $\rho=\mathfrak{R}$ $(m\times dt)$-a.e.\@ on $K\times [0,T]$, and also on $V_0$ for a.e.\@ $t\in [0,T]$.
This implies in particular that $\rho=\mathfrak{R}$ in $L^2(0,T,\mathcal{F})$.
\end{proof}

\begin{proof}[Proof of Lemma \ref{lem:kappa}]
We focus on the case $b\geq 5/3$.
Given $F\in C([0,T],{\rm dom}\Delta)$ with compact support in $[0,T]\times (K\setminus V_0)$, construct a sequence $\{F^i\}_{i\in \mathbb{N}}$, each having compact support in $[0,T]\times (K\setminus V_0)$, that converges to $F$ in the $C([0,T], {\rm dom}\Delta)$-norm.
It then suffices to verify that there exists a constant $C$ such that for every $n\in\mathbb{N}$,
\begin{align}
\label{eq:EQ1}
\mathbb{E}_{\mathbb{Q}}\left[ \max_{1\leq i \leq n}\left\{ \int_0^T \int_K \, (-\Delta F^i_s)(x) \rho_s(x)\,dm(x)\,ds -\kappa\int_0^T\, \mathcal{E}(F^i_s)\,ds \right\} \right]\leq C.
\end{align}
Applying Portmanteau's Lemma, we rewrite the left-hand side of \eqref{eq:EQ1} as
\begin{equation}
\label{eq:ExpEQ}
\begin{aligned}
&\lim_{N\to\infty}
\mathbb{E}_{\mathbb{Q}_N}\left[ \max_{1\leq i \leq n}\left\{ \int_0^T\,\pi^N_s(-\Delta F_s^i)\,ds - \kappa\int_0^T\, \mathcal{E}(F^i_s)\,ds \right\} \right]\\
&\leq \varlimsup_{N\to\infty}
\mathbb{E}_{\mu_N}\left[ \max_{1\leq i \leq n}\left\{ \int_0^T \, -\frac{3}{2} \frac{1}{|V_N|}\sum_{x\in V_N} \eta^N_s(x) (\Delta_N F_s^i)(x)\,ds  - \kappa\int_0^T\, \mathcal{E}(F^i_s)\,ds \right\} \right]\\
&+ \varlimsup_{N\to\infty} \mathbb{E}_{\mu_N}\left[\max_{1\leq i \leq n} \left| \int_0^T \, 
\left(
\frac{3}{2}\frac{1}{|V_N|}\sum_{x\in V_N} \eta_s^N(x) (\Delta_N F_s^i)(x) - \pi^N_s(\Delta F_s^i)
\right)
\,ds\right|\right]
.
\end{aligned}
\end{equation}
On the right-hand side, the second term vanishes by the convergence $\frac{3}{2}\Delta_N F^i \to \Delta F^i$ in $C([0,T]\times (K\setminus V_0))$ and the argument in \eqref{eq:throwaway}.
So the main estimate is on the first term on the right-hand side of \eqref{eq:ExpEQ}.
Upon applying the entropy inequality, Jensen's inequality, and the inequality $\exp\left(\max_i a_i\right) \leq \sum_i e^{a_i}$, we can bound this term from above by
\begin{equation}
\label{eq:entropyineq2}
\begin{aligned}
\frac{{\rm Ent}(\mu_N|\nu^N_{\rho(\cdot)})}{|V_N|} + \frac{1}{|V_N|} \log&\left(\sum_{1\leq i\leq j} \mathbb{E}_{\nu^N_{\rho(\cdot)}}\left[\exp\left( |V_N| \int_0^T\, \frac{5^N}{3^N} \sum_{x\in V_N}\sum_{\substack{y\in V_N \\ y\sim x}} (F_s^i(x)-F_s^i(y))\eta^N_s(x)\,ds \right.\right.\right. \\
&\left.\left.\left. \quad+ |V_N| o_N(1) - \kappa |V_N| \int_0^T\, \mathcal{E}(F_s^i)\,ds\right) \right]\right),
\end{aligned}
\end{equation}
where the density $\rho(\cdot)$ is taken to be constant $\rho$.
On the one hand, the first term of \eqref{eq:entropyineq2} is bounded by a constant $C$ independent of $N$.
On the other hand, we also need to bound the second term by a constant independent of $N$ and $F$, which will be proved in Lemma \ref{lem:C2} below.
The claim thus follows.

For the case $b<5/3$ the arguments are identical except for the choice of the density profile $\rho(\cdot)$ to derive \eqref{eq:entropyineq2}. See again Lemma \ref{lem:C2} below.
\end{proof}

\begin{lemma}
\label{lem:C2}
Choose $\rho(\cdot)=\rho$ constant (resp.\@ $\rho(\cdot)\in \mathcal{F}$ such that it is bounded away from $0$ and from $1$, and $\rho(a)=\bar\rho(a)$ for all $a\in V_0$) if $b\geq 5/3$ (resp.\@ if $b<5/3$).
Then there exists a positive constant $C$ such that for all $F\in C([0,T], {\rm dom}\Delta)$ with compact support in $[0,T]\times (K\setminus V_0)$,
\begin{equation}
\label{eq:varfcn3}
\begin{aligned}
\varlimsup_{N\to\infty} \frac{1}{|V_N|}\log \mathbb{E}_{\nu^N_{\rho(\cdot)}}\left[ \exp\left(|V_N|\left(\int_0^T\, \frac{5^N}{3^N} \sum_{x\in V_N}\sum_{\substack{y\in V_N \\ y\sim x}} (F_s(x)-F_s(y))\eta^N_s(x)\,ds  - \kappa  \int_0^T\, \mathcal{E}(F_s)\,ds\right)\right)\right] \leq C.
\end{aligned}
\end{equation}
\end{lemma}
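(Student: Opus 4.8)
The plan is to estimate the exponential moment through the Feynman--Kac inequality and then absorb the Dirichlet energy it produces into the deterministic term $-\kappa\int_0^T\mathcal{E}(F_s)\,ds$ already subtracted inside the exponential. Write
\[
W^N_s(\eta):=\frac{5^N}{3^N}\sum_{x\in V_N}\sum_{\substack{y\in V_N\\ y\sim x}}(F_s(x)-F_s(y))\eta(x)=\frac12\,\frac{5^N}{3^N}\sum_{x\in V_N}\sum_{\substack{y\in V_N\\ y\sim x}}(F_s(x)-F_s(y))(\eta(x)-\eta(y)).
\]
Since $\mathcal{E}(F_s)$ is non-random it pulls out of the expectation, so it suffices to prove $\varlimsup_{N\to\infty}\frac{1}{|V_N|}\log\mathbb{E}_{\nu^N_{\rho(\cdot)}}\big[\exp\big(|V_N|\int_0^T W^N_s(\eta^N_s)\,ds\big)\big]\le\kappa\int_0^T\mathcal{E}(F_s)\,ds+C$ with $C$ independent of $F$ and $N$. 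Applying the Feynman--Kac bound for a (possibly) non-invariant reference measure \cite{BMNS17}*{Lemma A.1}, exactly as in the proof of Lemma~\ref{lem:replace2}, I would bound the left-hand side by
\[
\int_0^T\sup_f\left\{\int W^N_s\,f\,d\nu^N_{\rho(\cdot)}-\frac{5^N}{|V_N|}\big\langle\sqrt f,-\mathcal{L}_N\sqrt f\big\rangle_{\nu^N_{\rho(\cdot)}}\right\}ds,
\]
the supremum over densities $f$ with respect to $\nu^N_{\rho(\cdot)}$.

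Treat first $b\ge 5/3$, with $\rho(\cdot)\equiv\rho$ constant, so that the exchange $\eta\mapsto\eta^{xy}$ preserves $\nu^N_\rho$. Performing that exchange edgewise, factoring $f(\eta)-f(\eta^{xy})=(\sqrt{f(\eta)}+\sqrt{f(\eta^{xy})})(\sqrt{f(\eta)}-\sqrt{f(\eta^{xy})})$, applying Young's inequality $2uv\le Au^2+A^{-1}v^2$, and using that $f$ is a density (so $\int(\eta(x)-\eta(y))^2(\sqrt{f(\eta)}+\sqrt{f(\eta^{xy})})^2\,d\nu^N_\rho\le 4$ on each edge) together with $\frac{5^N}{3^N}\sum_{x}\sum_{y\sim x}(F_s(x)-F_s(y))^2=2\mathcal{E}_N(F_s)$ and $\sum_{x}\sum_{y\sim x}\int(\sqrt{f(\eta^{xy})}-\sqrt{f(\eta)})^2\,d\nu^N_\rho=4\Gamma_N(\sqrt f,\nu^N_\rho)$, I obtain
\[
\int W^N_s\,f\,d\nu^N_\rho\ \le\ A\,\mathcal{E}_N(F_s)+\frac{1}{2A}\,\frac{5^N}{3^N}\,\Gamma_N(\sqrt f,\nu^N_\rho).
\]
Combining with Lemma~\ref{lem:bgeq53}, $\langle\sqrt f,-\mathcal{L}_N\sqrt f\rangle_{\nu^N_\rho}\ge\Gamma_N(\sqrt f,\nu^N_\rho)-C_1 b^{-N}$ ($C_1$ the constant there), the variational functional is at most $A\,\mathcal{E}_N(F_s)+\big(\frac{1}{2A}\frac{5^N}{3^N}-\frac{5^N}{|V_N|}\big)\Gamma_N(\sqrt f,\nu^N_\rho)+\frac{5^N}{|V_N|b^N}C_1$. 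Since $|V_N|=\frac32(3^N+1)\le 3\cdot 3^N$, choosing $A=2$ makes the $\Gamma_N$-coefficient nonpositive, so that term is discarded; for $b\ge 5/3$ one has $\frac{5^N}{|V_N|b^N}C_1\le\frac23 C_1$; and $\mathcal{E}_N(F_s)\le\mathcal{E}(F_s)$ by monotonicity of $N\mapsto\mathcal{E}_N(F_s)$ (cf.\ \eqref{def:E}). Integrating over $s\in[0,T]$ bounds the exponential moment by $2\int_0^T\mathcal{E}(F_s)\,ds+\frac23 C_1 T$; since $\kappa$ is ours to choose in Lemma~\ref{lem:kappa}, taking $\kappa\ge 2$ cancels the $\mathcal{E}(F_s)$-contribution and $C:=\frac23 C_1 T$ works.

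For $b<5/3$ the same scheme runs with $\rho(\cdot)\in\mathcal{F}$ bounded away from $0$ and $1$ and satisfying $\rho(a)=\bar\rho(a)$ for $a\in V_0$, using Corollary~\ref{cor:blessthan53} in place of Lemma~\ref{lem:bgeq53} (its nonnegative boundary carr\'e du champ being simply dropped). I expect the genuine difficulty to lie in this regime: the exchange $\eta\mapsto\eta^{xy}$ now carries a Radon--Nikodym factor $d\nu^N_{\rho(\cdot)}(\eta^{xy})/d\nu^N_{\rho(\cdot)}(\eta)=1+O(|\rho(x)-\rho(y)|)$, and Corollary~\ref{cor:blessthan53} contributes an error $C'(\rho)\sum_{xy\in E_N}(\rho(x)-\rho(y))^2$; after a Cauchy--Schwarz these produce contributions of the form $C(\rho)\sum_{xy\in E_N}(\rho(x)-\rho(y))^2$ and $C(\rho)\,\mathcal{E}_N(F_s)^{1/2}\mathcal{E}_N(\rho)^{1/2}$. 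The first equals $\frac{3^N}{|V_N|}C(\rho)\mathcal{E}_N(\rho)\le\frac23 C(\rho)\mathcal{E}(\rho)$, finite precisely because $\rho\in\mathcal{F}$; the second is $\le\frac12 C(\rho)\big(\mathcal{E}_N(F_s)+\mathcal{E}_N(\rho)\big)$, whose $\mathcal{E}_N(F_s)$-part is again absorbed by enlarging $\kappa$ and whose $\mathcal{E}_N(\rho)$-part is bounded as above. The careful bookkeeping needed to check that every error so generated is either uniformly bounded in $N$ --- which is exactly where $\rho\in\mathcal{F}$ and $\mathcal{E}_N(\cdot)\le\mathcal{E}(\cdot)$ enter --- or is an $\mathcal{E}_N(F_s)$-term swallowed by $\kappa$, so that the final constant $C$ depends only on $T$, $\rho(\cdot)$ and the reservoir rates and not on $F$ or $N$, is the crux of the argument.
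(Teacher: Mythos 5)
Your argument is correct and follows essentially the same route as the paper's proof: the Feynman--Kac bound with the non-invariant reference measure, edgewise symmetrization and change of variables, Young's inequality trading the field term for $\mathcal{E}_N(F_s)$ plus a carr\'e du champ, the Dirichlet form versus carr\'e du champ comparisons of Lemma \ref{lem:bgeq53} and Corollary \ref{cor:blessthan53}, and absorption of the remaining terms using $\mathcal{E}_N\leq\mathcal{E}$, $\rho(\cdot)\in\mathcal{F}$, and a suitable choice of $\kappa$. Your sketched bookkeeping in the $b<5/3$ regime (Radon--Nikodym correction of order $|\rho(x)-\rho(y)|$, the $\mathcal{E}_N(\rho)$ error, absorption of $\mathcal{E}_N(F_s)$ into $\kappa$) matches the paper's treatment, and your choice $A=2$ makes the $\Gamma_N$-coefficient nonpositive for every finite $N$, which is if anything slightly cleaner than the paper's asymptotic choice $A=3/2$.
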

\begin{proof}
By the Feynman-Kac formula with respect to a non-invariant measure \cite[Lemma A.1]{BMNS17}, the expression under the limit in the left-hand side of \eqref{eq:varfcn3} is bounded above by
\begin{equation}
\label{eq:FK2}
\begin{aligned}
\int_0^T\,  \sup_f\left\{\int\,\frac{5^N}{3^N}\sum_{x\in V_N}\sum_{\substack{y\in V_N\\y\sim x}}(F_s(x)-F_s(y)) \eta(x) f(\eta) \,d\nu^N_{\rho(\cdot)}(\eta) - \kappa \mathcal{E}(F_s) - \frac{5^N}{|V_N|} \left\langle \sqrt{f}, -\mathcal{L}_N \sqrt{f}\right\rangle_{\nu^N_{\rho(\cdot)}}\right\} \,ds,
\end{aligned}
\end{equation}
where the supremum is taken over all probability densities $f$ with respect to $\nu^N_{\rho(\cdot)}$.
Observe that
\begin{equation}
\label{eq:bulk2}
\begin{aligned}
\int\,& \frac{5^N}{3^N}\sum_{x\in V_N} \sum_{\substack{y\in V_N \\y\sim x}} (F_s(x)-F_s(y))\eta(x)f(\eta) \,d\nu^N_{\rho(\cdot)}(\eta) \\
&= \int\, \frac{5^N}{3^N} \frac{1}{2}\sum_{x\in V_N}\sum_{\substack{y\in V_N\\y\sim x}} (F_s(x)-F_s(y))(\eta(x)-\eta(y))f(\eta)\,d\nu^N_{\rho(\cdot)}(\eta)\\
&= \frac{5^N}{3^N} \frac{1}{2} \sum_{x\in V_N}\sum_{\substack{y\in V_N\\y\sim x}} (F_s(x)-F_s(y))\left(\int\, \eta(x)f(\eta) \,d\nu^N_{\rho(\cdot)}(\eta) - \int\,\eta(x) f(\eta^{xy}) \,d\nu^N_{\rho(\cdot)}(\eta^{xy})\right),
\end{aligned}
\end{equation}
where we apply a change of variable $\eta\to \eta^{xy}$ in the last line.

Suppose $b\geq 5/3$, so we choose $\rho(\cdot)=\rho$ constant.
Then $\nu^N_\rho(\eta^{xy}) = \nu^N_\rho(\eta)$, and \eqref{eq:bulk2} rewrites as
\begin{equation}
\label{eq:cov1}
\begin{aligned}
&\frac{5^N}{3^N}\frac{1}{2}\sum_{x\in V_N}\sum_{\substack{y\in V_N\\y\sim x}} (F_s(x)-F_s(y))\int\,\eta(x)(f(\eta)-f(\eta^{xy}))\,d\nu_\rho^N(\eta)\\
&=\frac{5^N}{3^N}\frac{1}{2}\sum_{x\in V_N}\sum_{\substack{y\in V_N\\y\sim x}} (F_s(x)-F_s(y))\int\,\eta(x)(\sqrt{f(\eta)}+\sqrt{f(\eta^{xy})})(\sqrt{f(\eta)}-\sqrt{f(\eta^{xy})})\,d\nu_\rho^N(\eta),
\end{aligned}
\end{equation}
which, by Young's inequality and $(\alpha+\beta)^2\leq 2(\alpha^2+\beta^2)$, can be bounded above by
\begin{equation}
\label{eq:Young}
\begin{aligned}
\frac{5^N}{3^N}&  \left(\sum_{xy\in E_N}\int\,A (\eta(x))^2 (F_s(x)-F_s(y))^2  (f(\eta)+f(\eta^{xy})) \,d\nu_\rho^N(\eta)
+\sum_{xy\in E_N} \int\, \frac{1}{2A}  (\sqrt{f(\eta)}-\sqrt{f(\eta^{xy})})^2\,d\nu_\rho^N(\eta)\right)\\
&\leq \frac{5^N}{3^N}\left(2A \sum_{xy\in E_N} (F_s(x)-F_s(y))^2 +\frac{1}{A} \Gamma_N(\sqrt{f}, \nu^N_\rho)\right) = 2A\mathcal{E}_N(F_s) + \frac{5^N}{3^N A} \Gamma_N(\sqrt{f}, \nu_\rho^N)
\end{aligned}
\end{equation} 
for any $A>0$.
Combine with the lower estimate \eqref{eq:ccest1} of the Dirichlet form $\langle \sqrt{f}, -\mathcal{L}_N \sqrt{f}\rangle_{\nu^N_\rho}$, and we find that \eqref{eq:FK2} is bounded above by
\begin{align}
\int_0^T\, \sup_f \left\{2A\mathcal{E}_N(F_s) + \frac{5^N}{3^N A}\Gamma_N(\sqrt{f}, \nu_\rho^N) -\kappa \mathcal{E}(F_s)  -\frac{5^N}{|V_N|}\Gamma_N(\sqrt{f}, \nu^N_\rho) + \frac{5^N}{b^N |V_N|} C''(\rho) \right\}\,ds.
\end{align}
To eliminate the dependence on $f$ and $F$ of the variational functional, we choose $A=\lim_{N\to\infty}3^{-N} |V_N|= \frac{3}{2}$ and $\kappa=2A$ (recall that $\mathcal{E}_N(F) \uparrow \mathcal{E}(F)$).
This allows us to further bound from above by the time integral of the last term, which is at most of order unity.

Now suppose $b<5/3$, so we choose $\rho(\cdot)\in \mathcal{F}$ such that $\rho(\cdot) \in [\delta, 1-\delta]$ for some $\delta>0$, and that $\rho(a) =\bar\rho(a)$ for all $a\in V_0$.
Due to the nonconstancy of $\rho(\cdot)$, the argument following the change of variables performed in \eqref{eq:bulk2} has to be modified:
\begin{equation}
\label{eq:cov2}
\begin{aligned}
&\int\, \eta(x)f(\eta) \,d\nu^N_{\rho(\cdot)}(\eta) - \int\,\eta(x) f(\eta^{xy}) \,d\nu^N_{\rho(\cdot)}(\eta^{xy})
= \int\, \eta(x) \left[f(\eta) - f(\eta^{xy}) \frac{d\nu^N_{\rho(\cdot)}(\eta^{xy})}{d\nu^N_{\rho(\cdot)}(\eta)}\right] \,d\nu^N_{\rho(\cdot)}(\eta)\\
&= \int\,\eta(x) \left[f(\eta) -f(\eta^{xy}) \frac{\rho(y)(1-\rho(x))}{\rho(x)(1-\rho(y))}\right]\,d\nu^N_{\rho(\cdot)}(\eta) \\
&= \int\, \eta(x)(f(\eta)-f(\eta^{xy}))\,d\nu^N_{\rho(\cdot)}(\eta)
+  \frac{\rho(x)-\rho(y)}{\rho(x)(1-\rho(y))} \int\, \eta(x) f(\eta^{xy})\,d\nu^N_{\rho(\cdot)}(\eta).
\end{aligned}
\end{equation}
We implement \eqref{eq:cov2} into \eqref{eq:bulk2} and rewrite the latter as
\begin{align}
\label{eq:cov3} \frac{5^N}{3^N}\frac{1}{2}\sum_{x\in V_N}\sum_{\substack{y\in V_N\\y\sim x}} (F_s(x)-F_s(y))\int\, \eta(x)(\sqrt{f(\eta)}+\sqrt{f(\eta^{xy})})(\sqrt{f(\eta)}-\sqrt{f(\eta^{xy})})\,d\nu^N_{\rho(\cdot)}(\eta)\\
\label{eq:cov4} + \frac{5^N}{3^N}\frac{1}{2}\sum_{x\in V_N}\sum_{\substack{y\in V_N\\y\sim x}} (F_s(x)-F_s(y))\frac{\rho(x)-\rho(y)}{\rho(x)(1-\rho(y))} \int\, \eta(x) f(\eta^{xy}) \, d\nu^N_{\rho(\cdot)}(\eta).
\end{align}
The first term \eqref{eq:cov3} is treated as in \eqref{eq:cov1} through the first line of \eqref{eq:Young}.
Then we will come across an integral which admits the estimate
\begin{align}
\begin{aligned}
\int\, (\eta(x))^2 f(\eta^{xy})\,d\nu_{\rho(\cdot)}^N(\eta) = \int\, (\eta(y))^2\, f(\eta) \frac{d\nu^N_{\rho(\cdot)}(\eta^{xy})}{d\nu^N_{\rho(\cdot)}(\eta)} \,d\nu^N_{\rho(\cdot)}(\eta)\\
=\frac{\rho(x)(1-\rho(y))}{\rho(y)(1-\rho(x))} \int\,\eta(y) f(\eta) \,d\nu^N_{\rho(\cdot)}(\eta) \leq \delta^{-2}.
\end{aligned}
\end{align}
In the above inequality we bound the numerator $\rho(x)(1-\rho(y))$ from above by $1$, the denominator $\rho(y)(1-\rho(x))$ from below by $\delta^2$, and the integral from above by $\int\, f(\eta) \,d\nu^N_{\rho(\cdot)}(\eta)=1$.
Consequently \eqref{eq:cov3} is bounded above by
\begin{align}
A(1+\delta^{-2})\mathcal{E}_N(F_s) + \frac{5^N}{3^N A}\Gamma_N(\sqrt{f},\nu^N_{\rho(\cdot)}).
\end{align}
As for the second term \eqref{eq:cov4}, we use again that $\rho(\cdot)\in [\delta,1-\delta]$, that the integral $\int\, \eta(x) f(\eta^{xy}) \,d\nu^N_{\rho(\cdot)}(\eta)$ is bounded above by $\delta^{-2}$, and Young's inequality to obtain the upper bound
\begin{equation}
\begin{aligned}
&\delta^{-4} \frac{5^N}{3^N} \frac{1}{2}\sum_{x\in V_N} \sum_{\substack{y\in V_N \\ y\sim x}} |F_s(x)-F_s(y)| |\rho(x)-\rho(y)| \\
& \leq \frac{\delta^{-4}}{2} \frac{5^N}{3^N}\left(\frac{1}{2}\sum_{x\in V_N} \sum_{\substack{y\in V_N \\ y\sim x}} |F_s(x)-F_s(y)|^2 + \frac{1}{2}\sum_{x\in V_N} \sum_{\substack{y\in V_N \\ y\sim x}}|\rho(x)-\rho(y)|^2 \right) 
=\frac{\delta^{-4}}{2} \left(\mathcal{E}_N(F_s)+\mathcal{E}_N(\rho)\right).
\end{aligned}
\end{equation}
Finally recall the lower estimate \eqref{eq:DFCC2} of the Dirichlet form $\langle \sqrt{f}, -\mathcal{L}_N \sqrt{f}\rangle_{\nu^N_{\rho(\cdot)}}$, except that we will discard the final boundary contribution.
Putting everything together, we bound \eqref{eq:FK2} from above by
\begin{equation}
\begin{aligned}
\int_0^T\, \sup_f&\left\{A(1+\delta^{-2})\mathcal{E}_N(F_s)+ \frac{5^N}{3^N A}\Gamma_N(\sqrt{f}, \nu^N_{\rho(\cdot)}) + \frac{\delta^{-4}}{2} (\mathcal{E}_N(F_s) + \mathcal{E}_N(\rho)) -\kappa\mathcal{E}(F_s) \right.\\
&\left.- \frac{5^N}{|V_N|}\Gamma_N(\sqrt{f}, \nu^N_{\rho(\cdot)}) + C'(\rho)\frac{5^N}{|V_N|}\sum_{xy\in E_N} (\rho(x)-\rho(y))^2 \right\}\,ds.
\end{aligned}
\end{equation}
To eliminate the dependence on $f$ and $F$ of the variational functional, we choose $A =\lim_{N\to\infty} 3^{-N}|V_N| = \frac{3}{2}$ and $\kappa=A(1+\delta^{-2})+\frac{\delta^{-4}}{2} $.
This gives a further upper bound in the form of the time integral of a constant multiple of $\mathcal{E}(\rho)$, which is finite because $\rho(\cdot)\in \mathcal{F}$.
\end{proof}

\subsubsection{Characterization of the limit density}

Having shown that $\mathbb{Q}$ is concentrated on trajectories whose $m$-density, $\rho_\cdot$, belongs to $L^2(0,T,\mathcal{F})$, we proceed to show that $\rho_\cdot$ is a weak solution of the heat equation.
Recall the definitions of $\Theta_{\text{Dir}}(t)$ and $\Theta_{\text{Rob}}(t)$ from 
\eqref{eq:ThetaD} and \eqref{eq:ThetaR}, and the statement of Theorem \ref{thm:Hydro}.

\begin{proposition}
$
\mathbb{Q}\left(\pi_\cdot: \Theta_b(t)=0, ~\forall t\in [0,T], ~\forall F\in \mathscr{D}_b\right)=1,
$
where
\begin{align}
\Theta_b(t) =
 \left\{
 \begin{array}{ll} 
 \Theta_{\rm Dir}(t) \text{ with } {\sf g}(a)=\bar\rho(a), ~\forall a\in V_0, & \text{if } b<5/3, \\ 
 \Theta_{\rm Rob}(t) \text{ with } {\sf g}(a)=\bar\rho(a),~ {\sf r}(a) = \lambda_\Sigma(a),~\forall a\in V_0, & \text{if } b= 5/3,\\
 \Theta_{\rm Rob}(t) \text{ with } {\sf r}(a)=0,~\forall a\in V_0, &\text{if } b>5/3,
 \end{array} 
 \right.
 \end{align}
and
\begin{align}
\label{eq:mathscrDb}
 \mathscr{D}_b =
 \left\{
 \begin{array}{ll}
 C([0,T], {\rm dom}\Delta_0) \cap C^1((0,T), {\rm dom}\Delta_0), &\text{if } b<5/3,\\
 C([0,T], {\rm dom}\Delta) \cap C^1((0,T), {\rm dom}\Delta), &\text{if } b\geq 5/3.
 \end{array}
 \right.
\end{align}
\end{proposition}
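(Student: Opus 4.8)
The plan is to start from Dynkin's martingale decomposition \eqref{eq:Dynkin1} and pass to the limit term by term, using the characterizations already established (absolute continuity, $\rho\in L^2(0,T,\mathcal{F})$) together with the replacement lemmas of \S\ref{sec:denRL}. Fix $F\in \mathscr{D}_b$. Since $F_t\in {\rm dom}\Delta$ (or ${\rm dom}\Delta_0$ when $b<5/3$), the discrete Laplacian term converges by \eqref{eq:throwaway} (uniform convergence $\tfrac32\Delta_N F_s\to\Delta F_s$, Lemma \ref{lem:domLap}-\eqref{Lapunif}), and the time-derivative term is handled directly since $F\in C^1$ in time. The normal-derivative term at the boundary converges because $(\partial_N^\perp F_s)(a)\to(\partial^\perp F_s)(a)$ (Lemma \ref{lem:domLap}-\eqref{NormalDerivative}); one should be slightly careful to control the rate of this convergence against the prefactor $3^N/|V_N|\to \tfrac23$, but this is standard from analysis on $SG$. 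The key point is that the martingale $M^N_t(F)$ itself vanishes: from \eqref{eq:QVDensity} and the same computation as in \eqref{eq:MGest} (with $\theta$ replaced by $t$), $\mathbb{E}_{\mu_N}[(M^N_t(F))^2]\to 0$, so $M^N_t(F)\to 0$ in $L^2(\mathbb{P}_{\mu_N})$; combined with tightness and Portmanteau this kills the martingale contribution in the limit identity.

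The crux is then to handle the boundary occupation variables $\eta^N_s(a)$, $a\in V_0$, which appear in \eqref{eq:Dynkin1} but must be converted into expressions involving the limiting density $\rho_s(a)$ or the prescribed boundary data. I would split into the three regimes. In the Dirichlet case $b<5/3$: here $F_s(a)=0$, so the last summand in the boundary term of \eqref{eq:Dynkin1} carries the factor $F_s(a)$ and vanishes identically except for the $(\partial^\perp F_s)(a)\eta^N_s(a)$ piece times $\frac{3^N}{|V_N|}\to\frac23$. But wait --- in $\Theta_{\rm Dir}$ the surviving boundary term is $\frac23\int_0^t\sum_a {\sf g}(a)(\partial^\perp F_s)(a)\,ds$ with ${\sf g}(a)=\bar\rho(a)$, so I need to replace $\eta^N_s(a)$ by $\bar\rho(a)$: this is exactly Lemma \ref{lem:replace2}. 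In the Neumann case $b>5/3$: the term $\frac{5^N}{3^N b^N}\lambda_\Sigma(a)(\eta^N_s(a)-\bar\rho(a))F_s(a)$ has prefactor $(5/(3b))^N\to 0$ and $\eta^N$ is bounded, so it disappears; the remaining $\frac{3^N}{|V_N|}\eta^N_s(a)(\partial^\perp F_s)(a)$ must become $\frac23\rho_s(a)(\partial^\perp F_s)(a)$, which requires replacing $\eta^N_s(a)$ by the local average ${\rm Av}_{K_j(a)\cap V_N}[\eta^N_s]$ (Lemma \ref{lem:replace1}) and then identifying $\lim_j\lim_N$ of this average with $\rho_s(a)$ using $\rho\in\mathcal{F}\Rightarrow\rho$ is $\tfrac12$-Hölder in the resistance metric (the footnote argument to Lemma \ref{lem:replace2}) together with weak convergence of $\pi^N_s$ to $\rho_s\,dm$. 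In the Robin case $b=5/3$: the prefactor $\frac{5^N}{3^N b^N}=1$, so $\lambda_\Sigma(a)(\eta^N_s(a)-\bar\rho(a))F_s(a)$ survives and --- again after the local-average replacement (Lemma \ref{lem:replace1}) and identification of the average with $\rho_s(a)$ --- gives the term ${\sf r}(a)(\rho_s(a)-{\sf g}(a))F_s(a)$ with ${\sf r}(a)=\lambda_\Sigma(a)$, ${\sf g}(a)=\bar\rho(a)$, matching $\Theta_{\rm Rob}$; the $(\partial^\perp F_s)(a)\eta^N_s(a)$ piece likewise becomes $\rho_s(a)(\partial^\perp F_s)(a)$.

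Putting the pieces together: for fixed $F$ and $t$, the random variable $\pi^N_\cdot\mapsto \Theta^N_b(t)$ (the discrete analogue of $\Theta_b(t)$ written in terms of $\pi^N$ and $\eta^N$) equals $M^N_t(F)$ plus the replacement errors plus $o_N(1)$, hence tends to $0$ in $L^1(\mathbb{P}_{\mu_N})$; the map $\pi_\cdot\mapsto\Theta_b(t)$ is continuous on $D([0,T],\mathcal{M}_+)$ for each fixed $F\in\mathscr{D}_b$ and $t$ (using $\rho\in L^2(0,T,\mathcal{F})$ to make sense of the boundary traces $\rho_s(a)$, via the $\tfrac12$-Hölder bound); so by Portmanteau $\mathbb{E}_{\mathbb{Q}}[|\Theta_b(t)|\wedge 1]=\lim_N \mathbb{E}_{\mathbb{Q}_N}[|\Theta^N_b(t)|\wedge 1]=0$, giving $\Theta_b(t)=0$ $\mathbb{Q}$-a.s. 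Finally I would upgrade from "for each fixed $t$ and $F$" to "for all $t\in[0,T]$ and all $F\in\mathscr{D}_b$ simultaneously" by a separability/density argument: $t\mapsto\Theta_b(t)$ is continuous, so a countable dense set of $t$'s suffices, and $\mathscr{D}_b$ admits a countable subset dense in the relevant norm so that $F\mapsto\Theta_b(t)$ passes to the limit. The main obstacle is the boundary-trace identification in the Neumann and Robin regimes: making rigorous that $\lim_{j\to\infty}\lim_{N\to\infty}{\rm Av}_{K_j(a)\cap V_N}[\eta^N_s]$ coincides $\mathbb{Q}$-a.s.\ with the well-defined value $\rho_s(a)$ of the $\mathcal{F}$-function $\rho_s$ at the boundary point $a$ --- this is where $\rho_s\in\mathcal{F}$, its $\tfrac12$-Hölder regularity in $R_{\rm eff}$, the diameter bound ${\rm diam}_{R^N_{\rm eff}}(K_j(a)\cap V_N)\le C(5/3)^{N-j}$, and the weak convergence $\pi^N_s\rightharpoonup\rho_s\,dm$ all have to be combined carefully, and one must also integrate in $s$ and interchange limits with the time integral.
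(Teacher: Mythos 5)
Your skeleton coincides with the paper's: Dynkin's decomposition \eqref{eq:Dynkin1}, vanishing of the martingale via its quadratic variation (the paper uses Doob's inequality to get the supremum over $t$ in one stroke), Lemma \ref{lem:replace2} in the Dirichlet regime, Lemma \ref{lem:replace1} plus continuity of $\rho_t$ (from Proposition \ref{prop:densityF}) in the Robin/Neumann regimes, and Portmanteau to transfer the identity from $\mathbb{Q}_N$ to $\mathbb{Q}$. The Dirichlet case as you describe it is complete, since there $\Theta_{\rm Dir}(t)$ involves only $\bar\rho(a)$ at the boundary and is a genuinely continuous functional of $\pi_\cdot$.

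The gap is in your Portmanteau step for $b\geq 5/3$. You assert that $\pi_\cdot\mapsto\Theta_b(t)$ is continuous on $D([0,T],\mathcal{M}_+)$ ``using $\rho\in L^2(0,T,\mathcal{F})$ to make sense of the boundary traces.'' This is false, and the paper flags it explicitly: the boundary values $\rho_s(a)$ are not functions of the measure path at all on the Skorokhod space (a generic element of $\mathcal{M}_+$ has no density, let alone a trace at a point), and $\mathbb{Q}$-a.s.\ regularity of the density does not make the functional continuous, which is what Portmanteau needs. The paper's proof is organized precisely around removing this obstruction by a chain of substitutions, \emph{cf.\@} \eqref{eq:replaceflow}: first replace $\rho_s(a)$ by $\pi_s(\iota^a_j)$, the pairing with the normalized indicator of the $j$-cell $K_j(a)$ (this converges to $\rho_s(a)$ as $j\to\infty$ by continuity of $\rho_s$, which is the Lebesgue-differentiation step you correctly anticipate via the H\"older bound); then, since $\iota^a_j$ is not continuous, replace it by the smooth bump functions $\tilde\iota^a_{j,\epsilon}$ of \cite{bumps}, which converge to $\iota^a_j$ in $L^1(K,m)$ --- only after this is the event an open set of the Skorokhod space and Portmanteau applicable; finally, on the discrete side, undo the approximation using $\pi^N_s(\iota^a_j)={\rm Av}_{K_j(a)\cap V_N}[\eta^N_s]$ and Lemma \ref{lem:replace1} to recover $\eta^N_s(a)$ and hence the martingale $M^N_t(F)$ up to $o_N(1)$. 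You name the boundary-trace identification as ``the main obstacle'' but supply no mechanism for it, and the one concrete claim you do make (continuity of $\Theta_b(t)$ in $\pi$) is the step that fails; the intermediate bump-function (or an equivalent continuous-test-function) approximation is the missing idea. Your closing remarks about upgrading to all $t$ and all $F$ by density are acceptable, though the paper avoids the issue in $t$ by putting $\sup_{t\in[0,T]}$ inside the probability and controlling the martingale with Doob's inequality.
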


\begin{proof}
We present the full proof for the case $b\geq 5/3$, which consists of several approximation and replacement steps.
The proof for the case $b<5/3$ is simpler and will be sketched at the end.

We want to show that for every $\delta>0$,
\begin{align}
\label{eq:QRob}
\mathbb{Q}\left(\pi \in D([0,T], \mathcal{M}_+): \sup_{t\in [0,T]} |\Theta_b(t)| >\delta \right) =0,
\end{align}
where $\rho$ in $\Theta_b(t)$ should be understood as the $m$-density of $\pi$.
There is however a problem: the boundary terms involving $\rho_\cdot(a)$ are not direct functions of $\pi$, so the event in question is not an open set in the Skorokhod space. 
Therefore we cannot apply Portmanteau's lemma right away.

To address the issue we use two ideas from analysis.
The first idea is local averaging, that is, to replace $\rho_\cdot(a)$ by $\pi_\cdot(\iota_j^a)$, the pairing of the limit measure $\pi_\cdot$ with the approximate identity $\iota_j^a: K \to\mathbb{R}_+$ given by
\begin{align}
\iota_j^a(x) = \frac{1}{m(K_j(a))}{\bf 1}_{K_j(a)}(x)
\end{align}
where $K_j(a)$ is the unique $j$-cell containing $a$.
This is where we invoke Proposition \ref{prop:densityF}, which implies that for a.e.\@ $t\in [0,T]$, $\rho_t$ is a (uniformly) continuous function on $K$, and thus we have the trivial case of Lebesgue's differentiation theorem
\begin{align}
\label{eq:LebDiff}
\lim_{j\to\infty} \pi_t(\iota^a_j) = \lim_{j\to\infty} \frac{1}{m(K_j(a))}\int_{K_j(a)}\, \rho_t(y)\,dm(y)  = \rho_t(a)
\qquad \forall a\in V_0 \text{ and for a.e. } t\in[0,T].
\end{align}
This almost achieves what we want, except that $\iota_j^a$ is not a continuous function.
Thus comes the second idea, which is to approximate $\iota_j^a$ by a sequence of continuous bump functions in $L^1(K,m)$.
Here is an explicit construction. 
Denote the two other corner vertices of $K_j(a)$ by $a^j_1$ and $a^j_2$.
For each $i=1,2$, let $K_k(a^j_i)$ be the $k$-cell which contains $a_i$ and intersects $K_j(a)$ only at $a^j_i$, and label the two other corner vertices of $K_k(a^j_i)$ by $b^{j,k}_{i,1}$ and $b^{j,k}_{i,2}$.
We then define $\tilde\iota^a_{j,k}: K\to\mathbb{R}_+$ by
\begin{align}
\label{eq:tildeiota}
\tilde\iota^a_{j,k}(x)
=
\frac{1}{m(K_j(a))}
\times
\left\{
\begin{array}{ll}
1, & x \in K_j(a),\\
0, & x \notin  K_j(a) \cup K_k(a^j_1) \cup K_k(a^j_2),\\
0, & x\in \{b^{j,k}_{1,1}, b^{j,k}_{1,2}, b^{j,k}_{2,1}, b^{j,k}_{2,2}\},\\
\text{harmonic interpolation}, & x\in K_k(a^j_1) \cup K_k(a^j_2). 
\end{array}
\right.
\end{align}
The harmonic interpolation from the boundary data $\{1,0,0\}$ on $\{a^j_i, b^{j,k}_{i,1}, b^{j,k}_{i,2}\}$ to $K_k(a^j_i)$ is based on the ``$\frac{1}{5}$-$\frac{2}{5}$'' algorithm \cite{StrichartzBook}*{\S1.3}, and ensures that $\tilde\iota^a_{j,k}$ is continuous. 
Moreover, for every $j\in \mathbb{N}$ we have
\[
\|\tilde\iota^a_{j,k} - \iota^a_j\|_{L^1(K,m)}
 \leq \frac{m(K_k(a^j_1)) + m(K_k(a^j_2))}{m(K_j(a))}
 = 2 \cdot 3^{j-k} \xrightarrow[k\to\infty]{} 0.
\]
It follows that for any nonnegative measure $\pi$ on $K$ with bounded density with respect to $m$, 
\begin{align}
\label{eq:piiota}
|\pi(\tilde\iota^a_{j,k}) - \pi(\iota^a_j)| \lesssim \int_K\, |\tilde\iota^a_{j,k} - \iota^a_j| \,dm \xrightarrow[k\to\infty]{} 0.
\end{align}

With these two ideas we can use Portmanteau's Lemma to pass from $\pi_\cdot$ to the discrete empirical measure $\pi^N_\cdot$.
Note that
\begin{align}
\label{eq:piNiaj}
\pi^N_\cdot(\iota_j^a) = \frac{1}{|V_N|}\frac{1}{m(K_j(a))}\sum_{x\in K_j(a)\cap V_N}  \eta^N_\cdot(x) = \frac{1}{|K_j(a)\cap V_N|}\sum_{x\in K_j(a) \cap V_N} \eta^N_\cdot(x) = {\rm Av}_{K_j(a)\cap V_N}[\eta^N_\cdot].
\end{align}
The density replacement Lemma \ref{lem:replace1} states that \eqref{eq:piNiaj} replaces $\eta_\cdot^N(a)$  in $L^1(\mathbb{P}_{\mu_N})$ as $N\to\infty$ then $j\to\infty$.

In what follows, the order in which we will perform the replacements is
\begin{equation}
\label{eq:replaceflow}
\begin{aligned}
\rho_\cdot(a) \xrightarrow[]{} \pi_\cdot(\iota^a_j) \xrightarrow[]{} \pi_\cdot(\tilde\iota_{j,k}^a) \xrightarrow[\text{Portmanteau}]{\text{in } D([0,T],\mathcal{M}_+)} \pi^N_\cdot(\tilde\iota_{j,k}^a) \xrightarrow[]{}  \pi^N_\cdot(\iota_j^a) = {\rm Av}_{K_j(a)\cap V_N}[\eta^N_\cdot] \xrightarrow[\substack{\text{Replacement}\\ \text{Lemma \ref{lem:replace1}}}]{\text{in $L^1(\mathbb{P_{\mu_N}})$}} \eta^N_\cdot(a).
\end{aligned}
\end{equation}

Starting with the first two steps in the replacement diagram \eqref{eq:replaceflow}, we subtract and add $\pi_s(\tilde\iota_{j,k}^a)$ to each $\rho_s(a)$ in $\Theta_{\text{Rob}}(t)$,  and rewrite the probability in \eqref{eq:QRob} as
\begin{equation}
\begin{aligned}
\mathbb{Q}&\left(\sup_{t\in [0,T]} \left|\int_K\, \rho_t(x) F_t(x)\,dm(x) - \int_K \,\rho_0(x) F_0(x)\,dm(x) - \int_0^t\int_K\, \rho_s(x)\left(\frac{2}{3}\Delta+\partial_s\right)F_s(x) \,dm(x)\,ds \right.\right.\\
 & \qquad \left.\left. + \frac{2}{3}\int_0^t\, \sum_{a\in V_0} \left[\pi_s(\tilde\iota_{j,k}^a) (\partial^\perp F_s)(a) + \lambda_\Sigma(a) (\pi_s(\tilde\iota_{j,k}^a)-\bar\rho(a))F_s(a)\right]\,ds \right.\right.\\
 &\qquad +\left.\int_K\, (\rho_0(x) - \varrho(x)) F_0(x)\,dm(x) \right.\\
 &\qquad+ \left.\left.\sum_{a\in V_0} \frac{2}{3} \int_0^t\, (\rho_s(a)-\pi_s(\tilde\iota^a_{j,k}))\left((\partial^\perp F_s)(a) + \lambda_\Sigma(a) F_s(a)\right)\,ds \right| > \delta\right).
\end{aligned}
\end{equation}
By the triangle inequality it suffices to prove that
\begin{align}
\label{Q1} &\varlimsup_{j\to\infty} \varlimsup_{k\to\infty}\mathbb{Q}\left(\sup_{t\in [0,T]} \left|\int_K\, \rho_t(x) F_t(x)\,dm(x) - \int_K \,\rho_0(x) F_0(x)\,dm(x) - \int_0^t\int_K\, \rho_s(x)\left(\frac{2}{3}\Delta+\partial_s\right)F_s(x) \,dm(x)\,ds \right.\right.\\
\nonumber & \qquad \left.\left. + \frac{2}{3}\int_0^t\, \sum_{a\in V_0} \left[\pi_s(\tilde\iota_{j,k}^a) (\partial^\perp F_s)(a) + \lambda_\Sigma(a) (\pi_s(\tilde\iota_{j,k}^a)-\bar\rho(a))F_s(a)\right]\,ds\right| > \frac{\delta}{5}\right)=0;\\
\label{Q2} & \mathbb{Q}\left(\left|\int_K\, (\rho_0(x) - \varrho(x)) F_0(x)\,dm(x)\right|>\frac{\delta}{5} \right)=0; \quad \text{and}\\
\label{Q3} &\varlimsup_{j\to\infty} \varlimsup_{k\to\infty} \mathbb{Q}\left(\sup_{t\in [0,T]} \left|\frac{2}{3} \int_0^t\, (\rho_s(a)-\pi_s(\tilde\iota^a_{j,k}))\left((\partial^\perp F_s)(a) + \lambda_\Sigma(a) F_s(a)\right)\,ds \right| >\frac{\delta}{5}\right)=0 \quad \text{for every } a\in V_0.
\end{align}
Eq.\@ \eqref{Q2} follows from \eqref{eq:init}.
For \eqref{Q3} we use, in this order, Chebyshev's and Cauchy-Schwarz inequalities and \eqref{eq:mathscrDb}  to bound the $\mathbb{Q}$-probability by
\begin{equation}
\begin{aligned}
&\left(\frac{5}{\delta}\right)^2 \mathbb{E}_{\mathbb{Q}}\left[\left|\sup_{t\in [0,T]} \frac{2}{3} \int_0^t\, (\rho_s(a)-\pi_s(\tilde\iota^a_{j,k}))\left((\partial^\perp F_s)(a) + \lambda_\Sigma(a) F_s(a)\right)\,ds \right|^2 \right] \\
&\lesssim
\delta^{-2}\mathbb{E}_{\mathbb{Q}}\left[\left(\int_0^T\, (\rho_s(a)-\pi_s(\tilde\iota^a_{j,k}))^2\,ds\right) \left(\int_0^T\, \left((\partial^\perp F_s)(a) + \lambda_\Sigma(a) F_s(a)\right)^2\,ds\right)\right]\\
&\lesssim \delta^{-2} \mathbb{E}_{\mathbb{Q}}\left[\int_0^T\, (\rho_s(a)-\pi_s(\tilde\iota^a_{j,k}))^2\,ds\right].
\end{aligned}
\end{equation}
On the one hand, $\mathbb{Q}$-a.s.,\@ $\pi_s(\tilde\iota^a_{j,k}) \to \pi_s(\iota_j^a)$ as $k\to\infty$ for a.e.\@ $s\in [0,T]$ by Proposition \ref{prop:Qlimitdensity} and \eqref{eq:piiota}.
On the other hand, $\pi_s(\iota^a_j)$, being the average density over $K_j(a)$, converges to $\rho_s(a)$ as $j\to\infty$ for a.e.\@ $s\in [0,T]$ by \eqref{eq:LebDiff}. 
So $\mathbb{Q}$-a.s., $\lim_{j\to\infty} \lim_{k\to\infty} (\rho_s(a) - \pi_s(\tilde\iota^a_{j,k}))^2= 0$  for a.e.\@ $s\in [0,T]$.
Now apply the dominated convergence theorem to deduce that 
\begin{align}
\lim_{j\to\infty} \lim_{k\to\infty} \mathbb{E}_{\mathbb{Q}}\left[\int_0^T\, (\rho_s(a)-\pi_s(\tilde\iota^a_{j,k}))^2\,ds\right]=0,
\end{align}
which then justifies \eqref{Q3}.

That leaves us with \eqref{Q1}: we note that the supremum of the long expression is a continuous function of $\pi\in D([0,T], \mathcal{M}_+)$, so the event is an open set in the Skorokhod space.
Therefore by Portmanteau's Lemma, the probability in \eqref{Q1} is bounded above by
\begin{equation}
\begin{aligned}
\label{Q4}
\varlimsup_{j\to\infty} \varlimsup_{k\to\infty}\varlimsup_{N\to\infty}&\mathbb{Q}_N\left(\sup_{t\in [0,T]} \left|\pi^N_t(F_t) - \pi^N_0(F_0) - \int_0^t\, \pi^N_s\left(\left(\frac{2}{3}\Delta+\partial_s\right)F_s\right)\,ds \right.\right.\\
 & \qquad \left.\left. + \frac{2}{3}\int_0^t\, \sum_{a\in V_0} \left[\pi^N_s(\tilde\iota_{j,k}^a) (\partial^\perp F_s)(a) + \lambda_\Sigma(a) (\pi^N_s(\tilde\iota_{j,k}^a)-\bar\rho(a))F_s(a)\right]\,ds\right| > \frac{\delta}{5}\right).
\end{aligned}
\end{equation}
We apply the last two steps of the replacement diagram \eqref{eq:replaceflow} by writing 
\[
\pi^N_\cdot(\tilde\iota^a_{j,k}) = \eta^N_\cdot(a) + (\pi^N_\cdot(\iota^a_j) - \eta^N_\cdot(a)) +(\pi^N_\cdot(\tilde\iota^a_{j,k})-\pi^N_\cdot(\iota^a_j)),
\]
and thus rewriting \eqref{Q4} as
\begin{equation}
\begin{aligned}
\varlimsup_{j\to\infty} \varlimsup_{k\to\infty} \varlimsup_{N\to\infty}&\mathbb{Q}_N\left(\sup_{t\in [0,T]} \left|\pi^N_t(F_t) - \pi^N_0(F_0) - \int_0^t\, \pi^N_s\left(\left(\frac{2}{3}\Delta+\partial_s\right)F_s\right)\,ds \right.\right.\\
 & \qquad\quad \left.\left. + \frac{2}{3}\int_0^t\, \sum_{a\in V_0} \left[\eta^N_s(a) (\partial^\perp F_s)(a) + \lambda_\Sigma(a) (\eta^N_s(a)-\bar\rho(a))F_s(a)\right]\,ds\right.\right.\\
 &\qquad\quad+\frac{2}{3} \int_0^t\,\sum_{a\in V_0} (\pi^N_s(\iota^a_j)-\eta^N_s(a))\left((\partial^\perp F_s)(a) + \lambda_\Sigma(a) F_s(a)\right)\,ds \\
&\qquad\quad+\left. \left.\frac{2}{3} \int_0^t\,\sum_{a\in V_0} (\pi^N_s(\tilde\iota^a_{j,k})-\pi^N_s(\iota^a_j))\left((\partial^\perp F_s)(a) + \lambda_\Sigma(a) F_s(a)\right)\,ds \right| >\frac{\delta}{5}\right).
\end{aligned}
\end{equation}
Again by the triangle inequality it suffices to prove that
\begin{align}
\label{Q5} \varlimsup_{N\to\infty}&\mathbb{Q}_N\left(\sup_{t\in [0,T]} \left|\pi^N_t(F_t) - \pi^N_0(F_0) - \int_0^t\, \pi^N_s\left(\left(\frac{2}{3}\Delta+\partial_s\right)F_s\right)\,ds \right.\right.\\
\nonumber & \qquad \quad \left.\left. + \frac{2}{3}\int_0^t\, \sum_{a\in V_0} \left[\eta^N_s(a) (\partial^\perp F_s)(a) + \lambda_\Sigma(a) (\eta^N_s(a)-\bar\rho(a))F_s(a)\right]\,ds\right| > \frac{\delta}{35}\right)=0;\\
\label{Q6} \varlimsup_{j\to\infty} \varlimsup_{N\to\infty}&\mathbb{Q}_N\left(\sup_{t\in [0,T]} \left|\frac{2}{3} \int_0^t\, (\pi^N_s(\iota^a_j)-\eta^N_s(a))\left((\partial^\perp F_s)(a) + \lambda_\Sigma(a) F_s(a)\right)\,ds \right| >\frac{\delta}{35}\right)=0 \quad \text{for every } a\in V_0;\\
\label{Q7} \varlimsup_{j\to\infty} \varlimsup_{k\to\infty} \varlimsup_{N\to\infty}&\mathbb{Q}_N\left(\sup_{t\in [0,T]} \left|\frac{2}{3} \int_0^t\, (\pi^N_s(\tilde\iota^a_{j,k})-\pi^N_s(\iota^a_j))\left((\partial^\perp F_s)(a) + \lambda_\Sigma(a) F_s(a)\right)\,ds \right| >\frac{\delta}{35}\right)=0 \quad \text{for every }a\in V_0.
\end{align}
The second term \eqref{Q6} follows from Lemma \ref{lem:replace1}.
To prove the last term \eqref{Q7}, we need to justify the following replacement:
for every $N \in \mathbb{N}$, $\mathbb{Q}_N$-a.s., for every $j \in \mathbb{N}$, $s\in [0,T]$, and $a\in V_0$,
\begin{align}
\label{eq:discretereplace}
\lim_{k\to \infty}(\pi^N_s(\tilde\iota^a_{j,k}) - \pi^N_s(\iota^a_j))^2 =0.
\end{align}
Then we can apply the same argument as was done for \eqref{Q3}.
The proof of \eqref{eq:discretereplace} follows from a discrete computation and a recall of \eqref{eq:tildeiota}:
\begin{equation}
\begin{aligned}
&\left|\pi^N_s(\tilde\iota^a_{j,k}) - \pi^N_s(\iota^a_j)\right|
=
\frac{1}{|V_N|} \left| \sum_{x\in V_N} \eta^N_s(x) \left[\tilde\iota^a_{j,k}(x) - \iota^a_j(x) \right]\right|
\leq
\frac{1}{|V_N|}\sum_{x\in V_N} \left|\tilde\iota^a_{j,k}(x) - \iota^a_j(x)\right|
\\
&\leq
\frac{|V_N\cap (K_k(a^j_1) \cup K_k(a^j_2))|}{|V_N|}
\lesssim
\frac{3^{N-k}}{3^N} = 3^{-k} \xrightarrow[k\to\infty]{} 0.
\end{aligned}
\end{equation}
As for the first term \eqref{Q5}, observe that the expression inside the absolute value matches $M^N_t(F)$ \eqref{eq:Dynkin1} up to an additional $o_N(1)$ function.
Thus it remains to show that
\begin{align}
\varlimsup_{N\to\infty}\mathbb{Q}_N\left(\sup_{t\in [0,T]} |M^N_t(F)|> \frac{\delta}{70}\right) =0.
\end{align}
By Doob's inequality,
\begin{align}
\mathbb{Q}_N \left(\sup_{t\in [0,T]} |M^N_t(F)|>\delta\right) \leq \frac{1}{\delta^2} \mathbb{E}_{\mu_N}\left[|M^N_T(F)|^2 \right] = \frac{1}{\delta^2} \mathbb{E}_{\mu_N}\left[ \langle M^N(F)\rangle_T\right].
\end{align}
By a similar computation as in \eqref{eq:MGest} we find that the last term goes to $0$ as $N\to\infty$.
This proves \eqref{eq:QRob}.

For the case $b<5/3$, observe that $\Theta_{\text{Dir}}(t)$ does not have the boundary term issue of $\Theta_{\text{Rob}}(t)$, and is already a continuous function of $\pi\in D([0,T],\mathcal{M}_+)$.
Therefore the proof goes through provided that we can replace $\bar\rho(a)$ by $\eta_\cdot^N(a)$  in $\mathbb{P}_{\mu_N}$-probability as $N\to\infty$, which follows from the replacement Lemma \ref{lem:replace2}.
\end{proof}

This completes the characterization of the limit density in the case $b\geq 5/3$.
In the case $b<5/3$, we need to verify Condition (3) of Definition \ref{def:HeatD}.
Showing that the profile has the value $\bar\rho(a)$ at $a$ is now standard (for the 1D case see \emph{e.g.\@} Section 5.3 of \cite{BGJ17}) and follows from  Lemma \ref{lem:fixing_profile} below.

\begin{lemma}[Fixing the profile at the boundary]
\label{lem:fixing_profile}
For every $a\in V_0$, let $K_j(a)$ denote the unique $j$-cell $K_w$, $|w|=j$, which contains $a$. Then
\begin{align*}
\varlimsup_{j\to\infty} \varlimsup_{N\to\infty} \mathbb{E}_{\mu_N}\left[\left|\int_0^t\, \left(\bar\rho(a) - {\rm Av}_{K_j(a)\cap V_N}[\eta_s^N]\right)\,ds \right|\right]=0.
\end{align*}
\end{lemma}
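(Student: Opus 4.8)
The plan is to deduce the statement from the two boundary replacement lemmas already proved, via the elementary splitting
\[
\bar\rho(a) - {\rm Av}_{K_j(a)\cap V_N}[\eta] = \bigl(\bar\rho(a)-\eta(a)\bigr) + \bigl(\eta(a) - {\rm Av}_{K_j(a)\cap V_N}[\eta]\bigr),
\]
so that by the triangle inequality and subadditivity of $\varlimsup$ it suffices to treat the time integral of each summand separately. The first is handled directly by Lemma \ref{lem:replace2}, which yields $\varlimsup_{N\to\infty}\mathbb{E}_{\mu_N}\bigl[\bigl|\int_0^t(\bar\rho(a)-\eta^N_s(a))\,ds\bigr|\bigr]=0$ with a bound independent of $j$. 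Everything thus reduces to establishing the $b<5/3$ analogue of Lemma \ref{lem:replace1}, namely
\[
\varlimsup_{j\to\infty}\varlimsup_{N\to\infty}\mathbb{E}_{\mu_N}\Bigl[\Bigl|\int_0^t\bigl(\eta^N_s(a)-{\rm Av}_{K_j(a)\cap V_N}[\eta^N_s]\bigr)\,ds\Bigr|\Bigr]=0 .
\]

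For this I would rerun the proof of Lemma \ref{lem:replace1}, the one essential modification being that the constant-density reference measure $\nu^N_\rho$ must be replaced by the inhomogeneous Bernoulli measure $\nu^N_{\rho(\cdot)}$ of Corollary \ref{cor:blessthan53} (with $\rho(\cdot)\in\mathcal{F}$ bounded away from $0$ and $1$ and $\rho(a')=\bar\rho(a')$ for all $a'\in V_0$): with $\nu^N_\rho$ the Dirichlet-form error from Lemma \ref{lem:bgeq53} is of order $b^{-N}$, and after multiplication by $\tfrac{5^N}{\kappa|V_N|}\sim\tfrac1\kappa(5/(3b))^N$ this diverges when $b<5/3$, whereas Corollary \ref{cor:blessthan53} supplies an error that stays bounded. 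Concretely, writing $B:=K_j(a)\cap V_N$ and $g(\eta)=\eta(a)-{\rm Av}_B[\eta]=\tfrac1{|B|}\sum_{z\in B}(\eta(a)-\eta(z))$, the entropy inequality together with the Feynman--Kac formula for a non-invariant measure \cite[Lemma A.1]{BMNS17} and the variational formula for the top eigenvalue bound $\mathbb{E}_{\mu_N}\bigl[\bigl|\int_0^t g\,ds\bigr|\bigr]$ by $\tfrac C\kappa+\sup_f\bigl\{\pm\int g f\,d\nu^N_{\rho(\cdot)}-\tfrac{5^N}{\kappa|V_N|}\langle\sqrt f,-\mathcal{L}_N\sqrt f\rangle_{\nu^N_{\rho(\cdot)}}\bigr\}$ over densities $f$. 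The change of variables $\eta\mapsto\eta^{za}$ inside $\int(\eta(a)-\eta(z))\,f\,d\nu^N_{\rho(\cdot)}$ now produces, besides the usual exchange term, a Radon--Nikodym correction $f(\eta^{za})\bigl(1-\tfrac{d\nu^N_{\rho(\cdot)}(\eta^{za})}{d\nu^N_{\rho(\cdot)}(\eta)}\bigr)$; since the integrand vanishes unless $\eta(a)\neq\eta(z)$, that correction is bounded by $C_\rho|\rho(a)-\rho(z)|$, and after averaging over $z\in B$ it contributes at most $C_\rho\,\omega_\rho(2^{-j})$, where $\omega_\rho$ is the modulus of continuity of the continuous function $\rho$ (a $j$-cell has Euclidean diameter $\lesssim 2^{-j}$). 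The exchange term is treated exactly as in Lemma \ref{lem:replace1} --- Young's inequality and $\int(f(\eta)+f(\eta^{za}))\,d\nu^N_{\rho(\cdot)}\le C_\rho$ for the ``square'' part, the moving particle Lemma \ref{lem:MPL} and $R^N_{\rm eff}(z,a)\le{\rm diam}_{R^N_{\rm eff}}(B)$ for the carr\'e-du-champ part --- with the Young parameter tuned so that the coefficient of $\Gamma_N(\sqrt f,\nu^N_{\rho(\cdot)})$ equals $\tfrac{5^N}{\kappa|V_N|}$. Feeding in the lower bound for $\langle\sqrt f,-\mathcal{L}_N\sqrt f\rangle_{\nu^N_{\rho(\cdot)}}$ from Corollary \ref{cor:blessthan53} (and discarding its non-negative boundary carr\'e du champ) cancels the two copies of $\Gamma_N(\sqrt f,\nu^N_{\rho(\cdot)})$, leaving the $f$-free bound
\[
\frac C\kappa + C_\rho\,\frac{\kappa|V_N|}{5^N}\,{\rm diam}_{R^N_{\rm eff}}(B) + C_\rho\,\omega_\rho(2^{-j}) + C'(\rho)\,\frac{5^N}{\kappa|V_N|}\sum_{xy\in E_N}(\rho(x)-\rho(y))^2 .
\]

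It then remains to pass to the limit in the order $N\to\infty$, then $j\to\infty$, then $\kappa\to\infty$. Using ${\rm diam}_{R^N_{\rm eff}}(K_j(a)\cap V_N)\le C(5/3)^{N-j}$ (\cite{StrichartzBook}*{Lemma 1.6.1}) and $3^{-N}|V_N|\to\tfrac32$, the second term is of order $\kappa(3/5)^j$; using $\sum_{xy\in E_N}(\rho(x)-\rho(y))^2=\tfrac{2\cdot 3^N}{5^N}\mathcal{E}_N(\rho)\le\tfrac{2\cdot 3^N}{5^N}\mathcal{E}(\rho)$, the last term tends to a constant multiple of $\kappa^{-1}$. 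Hence $\varlimsup_{N\to\infty}$ of the above is at most $C_1\kappa^{-1}+C_2\kappa(3/5)^j+C_3\,\omega_\rho(2^{-j})$; sending $j\to\infty$ removes the $j$-dependent terms and sending $\kappa\to\infty$ the remaining one, which proves the reduced statement and hence, combined with Lemma \ref{lem:replace2}, the lemma. The step that needs the most care --- and the only genuinely new ingredient relative to Lemma \ref{lem:replace1} --- is this forced passage to the inhomogeneous reference measure $\nu^N_{\rho(\cdot)}$: it is exactly what keeps the Dirichlet-form error bounded when $b<5/3$, at the price of the Radon--Nikodym corrections in the change of variables, which must then be absorbed into the modulus of continuity of $\rho$ (and here, as in the footnote to Lemma \ref{lem:replace2}, one uses that every function in $\mathcal{F}$ is automatically $\tfrac12$-H\"older on $(K,R^N_{\rm eff})$, hence continuous on $K$).
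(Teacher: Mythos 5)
Your proposal is correct and follows exactly the route the paper intends: split $\bar\rho(a)-{\rm Av}_{K_j(a)\cap V_N}[\eta]$ into $(\bar\rho(a)-\eta(a))+(\eta(a)-{\rm Av}_{K_j(a)\cap V_N}[\eta])$, handle the first piece with Lemma \ref{lem:replace2}, and rerun the proof of Lemma \ref{lem:replace1} for the second piece with the inhomogeneous reference measure $\nu^N_{\rho(\cdot)}$ and the Dirichlet-form bound of Corollary \ref{cor:blessthan53} --- precisely the adaptation the paper leaves to the reader, which you carry out correctly (Radon--Nikodym correction controlled by the continuity of $\rho(\cdot)\in\mathcal{F}$, moving particle lemma for the exchange term, nonnegative boundary carr\'e du champ discarded). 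The only blemish is a harmless constant: with the convention \eqref{eq:graphenergy} one has $\sum_{xy\in E_N}(\rho(x)-\rho(y))^2=\tfrac{3^N}{5^N}\mathcal{E}_N(\rho)$ (no factor $2$), which does not affect any limit.
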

The proof of this lemma follows from both Lemmas \ref{lem:replace2} and \ref{lem:replace1}. We also remark that Lemma \ref{lem:replace1} is proved in the regime $b\geq 5/3$, but in fact it holds for any $b$. The only difference in the proof is that one has to use the reference measure $\nu^N_{\rho(\cdot)}$ with a suitable profile $\rho(\cdot)$ as the one in the proof of Lemma \ref{lem:replace2}. We leave the details of the adapation of the arguments to the reader.

\section{Existence \& uniqueness of weak solutions to the heat equation} \label{sec:uniqueness}

To conclude the proof of Theorem \ref{thm:Hydro} it remains to establish Lemma \ref{lem:uniqueness} (where, for the sake of better notation, we use $\bar\rho(a)$ in place of ${\sf g}(a)$).
\begin{proposition}
\label{prop:uniquesoln}
The unique weak solution of the heat equation with boundary parameter ${b}$ is
\begin{align}
\label{eq:strongsoln}
\rho^{b}(t,\cdot) = \rho^{b}_{\rm ss} + \tilde{\sf T}^b_t\left(\varrho - \rho^{b}_{\rm ss} \right),
\end{align}
where $\rho_{\rm ss}^{b}$ is the steady-state solution satisfying Laplace's equation $\Delta_{b} \rho_{\rm ss}^{b}=0$ on $K\setminus V_0$, and boundary condition (for all $a\in V_0)$
\begin{align}
\left\{
\begin{array}{ll}
\rho^{b}_{\rm{ss}}(a)=\bar\rho(a),& \text{if } {b}<5/3,\\
\partial^\perp \rho^{b}_{\rm{ss}}(a) = 0, & \text{if } {b}>5/3,\\
\partial^\perp \rho^{b}_{\rm{ss}}(a) = -{\sf r}(a)(\rho^{b}_{\rm ss}(a)-\bar\rho(a)), &\text{if } {b}=5/3.
\end{array}
\right.
\end{align}
In particular, we have the following long-time limit:
\begin{align}
\label{eq:longtime}
\lim_{t\to\infty}\rho^{b}(t,\cdot) = 
\left\{
\begin{array}{ll}
\rho^{b}_{\rm ss}, & \text{if } {b}\leq 5/3,\\
 \int_K\, \varrho\,dm, & \text{if } {b}>5/3.
 \end{array}
 \right.
\end{align}
\end{proposition}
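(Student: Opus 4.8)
The statement is really three claims bundled together: (i) formula \eqref{eq:strongsoln} defines a function which is a weak solution in the relevant sense; (ii) it is the \emph{unique} such solution; (iii) the long-time limit \eqref{eq:longtime} holds. I would organize the argument accordingly. For (i), first establish existence of the steady state $\rho^{\rm b}_{\rm ss}$. In the Dirichlet case this is the harmonic extension of $\bar\rho|_{V_0}$, which exists and lies in ${\rm dom}\Delta$ by Kigami's theory (the space of harmonic functions on $SG$ is $3$-dimensional, parametrized by boundary values). In the Neumann case the only solutions of $\Delta_{\rm Neu}u=0$ are constants, and since the problem is only meaningful up to the conserved total mass one takes $\rho^{\rm b}_{\rm ss}=\int_K\varrho\,dm$; I should note that $\varrho - \rho^{\rm b}_{\rm ss}$ then has zero mean, which is the condition for it to lie in the orthogonal complement of the kernel of $H_{\rm Neu}$. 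In the Robin case $\rho^{\rm b}_{\rm ss}$ solves $\Delta u=0$ on $K\setminus V_0$ with $\partial^\perp u(a)=-{\sf r}(a)(u(a)-\bar\rho(a))$; existence and uniqueness here follow by writing $u=h+c$ with $h$ harmonic and using the Dirichlet-to-Neumann map on $V_0$ (Appendix \ref{app:DNmap}) — the boundary condition becomes a linear system $\Lambda u|_{V_0} = -{\sf r}(u|_{V_0}-\bar\rho)$ which is invertible because $\Lambda$ is positive semidefinite and ${\sf r}(a)\geq 0$ with the inhomogeneous term breaking degeneracy. Once $\rho^{\rm b}_{\rm ss}$ is in hand, set $w(t,\cdot) := \tilde{\sf T}^b_t(\varrho-\rho^{\rm b}_{\rm ss})$; by Lemma \ref{lem:semigroup} (with $H_b$ replaced by $\tfrac23 H_b$) this is smooth in $t>0$, lies in ${\rm dom}\Delta_b$, and solves $\partial_t w = \tfrac23\Delta w$. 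Then $\rho^{\rm b}=\rho^{\rm b}_{\rm ss}+w$; one checks it satisfies $\rho^{\rm b}\in L^2(0,T,\mathcal{F})$ (using $\|{\sf T}^b_t f\|_{\mathcal{F}}$ bounds from the spectral decomposition) and plug into $\Theta_{\rm Dir}$ or $\Theta_{\rm Rob}$ to verify the weak formulation: the bulk term cancels against $\partial_t$ via integration by parts (Lemma \ref{lem:domLap}-\eqref{IBP}), and the boundary terms match because of the boundary condition satisfied by $\rho^{\rm b}_{\rm ss}$ and because $w$ satisfies the homogeneous version (Dirichlet $w|_{V_0}=0$, Robin $\partial^\perp w = -{\sf r}\, w|_{V_0}$).

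For uniqueness (ii), suppose $\rho^{(1)},\rho^{(2)}$ are two weak solutions with the same initial datum, and set $u := \rho^{(1)}-\rho^{(2)} \in L^2(0,T,\mathcal{F})$ with $u(0,\cdot)=0$. The standard energy argument: test the weak formulation against $F_s$ chosen cleverly — actually, since the admissible test functions in $\Theta_{\rm Dir}/\Theta_{\rm Rob}$ must lie in $C([0,T],{\rm dom}\Delta_b)\cap C^1$, I would instead use the duality/Gronwall approach. Fix $t_0$ and $G\in {\rm dom}\Delta_b$, let $F_s := \tilde{\sf T}^b_{t_0-s}G$ for $s\in[0,t_0]$ (this is an admissible test function by Corollary \ref{cor:invarianceSb} and Lemma \ref{lem:semigroup}, and satisfies $(\tfrac23\Delta + \partial_s)F_s = 0$ with the right boundary condition). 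Writing the weak formulation for $u$ with this $F$ kills all interior and boundary terms except $\int_K u_{t_0}G\,dm - \int_K u_0 F_0\,dm = \int_K u_{t_0}G\,dm$, hence $\int_K u_{t_0}G\,dm = 0$ for all $G$ in a dense set, so $u_{t_0}\equiv 0$. This is cleaner than a Gronwall estimate and avoids having to differentiate $\|u_s\|_{L^2}^2$. One subtlety to address: for $b<5/3$ the test function must have $F|_{V_0}=0$, so one needs $\tilde{\sf T}^b_r G$ to remain in ${\rm dom}\Delta_{\rm Dir}$ — which is exactly the content of Corollary \ref{cor:invarianceSb}; and one must separately invoke Condition (3) of Definition \ref{def:HeatD} (that both solutions have boundary value $\bar\rho$) to conclude that $u|_{V_0}=0$, since the Dirichlet weak formulation alone does not see the boundary values.

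For the long-time limit (iii), this is immediate from \eqref{eq:strongsoln} once one knows $\tilde{\sf T}^b_t f \to 0$ (Dirichlet, Robin) or $\tilde{\sf T}^b_t f \to \overline{f} := \int_K f\,dm$ (Neumann) as $t\to\infty$, uniformly on $K$. This follows from the spectral decomposition: $H_b$ has compact resolvent (Lemma \ref{lem:DFR}-\eqref{cptres}), so a discrete spectrum $0\leq \mu_0\leq \mu_1\leq\cdots\to\infty$; for Dirichlet and Robin, $\mu_0>0$ (strict positivity because $0$ is not an eigenvalue — for Dirichlet a harmonic function vanishing on $V_0$ is $0$; for Robin one uses $\mathscr{E}_{\rm Rob}(F)=0 \Rightarrow F$ const and then the boundary term forces $F=0$), giving exponential decay $\|\tilde{\sf T}^b_t f\|_\infty \lesssim e^{-\frac23\mu_0 t}\|f\|$ after using the heat-kernel $L^1\!\to\!L^\infty$ bound of Lemma \ref{lem:semigroup}-\eqref{bounded} to upgrade $L^2$ decay to uniform decay; for Neumann, $\mu_0=0$ is simple with eigenfunction the constant, so $\tilde{\sf T}^b_t f = \overline{f} + O(e^{-\frac23\mu_1 t})$, and $\overline{\varrho - \rho^{\rm b}_{\rm ss}} = \overline\varrho - \overline\varrho = 0$ confirms $\rho^{\rm b}(t,\cdot)\to \rho^{\rm b}_{\rm ss} = \int_K\varrho\,dm$.

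The main obstacle I anticipate is \textbf{not} the decay estimates or the duality argument — those are routine given the semigroup machinery already assembled — but rather the \emph{existence and uniqueness of the Robin steady state} $\rho^{\rm b}_{\rm ss}$ and the verification that it has the regularity (membership in ${\rm dom}\Delta$, continuity up to $V_0$) needed to serve as a legitimate component of a weak solution. On $SG$ one cannot just quote elliptic regularity; one has to work explicitly with the finite-dimensional space of harmonic functions and the Dirichlet-to-Neumann operator $\Lambda: \mathbb{R}^{V_0}\to\mathbb{R}^{V_0}$, show $\Lambda$ is symmetric positive semidefinite (in fact its kernel is the constants), and argue that ${\sf r}(a)I + \Lambda$ restricted appropriately is invertible on $\mathbb{R}^{V_0}$. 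A clean way is to realize $\rho^{\rm b}_{\rm ss}$ variationally as the minimizer of $F\mapsto \mathscr{E}_{\rm Rob}(F) - 2\sum_{a}{\sf r}(a)\bar\rho(a)F(a)$ over $\mathcal{F}$, which exists and is unique by strict convexity of $\mathscr{E}_{\rm Rob}$ on $\mathcal{F}$ (coercivity coming from the added boundary term, exactly as in Lemma \ref{lem:DFR}), and whose Euler–Lagrange equation is precisely $\Delta u = 0$ on $K\setminus V_0$ with the Robin condition — this simultaneously handles existence, uniqueness, and the required ${\rm dom}\Delta$ regularity in one stroke.
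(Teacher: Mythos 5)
Your proposal is correct, and its existence part follows the paper's route exactly: decompose $\rho^{\rm b}=\rho^{\rm b}_{\rm ss}+\tilde{\sf T}^b_t(\varrho-\rho^{\rm b}_{\rm ss})$, use the spectral/semigroup representation for the homogeneous part, and plug into $\Theta_{\rm Dir}$, $\Theta_{\rm Rob}$ with the integration by parts formula. Where you genuinely diverge is in two places. First, for the Robin steady state the paper computes it explicitly through the Dirichlet-to-Neumann map (Appendix \ref{app:DNmap}), ending with an invertible $3\times 3$ matrix with determinant $\boldsymbol\Delta\neq 0$; your variational alternative (minimize $F\mapsto \mathscr{E}_{\rm Rob}(F)-2\sum_a {\sf r}(a)\bar\rho(a)F(a)$ over $\mathcal{F}$, then read off harmonicity and the Robin condition from the Euler--Lagrange identity via Definition \ref{def:Lap} and Lemma \ref{lem:domLap}) is equally valid and delivers existence, uniqueness and ${\rm dom}\Delta$-regularity in one stroke --- though note that invertibility of ${\sf r}I+\Lambda$ comes from ${\sf r}(a)=\lambda_\Sigma(a)>0$ killing the constants in $\ker\Lambda$, not from ``the inhomogeneous term breaking degeneracy.'' Second, for uniqueness the paper (\S\ref{sub:uniqueness}) stays inside the energy framework: it approximates $u=\rho^1-\rho^2$ by smooth $u_j$, tests against $v_j(t,x)=\int_t^T u_j(s,x)\,ds$, and passes to the limit via Lemma \ref{lem:weakreplace} to conclude $\int_0^T\|u_s\|^2_{L^2}\,ds+\tfrac13\mathscr{E}_b\bigl(\int_0^T u_s\,ds\bigr)=0$; your backward-semigroup duality $F_s=\tilde{\sf T}^b_{t_0-s}G$ instead kills the interior term outright and yields $\int_K u_{t_0}G\,dm=0$ for $G$ in the dense set $\mathcal{S}_b$. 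Your route avoids the approximation lemma and, in the Dirichlet case, does not even need Condition (3) (the boundary term in $\Theta_{\rm Dir}$ cancels upon subtraction since it involves only ${\sf g}$ and $\partial^\perp F$), but it leans on the semigroup machinery of \S\ref{subsec:fs} and Appendix \ref{sec:HS} (invariance of ${\rm dom}\Delta_b$, time-smoothness, density of $\mathcal{S}_b$ via the eigenfunctions) and requires a small regularization at the terminal time --- e.g.\ take $F_s=\tilde{\sf T}^b_{t_0+\epsilon-s}G$, which is admissible on $[0,t_0]$, obtain $\int_K u_{t_0}\tilde{\sf T}^b_\epsilon G\,dm=0$, and let $\epsilon\downarrow 0$ --- together with continuity of $s\mapsto F_s$ in the graph-norm topology on ${\rm dom}\Delta_b$, details your sketch glosses over but which are of the same flavor as the estimates already in Appendix \ref{sec:HS}. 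Your spectral-gap argument for the long-time limit \eqref{eq:longtime} is exactly the verification the paper leaves to the reader.
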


Actually \eqref{eq:strongsoln} is a strong solution of the heat equation.
Upon multiplying \eqref{eq:strongsoln} by a test function $F(t,x)$ and integrating over $[0,T]\times K$, and performing integration by parts, one can verify that a strong solution is a weak solution.
It thus remains to show that weak solutions are unique, which we verify in the next subsection.

The underlying ideas of this section are standard from the PDE perspective, and are well known to analysts on fractals; see \emph{e.g.} \cite{JaraAF}*{Chapter 4} for an exposition in the Dirichlet case.
Nevertheless, we decide to spell out the arguments for completeness, especially for the Robin case.

\subsection{Strong solution}

It is readily verified that $\rho^{b} = \rho^{b}_{\rm ss} + u^{b}$ is a strong solution of \eqref{eq:strongsoln}, where $u^{b}$ satisfies
\begin{align}
\label{eq:homoHeat}
\left\{
\begin{array}{ll}
\partial_t u^{b} = \frac{2}{3}\Delta u^{b},& t\in [0,T],~x\in K\setminus V_0\\
u^{b}(0,x) = \varrho(x)-\rho^{b}_{\rm ss}(x), & x\in K
\end{array}
\right.
\end{align}
along with boundary condition (for all $a\in V_0$)
\begin{align}
\left\{
\begin{array}{ll}
u^{b}(a)=0,& \text{if } {b}<5/3,\\
\partial^\perp u^{b}(a) = 0, & \text{if } {b}>5/3,\\
\partial^\perp u^{b}(a) = -{\sf r}(a)u^{b}(a), &\text{if } {b}=5/3.
\end{array}
\right.
\end{align}

\subsubsection*{Solution to the homogeneous heat equation, $u^{b}$.}
By the functional calculus, the solution to \eqref{eq:homoHeat} is uniquely given by
\begin{align}
\label{eq:heatsoln}
u^{b}(t,x) = \tilde{\sf T}^b_t(\varrho-\rho^{b}_{\rm ss}) 
=\sum_{n=1}^\infty \alpha^{b}_n[\varrho-\rho^{b}_{\rm ss}] e^{-(2/3) \lambda^{b}_n t} \varphi^{b}_n(x),
\end{align}
where the eigenvalues $\lambda^b_n$ and eigenfunctions $\varphi^b_n$ were defined in \S\ref{subsec:fs}, and $\alpha^{b}_n[f] = \int_K\, f\varphi^{b}_n\, dm$
are the Fourier coefficients. 
By Lemma \ref{lem:semigroup}-\eqref{domain}, $u^{b} \in L^2(0,T,\mathcal{F})$.

\subsubsection*{Steady-state solution, $\rho_{\rm ss}^{b}$.}
If $b<5/3$, we have
\begin{align}
\left\{\begin{array}{ll}
\Delta \rho_{\rm ss}^{b}(x) =0 ,& x\in K\setminus V_0,\\
\rho_{\rm ss}^{b}(a) = \bar\rho(a), & a\in V_0,
\end{array}
\right.
\end{align}
namely, $\rho_{\rm ss}^{b}$ is the unique harmonic extension of the boundary data $\bar\rho$ from $V_0$ to $K$.
We remind the reader the explicit harmonic extension algorithm known as the ``$\frac{1}{5}$-$\frac{2}{5}$ rule'' \cite{StrichartzBook}*{\S1.3}.
In particular, the algorithm implies that the space of harmonic functions on $K$ is $3$-dimensional.

If $b>5/3$, we have
\begin{align}
\left\{\begin{array}{ll}
\Delta \rho_{\rm ss}^{b}(x) =0 ,& x\in K\setminus V_0,\\
\partial^\perp \rho_{\rm ss}^{b}(a) = 0, & a\in V_0.
\end{array}
\right.
\end{align}
Note that $\rho_{\rm ss}^{\text{Neu}}$ is non-unique: any constant function is a solution.

Finally, if $b=5/3$, we have
\begin{align}
\left\{\begin{array}{ll}
\Delta \rho_{\rm ss}^{b}(x) =0 ,& x\in K\setminus V_0,\\
\partial^\perp \rho_{\rm ss}^{b}(a) =-{\sf r}(a) (\rho_{\rm ss}^{b}(a)-\bar\rho(a)), & a\in V_0.
\end{array}
\right.
\end{align}
We can convert this to a Dirichlet problem and solve for the unique solution using the Dirichlet-to-Neumann map.
The outcome is that $\rho_{\rm ss}^{b}$ is the harmonic extension of $\bar\rho^{\rm R}$ from $V_0$ to $K$, where
\begin{align}
\begin{bmatrix}
\bar\rho^{\rm R}(a_0) \\ \bar\rho^{\rm R}(a_1) \\ \bar\rho^{\rm R}(a_2)
\end{bmatrix}
=
\frac{1}{\boldsymbol\Delta}
\begin{bmatrix}
3+ 2(\kappa_1+\kappa_2) + \kappa_1\kappa_2 & 3+\kappa_2 & 3+\kappa_1 \\
3+\kappa_2 & 3+ 2(\kappa_2 +\kappa_0) + \kappa_2 \kappa_0 & 3+\kappa_0\\
3+\kappa_1 & 3+\kappa_0 & 3+2(\kappa_0+\kappa_1)+\kappa_0\kappa_1
\end{bmatrix}
\begin{bmatrix}
\gamma_0 \\ \gamma_1 \\ \gamma_2
\end{bmatrix}
,
\end{align}
\begin{align}
\boldsymbol\Delta:=3(\kappa_0+\kappa_1+\kappa_2) + 2(\kappa_0 \kappa_1+ \kappa_1\kappa_2+\kappa_2\kappa_0) + \kappa_0\kappa_1\kappa_2,
\end{align}
$\kappa_i = {\sf r}(a_i)$, and $\gamma_i={\sf r}(a_i) \bar\rho(a_i)$, $i\in \{0,1,2\}$.
Note that $\boldsymbol\Delta\neq 0$ if not all of the boundary rates $\lambda_\Sigma(a_i)$ are zero, and that  $\bar\rho^{\rm R} \neq \bar\rho$.
The interested reader is referred to Appendix \ref{app:DNmap} for the computations.

At this point we have addressed all but the uniqueness question when $b>5/3$.
Since $\rho^b_{\rm ss}=c$ for any constant $c$, we write $\rho^b(t,\cdot)= c+ \tilde{\sf T}^b_t(\varrho-c)$.
But by the fact that $\Delta c=0$ and functional calculus, we find that $c+\tilde{\sf T}^{\rm Neu}_t(\varrho-c) = c + \tilde{\sf T}^b_t \varrho - c = \tilde{\sf T}^b_t \varrho$.
So $\rho^b$ is uniquely determined by $\varrho$.
The verification of \eqref{eq:longtime} is left for the reader.

\subsubsection*{Strong solution is a weak solution.}
We now verify for $b<5/3$ that $\rho^b$ is a weak solution in the sense of Definition \ref{def:HeatD}, the other regimes being similar.
From the representation \eqref{eq:strongsoln} and known regularity results on the heat semigroup, it follows that $\rho^{b}_\cdot= \rho^{b}_{\rm ss} + \tilde{\sf T}^b_\cdot \left(\varrho - \rho^{b}_{\rm ss} \right) \in L^2(0,T,\mathcal{F})$, which verifies Condition (1).
To check the weak formulation, Condition (2), we use the integration by parts formula 
\eqref{eq:IBP}, and that $F_s$ and $\varrho-\rho^{b}_{\rm ss}$ vanish on $V_0$, to find
\begin{equation*}
\begin{aligned}
\Theta_{\rm Dir}(t) &= \int_K\, \rho^{b}_{\rm ss}(x)F_t(x)\,dm(x)  -\int_0^t\int_K\, \rho^{b}_{\rm ss}(x)\left(\frac{2}{3}\Delta+\partial_s\right) F_s(x)\,dm(x) + \frac{2}{3}\int_0^t\, \sum_{a\in V_0} \bar\rho(a) (\partial^\perp F_s)(a) \,ds \\
&\quad+ \int_K\, \tilde{\sf T}^b_t\left(\varrho-\rho^{b}_{\rm ss}\right)(x) F_t(x)\,dm(x)
- \int_0^t \int_K\, \tilde{\sf T}^b_s\left(\varrho-\rho^{b}_{\rm ss}\right)(x)\left(\frac{2}{3}\Delta+\partial_s \right) F_s(x)\,dm(x)\,ds =0.
\end{aligned}
\end{equation*}
Finally, Condition (3) is clear from \eqref{eq:strongsoln}.

\subsection{Uniqueness of weak solutions}\label{sub:uniqueness}

In this subsection we prove uniqueness of the weak solution. 
For this purpose, let $\rho^1, \rho^2 \in L^2(0,T,\mathcal{F})$ be two weak solutions of the heat equation.
Set $u:=\rho^1-\rho^2 \in L^2(0,T,\mathcal{F})$. From the initial condition we have $u(0,\cdot) \equiv 0$. We want to show that $u\equiv 0$.

\emph{If $b<5/3$:}
Recall (3) of Definition \ref{def:HeatD}. Then, for a.e.\@  $t\in(0,T]$ and all $a\in V_0$, $\rho^1(t,a)=\rho^2(t,a)= \bar\rho(a)$, so that $u(t,a)=0$, \emph{i.e.,} $u\in L^2(0,T,\mathcal{F}_0)$.
Using \eqref{eq:ThetaD} we find that
\begin{equation}
\int_K\, u_T(x) F_T(x)\,dm(x) - \int_0^T\, \int_K\, u_s(x)\left(\frac{2}{3}\Delta+\partial_s\right) F_s(x)\,dm(x)\,ds =0
\end{equation}
for all $F\in C([0,T],{\rm dom}\Delta_0) \cap C^1((0,T), {\rm dom}\Delta_0)$.
Furthermore, by the integration by parts formula (Definition \ref{def:Lap}) we may rewrite this as
\begin{align}
\label{eq:uF1}
\int_K\, u_T(x) F_T(x)\,dm(x) - \int_0^t\, \int_K\, u_s(x)(\partial_s F_s)(x)\,dm(x)\,ds + \frac{2}{3} \int_0^T\, \mathcal{E}(u_s, F_s)\,ds =0
\end{align}

Since ${\rm dom}\Delta_0$ is $\mathcal{E}_1$-dense in $\mathcal{F}_0$, and $C([0,T])\cap C^1((0,T))$ is dense in $L^2(0,T)$, we can find a sequence $\{u_j\}_{j\in \mathbb{N}}$ in $C([0,T],{\rm dom}\Delta_0) \cap C^1((0,T), {\rm dom}\Delta_0)$ which converges to $u$ in $L^2(0,T,\mathcal{F}_0)$.
Let 
\begin{align}
\label{eq:vj}
v_j(t,x) = \int_t^T\, u_j(s,x)\,ds \qquad \forall j\in \mathbb{N},~\forall t\in [0,T], ~\forall x\in K.
\end{align}
By plugging $v_j$ into $F$ in \eqref{eq:uF1}, and taking the limit $j\to\infty$ using Lemma \ref{lem:weakreplace} below, we obtain
\begin{align}
\int_0^T \int_K\, |u_s(x)|^2\,dm(x)\,ds + \frac{1}{3} \mathcal{E}\left(\int_0^T\, u_s\,ds\right)=0.
\end{align}
Both terms on the left-hand side being nonnegative, we deduce that $u\equiv 0$ in $L^2([0,T]\times K, ds\times m)$, and hence also in $L^2(0,T,\mathcal{F}_0)$.

It remains to prove:
\begin{lemma}
\label{lem:weakreplace}
Let $\{v_j\}_{j\in \mathbb{N}}$ be defined as in \eqref{eq:vj}.
Then:
\begin{enumerate}
\item \label{wr1} $\displaystyle \lim_{j\to\infty} \int_0^T \int_K\, u_s(x) (\partial_s v_j)(s,x) \,dm(x)\,ds = -\int_0^T \int_K\,|u_s(x)|^2 \,dm(x)\,ds$.
\item \label{wr2} $\displaystyle \lim_{j\to\infty} \int_0^T\, \mathcal{E}(u_s, v_j(s,\cdot)) \,ds = \frac{1}{2} \mathcal{E}\left(\int_0^T\, u_s\,ds\right)$.
\end{enumerate}
\end{lemma}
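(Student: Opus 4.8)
The plan is to establish \eqref{wr1} and \eqref{wr2} separately, in both cases exploiting the single structural identity $(\partial_s v_j)(s,\cdot) = -u_j(s,\cdot)$ together with the convergence $u_j\to u$ in $L^2(0,T,\mathcal{F}_0)$; note the latter also gives convergence in $L^2([0,T]\times K,\,ds\times m)$ because $\mathcal{E}_1(f)\geq \|f\|_{L^2(K,m)}^2$.

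For \eqref{wr1}, I would first observe that since $u_j\in C([0,T],{\rm dom}\Delta_0)$ the fundamental theorem of calculus yields $(\partial_s v_j)(s,x)=-u_j(s,x)$ for $(ds\times m)$-a.e.\ $(s,x)$, so that the integral in \eqref{wr1} equals $-\langle u,u_j\rangle_{L^2([0,T]\times K)}$; Cauchy--Schwarz together with $u_j\to u$ in $L^2([0,T]\times K)$ then gives the claim. This step is routine.

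For \eqref{wr2}, the idea is the standard energy identity in time. Since $s\mapsto u_j(s,\cdot)$ is continuous into $\mathcal{F}_0$, the curve $s\mapsto v_j(s,\cdot)=\int_s^T u_j(r,\cdot)\,dr$ is $C^1$ in the Hilbert space $(\mathcal{F}_0,\mathcal{E}_1)$ with derivative $-u_j(s,\cdot)$, whence $\tfrac{d}{ds}\mathcal{E}(v_j(s,\cdot))=-2\,\mathcal{E}(u_j(s,\cdot),v_j(s,\cdot))$; integrating over $[0,T]$ and using $v_j(T,\cdot)=0$ gives
\[
\int_0^T \mathcal{E}(u_j(s,\cdot),v_j(s,\cdot))\,ds \;=\; \tfrac12\,\mathcal{E}\Big(\int_0^T u_j(r,\cdot)\,dr\Big).
\]
I would then write $u_s=u_j(s,\cdot)+(u_s-u_j(s,\cdot))$ and use bilinearity of $\mathcal{E}$, so that the integral in \eqref{wr2} equals $\tfrac12\,\mathcal{E}\big(\int_0^T u_j(r,\cdot)\,dr\big)$ plus the error term $\int_0^T\mathcal{E}(u_s-u_j(s,\cdot),v_j(s,\cdot))\,ds$. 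The main term converges to $\tfrac12\,\mathcal{E}\big(\int_0^T u_r\,dr\big)$: by the triangle inequality for the seminorm $\mathcal{E}^{1/2}$ (continuous Minkowski) and Cauchy--Schwarz in time, $\mathcal{E}^{1/2}\big(\int_0^T(u_j(r,\cdot)-u_r)\,dr\big)\leq T^{1/2}\big(\int_0^T\mathcal{E}(u_j(r,\cdot)-u_r)\,dr\big)^{1/2}\to 0$, and $\mathcal{E}$ is continuous along its own seminorm. For the error term, the same two inequalities bound $\sup_s\mathcal{E}^{1/2}(v_j(s,\cdot))$ by $T^{1/2}\big(\int_0^T\mathcal{E}(u_j(r,\cdot))\,dr\big)^{1/2}$, which stays bounded in $j$ since $\{u_j\}$ is Cauchy --- hence bounded --- in $L^2(0,T,\mathcal{F}_0)$; Cauchy--Schwarz in $s$ then bounds the error by a constant times $\big(\int_0^T\mathcal{E}(u_s-u_j(s,\cdot))\,ds\big)^{1/2}\to 0$, which finishes \eqref{wr2}.

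The points deserving care --- and where the writing effort should go --- are, first, the justification that $s\mapsto v_j(s,\cdot)$ is differentiable as a curve in $(\mathcal{F}_0,\mathcal{E}_1)$, which is what legitimizes differentiating $\mathcal{E}(v_j(s,\cdot))$ under the integral; and second, the systematic use of the fact that $\mathcal{E}^{1/2}$ is only a seminorm, which is precisely the reason the approximating sequence is taken to converge in $\mathcal{F}_0$ rather than merely in $L^2(K,m)$. Neither is a genuine obstacle, and beyond these the lemma reduces to Cauchy--Schwarz.
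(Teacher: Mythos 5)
Your proof of Item \eqref{wr1} is the same as the paper's: write $(\partial_s v_j)(s,\cdot)=-u_j(s,\cdot)$, insert $|u_s|^2$, and kill the remainder by Cauchy--Schwarz using $u_j\to u$ in $L^2(0,T,\mathcal{F}_0)$. For Item \eqref{wr2}, however, you take a genuinely different route. The paper keeps $u_s$ in the first slot and decomposes the second slot, writing $v_j(s,\cdot)=\int_s^T u_r\,dr+\int_s^T(u_j(r,\cdot)-u_r)\,dr$; the main term is then computed \emph{exactly} (no $j$-limit) by interchanging $\mathcal{E}$ with the time integral and symmetrizing $\int_{\{0\le s\le r\le T\}}\mathcal{E}(u_s,u_r)\,dr\,ds=\tfrac12\,\mathcal{E}\big(\int_0^T u_s\,ds\big)$, while the error term is controlled by Cauchy--Schwarz, H\"older, and the Minkowski-type bound \eqref{eq:CS2}. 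You instead decompose the first slot, $u_s=u_j(s,\cdot)+(u_s-u_j(s,\cdot))$, and evaluate the main term via the energy identity $\tfrac{d}{ds}\mathcal{E}(v_j(s,\cdot))=-2\,\mathcal{E}(u_j(s,\cdot),v_j(s,\cdot))$, which uses the smoothness of the approximants directly and then requires passing to the limit in the main term as well (via the triangle inequality for $\mathcal{E}^{1/2}$) in addition to controlling the error term. Both arguments are correct and rest on the same two inequalities (Cauchy--Schwarz for the form and the continuous Minkowski bound, which is exactly the paper's \eqref{eq:CS2}); what the paper's version buys is that no differentiability of $s\mapsto v_j(s,\cdot)$ as a curve in $(\mathcal{F}_0,\mathcal{E}_1)$ is ever needed—only bilinearity and a Fubini interchange legitimized by $u\in L^2(0,T,\mathcal{F}_0)$—whereas your version buys a cleaner identification of the main term as an exact total derivative, at the price of the point you yourself flag: you must know that $s\mapsto u_j(s,\cdot)$ is continuous into $(\mathcal{F}_0,\mathcal{E}_1)$, which is not automatic from the bare statement ``$u_j\in C([0,T],{\rm dom}\Delta_0)$'' unless the topology on ${\rm dom}\Delta_0$ is specified (it is easily arranged, e.g.\ by choosing the approximants of the form $\sum_k\phi_k(s)\psi_k$ with $\phi_k\in C([0,T])\cap C^1((0,T))$ and $\psi_k\in{\rm dom}\Delta_0$, or by noting $\mathcal{E}(g)\le\|\Delta g\|_\infty\|g\|_{L^1}$ for $g\in{\rm dom}\Delta_0$ when the graph-type topology is used), so it deserves the explicit line you promise but is not a gap.
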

\begin{proof}
For Item \eqref{wr1}, we use that $(\partial_s v_j)(s,x) =- u_j(s,x)$ to write
\begin{equation}
\begin{aligned}
\int_0^T&\int_K \,u_s(x)(\partial_s v_j)(s,x)\,dm(x)\,ds
= -\int_0^T \int_K\, u_s(x) u_j(s,x)\,dm(x)\,ds\\
&= -\int_0^T \int_K\, |u_s(x)|^2\,dm(x)\,ds + \int_0^T \int_K\, u_s(x) (u_s(x)-u_j(s,x))\,dm(x)\,ds.
\end{aligned}
\end{equation}
We then use Cauchy-Schwarz to argue that the second term vanishes as $j\to\infty$:
\begin{equation}
\begin{aligned}
&\left|\int_0^T\int_K\, u_s(x) (u_s(x)- u_j(s,x))\,dm(x)\,ds\right|\\
&\quad\leq \left(\int_0^T \int_K\, |u_s(x)|^2\,dm(x)\,ds\right)^{1/2}
\left(\int_0^T \int_K\, |u_s(x)-u_j(s,x)|^2\,dm(x)\,ds\right)^{1/2}
\xrightarrow[j\to\infty]{}0
\end{aligned}
\end{equation}
since $u_j\to u$ in $L^2(0,T,\mathcal{F}_0)$.

For Item \eqref{wr2}, we use the bilinearity of the Dirichlet form to write
\begin{equation}
\label{wr22}
\begin{aligned}
\int_0^T&\,\mathcal{E}(u_s, v_j(s,\cdot))\,ds
=\int_0^T\, \mathcal{E}\left(u_s, \int_s^T\, u_r\,dr\right)\,ds 
+ \int_0^T\, \mathcal{E}\left(u_s, v_j(s,\cdot) - \int_s^T\, u_r\,dr\right)\,ds\\
&=\int_0^T \int_s^T\, \mathcal{E}(u_s, u_r)\,dr\,ds + \int_0^T\, \mathcal{E}\left(u_s, \int_s^T\, \left(u_j(r,\cdot)-u_r\right)\,dr\right)\,ds.
\end{aligned}
\end{equation}
We further exploit the bilinearity and symmetry of the Dirichlet from to rewrite the first term of \eqref{wr22}:
\begin{equation}
\begin{aligned}
\int_0^T& \int_s^T\, \mathcal{E}(u_s, u_r)\,dr\,ds = \int_{0\leq s \leq r \leq T} \, \mathcal{E}(u_s,u_r)\,dr\,ds \\
&=\frac{1}{2} \int_{[0,T]^2} \,\mathcal{E}(u_s, u_r)\,dr\,ds =\frac{1}{2}\mathcal{E}\left(\int_0^T\,u_s\,ds, \int_0^T\,u_r\,dr\right).
\end{aligned}
\end{equation}
Meanwhile, for the second term of \eqref{wr22}, we apply Cauchy-Schwarz and H\"older's inequalities in succession to show that it vanishes as $j\to\infty$:
\begin{equation}
\begin{aligned}
&\left|\int_0^T\, \mathcal{E}\left(u_s, \int_s^T\, \left(u_j(r,\cdot)-u_r\right)\,dr\right)\,ds\right|
\leq \int_0^T\, \sqrt{\mathcal{E}(u_s)} \sqrt{\mathcal{E}\left(\int_s^T\, \left(u_j(r,\cdot)-u_r\right)\,dr \right)}\,ds\\
&\quad \underset{\eqref{eq:CS2}}{\leq} \int_0^T\, \sqrt{\mathcal{E}(u_s)}\left(\int_s^T\, \sqrt{\mathcal{E}\left(u_j(r,\cdot)-u_r\right)}\,dr\right)\,ds\\
&\quad \leq \left(\int_0^T\,\sqrt{\mathcal{E}(u_s)}\,ds\right) \cdot \sup_{s\in [0,T]} \left(\int_s^T\, \sqrt{\mathcal{E}\left(u_j(r,\cdot)-u_r\right)}\,dr\right) \qquad \text{(H\"older's inequality)}\\
&\quad \leq  \left(\int_0^T\,\sqrt{\mathcal{E}(u_s)}\,ds\right)  \left(\int_0^T\, \sqrt{\mathcal{E}\left(u_j(r,\cdot)-u_r\right)}\,dr\right)\\
&\quad \leq T \left(\int_0^T\, \mathcal{E}(u_s)\,ds\right)^{1/2} \left(\int_0^T\, \mathcal{E}\left(u_j(r,\cdot)-u_r\right)\,dr\right)^{1/2}
\xrightarrow[j\to\infty]{}0
\end{aligned}
\end{equation}
since $u_j \to u$ in $L^2(0,T,\mathcal{F}_0)$.
In the second inequality above we used
\begin{equation}
\begin{aligned}
\label{eq:CS2}
\mathcal{E}&\left(\int_s^T\, f(r)\,dr, \int_s^T\, f(r')\,dr'\right) = \int_s^T \int_s^T \, \mathcal{E}(f(r), f(r'))\,dr'\,dr\\
&\leq \int_s^T \int_s^T \, \sqrt{\mathcal{E}(f(r))}\sqrt{\mathcal{E}(f(r'))}\,dr'\,dr
= \left(\int_s^T\, \sqrt{\mathcal{E}(f(r))}\,dr\right)^2.
\end{aligned}
\end{equation}
for any $f\in L^2(0,T,\mathcal{F})$.
\end{proof}

\emph{If $b\geq 5/3$:}
Using \eqref{eq:ThetaR} we have
\begin{equation}
\begin{aligned}
\int_K&\, u_T(x) F_T(x)\,dm(x) - \int_0^T \int_K\, u_s(x)\left(\frac{2}{3}\Delta+\partial_s\right) F_s(x)\,dm(x)\,ds\\
& + \frac{2}{3} \int_0^T\, \sum_{a\in V_0} \left[u_s(a) (\partial^\perp F_s)(a) + {\sf r}(a) u_s(a)F_s(a)\right]\,ds =0
\end{aligned}
\end{equation}
for all $F\in C([0,T],{\rm dom}\Delta)\cap C^1((0,T),{\rm dom}\Delta)$.
Again using integration by parts (Lemma \ref{def:Lap}-\eqref{IBP}), we can rewrite this as
\begin{align}
\label{eq:uF3}
\int_K\, u_T(x) F_T(x)\,dm(x) -\int_0^T \int_K\, u_s(x) (\partial_s F_s)(x)\,dm(x)\,ds +\frac{2}{3} \int_0^T\, \mathscr{E}_b(u_s, F_s) \,ds=0,
\end{align}
where $\mathscr{E}_b$ was defined in \eqref{eq:Eb}.
Now we follow the same strategy as in the Dirichlet case.
Let $L^2(0,T,\mathcal{F}_b)$ be the Hilbert space with norm
\begin{align}
\|f\|_{L^2(0,T,\mathcal{F}_b)} := \left(\int_0^T\,\left(\mathscr{E}_b(f_s) + \|f_s\|_{L^2(K,m)}^2 \right)\,ds \right)^{1/2}.
\end{align}
On the one hand, $L^2(0,T,\mathcal{F}_{b})$ contains $C([0,T],{\rm dom} \Delta) \cap C^1((0,T),{\rm dom}\Delta)$.
On the other hand, any function $f\in L^2(0,T,\mathcal{F})$ also belongs to $L^2(0,T,\mathcal{F}_{b})$, with $\|f\|_{L^2(0,T,\mathcal{F}_{b})} \geq \|f\|_{L^2(0,T,\mathcal{F})}$.
Since $C([0,T],{\rm dom} \Delta) \cap C^1((0,T),{\rm dom}\Delta)$ is dense in $L^2(0,T,\mathcal{F})$, 
it follows that $C([0,T],{\rm dom} \Delta) \cap C^1((0,T),{\rm dom}\Delta)$ is dense in $L^2(0,T,\mathcal{F}_{b})$.
Let $\{u_j\}_{j\in \mathbb{N}} \subset C([0,T],{\rm dom}\Delta) \cap C^1((0,T), {\rm dom}\Delta)$ be a sequence converging to $u$ in $L^2(0,T,\mathcal{F}_b)$.
Define $v_j$ exactly as in \eqref{eq:vj}.
Then we plug $v_j$ into $F$ in \eqref{eq:uF3}, and note that we have the exact analogs of Lemma \ref{lem:weakreplace}, except that $\mathcal{E}$ is replaced by $\mathscr{E}_b$ in Item \eqref{wr2}.
Applying the analogs and taking the limit $j\to\infty$, we obtain
\begin{align}
\int_0^T\int_K\, |u_s(x)|^2\,dm(x)\,ds + \frac{1}{3}\mathscr{E}_b\left(\int_0^T\, u_s\,ds\right)=0.
\end{align}
Each term on the left-hand side being nonnegative, we infer that $u\equiv 0$ in $L^2(0,T,\mathcal{F}_b)$ (whence in $L^2(0,T,\mathcal{F})$).

\section{Ornstein-Uhlenbeck limits of equilibrium density fluctuations} \label{sec:OU}

In this section we prove Theorem \ref{thm:OU}.
Recall the definition of the function space $\mathcal{S}_b$ and its topological dual $\mathcal{S}'_b$ from \S\ref{subsec:fs}, as well as the heat semigroup $\{\tilde{\sf T}^b_t:t>0\}$ from \S\ref{subsec:fs}.

%%%%
%Tightness of Y
%%%%

\subsection{Tightness and identification of limit points}
\label{sec:YTight}

The main result of this subsection is
\begin{proposition}
\label{prop:Ytight}
The sequence $\{\mathbb{Q}^{N,b}_\rho\}_N$ is tight with respect to the uniform topology on $C([0,T], \mathcal{S}_b')$. 
Under any limit point $\mathbb{Q}^b_\rho$ of the sequence, the process $\{\mathcal{Y}_t(F): t\in [0,T], ~F\in \mathcal{S}_b\}$ satisfies \ref{OU1} of Definition \ref{def:OUMG}.
\end{proposition}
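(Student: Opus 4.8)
### Plan for the proof of Proposition \ref{prop:Ytight}

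\textbf{Overview.} The plan is to establish tightness of $\{\mathbb{Q}^{N,b}_\rho\}_N$ in $C([0,T],\mathcal{S}_b')$ via \emph{Mitoma's criterion} (Lemma \ref{lem:Mitoma}), which reduces tightness of the $\mathcal{S}_b'$-valued processes to tightness of the real-valued processes $\{\mathcal{Y}^N_\cdot(F)\}_N$ for each fixed test function $F\in\mathcal{S}_b$. For each such $F$ we will verify Aldous' criterion (Lemma \ref{lem:Aldous}), exactly as in the hydrodynamic setting of \S\ref{sec:dentight}, using the Dynkin martingale decomposition \eqref{eq:MY} together with the quadratic variation formula \eqref{eq:QVY}--\eqref{eq:EQVY} and the boundary replacement Lemmas \ref{lem:RLDFFNeu}, \ref{lem:RLDFFDir}, \ref{lem:RLDFFRob}. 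Then, having a limit point $\mathbb{Q}^b_\rho$, we will identify it by showing that $\mathcal{M}_t(F)$ and $\mathcal{N}_t(F)$ of \ref{OU1} are $\mathscr{F}_t$-martingales, again passing to the limit in the discrete martingale identities via Portmanteau.

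\textbf{Tightness, step by step.} First I would invoke Mitoma: since $\mathcal{S}_b$ is a nuclear Fr\'echet space, it suffices to prove that $\{\mathcal{Y}^N_\cdot(F)\}_N$ is tight in $D([0,T],\mathbb{R})$ for every $F\in\mathcal{S}_b$. For the \ref{A1} part, Chebyshev together with $\mathbb{E}^N_\rho[\mathcal{Y}^N_t(F)^2] = \frac{1}{|V_N|}\sum_{x,y}F(x)F(y)(\mathbb{E}_{\nu^N_\rho}[\bar\eta(x)\bar\eta(y)]) = \chi(\rho)\frac{1}{|V_N|}\sum_x F(x)^2 \le \chi(\rho)\|F\|_\infty^2$ (using that $\nu^N_\rho$ is product) gives a uniform-in-$N$ second moment bound, hence relative compactness of $\mathcal{Y}^N_t(F)$ for each fixed $t$. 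For \ref{A2}, I would use the decomposition \eqref{eq:MY}: writing $\mathcal{Y}^N_{\tau+\theta}(F)-\mathcal{Y}^N_\tau(F)$ as the sum of a martingale increment, a ``Laplacian term'' $\int_\tau^{\tau+\theta}\mathcal{Y}^N_s(\Delta_N F)\,ds$, a boundary term, and an $o_N(1)$ term, and estimating each in $L^2(\mathbb{P}^N_\rho)$. The martingale increment is controlled by $\mathbb{E}^N_\rho[\langle \mathcal{M}^N(F)\rangle_{\tau+\theta}-\langle\mathcal{M}^N(F)\rangle_\tau] \le C\theta\,\frac{3^N}{|V_N|}\mathscr{E}_b(F)(1+o_N(1))$ via \eqref{eq:EQVY}, which is $O(\theta)$ uniformly in $N$ since $\frac{3^N}{|V_N|}\to\frac23$ and $\mathcal{E}_N(F)\le\mathcal{E}(F)<\infty$; the Laplacian term is bounded by $C\theta^2\|\Delta F\|_\infty^2$ by Cauchy--Schwarz and the fact that $\mathcal{Y}^N_s$ has uniformly bounded second moments against the fixed function $\Delta_N F\to\frac23\Delta F$ uniformly; the boundary term is handled by the appropriate replacement lemma in each regime of $b$ (Lemma \ref{lem:RLDFFNeu} for $b>5/3$ after using $(\partial^\perp F)(a)=0$ and the convergence rate of $\partial^\perp_N F\to\partial^\perp F$, Lemma \ref{lem:RLDFFDir} for $b<5/3$ after using $F(a)=0$, and Lemma \ref{lem:RLDFFRob} for $b=5/3$ with $\beta_N$ the convergence error of the discrete normal derivative, using $(\partial^\perp F)(a)=-\lambda_\Sigma F(a)$). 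Summing these and applying Chebyshev establishes \ref{A2}.

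\textbf{Identification of the limit point.} Fix a limit point $\mathbb{Q}^b_\rho$; along a subsequence $\mathbb{Q}^{N,b}_\rho\Rightarrow\mathbb{Q}^b_\rho$. For \ref{OU1}, I would show $\mathcal{M}_t(F)$ is a martingale by checking that for any $0\le s_1<\dots<s_k\le s<t$, bounded continuous $\Phi:\mathbb{R}^k\to\mathbb{R}$, and $F\in\mathcal{S}_b$, $\mathbb{E}_{\mathbb{Q}^b_\rho}[(\mathcal{M}_t(F)-\mathcal{M}_s(F))\Phi(\mathcal{Y}_{s_1}(F),\dots,\mathcal{Y}_{s_k}(F))]=0$; this follows by passing to the limit in the corresponding identity for $\mathcal{M}^N_t(F)$, which holds by \eqref{eq:MY_f} modulo $o_N(1)$ terms controlled by the replacement lemmas, together with the convergence $\mathcal{Y}^N_s(\Delta_N F)\to\mathcal{Y}_s(\frac23\Delta_b F)$ (here one must check $\Delta_N F\to\frac23\Delta_b F$ in a topology strong enough to pass to the limit inside the time integral, which is where the invariance $\Delta_b F\in\mathcal{S}_b$, Corollary \ref{cor:invarianceSb}, is used so the test function lies in the admissible class throughout; one also needs uniform integrability of the relevant quantities, which follows from the uniform second-moment bounds). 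For $\mathcal{N}_t(F)$, I would similarly pass to the limit in $(\mathcal{M}^N_t(F))^2 - \langle\mathcal{M}^N(F)\rangle_t$, using that \eqref{eq:EQVY} gives $\langle\mathcal{M}^N(F)\rangle_t \to \frac23\cdot2\chi(\rho)t\,\mathscr{E}_b(F)$ (a deterministic limit), together with a uniform bound on fourth moments of $\mathcal{M}^N_t(F)$ to justify the limit of the square.

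\textbf{Expected main obstacle.} The delicate point is not the martingale bookkeeping but the \emph{convergence of the discrete Laplacian term} $\int_0^t\mathcal{Y}^N_s(\Delta_N F)\,ds$ to $\int_0^t\mathcal{Y}_s(\frac23\Delta_b F)\,ds$ and the attendant boundary error analysis: one must quantify the rate at which $\Delta_N F\to\frac23\Delta F$ and $\partial_N^\perp F\to\partial^\perp F$ (via Lemma \ref{lem:domLap} and sharper estimates from analysis on $SG$), show this rate beats the divergent prefactors $3^N/\sqrt{|V_N|}$ or $5^N/(b^N\sqrt{|V_N|})$ appearing in the boundary terms of \eqref{eq:MY}, and feed the residual into the replacement lemmas. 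In the Robin regime $b=5/3$ this is the tightest, since the normal-derivative term and the reaction term $\lambda_\Sigma F(a)$ are of the same order and only cancel to leading order, leaving precisely the $\beta_N$-type error that Lemma \ref{lem:RLDFFRob} is designed to absorb. A secondary technical nuisance is ensuring all the $L^2$ estimates are genuinely uniform in $N$ and that the test function together with all its relevant images under $\Delta_b$ and $\tilde{\sf T}^b_t$ remains in $\mathcal{S}_b$, which is handled by Corollary \ref{cor:invarianceSb}.
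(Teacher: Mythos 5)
Your proposal is correct and follows essentially the same route as the paper: Mitoma's criterion, term-by-term analysis of the Dynkin decomposition \eqref{eq:Yeqn} with the boundary replacement Lemmas \ref{lem:RLDFFNeu}--\ref{lem:RLDFFRob}, and identification of the limit via uniform integrability plus a fourth-moment bound on $\mathcal{M}^N_t(F)$ (which the paper gets from Burkholder--Davis--Gundy and the $O(|V_N|^{-1/2})$ jump size). The only minor divergences are that the paper verifies the uniform-modulus condition of \cite{KipnisLandim}*{Lemma 11.3.2} rather than Aldous' \ref{A2}, since tightness is claimed in the uniform topology on $C([0,T],\mathcal{S}_b')$, and that for $b>5/3$ the discrete normal-derivative term is killed deterministically via the estimate $\sup_{x\in \mathfrak{F}_0^N(K)}|F(x)-F(a_0)|\leq CN5^{-N}$ (bounding $|\bar\eta^N_s(a)|\leq 1$), with the replacement Lemma \ref{lem:RLDFFNeu} reserved for the $F(a)$ term---precisely the quantitative rate you flag as the main obstacle.
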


First of all, we invoke Mitoma's criterion in order to establish tightness of the $\mathcal{S}_b'$-valued processes $\{\mathcal{Y}^N_t: t\in [0,T]\}_N$ 
from tightness of the $\mathbb{R}$-valued processes $\{\mathcal{Y}^N_t(F): t\in [0,T]\}_N$ for $F\in \mathcal{S}_b$.

\begin{lemma}[Mitoma's criterion \cite{Mitoma}*{Theorems 3.1 \& 4.1}]
\label{lem:Mitoma}
Let $\mathcal{S}$ be a nuclear Fr\'echet space and $\mathcal{S}'$ be its topological dual.
A sequence of processes $\{X^N_t: t\in [0,T]\}_N$ is tight with respect to the Skorokhod topology on $D([0,T], \mathcal{S}')$ (resp.\@ the uniform topology on $C([0,T],\mathcal{S}')$) if and only if the sequence $\{X^N_t(F): t\in [0,T]\}_N$ of $\mathbb{R}$-valued processes is tight with respect to the Skorokhod topology on $D([0,T],\mathbb{R})$ (resp.\@ the uniform topology on $C([0,T],\mathbb{R})$) for any $F\in \mathcal{S}$.
\end{lemma}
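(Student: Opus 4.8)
This is Mitoma's criterion, quoted from \cite{Mitoma}; the plan for a self-contained argument is to establish one structural fact about $\mathcal{S}_b$ and then run Mitoma's uniform–boundedness mechanism. \emph{Step 1: $\mathcal{S}_b$ is a countably Hilbertian nuclear Fr\'echet space.} Let $\{\varphi_n\}_{n\ge1}$ be an $L^2(K,m)$–orthonormal basis of eigenfunctions of $H_b$ with eigenvalues $0\le\lambda_1\le\lambda_2\le\cdots$, and for $j\in\mathbb{N}_0$ put $\mathcal{H}_j:=\mathrm{dom}\big((I+H_b)^{j}\big)$ with the Hilbertian norm $\|F\|_{(j)}^2:=\sum_n(1+\lambda_n)^{2j}\,|\langle F,\varphi_n\rangle|^2$, and let $\mathcal{H}_{-j}$ be its dual. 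Using the pointwise formula for the Laplacian together with Lemma~\ref{lem:DFR} and Lemma~\ref{lem:semigroup}, one checks that $\mathcal{S}_b=\bigcap_{j}\mathcal{H}_j$ with matching Fr\'echet topologies, hence $\mathcal{S}_b'=\bigcup_{j}\mathcal{H}_{-j}$ with its strong dual topology. By the Weyl–type spectral asymptotics of $H_b$ (Appendix~\ref{sec:HS}), $\lambda_n$ grows polynomially fast enough that $\sum_n(1+\lambda_n)^{-2}<\infty$, so each inclusion $\mathcal{H}_{j+1}\hookrightarrow\mathcal{H}_j$ is Hilbert–Schmidt and $\mathcal{S}_b$ is nuclear. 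In particular $\mathcal{S}_b'$ is a Montel space, and a subset of $\mathcal{S}_b'$ is relatively compact iff it is equicontinuous, i.e.\ contained in some polar $\{\mu\in\mathcal{S}_b': |\mu(F)|\le R\,\|F\|_{(j)}\ \text{for all }F\in\mathcal{S}_b\}$.

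\emph{Step 2: the two directions.} The ``only if'' part is immediate: for fixed $f\in\mathcal{S}_b$ the evaluation $\mu\mapsto\mu(f)$ is continuous on $\mathcal{S}_b'$, hence so is the path map $X\mapsto X(f)$ from $D([0,T],\mathcal{S}_b')$ to $D([0,T],\mathbb{R})$ (and likewise in the uniform topology), and tightness is preserved under continuous pushforward. For the ``if'' part I would use the standard reduction that tightness in $D([0,T],\mathcal{S}_b')$ (resp.\ $C([0,T],\mathcal{S}_b')$) follows from (i) \emph{compact containment}: for each $\epsilon>0$ a relatively compact $\mathcal{K}_\epsilon\subset\mathcal{S}_b'$ with $\sup_N\mathbb{P}\big(X^N_t\notin\mathcal{K}_\epsilon\ \text{for some }t\in[0,T]\big)\le\epsilon$; and (ii) the appropriate oscillation control — an Aldous condition for the $D$–topology, an equicontinuity-of-paths modulus for the $C$–topology — for the $\mathcal{S}_b'$–valued trajectories. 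By Step~1, (i) amounts to producing a single pair $(j,R)$ such that the $\mathcal{H}_{-j}$–norm of $X^N_\cdot$ stays $\le R$ up to probability $\epsilon$, uniformly in $N$. From the hypothesis, for each $F\in\mathcal{S}_b$ and each $\epsilon$ there is a compact set of real paths trapping every $X^N(F)$ with probability $\ge1-\epsilon$; Mitoma's uniform–boundedness lemma (a Baire–category argument on the Fr\'echet space $\mathcal{S}_b$) promotes this \emph{pointwise-in-}$F$ control to control uniform over a seminorm ball $\{\|F\|_{(j_0)}\le1\}$, and the Hilbert–Schmidt summability from Step~1 then converts it, through $\|X^N_t\|_{-(j_0+1)}^2=\sum_n(1+\lambda_n)^{-2(j_0+1)}|X^N_t(\varphi_n)|^2$, into the compact containment (i). The same two moves — pointwise-to-seminorm uniformization followed by nuclear summation over $\{\varphi_n\}$ — upgrade the coordinatewise Aldous/modulus estimates to (ii), and (i)–(ii) give tightness of $\{X^N\}_N$.

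\emph{Main obstacle.} Everything except one point is routine: the ``only if'' direction, the reduction to (i)–(ii), and the coordinatewise real–valued facts. The crux is the passage from estimates holding \emph{for each fixed test function} to estimates \emph{uniform over a neighbourhood of $0$ in $\mathcal{S}_b$} (Mitoma's uniform–boundedness lemma) and its compatibility with the Hilbert–Schmidt chain $\cdots\hookrightarrow\mathcal{H}_{j+1}\hookrightarrow\mathcal{H}_j\hookrightarrow\cdots$; this is precisely where nuclearity of $\mathcal{S}_b$ — i.e.\ the summability $\sum_n(1+\lambda_n)^{-2}<\infty$ furnished by the Weyl asymptotics — is indispensable. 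In the applications below, the hypothesis of the lemma, namely tightness of the real processes $\{\mathcal{Y}^N_\cdot(F)\}_N$, will be supplied by the Aldous–type estimates of \S\ref{sec:YTight}.
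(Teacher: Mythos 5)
The paper gives no proof of this lemma: it is quoted from Mitoma's Theorems 3.1 and 4.1, so your write-up is not competing with an argument in the text but reconstructing the cited one. Your outline is sound and, in one respect, supplies something the paper leaves implicit. Mitoma's theorems do not hold for an arbitrary Fr\'echet space --- they require $\mathcal{S}$ to be nuclear, realized as a countable intersection of Hilbert spaces with Hilbert--Schmidt linking maps --- so what actually must be checked before the lemma can be invoked in \S\ref{sec:YTight} is that $\mathcal{S}_b$ carries this structure. Your Step~1 is exactly that verification: the set identification $\mathcal{S}_b=\bigcap_j \mathrm{dom}\big((I+H_b)^j\big)$ follows because $\mathrm{dom}(H_b)\subset\mathcal{F}\subset C(K)$ and Lemma~\ref{lem:DFR} force $H_b^jF$ to be continuous and to satisfy the boundary condition for every $j$; the equivalence of the sup-norm seminorms $\|\cdot\|_j$ with your Hilbertian norms rests on the eigenfunction bound of Lemma~\ref{lem:eigf}; and the Hilbert--Schmidt property of $\mathcal{H}_{j+1}\hookrightarrow\mathcal{H}_j$ is precisely the summability $\sum_n(1+\lambda_n)^{-2}<\infty$ furnished by Lemma~\ref{lem:Weyl}. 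Step~2 is then a faithful pr\'ecis of Mitoma's own proof: continuous pushforward for the easy direction, and uniform boundedness plus nuclear summation for compact containment in the converse. The two places where you remain genuinely schematic are the topology-matching in Step~1 (which needs the ultracontractive estimate $\|g\|_{L^\infty}\le C\|(I+H_b)^m g\|_{L^2}$ spelled out) and the upgrade of the coordinatewise Aldous/modulus estimates to an $\mathcal{H}_{-q}$-valued oscillation bound, which in Mitoma's paper is not a termwise summation but a separate argument. Since the lemma is being cited rather than reproved, deferring those two points to the reference is acceptable; if the sketch is meant to stand alone, they are the steps still to be written out.
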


To check for tightness of $\{\mathcal{Y}^N_t(F): t\in [0,T]\}_N$ in $C([0,T],\mathbb{R})$, we verify \ref{A1} of Aldous' criterion, and the following condition on the uniform modulus of continuity, \emph{cf.\@} \cite[Lemma 11.3.2]{KipnisLandim}:
\begin{enumerate}[label=(AC\arabic*)]
\setcounter{enumi}{1}
\item \label{AC2} For every $\epsilon>0$,
\begin{align}
\lim_{\delta\downarrow 0}\varlimsup_{N\to\infty} P_N\left(\sup_{\substack{|s-t|\leq \delta \\ s,t \in [0,T]}}\left|X_t-X_s\right| > \epsilon \right) =0.
\end{align}
\end{enumerate}

Recall from \eqref{eq:MY} that for any $F\in \mathcal{S}_b$,
\begin{align}
\label{eq:Yeqn}
\mathcal{Y}^N_t(F) &= \mathcal{Y}^N_0(F) + \int_0^t\, \mathcal{Y}_s^N(\Delta_N F)\,ds - \mathcal{B}^N_t(F) + \mathcal{M}^N_t(F)+ o_N(1), \\
\text{where  } \label{eq:BNt} \mathcal{B}^N_t(F) &= \frac{3^N}{\sqrt{|V_N|}}\int_0^t\, \sum_{a\in V_0} \bar\eta^N_s(a) \left[(\partial^\perp F)(a) + \frac{5^N}{b^N 3^N} \lambda_\Sigma F(a) + \left((\partial^\perp_N F)(a)-(\partial^\perp F)(a)\right)\right]\,ds.
\end{align}
To show tightness of $\{\mathcal{Y}^N_t(F): t\in [0,T]\}_N$, it suffices to check, up to extraction of a common subsequence, tightness of each of the four terms on the right-hand side of \eqref{eq:Yeqn}---the initial measure, the Laplacian term, the boundary term, and the martingale term---using either Aldous' criterion  or a direct proof of convergence.
To avoid an overcharged notation, we suppress the subsequence index in what follows.

\subsubsection*{Convergence of the initial measure.}
We want to prove that $\mathcal{Y}^N_0 \xrightarrow[N\to\infty]{d} \mathcal{Y}_0$, where $\mathcal{Y}_0$ is a centered $\mathcal{S}'_b$-valued Gaussian random variable with covariance given by \eqref{eq:cov}.
This relies on computing the characteristic function of $\mathcal{Y}^N_0(F)$, which is possible thanks to the product Bernoulli measure $\mathbb{P}_\rho^{N,b}$ (below $i=\sqrt{-1}$):
\begin{equation}
\begin{aligned}
\log & \mathbb{E}_\rho^{N,b}\left[\exp\left(i\lambda \mathcal{Y}^N_0(F)\right)\right]
= \log \mathbb{E}_\rho^{N,b}\left[\exp\left(i\lambda \frac{1}{\sqrt{|V_N|}}\sum_{x\in V_N} (\eta_0^N(x)-\rho) F(x)\right)\right] \\
&=\log \prod_{x\in V_N} \left(1 - \frac{\lambda^2}{2|V_N|}\chi(\rho) F^2(x) + \mathcal{O}\left(\frac{1}{|V_N|^{3/2}}\right)\right)\\
&= -\frac{\lambda^2}{2} \chi(\rho) \frac{1}{|V_N|}\sum_{x\in V_N} F^2(x) + \mathcal{O}\left(\frac{1}{|V_N|^{1/2}}\right)\xrightarrow[N\to\infty]{} -\frac{\lambda^2}{2}\chi(\rho) \int_K\, F^2(x)\,dm(x).
\end{aligned}
\end{equation}
For the convergence in the last line we used that $F\in C(K)$ to pass from the discrete sum to the integral.
To conclude the proof, we replace $F$ by a linear combination of functions and use the Cr\'amer-Wold device.

\begin{remark}
The above proof can be repeated to show that for each $t\in [0,T]$, $\mathcal{Y}^N_t  \xrightarrow[N\to\infty]{d} \mathcal{Y}_t$, where $\mathcal{Y}_t$ is a centered $\mathcal{S}_b'$-valued Gaussian random variable.
In particular, $\{\mathcal{Y}_t: t\in [0,T]\}$ is a stationary solution to the Ornstein-Uhlenbeck equation, Definition \ref{def:OUMG}, for any $b>0$.
\end{remark}

\subsubsection*{The Laplacian term.}
We verify the tightness criterion. To check \ref{A1}, we estimate using Cauchy-Schwarz and the stationarity of the product measure $\mathbb{P}^{N,b}_\rho$ to find that for any $t\in [0,T]$,
\begin{equation}
\label{eq:YDeltaest}
\begin{aligned}
\mathbb{E}^{N,b}_\rho& \left[\left(\int_0^t\, \mathcal{Y}_s^N(\Delta_N F)\,ds\right)^2\right]
=
\mathbb{E}^{N,b}_\rho \left[ \left(\int_0^t\, \frac{1}{\sqrt{|V_N|}} \sum_{x\in V_N} \bar\eta^N_s(x) (\Delta_N F)(a)\,ds\right)^2\right]\\
& \leq CT\int_0^T \, \mathbb{E}^{N,b}_\rho \left[\left(\frac{1}{\sqrt{|V_N|}}\sum_{x\in V_N} \bar\eta^N(x) (\Delta_N F)(x)\right)^2\right]\,ds = CT^2 \chi(\rho) \left(\frac{1}{|V_N|}\sum_{x\in V_N} (\Delta_N F)^2(x)\right).
\end{aligned}
\end{equation}
Since $F\in \mathcal{S}_b$ implies that $\Delta F\in C(K)$, we see that the last expression is bounded.
To check \ref{AC2}, we use Chebyshev's inequality and \eqref{eq:YDeltaest} to find that for every pair of times $t-\theta< t \in [0,T]$ and every $\epsilon>0$,
\begin{align}
\mathbb{Q}^{N,b}_\rho \left(\left|\int_{t-\theta}^t\, \mathcal{Y}_s(\Delta_N F)\,ds\right|>\epsilon\right)
\leq \frac{1}{\epsilon^2} \mathbb{E}^{N,b}_\rho \left[\left|\int_{t-\theta}^t\,\mathcal{Y}_s(\Delta_N F)\,ds\right|^2\right] \leq \frac{C\theta^2}{\epsilon^2} \xrightarrow[\theta\downarrow 0]{} 0. 
\end{align}
We can perform one more substitution, by replacing $\Delta_N F$ with $\frac{2}{3}\Delta F$.
Apply an estimate similar to \eqref{eq:YDeltaest}, and we get
\begin{equation}
\label{eq:2moment}
\begin{aligned}
\mathbb{E}^{N,b}_\rho& \left[\left(\int_0^t\, \left(\mathcal{Y}_s^N\left(\frac{2}{3}\Delta F\right)-\mathcal{Y}_s^N(\Delta_N F)\right)\,ds\right)^2\right]
\leq CT^2 \chi(\rho) \left(\frac{1}{|V_N|}\sum_{x\in V_N} \left(\left(\frac{2}{3}\Delta F\right)(x) -(\Delta_N F)(x)\right)^2\right).
\end{aligned}
\end{equation}
Since $\frac{3}{2}\Delta_N F \to \Delta F$ in $C(K)$, \emph{cf.\@} Lemma \ref{lem:domLap}-\eqref{Lapunif} and the remark following Lemma \ref{lem:domLap}, the right-hand side of \eqref{eq:2moment} tends to $0$ as $N\to\infty$.
So the left-hand side of \eqref{eq:2moment} tends to $0$ as $N\to\infty$, which implies that any limit point of the Laplacian term takes on the form $\int_0^t\, \mathcal{Y}_s(\frac{2}{3}\Delta F)\,ds$.

\subsubsection*{The boundary term.}
We claim that for any regime of $b$,
\begin{align}
\varlimsup_{N\to\infty} \mathbb{E}_\rho^{N,b}\left[|\mathcal{B}^N_t(F)|^2\right] =0, \quad \forall F\in \mathcal{S}_b,
\end{align}
where $\mathcal{B}^N_t(F)$ was defined in \eqref{eq:BNt}. We verify this case by case.

%%%
%Analysis of the boundary terms
%%%
\quad

\emph{The case $b>5/3$:} Then $(\partial^\perp F)|_{V_0}=0$ for any $F\in \mathcal{S}_b$.
There are two contributions to $\mathcal{B}^N_t(F)$:
\begin{align}
\label{eq:bN}
\int_0^t\, \sum_{a\in V_0} \frac{3^N}{\sqrt{|V_N|}} \bar\eta^N_s(a) (\partial^\perp_N F)(a) \,ds + \int_0^t\, \sum_{a\in V_0}  \frac{5^N}{b^N \sqrt{|V_N|}} \bar\eta^N_s(a) \lambda_\Sigma F(a)\,ds
\end{align}
We argue that both terms vanish as $N\to\infty$, using different arguments.

For the first term of \eqref{eq:bN}, we upper bound $|\bar\eta^N_s(a)|$ by $1$, and aim to show that $3^{N/2} |(\partial^\perp_N F)(a)| \to 0$ for every $a\in V_0$.
By \cite{StrichartzBook}*{Lemma 2.7.4(b)}, if $F \in {\rm dom}\Delta$ and $(\partial^\perp F)(a_0)=0$, then there exists $C>0$ such that for all $N$, 
\begin{align}
\label{eq:a0}
\sup_{x\in \mathfrak{F}_0^N(K)} |F(x)- F(a_0)| \leq C N 5^{-N}.
\end{align}
(In addition, if $\Delta F$ satisfies a H\"older condition---which holds for $F\in \mathcal{S}_{\rm Neu}$---then the right-hand side estimate can be improved to $C5^{-N}$.)
Therefore
\begin{align}
3^{N/2} |(\partial^\perp_N F)(a_0)| = 3^{N/2}\left|\frac{5^N}{3^N} \sum_{\substack{y\in V_N\\y \sim a_0}} \left(F(a_0)-F(y)\right)\right|
\leq \frac{5^N}{3^{N/2}}\sum_{\substack{y\in V_N\\y \sim a_0}} |F(a_0)-F(y)| \underset{\eqref{eq:a0}}{\leq} 2 \frac{C(N)}{3^{N/2}} \xrightarrow[N\to\infty]{}0,
\end{align}
proving the desired claim at $a_0$. The same argument applies to the other boundary points $a_1$ and $a_2$.

The second term of \eqref{eq:bN} vanishes as $N\to\infty$ by virtue of the replacement Lemma \ref{lem:RLDFFNeu}.

\quad

\emph{The case $b<5/3$:} Then $F|_{V_0}=0$ for any $F\in \mathcal{S}_b$. Thus $\mathcal{B}^N_t(F)$ equals
\[
\int_0^t \, \sum_{a\in V_0} \frac{3^N}{\sqrt{|V_N|}} \bar\eta^N_s(a) (\partial^\perp_N F)(a)\,ds,
\]
which vanishes as $N\to\infty$ by virtue of the replacement Lemma \ref{lem:RLDFFDir}.

\quad

\emph{The case $b=5/3$:} Then $(\partial^\perp F)|_{V_0} = -\lambda_\Sigma F|_{V_0}$ for any $F\in \mathcal{S}_b$. Thus $\mathcal{B}^N_t(F)$ equals
\[
\frac{3^N}{\sqrt{|V_N|}}\int_0^t\, \sum_{a\in V_0} \bar\eta^N_s(a) \left((\partial_N^\perp F)(a) - (\partial^\perp F)(a)\right)\,ds.
\]
This vanishes as $N\to\infty$ by virtue of $(\partial^\perp_N F)(a)- (\partial^\perp F)(a) = o_N(1)$ and the replacement Lemma \ref{lem:RLDFFRob}..

\subsubsection*{The martingale term.} Recall the computation \eqref{eq:EQVY}, and note that for any $F\in \mathcal{S}_b$,
\begin{align}
\label{eq:EM2}
\lim_{N\to\infty} \mathbb{E}^{N,b}_\rho\left[|\mathcal{M}^N_t(F)|^2\right] = \frac{2}{3} 2\chi(\rho) t \mathscr{E}_b(F) <\infty,
\end{align}
where $\mathscr{E}_b$ was defined in \eqref{eq:Eb}.
This estimate is enough to verify tightness.
In fact, it shows that $\{\mathcal{M}^N_t(F) : t\in [0,T]\}_N$ is a uniformly integrable (UI) family of martingales, so by the martingale convergence theorem it converges in distribution to a martingale $\{\mathcal{M}_t(F): t\in [0,T]\}$.

\subsubsection*{Identification of limit points.}
At this point we have shown that any limit point $\mathcal{Y}_\cdot(F) \in C([0,T],\mathbb{R})$ of $\{\mathcal{Y}^N_\cdot(F)\}_N$, whose law we denote by $\mathbb{Q}^\rho_b$, satisfies that $\mathcal{Y}_t(F)$ is Gaussian for each $t$, and that
\begin{align}
\label{eq:MY3}
\mathcal{M}_t(F) = \mathcal{Y}_t(F) - \mathcal{Y}_0(F) - \int_0^t\, \mathcal{Y}_s\left(\frac{2}{3}\Delta F\right)\,ds
\end{align}
is a martingale.
It remains to show that the quadratic variation of $\mathcal{M}_t(F)$ equals $\frac{2}{3} 2\chi(\rho)t \mathscr{E}_b(F)$.

Recall that each term of the sequence
\begin{align}
\label{eq:NMG}
\left\{(\mathcal{M}^N_t(F))^2 - \langle \mathcal{M}^N(F)\rangle_t : t\in [0,T]\right\}_N
\end{align}
is a martingale.
Using tightness of $\{\mathcal{M}^N_t(F)\}_N$ and \eqref{eq:EM2}, we see that as $N\to\infty$, the limit in distribution of this sequence is
\begin{align}
\label{eq:limitN}
\left\{\mathcal{N}_t(F):=(\mathcal{M}_t(F))^2 - \frac{2}{3} 2\chi(\rho)t\mathscr{E}_b(F): t\in [0,T]\right\}.
\end{align}
The quadratic variation claim follows once we show that $\mathcal{N}_t(F)$ is a martingale. This is done by checking both $\{(\mathcal{M}^N_t(F))^2\}_N$ and $\{\langle \mathcal{M}^N(F)\rangle_t\}_N$ are UI families, and then applying the martingale convergence theorem to the sequence \eqref{eq:NMG}.

By \eqref{eq:EQVY} (or \eqref{eq:EM2}), $\mathbb{E}_\rho^{N,b}\left[\langle \mathcal{M}^N(F)\rangle_t\right]$ is bounded for all $N$, which is enough to imply that $\{\langle \mathcal{M}^N(F)\rangle_t\}_N$ is UI.
To show that $\{(\mathcal{M}_t^N(F))^2\}_N$ is UI, it suffices to show that $\mathbb{E}^{N,b}_\rho\left[(\mathcal{M}_t^N(F))^4\right]$ is uniformly bounded in $N$.
By \cite{DittrichGartner}*{Lemma 3}, which is a consequence of the Burkholder-Davis-Gundy inequality, there exists $C>0$ such that for all $N$,
\begin{align}
\mathbb{E}^{N,b}_\rho\left[(\mathcal{M}^N_t(F))^4\right] \leq C\left(\mathbb{E}^{N,b}_\rho [(\mathcal{M}^N_t(F))^2] + \mathbb{E}^{N,b}_\rho\left[ \sup_{t\in [0,T]} |\mathcal{M}^N_t(F)-\mathcal{M}^N_{t^-}(F)|^4\right] \right).
\end{align}
On the right-hand side we already showed that the first term is bounded in $N$.
For the second term, observe that
\begin{equation}
\begin{aligned}
\sup_{t\in [0,T]} |\mathcal{M}^N_t(F)- \mathcal{M}^N_{t^-}(F)| = \sup_{t\in [0,T]} |\mathcal{Y}^N_t(F)-\mathcal{Y}^N_{t^-}(F)|\\
\leq \sup_{t\in [0,T]} \frac{1}{\sqrt{|V_N|}}\sum_{x\in V_N} \left|(\bar\eta^N_t(x) - \bar\eta^N_{t^-}(x)) F(x)\right| \leq \frac{C(F)}{\sqrt{|V_N|}},
\end{aligned}
\end{equation}
since in a single jump in the exclusion process, at most $2$ points in $V_N$ change configuration, and almost surely no two jumps occur at the same time.

We have thus proved Proposition \ref{prop:Ytight}, and in particular, verified condition \ref{OU1} of Definition \ref{def:OUMG}.
By L\'evy's characterization, $\mathcal{M}_t(F)$ is a time-changed Brownian motion $B_{\Theta(t)}$ with time change $\Theta(t)= \frac{2}{3} 2\chi(\rho) \mathscr{E}_b(F) t$, and thus is a continuous local martingale.

%%%%
%%! of OU
%%%%

\subsection{Uniqueness of the limit point} \label{sec:!limitOU}
To prove uniqueness of $\mathcal{Y}_\cdot$, we follow the strategy described in \cite{KipnisLandim}*{\S11.4}, which is based on the analysis of martingales.
Throughout this subsection, $i=\sqrt{-1}$ and $\mathscr{F}_s:= \sigma\{\mathcal{Y}_t(F): t\in [0,s],~F\in \mathcal{S}_b\}$.

\begin{lemma}
\label{lem:MGX}
Fix $s\geq 0$ and $F\in \mathcal{S}_b$. The process $\{\mathcal{X}^s_t(F): t\geq s\}$ under $\mathbb{Q}_\rho^b$ given by
\begin{align}
\mathcal{X}^s_t(F) := \exp\left[i\left(\mathcal{Y}_t(F) - \mathcal{Y}_s(F) - \int_s^t\, \mathcal{Y}_r\left(\frac{2}{3}\Delta F\right)\,dr\right) + \frac{1}{2}\left(\frac{2}{3} 2\chi(\rho) \mathscr{E}_b(F)\right)(t-s) \right]
\end{align}
is a martingale with respect to $\mathscr{F}_t$.
\end{lemma}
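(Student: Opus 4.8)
The plan is to obtain $\{\mathcal{X}^s_t(F): t\geq s\}$ as a limit (in distribution, along the relevant subsequence) of an analogous exponential martingale built from the particle system, and then to show that the martingale property is preserved under the limit. First I would fix $s\geq 0$ and $F\in\mathcal{S}_b$, and consider the complex-valued process
\[
\mathcal{X}^{s,N}_t(F) := \exp\left[i\left(\mathcal{Y}^N_t(F) - \mathcal{Y}^N_s(F) - \int_s^t\, \mathcal{Y}^N_r(\Delta_N F)\,dr\right) + \frac{1}{2}\langle \mathcal{M}^N(F)\rangle_t - \frac{1}{2}\langle \mathcal{M}^N(F)\rangle_s\right],
\]
which, by Dynkin's formula applied to the exponential of the Dynkin martingale $\mathcal{M}^N_t(F) - \mathcal{M}^N_s(F)$ (\emph{cf.\@} \cite{KipnisLandim}*{\S11.4}), is an $\mathscr{F}^N_t$-martingale under $\mathbb{P}^{N,b}_\rho$; here I use that $\mathcal{M}^N_t(F) = \mathcal{Y}^N_t(F) - \mathcal{Y}^N_0(F) - \int_0^t \mathcal{Y}^N_r(\Delta_N F)\,dr - \mathcal{B}^N_t(F) + o_N(1)$ from \eqref{eq:Yeqn}, so that modulo the boundary term $\mathcal{B}^N_\cdot(F)$ and $o_N(1)$ corrections the bracketed expression is exactly $\mathcal{M}^N_t(F)-\mathcal{M}^N_s(F)$ compensated by half its quadratic variation.

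Next I would pass to the limit. From \S\ref{sec:YTight} we have: $\{\mathcal{Y}^N_\cdot\}_N$ is tight in $C([0,T],\mathcal{S}'_b)$ with every limit point $\mathcal{Y}_\cdot$ satisfying \ref{OU1}; the Laplacian term converges, $\int_s^t \mathcal{Y}^N_r(\Delta_N F)\,dr \to \int_s^t \mathcal{Y}_r(\frac{2}{3}\Delta_b F)\,dr$ by \eqref{eq:2moment}; the boundary term vanishes, $\varlimsup_N \mathbb{E}^{N,b}_\rho[|\mathcal{B}^N_t(F)|^2]=0$; and the quadratic variation converges, $\langle\mathcal{M}^N(F)\rangle_t \to \frac{2}{3}\cdot 2\chi(\rho)\, t\,\mathscr{E}_b(F)$ in $L^1$, since $\{\langle\mathcal{M}^N(F)\rangle_t\}_N$ is UI (by \eqref{eq:EM2}/\eqref{eq:EQVY}) and the bound \eqref{eq:QVY} combined with $\mathcal{E}_N(F)\uparrow\mathcal{E}(F)$ controls the fluctuation of the bracket around its mean. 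Combining these, along the subsequence defining $\mathbb{Q}^b_\rho$, the bracketed exponent of $\mathcal{X}^{s,N}_t(F)$ converges in distribution (jointly in finitely many times) to the bracketed exponent of $\mathcal{X}^s_t(F)$; since $\exp[\,\cdot\,]$ of that exponent is bounded (the real part of the exponent is $\frac{1}{2}(\frac{2}{3}\cdot 2\chi(\rho)\mathscr{E}_b(F))(t-s)$, a fixed finite number, and the imaginary part has modulus $1$), we get convergence of $\mathcal{X}^{s,N}_t(F)$ to $\mathcal{X}^s_t(F)$ together with uniform integrability.

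Then the martingale property transfers: for any $s\leq s\leq t_1 < t_2$ in $[0,T]$ and any bounded continuous $\mathscr{F}_{t_1}$-measurable functional $\Phi$ of the trajectory $\{\mathcal{Y}_r: r\leq t_1\}$, I would write $\mathbb{E}_{\mathbb{Q}^b_\rho}[(\mathcal{X}^s_{t_2}(F) - \mathcal{X}^s_{t_1}(F))\Phi]$ as the limit of $\mathbb{E}^{N,b}_\rho[(\mathcal{X}^{s,N}_{t_2}(F) - \mathcal{X}^{s,N}_{t_1}(F))\Phi^N]$, where $\Phi^N$ is the corresponding functional of $\mathcal{Y}^N_\cdot$; the latter expectation is $0$ by the $\mathbb{P}^{N,b}_\rho$-martingale property of $\mathcal{X}^{s,N}_\cdot(F)$, so the former is $0$, which is precisely the $\mathbb{Q}^b_\rho$-martingale property of $\mathcal{X}^s_\cdot(F)$ with respect to $\mathscr{F}_t = \sigma\{\mathcal{Y}_r(F): r\leq t\}$. (One must check $\Phi^N \to \Phi$ with uniform integrability, which follows from boundedness of $\Phi$ and continuity of the trajectory; and one should note the standard point that $\mathcal{X}^{s,N}_t(F)$ as written differs from a genuine martingale only by $o_N(1)$ terms coming from the $o_N(1)$ and $\mathcal{B}^N$ corrections in \eqref{eq:Yeqn}, which is harmless in the limit.)

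The main obstacle I anticipate is the interchange of limit and expectation in transferring the martingale identity --- specifically, establishing the requisite uniform integrability of $\{(\mathcal{X}^{s,N}_{t}(F) - \mathcal{X}^{s,N}_{t'}(F))\Phi^N\}_N$ and the joint convergence in distribution of the quadruple $(\mathcal{Y}^N_\cdot(F), \int \mathcal{Y}^N_r(\Delta_N F)\,dr, \mathcal{B}^N_\cdot(F), \langle\mathcal{M}^N(F)\rangle_\cdot)$ as processes, so that the continuous mapping theorem applies to the exponential functional. The exponent's real part being a deterministic bounded quantity makes the exponentials automatically bounded, so UI is in fact immediate; the genuine work is the joint convergence, which is assembled from the individual convergences already proved in \S\ref{sec:YTight} (tightness gives joint tightness, and the $L^2$-vanishing of $\mathcal{B}^N$ and of the $\Delta_N F$-vs-$\frac{2}{3}\Delta F$ discrepancy pins down the joint limit). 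Once this is in place the conclusion is routine.
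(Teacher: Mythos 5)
Your plan is a genuinely different route from the paper's: you go back to the particle system, build exponential martingales at level $N$, and transport the martingale property through the limit, whereas the paper never returns to the microscopic dynamics. The paper simply rewrites $\mathcal{X}^s_t(F)=\exp\left[i\left(\mathcal{M}_t(F)-\mathcal{M}_s(F)\right)+\tfrac12\left(\tfrac23\cdot 2\chi(\rho)\mathscr{E}_b(F)\right)(t-s)\right]$ using \eqref{eq:MY3}, and then verifies the martingale identity by computing the conditional characteristic function of the increment: conditionally on $\mathscr{F}_s$ the increment $\mathcal{M}_t(F)-\mathcal{M}_s(F)$ is centered Gaussian, and its conditional variance is pinned down by the martingale $\mathcal{N}_\cdot(F)$ of \eqref{eq:limitN} to be $\tfrac23\cdot 2\chi(\rho)(t-s)\mathscr{E}_b(F)$. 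That is a two-line computation exploiting facts already proved in \S\ref{sec:YTight}, at the price of invoking conditional Gaussianity; your approach would avoid that input, but it has to pay elsewhere.

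And that is where there is a genuine gap: your pivotal first claim, that $\mathcal{X}^{s,N}_t(F)=\exp\left[i(\cdots)+\tfrac12\left(\langle\mathcal{M}^N(F)\rangle_t-\langle\mathcal{M}^N(F)\rangle_s\right)\right]$ is an exact $\mathbb{P}^{N,b}_\rho$-martingale ``by Dynkin's formula applied to the exponential of the Dynkin martingale,'' is false. The identity ``$e^{iM_t+\frac12\langle M\rangle_t}$ is a martingale'' is an It\^o-formula fact for \emph{continuous} martingales; $\mathcal{M}^N_\cdot(F)$ is a pure-jump martingale, and for a jump Markov process the exact exponential martingale is $\exp\left\{i\,\mathcal{Y}^N_t(F)-\int_0^t e^{-i\mathcal{Y}^N_r(F)}\,5^N\mathcal{L}_N\, e^{i\mathcal{Y}^N_r(F)}\,dr\right\}$ (the analogue of \cite{KipnisLandim}*{Appendix 1, Lemma 5.1}, equivalently the Dol\'eans-Dade exponential), whose compensator differs from $i\int_0^t\mathcal{Y}^N_r(\Delta_N F)\,dr-\tfrac12\langle\mathcal{M}^N(F)\rangle_t$ by the cubic and higher terms in the jump amplitudes. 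So at fixed $N$ your process is only approximately a martingale, on top of the boundary/$o_N(1)$ corrections you do acknowledge. The gap is repairable: each jump changes $\mathcal{Y}^N_\cdot(F)$ by $O(|V_N|^{-1/2})$, and a Taylor expansion of the generator acting on the exponential bounds the discrepancy by a constant times $3^N|V_N|^{-3/2}\,\mathcal{E}_N(F)\,\sup_{x\sim y}|F(x)-F(y)|$ plus a boundary analogue (which vanishes for $b<5/3$ since then $F|_{V_0}=0$), and this is $O(3^{-N/2})$, hence negligible. Working with the exact exponential martingale and showing its compensator converges to $i\int\mathcal{Y}_r\left(\tfrac23\Delta_b F\right)dr-\tfrac12\cdot\tfrac23\cdot 2\chi(\rho)\mathscr{E}_b(F)\,(t-\cdot)$, your transfer-of-martingale-property argument (joint convergence plus the uniform bound on the exponentials) then goes through. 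As written, however, the proof rests on an incorrect exact martingale claim and needs either this repair or the paper's shorter limit-level computation.
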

\begin{proof}
Recall that using It\^o's formula one can show that for a continuous local martingale $\{\mathcal{M}_t(F): t\geq 0\}$, $\{\exp\left(i\mathcal{M}_t(F) + \frac{1}{2}\langle\mathcal{M}(F)\rangle_t\right): t\geq 0\}$ is a continuous local martingale. Now apply \eqref{eq:OU1MtF} and \eqref{eq:OU1NtF}.
\end{proof}

\begin{lemma}
\label{lem:MGZ}
Fix $S\geq 0$ and $F\in \mathcal{S}_b$. The process $\{\mathcal{Z}^S_t(F): t\in [0,S]\}$ under $\mathbb{Q}_\rho^b$ given by
\begin{align}
\mathcal{Z}^S_t(F) := \exp\left[i \mathcal{Y}_t(\tilde{\sf T}^b_{S-t} F) + \frac{1}{2}\int_0^t \, \frac{2}{3} 2\chi(\rho) \mathscr{E}_b(\tilde{\sf T}^b_{S-r} F)\,dr \right]
\end{align}
is a martingale with respect to $\mathscr{F}_t$.
\end{lemma}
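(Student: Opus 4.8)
The plan is to reproduce the mechanism behind Lemma~\ref{lem:MGX}, reducing the martingale property of $\mathcal{Z}^S_\cdot(F)$ to an explicit computation of the conditional characteristic function of the increments of $M_t:=\mathcal{Y}_t(\tilde{\sf T}^b_{S-t}F)$, this time via a time discretization in which Lemma~\ref{lem:MGX} is applied on each mesh interval with a \emph{frozen} test function. Abbreviate $c:=\frac{2}{3}\cdot 2\chi(\rho)$ and $\phi(t):=\int_0^t c\,\mathscr{E}_b(\tilde{\sf T}^b_{S-r}F)\,dr$, so that $\mathcal{Z}^S_t(F)=\exp\big(iM_t+\tfrac{1}{2}\phi(t)\big)$. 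First note that $r\mapsto\tilde{\sf T}^b_{S-r}F$ is continuous from $[0,S]$ into $\mathcal{S}_b$ (strong continuity of $\{\tilde{\sf T}^b_t\}$ together with Corollary~\ref{cor:invarianceSb}), so $\{\tilde{\sf T}^b_{S-r}F:r\in[0,S]\}$ is a compact subset of $\mathcal{S}_b$; since $\mathscr{E}_b(g)=\langle g,-\Delta_b g\rangle_{L^2(K,m)}\le\|g\|_0\|g\|_1$ on ${\rm dom}\Delta_b$, the function $\phi$ is finite and $|\mathcal{Z}^S_t(F)|\le e^{\phi(S)/2}<\infty$; hence integrability is never an issue and it suffices to show, for $0\le s\le t\le S$,
\begin{align}
\mathbb{E}^b_\rho\big[e^{i(M_t-M_s)}\,\big|\,\mathscr{F}_s\big]=e^{-\frac{1}{2}(\phi(t)-\phi(s))}.
\end{align}

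The key step is a telescoping identity. Fix a partition $s=t_0<t_1<\dots<t_n=t$ of mesh $|\mathcal{P}|$ and set $G_k:=\tilde{\sf T}^b_{S-t_{k+1}}F\in\mathcal{S}_b$ (Corollary~\ref{cor:invarianceSb}). Applying property \ref{OU1} to the time-independent test function $G_k$ on $[t_k,t_{k+1}]$, combining it with $\tilde{\sf T}^b_{S-t_k}F=\tilde{\sf T}^b_{t_{k+1}-t_k}G_k$ and the semigroup relation $\tilde{\sf T}^b_\delta G_k-G_k=\int_0^\delta\frac{2}{3}\Delta_b\tilde{\sf T}^b_u G_k\,du$ (valid on $\mathcal{S}_b$ by Lemma~\ref{lem:semigroup} and the scaling remark closing \S\ref{subsec:fs}), and changing variables, one obtains
\begin{align}
M_t-M_s=\sum_{k=0}^{n-1}\big(\mathcal{M}_{t_{k+1}}(G_k)-\mathcal{M}_{t_k}(G_k)\big)+R_{\mathcal{P}},\qquad R_{\mathcal{P}}:=\sum_{k=0}^{n-1}\int_{t_k}^{t_{k+1}}\Big[\mathcal{Y}_r\big(\tfrac{2}{3}\Delta_b G_k\big)-\mathcal{Y}_{t_k}\big(\tfrac{2}{3}\Delta_b\tilde{\sf T}^b_{r-t_k}G_k\big)\Big]\,dr,
\end{align}
where $\mathcal{M}(G_k)$ is the $\mathscr{F}_t$-martingale from \ref{OU1}. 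I expect $R_{\mathcal{P}}\to 0$ in $L^1(\mathbb{E}^b_\rho)$ as $|\mathcal{P}|\to 0$: writing $H_k:=\tfrac{2}{3}\Delta_b G_k\in\mathcal{S}_b$ and splitting the integrand as $[\mathcal{Y}_r(H_k)-\mathcal{Y}_{t_k}(H_k)]+\mathcal{Y}_{t_k}(H_k-\tilde{\sf T}^b_{r-t_k}H_k)$, the second term has $L^1$-norm at most $\sqrt{\chi(\rho)}\,\|H_k-\tilde{\sf T}^b_{r-t_k}H_k\|_{L^2(K,m)}\le C(r-t_k)$ — using \eqref{eq:cov} with stationarity of $\nu^N_\rho$ to get $\mathbb{E}^b_\rho|\mathcal{Y}_{t_k}(g)|\le\sqrt{\chi(\rho)}\,\|g\|_{L^2(K,m)}$, and the semigroup bound $\|H_k-\tilde{\sf T}^b_\delta H_k\|_\infty\le\delta\sup_{u\le\delta}\|\tfrac{2}{3}\Delta_b\tilde{\sf T}^b_u H_k\|_\infty$ with constant uniform in $k$ because the relevant functions stay in a compact subset of $\mathcal{S}_b$; while the first term has $L^1$-norm at most $C(r-t_k)^{1/2}$ after one more application of \ref{OU1} together with the quadratic-variation identity $\mathbb{E}^b_\rho[(\mathcal{M}_r(H_k)-\mathcal{M}_{t_k}(H_k))^2]=c\,\mathscr{E}_b(H_k)(r-t_k)$ (a consequence of the martingale $\mathcal{N}(H_k)$ in \ref{OU1}, of $\mathcal{M}_0(H_k)=0$, and of orthogonality of martingale increments), again uniformly in $k$. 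Summing over $k$ gives $\mathbb{E}^b_\rho|R_{\mathcal{P}}|\le C|\mathcal{P}|^{1/2}(t-s)\to 0$.

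Granting this, the martingale property follows quickly. Each factor $e^{i(\mathcal{M}_{t_{k+1}}(G_k)-\mathcal{M}_{t_k}(G_k))}$ is $\mathscr{F}_{t_{k+1}}$-measurable, and Lemma~\ref{lem:MGX} applied with $\sigma=t_k$ and test function $G_k$ gives $\mathbb{E}^b_\rho\big[e^{i(\mathcal{M}_{t_{k+1}}(G_k)-\mathcal{M}_{t_k}(G_k))}\,\big|\,\mathscr{F}_{t_k}\big]=e^{-\frac{c}{2}\mathscr{E}_b(G_k)(t_{k+1}-t_k)}$, a deterministic constant; so successive conditioning from $k=n-1$ down to $k=0$ yields
\begin{align}
\mathbb{E}^b_\rho\Big[\prod_{k=0}^{n-1}e^{i(\mathcal{M}_{t_{k+1}}(G_k)-\mathcal{M}_{t_k}(G_k))}\,\Big|\,\mathscr{F}_s\Big]=\exp\Big(-\frac{c}{2}\sum_{k=0}^{n-1}\mathscr{E}_b\big(\tilde{\sf T}^b_{S-t_{k+1}}F\big)(t_{k+1}-t_k)\Big).
\end{align}
Since $\big|e^{i(M_t-M_s)}-\prod_k e^{i(\mathcal{M}_{t_{k+1}}(G_k)-\mathcal{M}_{t_k}(G_k))}\big|=|e^{iR_{\mathcal{P}}}-1|\le|R_{\mathcal{P}}|\to 0$ in $L^1$, and the Riemann sum converges to $\phi(t)-\phi(s)$ by continuity of $r\mapsto\mathscr{E}_b(\tilde{\sf T}^b_{S-r}F)$, letting $|\mathcal{P}|\to 0$ gives $\mathbb{E}^b_\rho[e^{i(M_t-M_s)}\mathbf{1}_A]=\mathbb{E}^b_\rho[\mathbf{1}_A]\,e^{-\frac{1}{2}(\phi(t)-\phi(s))}$ for every $A\in\mathscr{F}_s$, hence $\mathbb{E}^b_\rho[e^{i(M_t-M_s)}\,|\,\mathscr{F}_s]=e^{-\frac{1}{2}(\phi(t)-\phi(s))}$ and $\mathbb{E}^b_\rho[\mathcal{Z}^S_t(F)\,|\,\mathscr{F}_s]=\mathcal{Z}^S_s(F)$. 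The one genuinely delicate point I anticipate is the $L^1$-vanishing of $R_{\mathcal{P}}$ (the combination of semigroup smoothing and equilibrium moment bounds, with the $\sqrt{|\mathcal{P}|}$ bookkeeping); everything else is manipulation of Lemma~\ref{lem:MGX}, the smoothing estimates of Lemma~\ref{lem:semigroup} and Corollary~\ref{cor:invarianceSb}, and the equilibrium two-point function \eqref{eq:cov}.
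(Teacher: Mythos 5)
Your proposal is correct and follows essentially the same route as the paper: partition the time interval, freeze the test function as $\tilde{\sf T}^b_{S-\cdot}F$ on each subinterval, apply Lemma \ref{lem:MGX} with successive conditioning, and pass to the limit of the Riemann sums using the semigroup differentiability from Lemma \ref{lem:semigroup} and Corollary \ref{cor:invarianceSb}. The only (harmless) variation is that you control the discretization error $R_{\mathcal{P}}$ quantitatively in $L^1$ via the stationary Gaussian covariance \eqref{eq:cov} and the quadratic variation from \ref{OU1}, whereas the paper obtains the analogous convergence of the product of exponential martingales pathwise, using continuity of $t\mapsto\mathcal{Y}_t$, and then applies dominated convergence.
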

\begin{proof}
Recall that for any $F\in \mathcal{S}_b$, $t\mapsto \mathcal{Y}_t(F)$ is continuous (Proposition \ref{prop:Ytight}), $t\mapsto \tilde{\sf T}^b_t F$ is continuous (Lemma \ref{lem:semigroup}-\eqref{smooth}), and $\Delta \tilde{\sf T}^b_t F \in \mathcal{S}_b$ for every $t>0$ (Corollary \ref{cor:invarianceSb}).
With these in mind, we fix $0\leq t_1 < t_2 \leq S$, and consider the partition of $[t_1, t_2]$ into $n$ equal subintervals, namely, $t_1=s_0 < s_1< \cdots < s_n=t_2$ with  $s_j = t_1 + j\left(\frac{t_2-t_1}{n}\right)$.
A direct computation shows that
\begin{equation}
\label{eq:prodX}
\begin{aligned}
\prod_{j=0}^{n-1} \mathcal{X}^{s_j}_{s_{j+1}} \left(\tilde{\sf T}^b_{S-s_j} F\right) =
\exp&\left[i\sum_{j=0}^{n-1}\left(\mathcal{Y}_{s_{j+1}}(\tilde{\sf T}^b_{S-s_j} F) - \mathcal{Y}_{s_j}(\tilde{\sf T}^b_{S-s_j} F) - \int_{s_j}^{s_{j+1}} \, \mathcal{Y}_r\left(\frac{2}{3}\Delta \tilde{\sf T}^b_{S-s_j}F\right)\,dr\right) \right.\\
&\left.+\frac{1}{2} \frac{2}{3} 2\chi(\rho) \frac{t_2-t_1}{n} \sum_{j=0}^{n-1} \mathscr{E}_b(\tilde{\sf T}^b_{S-s_j} F)\right].
\end{aligned}
\end{equation}
Using the continuity of $t\mapsto \tilde{\sf T}^b_t F$ and Riemann sum approximation, we see that
\begin{align}
\frac{t_2-t_1}{n}\sum_{j=0}^{n-1} \mathscr{E}_b(\tilde{\sf T}^b_{S-s_j} F) \xrightarrow[n\to\infty]{} \int_{t_1}^{t_2} \, \mathscr{E}_b(\tilde{\sf T}^b_{S-r} F)\,dr.
\end{align}
Meanwhile, we can rewrite the sum in the first term on the right-hand side of \eqref{eq:prodX} as
\begin{align}
\label{eq:prodX1}
\mathcal{Y}_{t_2}(\tilde{\sf T}^b_{S-t_2-\frac{t_2-t_1}{n}}F) - \mathcal{Y}_{t_1}(\tilde{\sf T}^b_{S-t_1} F) + \sum_{j=1}^{n-1} \mathcal{Y}_{s_j}(\tilde{\sf T}^b_{S-s_{j-1}}F - \tilde{\sf T}^b_{S-s_j}F) - \sum_{j=0}^{n-1} \int_{s_j}^{s_{j+1}} \, \mathcal{Y}_r\left(\frac{2}{3}\Delta \tilde{\sf T}^b_{S-s_j} F\right)\,dr.
\end{align}
By Lemma \ref{lem:semigroup}-\eqref{smooth},
$
\tilde{\sf T}^b_{t+\epsilon} F - \tilde{\sf T}^b_t F = \epsilon \frac{2}{3}\Delta F + o(\epsilon)
$
as $\epsilon \downarrow 0$,
so we get
\begin{align}
 \sum_{j=1}^{n-1} \mathcal{Y}_{s_j}(\tilde{\sf T}^b_{S-s_{j-1}}F - \tilde{\sf T}^b_{S-s_j}F) = \sum_{j=1}^{n-1} \frac{t_2-t_1}{n} \mathcal{Y}_{s_j}\left( \frac{2}{3}\Delta \tilde{\sf T}^b_{S-s_j}F\right) + o\left(\frac{1}{n}\right) \xrightarrow[n\to\infty]{} \int_{t_1}^{t_2}\, \mathcal{Y}_r \left(\frac{2}{3} \Delta \tilde{\sf T}^b_{S-r}F\right)\,dr,
\end{align}
which cancels with the $n\to\infty$ limit of the last term of \eqref{eq:prodX1}.
Altogether we have
\begin{align}
\label{eq:limprodX}
\lim_{n\to\infty}\prod_{j=0}^{n-1} \mathcal{X}^{s_j}_{s_{j+1}} \left(\tilde{\sf T}^b_{S-s_j} F\right) =
\exp\left[i\left(\mathcal{Y}_{t_2}(\tilde{\sf T}^b_{S-t_2}F) - \mathcal{Y}_{t_1}(\tilde{\sf T}^b_{S-t_1}F)\right) + \frac{1}{2}\int_{t_1}^{t_2}\, \frac{2}{3} 2\chi(\rho) \mathscr{E}_b(\tilde{\sf T}^b_{S-r} F)\,dr \right],
\end{align}
the right-hand side being equal to $\mathcal{Z}^S_{t_2}(F) / \mathcal{Z}^s_{t_1}(F)$, $\mathbb{Q}^b_\rho$-a.s.
Moreover, since the complex exponential is bounded, the dominated convergence theorem implies the limit \eqref{eq:limprodX} also takes place in $L^1(\mathbb{Q}^b_\rho)$. So for any bounded random variable $G$,
\begin{align}
\label{eq:GZ}
\mathbb{E}^b_\rho\left[G\frac{\mathcal{Z}^S_{t_2}(F)}{\mathcal{Z}^S_{t_1}(F)}\right] = \lim_{n\to\infty} \mathbb{E}^b_\rho\left[G\prod_{j=0}^{n-1} \mathcal{X}^{s_j}_{s_{j+1}} \left(\tilde{\sf T}^b_{S-s_j} F\right)\right].
\end{align}

Suppose further that $G$ is $\mathscr{F}_{t_1}$-measurable.
Since $\{\mathcal{X}^s_t(F): t\geq s\}$ is a martingale by Lemma \ref{lem:MGX}, we have
\begin{equation}
\begin{aligned}
\mathbb{E}^b_\rho&\left[G\prod_{j=0}^{n-1} \mathcal{X}^{s_j}_{s_{j+1}} \left(\tilde{\sf T}^b_{S-s_j} F\right)\right]
= \mathbb{E}^b_\rho\left[\mathbb{E}^b_\rho\left[G\prod_{j=0}^{n-1} \mathcal{X}^{s_j}_{s_{j+1}} \left(\tilde{\sf T}^b_{S-s_j} F\right)\bigg| \mathscr{F}_{s_{n-1}}\right]\right]\\
&=\mathbb{E}^b_\rho\left[G\prod_{j=0}^{n-2} \mathcal{X}^{s_j}_{s_{j+1}} \left(\tilde{\sf T}^b_{S-s_j} F\right)\mathbb{E}^b_\rho\left[\mathcal{X}^{s_{n-1}}_{s_n}\left(\tilde{\sf T}^b_{S-s_{n-1}}F\right)\bigg| \mathscr{F}_{s_{n-1}}\right]\right]\\
&=\mathbb{E}^b_\rho\left[G\prod_{j=0}^{n-2} \mathcal{X}^{s_j}_{s_{j+1}} \left(\tilde{\sf T}^b_{S-s_j} F\right)\mathcal{X}^{s_{n-1}}_{s_{n-1}}\left(\tilde{\sf T}^b_{S-s_{n-1}}F\right)\right]
=\mathbb{E}^b_\rho\left[G\prod_{j=0}^{n-2} \mathcal{X}^{s_j}_{s_{j+1}} \left(\tilde{\sf T}^b_{S-s_j} F\right)\right].
\end{aligned} 
\end{equation}
By induction we can boil the last expression down to $\mathbb{E}^b_\rho[G]$.
Combine this result with \eqref{eq:GZ} to obtain
\begin{align}
\mathbb{E}^b_\rho\left[G\frac{\mathcal{Z}^S_{t_2}(F)}{\mathcal{Z}^S_{t_1}(F)}\right]  = \mathbb{E}^b_\rho[G]
\end{align}
for any bounded $\mathscr{F}_{t_1}$-measurable random variable $G$.
This shows that $\{\mathcal{Z}^S_t(F): t\in [0,S]\}$ is a martingale.
\end{proof}

We can now finish the uniqueness proof.
From the martingale identity $\mathbb{E}^b_\rho[\mathcal{Z}^S_t(F) | \mathscr{F}_s] = \mathcal{Z}^S_s(F)$ for $S\geq t\geq s$, we get
\begin{align}
\mathbb{E}^b_\rho\left[ \exp\left(i\mathcal{Y}_t(\tilde{\sf T}^b_{S-t} F)+\frac{1}{2}\int_0^t\,\frac{2}{3} 2\chi(\rho) \mathscr{E}_b(\tilde{\sf T}^b_{S-r}F)\,dr \right)\bigg| \mathscr{F}_s\right]
=
 \exp\left(i\mathcal{Y}_t(\tilde{\sf T}^b_{S-s} F)+\frac{1}{2}\int_0^s\,\frac{2}{3} 2\chi(\rho) \mathscr{E}_b(\tilde{\sf T}^b_{S-r}F)\,dr \right).
\end{align}
This can be rearranged to give
\begin{align}
\mathbb{E}^b_\rho\left[ \exp\left(i\mathcal{Y}_t(\tilde{\sf T}^b_{S-t} F) \right)\bigg| \mathscr{F}_s\right]
=
\exp\left(i \mathcal{Y}_t(\tilde{\sf T}^b_{S-s}F)-\frac{1}{2}\int_s^t\,\frac{2}{3} 2\chi(\rho) \mathscr{E}_b(\tilde{\sf T}^b_{S-r}F)\,dr \right).
\end{align}
By a change of  variables and the semigroup definition $\tilde{\sf T}^b_{S-s} = \tilde{\sf T}^b_{t-s}\tilde{\sf T}^b_{S-t}$ for $S> t> s$, the last expression can be rewritten as
\begin{align}
\mathbb{E}^b_\rho\left[ \exp\left(i\mathcal{Y}_t(F) \right)\bigg| \mathscr{F}_s\right]
=
\exp\left(i \mathcal{Y}_t(\tilde{\sf T}^b_{t-s}F)-\frac{1}{2}\int_0^{t-s}\,\frac{2}{3} 2\chi(\rho) \mathscr{E}_b(\tilde{\sf T}^b_r F)\,dr \right).
\end{align}
Changing $F$ to $\lambda F$, $\lambda \in \mathbb{R}$, we see that the distribution of $\mathcal{Y}_t(F)$ conditional upon $\mathscr{F}_s$ is Gaussian with mean $\mathcal{Y}_t(\tilde{\sf T}^b_{t-s}F)$ and variance $\int_0^{t-s}\, \frac{2}{3} 2\chi(\rho)\mathscr{E}_b(\tilde{\sf T}^b_r F)\,dr$, matching the condition \ref{OU2} in Definition \ref{def:OUMG}.
Successive conditioning at different times implies uniqueness of the finite-dimensional distributions of the process $\{\mathcal{Y}_t(F): t\in [0,T]\}$, which then implies uniqueness of the law of $\mathcal{Y}_\cdot$.
This completes the proof of Theorem \ref{thm:OU}.

\section{Generalizations}
\label{sec:generalizations}

\subsection{Mixed boundary conditions}

With minor tweaks to the preceding proofs, it is straightforward to establish analogs of Theorems \ref{thm:Hydro} and \ref{thm:OU} for the exclusion process with different boundary parameters $b_a>0$ at each $a\in V_0$.
We leave the adaptation to the reader.

\subsection{Other post-critically finite self-similar fractals and resistance spaces}
In order to make the paper readable with minimal prerequisites, we have decided to work on the Sierpinski gasket only. 
That said, the results in this paper can be generalized to other post-critically finite self-similar (p.c.f.s.s.) fractals as defined in \cites{BarlowStFlour, KigamiBook}, and more generally, to \emph{resistance spaces} introduced by Kigami \cite{Kigamiresistance}, which also include 1D random walks with long-range jumps; trees; and random graphs arising from critical percolation.
Note that resistance spaces have spectral dimension $d_S<2$ ($d_S/2$ is defined as the growth exponent of the eigenvalue counting function $\#_b(s)$, \emph{cf.\@} \eqref{eq:ECF}), but can have geometric (\emph{e.g.\@} Hausdorff) dimension $\geq 2$.
A case in point is the $d$-dimensional Sierpinski simplex, $d\geq 3$.

In some sense there is very little ``fractal'' involved in our proofs; rather, the most important ingredient is a good notion of calculus, including: convergence of discrete Laplacians and of the discrete energy forms (with respect to the reference measure), and a robust theory of boundary-value elliptic and parabolic problems.
It is also important that that the space be bounded in the resistance metric. 
Otherwise the moving particle Lemma \ref{lem:MPL} becomes ineffective, and we would not have been able to prove the replacement Lemma \ref{lem:replace1} in light of the lack of translational invariance.

It is an open problem to prove hydrodynamic limits of exclusion processes on non-translationally-invariant spaces whose spectral dimension $\geq 2$; see \cites{vGR, vGR2} for recent progress towards this goal.
Due to the length of the present paper, we leave the details of these generalizations to future work.

\subsection*{Acknowledgements}
We thank the anonymous referees for useful comments that helped us improve the paper.

\appendix

\section{Dirichlet-to-Neumann map on $SG$}
\label{app:DNmap}

In this appendix we characterize the harmonic function which satisfies the Robin boundary condition
\begin{align}
\label{eq:HRob}
\left\{\begin{array}{ll}
\Delta h(x) =0 ,& x\in K\setminus V_0,\\
\partial^\perp h(a) + \kappa(a) h(a) = \gamma(a), & a\in V_0,
\end{array}
\right.
\end{align}
where $\{\kappa(a) : a\in V_0\}$ and $\{\gamma(a) : a\in V_0\}$ are given coefficients.

Let $h^i: K\to\mathbb{R}$, $i\in \{0,1,2\}$, denote the harmonic function with Dirichlet boundary condition $h^i(a_j) = \delta_{ij}$, $j\in \{0,1,2\}$.
By the harmonic extension algorithm described in \cite{StrichartzBook}*{\S1.3}, $\{h^i\}_{i=0}^2$ is a basis for the space of harmonic functions on $K$, so we may express the solution $h$ of \eqref{eq:HRob} as a linear combination $h= \sum_{i=0}^2 {\bf c}_i h^i$, where the coefficients $\{{\bf c}_i\}_i$ are determined by the boundary condition in \eqref{eq:HRob}:
\begin{align}
\sum_{i=0}^2 {\bf c}_i  (\partial^\perp h^i)(a_j)  + \kappa(a_j) {\bf c}_j = \gamma(a_j), \quad j\in \{0,1,2\}.
\end{align}
We can then conclude that $h$ is a harmonic function satisfying the Dirichlet boundary condition $h(a_i) = {\bf c}_i$, $i\in \{0,1,2\}$.

So it suffices to find $\{{\bf c}_i\}_i$. The harmonic extension algorithm \cite{StrichartzBook}*{\S1.3} yields
\begin{align}
(\partial^\perp h^i)(a_j) = \left\{\begin{array}{ll}
2, & \text{if } j=i,\\
-1, & \text{if } j \neq i.
\end{array}
\right. 
\end{align}
Thus we arrive at the matrix problem
\begin{align}
\begin{bmatrix}
2+\kappa_0 & -1 & -1 \\
-1 & 2+\kappa_1 & -1 \\
-1 & -1 & 2+\kappa_2
\end{bmatrix}
\begin{bmatrix} 
{\bf c}_0 \\ {\bf c}_1 \\ {\bf c}_2
\end{bmatrix}
=
\begin{bmatrix}
\gamma_0 \\ \gamma_1 \\ \gamma_2
\end{bmatrix}
.
\end{align}
where $\kappa_j$ and $\gamma_j$ are shorthands for $\kappa(a_j)$ and $\gamma(a_j)$.
It can be checked that the left-hand side matrix is invertible iff its determinant
\begin{align}
\boldsymbol\Delta:=3(\kappa_0+\kappa_1+\kappa_2) + 2(\kappa_0 \kappa_1+ \kappa_1\kappa_2+\kappa_2\kappa_0) + \kappa_0\kappa_1\kappa_2
\end{align}
is nonzero.
Assuming invertibility, we find
\begin{align}
\begin{bmatrix}
{\bf c}_0 \\ {\bf c}_1 \\ {\bf c}_2
\end{bmatrix}
=
\frac{1}{\boldsymbol\Delta}
\begin{bmatrix}
3+ 2(\kappa_1+\kappa_2) + \kappa_1\kappa_2 & 3+\kappa_2 & 3+\kappa_1 \\
3+\kappa_2 & 3+ 2(\kappa_2 +\kappa_0) + \kappa_2 \kappa_0 & 3+\kappa_0\\
3+\kappa_1 & 3+\kappa_0 & 3+2(\kappa_0+\kappa_1)+\kappa_0\kappa_1
\end{bmatrix}
\begin{bmatrix}
\gamma_0 \\ \gamma_1 \\ \gamma_2
\end{bmatrix}
.
\end{align}

\bibliographystyle{alpha}
\bibliography{biblio_hydro}

\end{document}